\declaretheoremstyle[headfont=\normalfont]{normalhead}
\newtheorem{lemma}{Lemma}[section]
\newtheorem{theorem}[lemma]{Theorem}
\newtheorem{proposition}[lemma]{Proposition}
\newtheorem{corollary}[lemma]{Corollary}
\newtheorem{definition}[lemma]{Definition}
\newtheorem{remark}[lemma]{Remark}
\newcounter{mt}
\newtheorem{maintheorem}[mt]{Theorem}
\newtheorem{maincorollary}[mt]{Corollary}
\newcommand{\R}{\mathbb{R}}
\newcommand{\C}{\mathbb{C}}
\newcommand{\U}{\mathrm{U}}
\DeclareMathOperator{\sign}{sign}
\DeclareMathOperator{\Val}{Val}
\DeclareMathOperator{\VConv}{VConv}
\DeclareMathOperator{\Conv}{Conv}
\DeclareMathOperator{\vol}{vol}
\DeclareMathOperator{\supp}{supp}
\DeclareMathOperator{\GL}{GL}
\DeclareMathOperator{\Gr}{\mathrm{Gr}}
\DeclareMathOperator{\nc}{\mathrm{nc}}
\DeclareMathOperator{\SO}{\mathrm{SO}}
\author{Jonas Knoerr}
\title{A geometric decomposition for unitarily invariant valuations on convex functions}
\date{}
\newcommand{\Addresses}{{
		\bigskip
		\footnotesize
		
		Jonas Knoerr, \textsc{Institute of Discrete Mathematics and Geometry, TU Wien, Wiedner Hauptstrasse 8-10, 1040 Wien, Austria}\par\nopagebreak
		\textit{E-mail address}: \texttt{jonas.knoerr@tuwien.ac.at}
		
		\medskip
	}}
\def\blfootnote{\xdef\@thefnmark{}\@footnotetext}
\begin{document}
\maketitle
\begin{abstract}
	Valuations on the space of finite-valued convex functions on $\C^n$ that are continuous, dually epi-translation invariant, as well as $\U(n)$-invariant are completely classified. It is shown that the space of these valuations decomposes into a direct sum of subspaces defined in terms of vanishing properties with respect to restrictions to a finite family of special subspaces of $\C^n$, mirroring the behavior of the Hermitian intrinsic volumes introduced by Bernig and Fu. Unique representations of these valuations in terms of principal value integrals involving two families of Monge-Amp\`ere-type operators are established. 
\end{abstract}
\blfootnote{2020 \emph{Mathematics Subject Classification}. 52B45, 26B25, 53C65, 32W20.\\
	\emph{Key words and phrases}. Convex function, valuation on functions, complex Monge-Amp\`ere operator.\\}

\section{Introduction}

Given a family $\mathcal{S}$ of sets, a map $\mu:S\rightarrow (\mathcal{A},+)$ with values in an abelian semi-group is called a valuation if
\begin{align*}
	\mu(K)+\mu(L)=\mu(K\cup L)+\mu(K\cap L)
\end{align*}
for all $K,L\in\mathcal{S}$ such that $K\cup L, K\cap L\in \mathcal{S}$. This notion goes back to Dehn's solution of Hilbert's third problem and has since become a core concept in integral and convex geometry with many applications in other areas of mathematics. For applications in integral geometry, the focus is usually on valuations on the space $\mathcal{K}(\R^n)$ of convex bodies, that is, the set of all non-empty, convex and compact subsets of $\R^n$ equipped with the Hausdorff metric. Many classical formulas in integral geometry follow easily from the follow classification result.
\begin{theorem}[Hadwiger \cite{HadwigerVorlesungenuberInhalt1957}]
	Let $\mu:\mathcal{K}(\R^n)\rightarrow\R$ be a rigid motion invariant and continuous valuation. Then $\mu$ is a linear combination of the intrinsic volumes.
\end{theorem}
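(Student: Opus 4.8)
The plan is to prove this by induction on the dimension $n$, following Klain's route through \emph{simple} valuations (those vanishing on all convex bodies of dimension $<n$). The base case $n=0$ is trivial, a valuation on a point being a constant multiple of $V_0$. For the inductive step I would first record the standard reduction tools: a continuous valuation on $\mathcal{K}(\R^n)$ is determined by its restriction to polytopes (by continuity and density of polytopes in the Hausdorff metric), and the intrinsic volumes $V_0,\dots,V_n$ are themselves continuous rigid motion invariant valuations with the key compatibility $V_i^{(n)}\big|_{\mathcal{K}(E)} = V_i^{(n-1)}$ for any hyperplane $E$ and $i\le n-1$, while $V_n^{(n)}\big|_E = 0$. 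Restricting $\mu$ to a fixed hyperplane gives a continuous rigid motion invariant valuation on $\mathcal{K}(\R^{n-1})$, which by the inductive hypothesis equals $\sum_{i=0}^{n-1} c_i V_i$; rotation invariance makes this independent of the chosen hyperplane. Subtracting $\sum_{i=0}^{n-1} c_i V_i^{(n)}$ from $\mu$ yields a continuous rigid motion invariant valuation $\mu'$ whose restriction to every hyperplane vanishes, hence $\mu'$ is simple. It then remains to show every simple, continuous, rigid motion invariant valuation on $\mathcal{K}(\R^n)$ is a multiple of $V_n=\vol$.

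The second step evaluates such a $\mu'$ on boxes. Setting $f(t_1,\dots,t_n)=\mu'\big([0,t_1]\times\dots\times[0,t_n]\big)$, I would split a box by a hyperplane perpendicular to one edge: the two pieces are boxes, their intersection is a lower-dimensional box on which $\mu'$ vanishes by simplicity, and translation invariance identifies the pieces with boxes based at the origin. This gives Cauchy's functional equation in each coordinate separately, and continuity upgrades it to linearity; simplicity forces the terms not involving some variable to vanish (let that variable tend to $0$). Hence $f(t_1,\dots,t_n)=c\,t_1\cdots t_n$, so $\mu'$ and $c V_n$ agree on axis-parallel boxes, and by rotation invariance on all boxes. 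Replacing $\mu'$ by $\mu'-cV_n$, I may assume $\mu'$ is simple, continuous, rigid motion invariant, and vanishes on all boxes.

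The third step passes from boxes to arbitrary bodies via orthoscheme dissections. The unit cube dissects into $n!$ mutually congruent orthoschemes (path-simplices); rotation invariance gives $\mu'$ a common value on them, and because $\mu'$ is simple, the inclusion--exclusion relation associated to the dissection retains only the top-dimensional terms, so $n!\cdot\mu'(\text{orthoscheme}) = \mu'(\text{cube}) = 0$, whence $\mu'$ vanishes on every orthoscheme. Invoking Hadwiger's dissection theorem, that every convex polytope can be dissected into finitely many orthoschemes, simplicity and the valuation property then yield $\mu'(P)=0$ for every polytope $P$; continuity extends this to all of $\mathcal{K}(\R^n)$. Therefore $\mu'\equiv 0$, i.e.\ the original valuation equals $\sum_{i=0}^{n-1}c_iV_i + cV_n$, closing the induction.

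I expect the main obstacle to be the geometric dissection inputs rather than the functional-equation bookkeeping: the $n!$-fold dissection of the cube into congruent orthoschemes, and especially Hadwiger's theorem that an arbitrary convex polytope admits an orthoscheme dissection, are substantive facts that must be cited or proved separately. The accompanying technical care lies in turning ``valuation plus simplicity'' into genuine finite additivity over a dissection, which requires verifying that all lower-dimensional overlap contributions in the iterated inclusion--exclusion vanish; by contrast, the base case, the reduction to simple valuations, and the evaluation on boxes are routine.
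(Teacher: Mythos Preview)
The paper does not give its own proof of Hadwiger's theorem; it is quoted as a classical result with a reference to \cite{HadwigerVorlesungenuberInhalt1957} and used only as background. So there is no in-paper argument to compare against, and the question is simply whether your sketch stands on its own.

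There is a genuine gap in your third step. From the $n!$-dissection of the unit cube you correctly conclude that $\mu'$ vanishes on the \emph{standard} orthoscheme $\{0\le x_{\sigma(1)}\le\dots\le x_{\sigma(n)}\le 1\}$ and hence, by rigid motion invariance, on every orthoscheme \emph{congruent} to it. But the orthoschemes arising in a dissection of an arbitrary polytope are not congruent to this one: an orthoscheme is determined (up to congruence) by the lengths of its $n$ mutually orthogonal ``leg'' edges, and these can be arbitrary positive reals. Knowing $\mu'=0$ on the unit-leg orthoscheme says nothing about $\mu'$ on a general orthoscheme, so the implication ``$\mu'$ vanishes on every orthoscheme, hence on every polytope'' is not justified by what precedes it.

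To close this gap you need a separate argument that a simple, continuous, rigid motion invariant valuation vanishing on all boxes vanishes on \emph{all} orthoschemes (equivalently, on all simplices). The standard modern route avoids orthoschemes altogether: after reducing to a simple valuation one invokes Klain's theorem that a simple, even, translation invariant, continuous valuation is a multiple of the volume (with evenness coming from rotation invariance), or equivalently passes through McMullen's homogeneous decomposition and notes that a simple $k$-homogeneous valuation with $k<n$ is zero. If you want to stay with dissections, you must supply an additional recursive argument showing $\mu'$ vanishes on orthoschemes with arbitrary leg lengths, which is considerably more work than your outline suggests.
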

We refer to Section \ref{section:valuationsBodies} and \cite{SchneiderConvexbodiesBrunn2014} for the definition of the intrinsic volumes and their role in integral geometry. Let us, however, remark that Hadwiger's result implies in particular that the space of these valuations is finite dimensional.\\
Since the late 1990s, the theory of valuations on convex bodies has developed rapidly, with a variety of breakthrough results and new applications \cite{AleskerDescriptiontranslationinvariant2001,AleskerHardLefschetztheorem2003,Aleskermultiplicativestructurecontinuous2004,AleskerFaifmanConvexvaluationsinvariant2014,BernigHadwigertypetheorem2009,BernigBroeckerValuationsmanifoldsRumin2007,BernigEtAlCurvaturemeasurespseudo2022,BernigEtAlHardLefschetztheorem2024,FaifmanHofstaetterConvexvaluationsWhitney2023,FreyerEtAlUnimodularValuationsEhrhart2024,FuStructureunitaryvaluation2006,KotrbatyWannererharmonicanalysistranslation2023,LudwigReitznerclassification$SLn$invariant2010,Wannerermoduleunitarilyinvariant2014}. Many of these results rely on the seminal work by Alesker, who in particular showed that Hadwiger-type finiteness results hold for a variety of other subgroups. Let $\Val(\C^n)^{\U(n)}$ denote the space of all continuous, translation invariant and $\U(n)$-invariant valuations. This space is finite dimensional, as shown by Alesker \cite{AleskerDescriptiontranslationinvariant2001}, who also introduced the first bases for this space \cite{AleskerHardLefschetztheorem2003}. For our purposes, the following basis is more suitable.
\begin{theorem}[Bernig--Fu \cite{BernigFuHermitianintegralgeometry2011} Theorem 3.2]
	\label{theorem:BernigFuHermitianIntrinsicVolumes}
	For $0\le k\le 2n$, $\max(0,k-n)\le q\le \lfloor\frac{k}{2}\rfloor$ there exists a unique $k$-homogeneous valuation $\mu_{k,q}\in \Val(\C^n)^{\U(n)}$ such that
	\begin{align*}
		\mu_{k,q}|_{E_{k,p}}=\delta_{pq}\vol_k,
	\end{align*}
	where $\vol_k$ denotes the $k$-dimensional Lebesgue measure on $E_{k,p}$. Moreover, these valuations form a basis for $\Val(\C^n)^{\U(n)}$.
\end{theorem}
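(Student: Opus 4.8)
The plan is to reduce the statement to the injectivity of the Klain embedding together with a dimension count. By McMullen's homogeneous decomposition one has $\Val(\C^n)^{\U(n)}=\bigoplus_{k=0}^{2n}\Val_k(\C^n)^{\U(n)}$, so it suffices to fix a homogeneity degree $k$ and to produce valuations $\mu_{k,q}$ with the asserted normalization that form a basis of $\Val_k(\C^n)^{\U(n)}$. Since any $\mu\in\Val_k(\C^n)^{\U(n)}$ is $k$-homogeneous, its restriction to any $k$-dimensional subspace $E\subseteq\C^n$ is a multiple of $\vol_k$, which defines its Klain function $\mathrm{Kl}_\mu\in C(\Gr_k(\C^n))$; and since $-\mathrm{id}_{\R^{2n}}\in\U(n)$, every such $\mu$ is even, so Klain's injectivity theorem (see e.g.\ \cite{SchneiderConvexbodiesBrunn2014}) shows that $\mu\mapsto\mathrm{Kl}_\mu$ is injective on $\Val_k(\C^n)^{\U(n)}$. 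Writing $N_{n,k}:=\#\{q:\max(0,k-n)\le q\le\lfloor k/2\rfloor\}$ and $E_{k,q}\cong\C^q\oplus\R^{k-2q}$ for the standard subspaces, the theorem in degree $k$ is then equivalent to the assertion that
\begin{align*}
\Phi_k\colon\Val_k(\C^n)^{\U(n)}\longrightarrow\R^{N_{n,k}},\qquad \mu\longmapsto\bigl(\mathrm{Kl}_\mu(E_{k,q})\bigr)_q,
\end{align*}
is a linear isomorphism: surjectivity yields $\mu_{k,q}$ as the preimage of the $q$-th standard basis vector, injectivity gives uniqueness, and bijectivity gives the basis property.

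I would then fix the dimension: by Alesker's finiteness and structure theory \cite{AleskerDescriptiontranslationinvariant2001} --- or, concretely, by Fu's description of $\Val(\C^n)^{\U(n)}$ as a complete intersection $\R[t,s]/(f_{n+1},f_{n+2})$ with $\deg t=1$, $\deg s=2$ \cite{FuStructureunitaryvaluation2006}, whose Poincar\'e series has $t^k$-coefficient $N_{n,k}$ --- one has $\dim\Val_k(\C^n)^{\U(n)}=N_{n,k}$. Hence $\Phi_k$ is an isomorphism as soon as it is injective, i.e.\ as soon as no nonzero $\mu\in\Val_k(\C^n)^{\U(n)}$ has $\mathrm{Kl}_\mu$ vanishing at all the $E_{k,q}$.

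For the remaining point I would use that $\mathrm{Kl}_\mu$, being $\U(n)$-invariant, descends to a function of the multiple K\"ahler angle $(\theta_1,\dots,\theta_{\lfloor k/2\rfloor})$ parametrizing $\Gr_k(\C^n)/\U(n)$, under which $E_{k,q}$ is the corner where $\cos^2\theta_i=1$ for $i\le q$ and $\cos^2\theta_i=0$ for $i>q$. The decisive input --- and the step I expect to be the main obstacle --- is to determine the image $\mathrm{Kl}\bigl(\Val_k(\C^n)^{\U(n)}\bigr)$ and, specifically, to show it is the linear span of the elementary symmetric polynomials $e_0,e_1,\dots,e_{\lfloor k/2\rfloor}$ in $\cos^2\theta_1,\dots,\cos^2\theta_{\lfloor k/2\rfloor}$ (a space that restricts to an $N_{n,k}$-dimensional space of functions on $\Gr_k(\C^n)$, since for $k>n$ the first $k-n$ K\"ahler angles vanish identically there). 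This is the genuinely Hermitian-geometric part and appears to need more than formal arguments: one could construct the $\mu_{k,q}$ explicitly as integral-geometric valuations --- $\U(n)$-averages of Lebesgue measures of orthogonal projections onto complex subspaces, multiplied by powers of the degree-one generator --- and compute the resulting K\"ahler-angle functions via Tasaki-type integral geometry; or compute $\mathrm{Kl}$ on Fu's generators $t,s$ and propagate through the Alesker product via the first-variation formula; or invoke Alesker's analysis of the $\U(n)$-module $\Val_k(\C^n)$. Granting this description, the proof finishes: evaluating $e_j$ at $E_{k,q}$ gives a binomial coefficient, so the relevant evaluation matrix is, up to reindexing, the Pascal matrix $\bigl(\binom{i}{j}\bigr)$, which is unitriangular and hence invertible; therefore $\Phi_k$ is an isomorphism. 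Defining $\mu_{k,q}$ as $\Phi_k^{-1}$ of the $q$-th standard basis vector and summing over $k$ completes the proof.
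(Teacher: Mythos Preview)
The paper does not prove this theorem; it merely quotes it as a background result from Bernig--Fu \cite{BernigFuHermitianintegralgeometry2011}, so there is no ``paper's own proof'' to compare against. Your outline is in fact a faithful sketch of the original Bernig--Fu argument: reduce to a fixed degree $k$, use evenness (from $-\mathrm{id}\in\U(n)$) and Klain's injectivity to replace valuations by their Klain functions, use the known dimension $\dim\Val_k(\C^n)^{\U(n)}=N_{n,k}$, and then show that evaluation at the subspaces $E_{k,q}$ is an isomorphism by computing the Klain functions in terms of the multiple K\"ahler angle and checking that the resulting evaluation matrix is triangular.

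The one genuine gap you correctly identify yourself: the step ``the image of the Klain map is spanned by $e_0,\dots,e_{\lfloor k/2\rfloor}$ in $\cos^2\theta_i$'' is the entire content of the theorem and cannot be treated as a lemma to be granted. In Bernig--Fu this is done concretely by writing down explicit invariant differential forms on the sphere bundle (their forms $\beta,\gamma,\theta_0,\theta_1,\theta_2$, which reappear in the present paper), integrating them over normal cycles, and computing the resulting Klain functions directly via a linear-algebra calculation on $E_{k,p}$. Your listed alternatives (averaging projections, propagating through the Alesker product, Alesker's module analysis) are plausible but each requires substantial work you have not carried out; the differential-forms route is the one that actually closes the argument in the cited reference. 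Until that computation is supplied, your proposal is an accurate reduction but not yet a proof.
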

Due to their resemblance to the intrinsic volumes, the functionals $\mu_{k,q}$ are also called hermitian intrinsic volumes.\\

In recent years, there has been a considerable effort to extend classical results and notions from convex geometry to a functional setting \cite{ArtsteinAvidanEtAlFunctionalaffineisoperimetry2012,ArtsteinAvidanMilmanconceptdualityconvex2009,BaryshnikovEtAlHadwigersTheoremdefinable2013,BobkovEtAlQuermassintegralsquasiconcave2014,ColesantiEtAlTranslationinvariantvaluations2018,HofstaetterSchusterBlaschkeSantaloinequalities2023,KolesnikovWernerBlaschkeSantaloinequality2022,KoneValuationsOrliczspaces2014,LiEtAlAffineinvariantmaps2022,MilmanRotemMixedintegralsrelated2013}. One of the most fruitful approaches builds on the valuation theoretic classification of geometric functionals on convex bodies and tries to obtain similar characterization results for valuations on spaces of functions. Here a functional $\mu$ defined on a space $\mathcal{F}$ of (extended) real-valued functions is called a valuation if
\begin{align*}
	\mu(f)+\mu(h)=\mu(f\vee h)+\mu(f\wedge h)
\end{align*}
for all $f,h\in \mathcal{F}$ such that the pointwise maximum $f\vee h$ and minimum $f\wedge h$ also belong to $\mathcal{F}$. Depending on the chosen function space, the topology, and the invariance assumptions, various well-known functionals can be characterized as the unique valuations with the given properties, see for example \cite{ColesantiEtAlMinkowskivaluationsconvex2017,HofstaetterKnoerrEquivariantendomorphismsconvex2023,HofstaetterKnoerrEquivariantValuationsConvex2024,KnoerrMongeAmpereoperators2024,LiMaLaplacetransformsvaluations2017,LudwigFisherinformationmatrix2011,LudwigValuationsSobolevspaces2012,MussnigVolumepolarvolume2019,Mussnig$SLn$InvariantValuations2021,TradaceteVillanuevaRadialcontinuousvaluations2017,TradaceteVillanuevaContinuityrepresentationvaluations2018,TradaceteVillanuevaValuationsBanachlattices2020,VillanuevaRadialcontinuousrotation2016}.\\

One of the most active areas of research in this part of valuation theory are valuations on spaces of convex functions \cite{AleskerValuationsconvexfunctions2019,ColesantiEtAlHadwigertheoremconvex2020,ColesantiEtAlHadwigertheoremconvex,ColesantiEtAlHadwigertheoremconvex2022,ColesantiEtAlHadwigertheoremconvex2023,HugEtAlAdditivekinematicformulas2024,KnoerrSingularvaluationsHadwiger2022,KnoerrUlivellivaluationsconvexbodies2024}. This applies in particular to the space
\begin{align*}
	\Conv(V,\R):=\{f:V\rightarrow\R:f ~\text{is convex}\}
\end{align*}
of all finite convex functions on a finite dimensional real vector space $V$. This space is naturally  equipped with the topology induced by epi-convergence (which coincides with the topology induced by pointwise convergence, compare Section \ref{section:PreliminariesConvex}). We denote by $\VConv(V)$ the space of all continuous valuations $\mu:\Conv(V,\R)\rightarrow\R$ that are \emph{dually epi-translation invariant}, that is, that satisfy 
\begin{align*}
	\mu(f+\lambda)=\mu(f)\quad\text{for all}~\lambda:V\rightarrow \R~\text{affine},~f\in\Conv(V,\R).
\end{align*}
This notion is intimately tied to translation invariance: Functionals with this property are invariant under translations of the epi-graph of the convex conjugate of the given function $f\in\Conv(V,\R)$ considered as a subset of $V^*\times \R$, compare \cite{Knoerrsupportduallyepi2021,KnoerrSmoothvaluationsconvex2024,KnoerrUlivellivaluationsconvexbodies2024}. Consequently, many results for translation invariant valuations on convex bodies admit a functional version for elements of $\VConv(V)$. For example, Colesanti, Ludwig, and Mussnig \cite{ColesantiEtAlhomogeneousdecompositiontheorem2020} showed that there exists a homogeneous decomposition mirroring McMullen's decomposition \cite{McMullenValuationsEulertype1977},
\begin{align*}
	\VConv(V)=\bigoplus_{k=0}^{\dim V}\VConv_k(V),
\end{align*}
where $\mu\in\VConv_k(V)$ if and only if $\mu(tf)=t^k\mu(f)$ for $t\ge 0$ and $f\in\Conv(V,\R)$. In \cite{ColesantiEtAlHadwigertheoremconvex2020} they also obtained a Hadwiger-type characterization of the space of all $\SO(n)$-invariant valuations in $\VConv(\R^n)$, and the corresponding functionals are called \emph{functional intrinsic volumes}. Their construction involves integrals of certain rotation invariant densities with respect to the so-called \emph{Hessian measures} $\Phi_k$, $0\le k\le n$, which are closely related to the real Monge-Amp\`ere operator. These may be characterized as the unique continuous valuations on $\Conv(\R^n,\R)$ with values in the space $\mathcal{M}(\R^n)$ of signed Radon measures on $\R^n$, considered as the continuous dual of $C_c(\R^n)$, satisfying
\begin{align*}
	d\Phi_k(f)[x]=[D^2f(x)]_kdx \quad\text{for all}~f\in\Conv(\R^n,\R)\cap C^2(\R^n).
\end{align*}
Here, $[A]_k$ denotes the $k$th elementary symmetric function of the eigenvalues of an $(n\times n)$-matrix $A$. We refer to \cite{KnoerrMongeAmpereoperators2024} for a valuation theoretic characterization of the Hessian measures, to \cite{TrudingerWangHessianmeasures.I1997,TrudingerWangHessianmeasures.II1999} for their role in the theory of Hessian equations, and to \cite{AleskerValuationsconvexsets2005,AleskerValuationsconvexfunctions2019} for the role of Monge-Amp\`ere-type operators in the construction of valuations on convex bodies and functions.\\

Let $C_b((0,\infty))$ denote the space of all continuous functions $\zeta:(0,\infty)\rightarrow \R$ whose support is bounded from above and consider for $a>0$ the subspace
\begin{align*}
	D^a:=\left\{\zeta\in C_b((0,\infty)):\lim_{t\rightarrow0}t^a\zeta(t)=0,~\lim\limits_{t\rightarrow0}\int_0^\infty \zeta(r)r^{a-1}dr~\text{exists and is finite}\right\}.
\end{align*}
Let $\VConv(\R^n)^{\SO(n)}$ denote the subspace of all $\mu\in\VConv(\R^n)$ that are $\SO(n)$-invariant, that is, that satisfy
\begin{align*}
	\mu(f\circ g)=\mu(f)\quad\text{for all}~g\in \SO(n),~f\in\Conv(\R^n,\R).
\end{align*}
\begin{theorem}[Colesanti--Ludwig--Mussnig \cite{ColesantiEtAlHadwigertheoremconvex2020}]
	\label{theorem:HadwigerVConv}
	Let $1\le k\le n-1$. For every $\mu\in\VConv_k(\R^n)^ {\SO(n)}$ there exists a unique $\zeta\in D^{n-k}$ such that
	\begin{align*}
		\mu(f)=\int_{\R^n}\zeta(|x|)d\Phi_k(f)\quad \text{for all}~f\in\Conv(\R^n,\R)\cap C^2(\R^n).
	\end{align*}
	 Conversely, the right hand side of this equation extends uniquely by continuity to a continuous valuation in $\VConv_k(\R^n)^{\SO(n)}$ for  $\zeta\in D^{n-k}$. 
\end{theorem}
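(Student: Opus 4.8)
The plan is to produce a linear bijection between $D^{n-k}$ and $\VConv_k(\R^n)^{\SO(n)}$ implemented by the stated formula, so three things must be checked: that for $\zeta\in D^{n-k}$ the right-hand side defines a valuation in $\VConv_k(\R^n)^{\SO(n)}$ (the converse statement), that the correspondence $\zeta\mapsto\mu$ is injective (uniqueness), and that it is onto (existence). On functions $f\in\Conv(\R^n,\R)\cap C^2(\R^n)$ the assignment $f\mapsto\int_{\R^n}\zeta(|x|)\,d\Phi_k(f)$ is a valuation because $\Phi_k$ is a valuation with values in $\M(\R^n)$, and it is visibly $k$-homogeneous, $\SO(n)$-invariant, and unchanged under adding affine functions since $\Phi_k$ depends only on $D^2f$. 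So for the converse direction the only real work is the continuous extension from the dense subspace $\Conv(\R^n,\R)\cap C^2(\R^n)$ to all of $\Conv(\R^n,\R)$. I would carry this out by approximating $\zeta$ rather than $f$: if $\zeta$ is smooth with $\supp\zeta$ a compact subset of $(0,\infty)$, then $x\mapsto\zeta(|x|)$ lies in $C_c(\R^n)$ and $f\mapsto\int_{\R^n}\zeta(|x|)\,d\Phi_k(f)$ is automatically a continuous valuation on all of $\Conv(\R^n,\R)$; for general $\zeta\in D^{n-k}$ one writes $\zeta=\lim_j\zeta_j$ with $\zeta_j$ of this elementary type and checks that the corresponding valuations converge uniformly on compact subsets of $\Conv(\R^n,\R)$. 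The three defining conditions of $D^{n-k}$ are exactly what secures this limit: $\supp\zeta$ bounded from above localizes everything to a fixed ball, so that the non-finiteness of $\Phi_k(f)$ (say for $f(x)=|x|^2$) is irrelevant; $\lim_{t\to0}t^{n-k}\zeta(t)=0$ controls the mass the Hessian measures can concentrate near the origin, a phenomenon already visible on $f(x)=a(|x|-t)_+$, whose $k$-th Hessian measure carries a singular part of mass of order $t^{n-k}$ on $\{|x|=t\}$; and finiteness of $\lim_{t\to0}\int_t^\infty\zeta(r)r^{n-k-1}\,dr$ is precisely the condition under which the value on a conical function $f(x)=a|x|+b$, which equals a dimensional constant times $a^k\int_0^\infty\zeta(r)r^{n-k-1}\,dr$, is finite. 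Turning these observations into the uniform estimates required for the extension is the technical content of this direction.

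For injectivity, suppose $\zeta\in D^{n-k}$ induces the zero valuation, and evaluate on radial functions $f(x)=g(|x|)$ with $g$ convex, nondecreasing, and $C^2$ on $(0,\infty)$. Since $D^2f(x)$ has eigenvalue $g''(|x|)$ in the radial direction and $g'(|x|)/|x|$ with multiplicity $n-1$ tangentially, a direct computation gives $[D^2f(x)]_k\,|x|^{n-1}=\tfrac1k\binom{n-1}{k-1}\tfrac{d}{dr}\big((g'(r))^k r^{n-k}\big)\big|_{r=|x|}$, so, after dividing by a dimensional constant, the vanishing of the valuation reads
\begin{align*}
0=\int_0^\infty\zeta(r)\,d\big((g'(r))^k r^{n-k}\big)
\end{align*}
for all such $g$. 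As $g'$ runs over the nonnegative nondecreasing functions, so does $(g')^k$, hence $(g'(r))^k r^{n-k}$ runs over a class rich enough that, after an integration by parts whose boundary terms vanish precisely because $\lim_{t\to0}t^{n-k}\zeta(t)=0$ and $\zeta$ is continuous with support bounded from above, the identity forces $\zeta'\equiv0$ on $(0,\infty)$ and therefore $\zeta\equiv0$.

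For surjectivity, let $\mu\in\VConv_k(\R^n)^{\SO(n)}$. The first step is to extract a candidate profile: restricting $\mu$ to radial functions and using its valuation property, continuity, and $k$-homogeneity, one argues that $g\mapsto\mu(g(|\cdot|))$ is integration against a Radon measure on $(0,\infty)$ of exactly the shape produced above by the formula, which reads off a function $\zeta$; testing $\mu$ against the families $a(|x|-t)_+$ and $a|x|$ recovers the two limit conditions in the definition of $D^{n-k}$, and the fact that $\mu$ has bounded support in the sense of \cite{Knoerrsupportduallyepi2021} forces $\supp\zeta$ to be bounded from above, so that $\zeta\in D^{n-k}$. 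By construction $\mu$ and the valuation attached to $\zeta$ agree on all radial functions, and it remains to upgrade this to the identity $\mu=\int_{\R^n}\zeta(|\cdot|)\,d\Phi_k$ on all of $\Conv(\R^n,\R)$. This last step is, I expect, the main obstacle: it amounts to showing that an $\SO(n)$-invariant valuation in $\VConv_k(\R^n)$ is determined by its values on radial functions, the functional counterpart of the statement that a rigid-motion invariant continuous valuation on convex bodies is determined by its Klain function, and no Klain embedding is available off the shelf here. I would attack it by induction on $n$ via the restriction of valuations to linear subspaces $E\subseteq\R^n$ with $\dim E\ge k$: this operation is compatible with $\Phi_k$ and sends the valuation attached to $\zeta$ to the one attached to an Abel-type transform of $\zeta$ on $E$, so the difference $\mu-\int_{\R^n}\zeta(|\cdot|)\,d\Phi_k$ restricts on each such $E$ to a valuation vanishing on radial functions, which the inductive hypothesis kills; the base case is the top degree $\dim E=k$, where $\Phi_k$ is the Monge--Amp\`ere operator and a direct argument applies, and one then has to pass from ``all restrictions vanish and the radial data vanishes'' to ``the valuation vanishes'' using the localization and support theory for $\VConv$. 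It is this final reduction --- not the homogeneity, the equivariance, or even the extension in the converse direction --- where I expect the genuine difficulty to lie.
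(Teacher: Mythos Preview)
This theorem is quoted in the paper as a result of Colesanti--Ludwig--Mussnig \cite{ColesantiEtAlHadwigertheoremconvex2020}; the paper does not supply its own proof, so there is nothing to compare your proposal against. Your outline is a reasonable summary of the strategy in the original reference (and of the alternative approach in \cite{KnoerrSingularvaluationsHadwiger2022}), and you correctly identify the two substantive difficulties: the uniform estimates needed to extend from $C^2$ functions to all of $\Conv(\R^n,\R)$ when $\zeta$ has the allowed singular behaviour near $0$, and the passage from agreement on radial functions to agreement everywhere in the surjectivity step.
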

Let us remark that we have omitted the cases $k=0$ (which corresponds to constant valuations) and $k=n$. In the latter case, any valuation in $\VConv_n(\R^n)$ admits a unique integral representation with respect to the real Monge-Amp\`ere operator $\Phi_n$, compare \cite{ColesantiEtAlhomogeneousdecompositiontheorem2020,KnoerrUlivellivaluationsconvexbodies2024}. In contrast, the functions $x\mapsto \zeta(|x|)$ are in general not integrable with respect to $\Phi_k(f)$, and it is a highly non-trivial result that the right hand side of the equation above extends by continuity to arbitrary convex functions. Two alternative approaches to these functionals involving mixed Monge-Amp\`ere operators and Cauchy-Kubota-type formulas are  discussed in  \cite{ColesantiEtAlHadwigertheoremconvex,ColesantiEtAlHadwigertheoremconvex2022}, and we refer to \cite{KnoerrSingularvaluationsHadwiger2022} for an interpretation of the construction in terms of certain principal value integrals. 

\subsection{Main results}

This is the second of two articles on the characterization of the space $\VConv(\C^n)^{\U(n)}$ of all $\U(n)$-invariant valuations in $\VConv(\C^n)$. In the first part \cite{KnoerrUnitarilyinvariantvaluations2021}, general properties of these functionals were investigated, including a description of the dense subspace of smooth valuations (see Section \ref{section:VConv} for the definition), which will be the foundation for this article. Here, the space $\VConv(\C^n)$ and all of its subspaces are equipped with the topology of uniform convergence on compact subsets in $\Conv(\C^n,\R)$, compare Section \ref{section:VConv}.

Our main results establish unique representation formulas for elements of $\VConv(\C^n)^{\U(n)}$ in terms of two families of Monge-Amp\`ere-type operators, where the number of families needed in the construction grows with the square of the dimension. This implies in particular that in contrast to the case of $\SO(n)$-invariant valuations treated in Theorem \ref{theorem:HadwigerVConv}, we are not only dealing with one family of valuations for each degree of homogeneity. This  presents a challenge to the known approaches to Theorem \ref{theorem:HadwigerVConv}: They all rely on a version of the template method to reconstruct a suitable candidate for the function $\zeta\in D^{n-k}$ from a given valuation $\mu\in\VConv_k(\R^n)^{\SO(n)}$ - that is, evaluating the valuation in a suitable $1$-parameter family of convex functions and inverting some associated integral transforms.\\

At the present time, we are lacking suitable tools to circumvent the template method completely. In order to reduce the computational complexity to a manageable level, we first establish a geometric direct sum decomposition of $\VConv_k(\C^n)^{\U(n)}$ into closed subspaces mirroring the vanishing behaviour of the hermitian intrinsic volumes. To make this precise, note that for a linear map $T:V\rightarrow W$ between finite dimensional real vector spaces, we can define the push-forward $T_*:\VConv(V)\rightarrow\VConv(W)$ by
\begin{align*}
	T_*\mu(f)=\mu(T^*f)\quad\text{for}~\text{for}~\mu\in\VConv(V),~f\in\Conv(W,\R).
\end{align*}
 If $E\subset \C^n$ is a subspace and $T=\pi_E:\C^n\rightarrow E$ is the orthogonal projection, we will also call this the \emph{restriction} of $\mu$ to $E$. The following spaces were introduced in \cite{KnoerrUnitarilyinvariantvaluations2021} for $\max(0,k-n)\le q\le\lfloor\frac{k}{2}\rfloor$:
\begin{align*}
	\VConv_{k,q}(\C^n)^{\U(n)}:=\left\{\mu\in\VConv_k(\C^n)^{\U(n)}:\pi_{E_{k,p}*}\mu=0~\text{for all}~p\ne q\right\}
\end{align*}
It was shown in \cite{KnoerrUnitarilyinvariantvaluations2021} that the sum of these spaces is direct. Our first main result shows that this direct sum provides a decomposition of $\VConv_k(\C^n)^{\U(n)}$.
	\begin{maintheorem}
		\label{maintheorem:decomposition}
		\begin{align*}
			\VConv_{k}(\C^n)^{\U(n)}=\bigoplus_{q=\max(0,k-n)}^{\lfloor\frac{k}{2}\rfloor}\VConv_{k,q}(\C^n)^{\U(n)}
		\end{align*}
	\end{maintheorem}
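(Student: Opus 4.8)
The plan is the following. Since it was shown in \cite{KnoerrUnitarilyinvariantvaluations2021} that the sum on the right-hand side is direct, and each summand is contained in $\VConv_k(\C^n)^{\U(n)}$, only the spanning property has to be established. I would do this by constructing, for every admissible $q$, a continuous linear projection $\Pi_q\colon\VConv_k(\C^n)^{\U(n)}\to\VConv_{k,q}(\C^n)^{\U(n)}$ with $\sum_q\Pi_q=\operatorname{id}$. Because the smooth valuations form a dense subspace and the spaces $\VConv_{k,q}(\C^n)^{\U(n)}$ are closed, being intersections of kernels of the continuous restriction maps, it suffices to construct the $\Pi_q$ on smooth valuations and to verify that they are continuous there; the identities $\sum_q\Pi_q=\operatorname{id}$ and $\Pi_q(\mu)\in\VConv_{k,q}(\C^n)^{\U(n)}$ then pass to the limit, and together with directness this yields the asserted decomposition.

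On the subspace of smooth valuations I would use the classification of smooth $\U(n)$-invariant valuations from \cite{KnoerrUnitarilyinvariantvaluations2021}, which reduces the data of such a $\mu$, as far as its restrictions to $k$-dimensional subspaces are concerned, to a finite family of invariant differential forms together with radial densities. Two properties of the restriction maps should follow from this description. First, a Klain-type injectivity: a smooth $\mu\in\VConv_k(\C^n)^{\U(n)}$ with $\pi_{E_{k,p}*}\mu=0$ for all admissible $p$ is zero. Second, a lifting property: for each admissible $q$ there is a continuous linear map $L_q$, defined on the image of $\pi_{E_{k,q}*}$, with values in $\VConv_k(\C^n)^{\U(n)}$ and satisfying $\pi_{E_{k,p}*}\circ L_q=\delta_{pq}\operatorname{id}$. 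Granting these, $\Pi_q:=L_q\circ\pi_{E_{k,q}*}$ takes values in $\VConv_{k,q}(\C^n)^{\U(n)}$ by the second property, is idempotent with $\Pi_p\Pi_q=0$ for $p\ne q$, and since $\mu-\sum_q\Pi_q\mu$ has vanishing restriction to every $E_{k,p}$, the first property forces $\sum_q\Pi_q=\operatorname{id}$.

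Both properties should come down to a single computation: determining how restriction to the special subspaces $E_{k,p}$ acts on the finite-dimensional data attached to a smooth $\U(n)$-invariant valuation of degree $k$, where restriction to $E_{k,p}$ amounts to evaluating this data at the point determined by $E_{k,p}$ on the relevant homogeneous space. I expect that, after a suitable normalisation, the resulting ``restriction matrix'', indexed by the admissible pairs $(p,q)$, is triangular with invertible diagonal blocks, in complete analogy with the defining relation $\mu_{k,q}|_{E_{k,p}}=\delta_{pq}\vol_k$ of the Hermitian intrinsic volumes in Theorem~\ref{theorem:BernigFuHermitianIntrinsicVolumes}. Such a structure yields the injectivity and, by inverting the triangular system, the maps $L_q$; the continuity of $L_q$ follows because the entries of the system are fixed integral transforms acting on the radial densities, whose inversion is again continuous.

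The point I expect to be the main obstacle is the Klain-type injectivity. It would be false for restriction to a single generic $k$-dimensional real subspace of $\C^n$, since such a subspace does not lie in the $\U(n)$-orbit of any $E_{k,p}$ — its K\"ahler angles range over a continuum — so the argument genuinely relies on the finite-dimensionality of the Klain data supplied by \cite{KnoerrUnitarilyinvariantvaluations2021}, and on pinning down the precise restriction behaviour at the finitely many extremal configurations $E_{k,p}$. By contrast, the remaining steps — carrying out the restriction computation and checking non-degeneracy of the matrix, and the functional-analytic bookkeeping needed to pass from the construction on smooth valuations to genuine continuous projections on all of $\VConv_k(\C^n)^{\U(n)}$ — should be comparatively routine.
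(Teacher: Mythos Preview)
Your high-level strategy---construct the projections on the dense subspace of smooth valuations, verify continuity, and extend---is exactly the framework the paper uses. The gap lies in how you propose to establish continuity. You define $\Pi_q=L_q\circ\pi_{E_{k,q}*}$ and assert that $L_q$ is continuous because ``the entries of the system are fixed integral transforms \dots\ whose inversion is again continuous.'' This is the step that fails. The restriction $\pi_{E_{k,q}*}$ sends a $k$-homogeneous valuation on $\C^n$ to a top-degree valuation on a $k$-dimensional space; on the level of radial densities this is an Abel-type transform, which is injective but has only dense image, so its inverse is not continuous on the natural target. The paper makes exactly this point in Remark~\ref{remark:ProblematicComponent} for the analogous restriction to complex lines. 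Worse, for $\max(1,k-n)\le q\le\lfloor(k-1)/2\rfloor$ a valuation in $\VConv_{k,q}(\C^n)^{\U(n)}$ is determined by \emph{two} radial densities, while $\pi_{E_{k,q}*}\mu$ is a single top-degree valuation on $E_{k,q}$ encoded by one function; Klain injectivity guarantees the pair is determined by this one function, but gives no continuous reconstruction.

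The paper circumvents this by a substantially different mechanism. It proceeds by induction on $n$: once the decomposition is known in all lower complex dimensions, the projections there are continuous by the open mapping theorem. One then takes a complex orthogonal splitting $\C^n=X\oplus Y$ with $\dim_\C X\in\{1,2\}$, decomposes the polarization $\bar\mu(f_X[i],f_Y[k-i])$ using the lower-dimensional projections (Lemma~\ref{lemma:continous_decomposition_polarization}), and evaluates on a \emph{two}-parameter family $u_s^X,u_t^Y$. The resulting functions $U_q,\tilde V_q,\tilde W_q$ lie in the images of the transforms $\mathcal{R}^{a,b},\mathcal{R}^a,\mathcal{P}^a$ of Section~\ref{section:IntegralTransforms}, which \emph{are} topological isomorphisms onto explicit closed subspaces; this is where most of the analytic work sits. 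Even so, the component $q=1$ cannot be reached directly and is recovered as $\mathrm{id}$ minus the other projections. The template via restriction to a single $E_{k,q}$ that you propose does not give access to these better-behaved transforms, and no triangular ``restriction matrix'' emerges---the analogy with the Hermitian intrinsic volumes is misleading here because the densities live in infinite-dimensional function spaces where invertibility of a formal matrix does not imply continuity of the inverse.
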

		Theorem \ref{maintheorem:decomposition} reduces the classification problem to the description of the different components $\VConv_{k,q}(\C^n)^{\U(n)}$. In order to describe the relevant valuations, we use the Monge-Amp\`ere-type operators
	\begin{align*}
		&\Theta^{n}_{k,q},&&\max(0,k-n)\le q\le\left\lfloor\frac{k}{2}\right\rfloor,\\
		&\Upsilon^{n}_{k,q},&&\max(1,k-n)\le q\le\left\lfloor\frac{k-1}{2}\right\rfloor
	\end{align*}
	introduced in \cite{KnoerrUnitarilyinvariantvaluations2021}, which assign to any element of $\Conv(\C^n,\R)$ a signed Radon measure on $\C^n$. As in the case of $\SO(n)$-invariant valuations, the classification requires valuations constructed from singular functions. In addition to the spaces $D^a$ introduced in the previous section, we consider for $a\in\mathbb{N}$ the space
	\begin{align*}
		\tilde{D}^{a+2}:=\left\{\tilde{\zeta}\in C_b((0,\infty)): \lim\limits_{t\rightarrow0}t^{a+2}\tilde{\zeta}(t)=0\right\}.
	\end{align*}
	For $R>0$ we let $D^a_R$ and $\tilde{D}_R^{a+2}$ denote the subspaces of functions with support contained in $(0,R]$, and we equip these spaces with the norms
	\begin{align*}
	&\|\zeta\|_{D^a}:=\sup_{t>0}t^a|\zeta(t)|+\sup_{t>0}\left|\int_t^\infty \zeta(r)r^{a-1}dr\right|,&& \text{for}~\zeta\in D^a_R,\\
		&\|\tilde{\zeta}\|_{\tilde{D}^{a+2}}:=\sup_{t>0}t^{a+2}|\zeta(t)|,  &&\text{for}~\tilde{\zeta}\in \tilde{D}^{a+2}_R.
	\end{align*}
	These spaces contain  $C_{c,R}([0,\infty))=\{\phi\in C_c([0,\infty)):\supp\phi\subset[0,R]\}$ as a dense subspace (compare Section \ref{section:DaTildeDa}). Following the approach in \cite{KnoerrSingularvaluationsHadwiger2022}, we obtain the following extension result.
	\begin{maintheorem}
		\label{maintheorem:ExistenceExtensionIntegration}
		There exist unique continuous maps
		\begin{align*}
			T^n_{k,q}:D^{2n-k}_R&\rightarrow\VConv_{k,q}(\C^n)^{\U(n)},\\
			Y^n_{k,q}:\tilde{D}^{2n-k+2}_R&\rightarrow\VConv_{k,q}(\C^n)^{\U(n)}
		\end{align*}
		such that for $\zeta\in C_c([0,\infty))$ with $\supp\zeta\subset[0,R]$ and $f\in\Conv(\C^n,\R)$,
		\begin{align*}
			&T^n_{k,q}(\zeta)[f]=\int_{\C^n}\zeta(|z|)d\Theta_{k,q}(f),
			&Y^n_{k,q}(\zeta)[f]=\int_{\C^n}\zeta(|z|)d\Upsilon^n_{k,q}(f).
		\end{align*}
	\end{maintheorem}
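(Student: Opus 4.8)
The plan is to follow the strategy of \cite{KnoerrSingularvaluationsHadwiger2022} and reduce everything to a single quantitative estimate. First observe that for $\zeta\in C_{c,R}([0,\infty))$ the right-hand sides $f\mapsto\int_{\C^n}\zeta(|z|)\,d\Theta_{k,q}(f)$ and $f\mapsto\int_{\C^n}\zeta(|z|)\,d\Upsilon^n_{k,q}(f)$ are well defined: $\Theta_{k,q}(f)$ and $\Upsilon^n_{k,q}(f)$ are signed Radon measures by their construction in \cite{KnoerrUnitarilyinvariantvaluations2021}, and $\zeta(|\cdot|)$ is compactly supported and continuous. That these functionals are valuations, are continuous, are $\U(n)$-invariant, are $k$-homogeneous, and lie in the component $\VConv_{k,q}(\C^n)^{\U(n)}$ should all be inherited directly from the corresponding properties of $\Theta_{k,q}$ and $\Upsilon^n_{k,q}$ established in the first paper; in particular the vanishing condition $\pi_{E_{k,p}*}\mu=0$ for $p\ne q$ is exactly a property of the operators. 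So on the dense subspace $C_{c,R}([0,\infty))$ the maps $T^n_{k,q}$ and $Y^n_{k,q}$ are already defined, and uniqueness of any continuous extension is automatic once we know density (which is asserted, with proof deferred to Section \ref{section:DaTildeDa}). The entire content is therefore the existence of a continuous extension to the completed spaces $D^{2n-k}_R$ and $\tilde D^{2n-k+2}_R$.

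The key step is to prove a bound of the form
\begin{align*}
	\left|\int_{\C^n}\zeta(|z|)\,d\Theta_{k,q}(f)\right|\le C\,\|\zeta\|_{D^{2n-k}}\qquad\text{and}\qquad
	\left|\int_{\C^n}\zeta(|z|)\,d\Upsilon^n_{k,q}(f)\right|\le C\,\|\tilde\zeta\|_{\tilde D^{2n-k+2}}
\end{align*}
for $\zeta\in C_{c,R}([0,\infty))$, with a constant $C=C(n,k,q,R,f)$ that is locally bounded in $f$ (uniform on sets of convex functions that are uniformly bounded on a fixed large ball). Granting such an estimate, the extension is built as in \cite{KnoerrSingularvaluationsHadwiger2022}: for $\zeta\in D^{2n-k}_R$ choose $\zeta_j\in C_{c,R}([0,\infty))$ with $\|\zeta_j-\zeta\|_{D^{2n-k}}\to0$, define $T^n_{k,q}(\zeta)[f]:=\lim_j T^n_{k,q}(\zeta_j)[f]$; the estimate shows the limit exists, is independent of the approximating sequence, depends continuously on $f$ (so lies in $\VConv_k(\C^n)$), and inherits $\U(n)$-invariance, $k$-homogeneity, the valuation property and the vanishing conditions by passing to the limit, hence lands in $\VConv_{k,q}(\C^n)^{\U(n)}$; continuity of $\zeta\mapsto T^n_{k,q}(\zeta)$ into $\VConv_{k,q}(\C^n)^{\U(n)}$ then follows from the same estimate applied uniformly over $f$ in compact subsets of $\Conv(\C^n,\R)$. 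The argument for $Y^n_{k,q}$ is identical with $D$ replaced by $\tilde D$ and the weight $2n-k$ replaced by $2n-k+2$.

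The main obstacle is establishing the estimate on the unbounded weight. Since $z\mapsto\zeta(|z|)$ need not be integrable against $\Theta_{k,q}(f)$ near the origin — this is the same singular phenomenon as in Theorem \ref{theorem:HadwigerVConv} — one cannot bound the integral crudely by $\sup|\zeta|$ times a mass. The plan is to pass to $\U(n)$-invariant (in fact radial) information: integrating $\zeta(|z|)$ against $\Theta_{k,q}(f)$ only sees the radial push-forward of $\Theta_{k,q}(f)$, a signed measure on $(0,\infty)$, and one rewrites the integral using $\int_0^\infty\zeta(r)\,r^{2n-k-1}dr$-type quantities via an integration by parts in the radial variable, exactly so that the two seminorms defining $\|\cdot\|_{D^{2n-k}}$ appear. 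The crucial input is a bound on the radial distribution function of $\Theta_{k,q}(f)$, i.e.\ an estimate of the form $|\Theta_{k,q}(f)|(\{|z|\le t\})\le C\,t^{2n-k}$ for small $t$ (and a matching growth/decay control for the $\Upsilon$-operators giving the weight $2n-k+2$); such estimates should follow from the explicit Monge-Amp\`ere-type structure of $\Theta_{k,q}$, $\Upsilon^n_{k,q}$ — they are combinations of mixed complex Hessian operators, whose mass on a ball is controlled by the $C^{1,1}$/Lipschitz data of $f$ there — together with a regularization argument reducing to smooth $f$ on which the operators are given by honest densities $[\,\cdot\,]$-type polynomials in $D^2 f$. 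Assembling this radial estimate and performing the integration by parts carefully, keeping track that the boundary terms at $t\to0$ vanish under the defining condition $\lim_{t\to0}t^{2n-k}\zeta(t)=0$, is where the real work lies; once it is in place, the rest is the soft functional-analytic extension argument sketched above.
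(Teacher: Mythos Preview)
Your overall architecture is correct and matches the paper exactly: define $T^n_{k,q}$, $Y^n_{k,q}$ on the dense subspace $C_{c,R}([0,\infty))$, prove a quantitative estimate of the form $\|T^n_{k,q}(\zeta)\|_{R,1}\le A\,\|\zeta\|_{D^{2n-k}}$ (resp.\ $\|Y^n_{k,q}(\tilde\zeta)\|_{R,1}\le B\,\|\tilde\zeta\|_{\tilde D^{2n-k+2}}$), and extend by completeness. The identification of which estimate to prove is also correct.

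The gap is in the mechanism you propose for the estimate. A bound of the form $|\Theta^n_{k,q}(f)|(\{|z|\le t\})\le C\,t^{2n-k}$, together with a naive radial integration by parts, does \emph{not} yield control by $\|\zeta\|_{D^{2n-k}}$. If you write $\int\zeta\,dF=-\int F\zeta'$ with $F(t)=\Theta(f)(\{|z|\le t\})$, you obtain a bound by $\int_0^R t^{2n-k}|\zeta'(t)|\,dt$, which is a different (and strictly stronger) norm than $\|\zeta\|_{D^{2n-k}}$. Alternatively, if you try to invert $\mathcal R^{2n-k}$ and substitute $\zeta=(\mathcal R^{2n-k})^{-1}\phi$, you pick up a term $\int_0^R|\phi(s)|\,s^{-1}\,ds$, which diverges logarithmically. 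So the distribution-function bound alone is insufficient: there exist positive measures $\nu$ with $\nu((0,t])\le Ct^{a}$ for which $\zeta\mapsto\int\zeta\,d\nu$ is unbounded on $(C_c,\|\cdot\|_{D^a})$. For $\Upsilon^n_{k,q}$ the situation is worse, since the measure is signed and the paper explicitly leaves open whether the analogous variation bound even holds.

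What the paper actually does is perform the integration by parts at the level of differential forms on $T^*\C^n$, not at the level of the push-forward measure. Concretely, it exhibits a primitive $\tau_{k,q}=i_{X_{\beta_1}}\theta^n_{k,q}$ and a companion form $\tilde\tau_{k,q}$ with $d\tau_{k,q}=(2n-k)\theta^n_{k,q}$ and $r^2\theta^n_{k,q}\equiv\gamma_1\wedge\tau_{k,q}+\beta_1\wedge\tilde\tau_{k,q}\bmod\omega_s$; setting $\Psi(t)=-t^{-(2n-k)}\int_t^\infty\zeta(s)s^{2n-k-1}ds$ gives $\zeta(|z|)\theta^n_{k,q}\equiv d(\Psi(|z|)\tau_{k,q})+\frac{\Psi'(|z|)}{|z|}\beta_1\wedge\tilde\tau_{k,q}\bmod\omega_s$. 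Pairing with the (closed, Lagrangian) differential cycle $D(f)$ kills the $\omega_s$-terms and turns the exact part into a boundary contribution. The remaining pieces are then bounded using the relation $F_{f*}\nc(K_{\mathcal L f}^t)=\langle D(f),\pi_2^*\mathcal L f,t\rangle$ between slices of $D(f)$ and normal cycles of sublevel sets of the Legendre transform, together with pointwise domination of the induced curvature measures by Federer's $C_k$ (Proposition~\ref{proposition:BoundCurvatureMeasure}) and the sublevel-set inequality of Lemma~\ref{lemma:estimatePartialIntegrationSublevelSets}. The $\tilde D^{2n-k+2}$ estimate for $\mathcal B^n_{k,q}$, $\mathcal C^n_{k,q}$ (hence $\Upsilon^n_{k,q}$) is obtained first, by the same circle of ideas, and is in fact used as an input in the $\Theta$ estimate via the $\beta_1\wedge\tilde\tau_{k,q}$ term. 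All of this is carried out for smooth $f\in\Conv_0^+(\C^n,\R)$ and then extended by continuity; your reduction-to-smooth-$f$ instinct is right, but the work on smooth $f$ is considerably more geometric than a density bound.
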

	Let us remark that for $\zeta\in D^{2n-k}$, the function $z\mapsto \zeta(|z|)$ is in general not integrable with respect to the measures $\Theta^n_{k,q}(f)$, compare Corollary \ref{corollary:ZetaNotIntegrableWRTTheta}, while we do not know if this is the case for $\Upsilon^n_{k,q}$ and functions in $\tilde{D}^{2n-k+2}$. However, in both cases the following principal value representation holds.
	\begin{maintheorem}
		\label{maintheorem:PrincipalValueRepresentation}
		For $\zeta\in D^{2n-k}$, $\tilde{\zeta}\in\tilde{D}^{2n-k+2}$, and every $f\in\Conv(\C^n,\R)$
		\begin{align*}
			T^n_{k,q}(\zeta)[f]=&\lim\limits_{\epsilon\rightarrow0}\int_{\C^n\setminus B_\epsilon(0)}\zeta(|z|)d\Theta_{k,q}(f),\\
			Y^n_{k,q}(\tilde{\zeta})[f]=&\lim\limits_{\epsilon\rightarrow0}\int_{\C^n\setminus B_\epsilon(0)}\tilde{\zeta}(|z|)d\Upsilon^n_{k,q}(f).
		\end{align*}
		Moreover, the convergence is uniform on compact subsets of $\Conv(\C^n,\R)$.
	\end{maintheorem}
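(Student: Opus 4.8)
The plan is to follow the approach of \cite{KnoerrSingularvaluationsHadwiger2022}, handling the $\Theta$- and $\Upsilon$-families in parallel; I phrase everything for $T^n_{k,q}$ and $\Theta_{k,q}$ and indicate the minor changes for $Y^n_{k,q}$ and $\Upsilon^n_{k,q}$ at the end. Write $a:=2n-k$. The first --- and, I expect, most delicate --- step is to reformulate the continuity in Theorem~\ref{maintheorem:ExistenceExtensionIntegration} quantitatively. Since $D^a_R$ is normed and $\VConv_{k,q}(\C^n)^{\U(n)}$ carries the topology of uniform convergence on compact subsets of $\Conv(\C^n,\R)$, continuity of $T^n_{k,q}\colon D^a_R\to\VConv_{k,q}(\C^n)^{\U(n)}$ --- which is linear, being the continuous extension of a linear map on the dense subspace $C_{c,R}([0,\infty))$ --- is equivalent to the assertion that for every compact $K\subset\Conv(\C^n,\R)$ and every $R>0$ there is a constant $C=C(K,R)$ with $\sup_{f\in K}|T^n_{k,q}(\zeta)[f]|\le C\|\zeta\|_{D^a}$ for all $\zeta\in D^a_R$. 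Via the integral representation on $C_{c,R}([0,\infty))$ this yields $\bigl|\int_{\C^n}\psi(|z|)\,d\Theta_{k,q}(f)\bigr|\le C\|\psi\|_{D^a}$ for continuous $\psi$ supported in $[0,R]$, and I would then extend it to the truncated integrands $\psi=\zeta\,\mathbbm{1}_{[s,s']}$ and $\psi=\zeta\,\mathbbm{1}_{[s,\infty)}$ needed below by approximating $\psi$ by continuous functions $g$ supported in a slightly larger interval with $g\to\psi$ boundedly pointwise and $\limsup\|g\|_{D^a}\le\|\psi\|_{D^a}$, then passing to the limit by dominated convergence --- legitimate since $\Theta_{k,q}(f)$, as a Radon measure, is finite on the relevant compact annulus. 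Keeping the constant uniform over $f\in K$ through this approximation, given that $\Theta_{k,q}(f)$ depends on $f$ and may charge the truncating spheres, will be where the main difficulty lies.

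Next, I would fix $\zeta\in D^a$ and $R>0$ with $\zeta\in D^a_R$, and set $I_\epsilon(f):=\int_{\C^n\setminus B_\epsilon(0)}\zeta(|z|)\,d\Theta_{k,q}(f)$ for $\epsilon>0$; since $\zeta$ is bounded on the compact annulus $\{\epsilon\le|z|\le R\}$, this is a well-defined real number. For $0<\epsilon<\epsilon'$ one has $I_\epsilon(f)-I_{\epsilon'}(f)=\int_{\{\epsilon\le|z|<\epsilon'\}}\zeta(|z|)\,d\Theta_{k,q}(f)$, so the extended bound gives
\begin{align*}
\sup_{f\in K}\bigl|I_\epsilon(f)-I_{\epsilon'}(f)\bigr|\le C(K,R)\Bigl(\sup_{t\le\epsilon'}t^a|\zeta(t)|+\sup_{\epsilon\le s\le s'\le\epsilon'}\Bigl|\int_s^{s'}\zeta(r)r^{a-1}\,dr\Bigr|\Bigr).
\end{align*}
The bracketed quantity tends to $0$ as $\epsilon,\epsilon'\to0$: the first term vanishes because $\lim_{t\to0}t^a\zeta(t)=0$, and the second by the Cauchy criterion for the (only conditionally convergent) improper integral $\int_0^R\zeta(r)r^{a-1}\,dr$ built into the definition of $D^a$. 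Hence $(I_\epsilon)_{\epsilon>0}$ is uniformly Cauchy on every compact subset of $\Conv(\C^n,\R)$ and thus converges, uniformly on such sets, to a functional $\mu_\zeta$; this uniformity is exactly the last assertion of the theorem.

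Finally, I would identify $\mu_\zeta$ with $T^n_{k,q}(\zeta)$. Choose $\zeta_j\in C_{c,R}([0,\infty))$ with $\zeta_j\to\zeta$ in $D^a_R$, which is possible since $C_{c,R}([0,\infty))$ is dense in $D^a_R$. For fixed $j$, dominated convergence gives $\int_{\C^n\setminus B_\epsilon(0)}\zeta_j(|z|)\,d\Theta_{k,q}(f)\to\int_{\C^n}\zeta_j(|z|)\,d\Theta_{k,q}(f)=T^n_{k,q}(\zeta_j)[f]$ as $\epsilon\to0$, while the bound applied to $(\zeta-\zeta_j)\,\mathbbm{1}_{[\epsilon,\infty)}$ yields
\begin{align*}
\sup_{f\in K}\Bigl|\int_{\C^n\setminus B_\epsilon(0)}(\zeta-\zeta_j)(|z|)\,d\Theta_{k,q}(f)\Bigr|\le C(K,R)\,\|\zeta-\zeta_j\|_{D^a}
\end{align*}
uniformly in $\epsilon>0$. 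Letting $\epsilon\to0$ gives $\sup_{f\in K}|\mu_\zeta(f)-T^n_{k,q}(\zeta_j)[f]|\le C(K,R)\|\zeta-\zeta_j\|_{D^a}$, and letting $j\to\infty$, together with $T^n_{k,q}(\zeta_j)\to T^n_{k,q}(\zeta)$ in $\VConv_{k,q}(\C^n)^{\U(n)}$ from Theorem~\ref{maintheorem:ExistenceExtensionIntegration}, forces $\mu_\zeta=T^n_{k,q}(\zeta)$ on $K$, hence on all of $\Conv(\C^n,\R)$. The argument for $Y^n_{k,q}$ is verbatim the same after replacing $\Theta_{k,q}$, $D^a_R$ and $\|\cdot\|_{D^a}$ by $\Upsilon^n_{k,q}$, $\tilde D^{a+2}_R$ and $\|\cdot\|_{\tilde D^{a+2}}$: there the relevant norm has no integral term, and the vanishing $\sup_{t\le\epsilon'}t^{a+2}|\tilde\zeta(t)|\to0$ is immediate from $\lim_{t\to0}t^{a+2}\tilde\zeta(t)=0$ in the definition of $\tilde D^{a+2}$.
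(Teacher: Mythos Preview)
Your approach is correct, but it takes a different route from the paper's proof. The paper never extends the estimate to discontinuous integrands; instead it works directly with the explicit continuous approximants from Lemmas~\ref{lemma:DensityDNJ_continuousFunctions} and~\ref{lemma:DensityTildeD_continuousFunctions}: set $\zeta^\epsilon(t)=\zeta(t)$ for $t>\epsilon$ and $\zeta^\epsilon(t)=\zeta(\epsilon)$ for $t\le\epsilon$, so $\zeta^\epsilon\in C_{c,R}([0,\infty))$ with $\zeta^\epsilon\to\zeta$ in $D^a$ and $\zeta^\epsilon=\zeta$ on $[\epsilon,\infty)$. Then
\[
\bigl|T^n_{k,q}(\zeta)[f]-I_\epsilon(f)\bigr|
\;\le\; \bigl|T^n_{k,q}(\zeta-\zeta^\epsilon)[f]\bigr|
+\Bigl|\int_{B_\epsilon(0)}\zeta^\epsilon(|z|)\,d\Theta_{k,q}(f)\Bigr|,
\]
the first term is controlled by $\|\zeta-\zeta^\epsilon\|_{D^a}\to0$, and the second by $|\zeta(\epsilon)|\,\epsilon^{a}\to0$ via the positivity of $\Theta_{k,q}(f)$ and a continuous cut-off $\phi(\cdot/\epsilon)$ plugged into Theorem~\ref{theorem:EstimateTheta}. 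For $\Upsilon^n_{k,q}$ the second term is handled instead through the variation bound $|\Upsilon^n_{k,q}(f)|(U)\le C\int_U|z|^2\,d\Phi_k(f)$ from Proposition~\ref{proposition:GeneralBoundMA}. This delivers the limit and the uniformity in one stroke, with no Cauchy argument and no separate identification step.

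As for your flagged difficulty, it is in fact not an obstacle: choosing trapezoidal approximants $h_j$ with $h_j\equiv1$ on $[s,s']$ and support in $[s-\tfrac1j,\,s'+\tfrac1j]$, one has $h_j(t)\to\mathbbm{1}_{[s,s']}(t)$ for \emph{every} $t$, so dominated convergence applies for each fixed $f$ irrespective of any mass on the boundary spheres, and the resulting pointwise inequality is then automatically uniform over $K$ because the constant $C(K,R)$ never depended on $f$. One small point you use without comment in the identification step is $\Theta_{k,q}(f)(\{0\})=0$, needed for $\int_{\C^n\setminus B_\epsilon}\zeta_j\,d\Theta_{k,q}(f)\to\int_{\C^n}\zeta_j\,d\Theta_{k,q}(f)$; this holds for $1\le k\le 2n-1$ by Corollary~\ref{corollary:GeneralIntegrabilityMA}, and the paper's route sidesteps it entirely.
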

	In the first case, we have precise information under which conditions on $\zeta$ the representation is given by a proper integral for every $f\in\Conv(\C^n,\R)$, while we only obtain a sufficient condition in the second case, compare Corollaries \ref{corollary:representationConv0}, \ref{corollary:IntegrabilityTheta}, and \ref{corollary:IntegrabilityUpsilon}.\\
	Using these families of valuations, we obtain the following complete classification of the components $\VConv_{k,q}(\C^n)^{\U(n)}$, where we have once again omitted the components $k=0$ and $k=2n$.
	\begin{maintheorem}
	\label{maintheorem:RepresentationVConvKQ}
	Let	$\mu\in\VConv_{k,q}(\C^n)^{\U(n)}$, $1\le k\le 2n-1$.
	\begin{itemize}
		\item If $k\le n$ and $q=0$, or if $k$ is even and $q=\frac{k}{2}$, then there exists a unique $\phi\in D^{2n-k}$ such that
		\begin{align*}
			\mu=T^n_{k,q}(\phi).
		\end{align*}
		\item If $\max(1,k-n)\le q\le \lfloor\frac{k-1}{2}\rfloor$, then there exist unique $\phi\in D^{2n-k}$, $\psi\in \tilde{D}^{2n-k+2}$, such that
		\begin{align*}
			\mu=T^n_{k,q}(\phi)+Y_{k,q}(\psi).
		\end{align*}
	\end{itemize} 
	\end{maintheorem}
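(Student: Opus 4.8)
The plan is to deduce Theorem~\ref{maintheorem:RepresentationVConvKQ} from the combination of the decomposition (Theorem~\ref{maintheorem:decomposition}) and the extension/injectivity properties of the operators $T^n_{k,q}$ and $Y^n_{k,q}$. By Theorem~\ref{maintheorem:decomposition} it suffices to analyze a single component $\VConv_{k,q}(\C^n)^{\U(n)}$, and by Theorem~\ref{maintheorem:ExistenceExtensionIntegration} we already know that $T^n_{k,q}(D^{2n-k}_R)$ and $Y^n_{k,q}(\tilde D^{2n-k+2}_R)$ land inside this component; the content of the present statement is that these images \emph{span} the component (with the stated cardinality of families) and that the parametrizations are \emph{unique}. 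So the proof splits into a surjectivity part and an injectivity part, and both will be reduced — via the template method — to inverting explicit integral transforms obtained by plugging a one-parameter family of radial convex functions into $\mu$.

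For the surjectivity part, I would fix $\mu\in\VConv_{k,q}(\C^n)^{\U(n)}$ and use the structural results from the companion paper \cite{KnoerrUnitarilyinvariantvaluations2021}: the dense subspace of smooth valuations, together with the description of $\U(n)$-invariant valuations in terms of the two families of measures $\Theta^n_{k,q}$ and $\Upsilon^n_{k,q}$, should give that on smooth elements $\mu$ has an integral representation $\mu(f)=\int_{\C^n}\alpha(|z|)\,d\Theta_{k,q}(f)+\int_{\C^n}\beta(|z|)\,d\Upsilon^n_{k,q}(f)$ for some $\alpha,\beta\in C_c([0,\infty))$ (with $\beta$ absent when $q=0$, $k\le n$, or $q=k/2$ with $k$ even, since then $\Upsilon^n_{k,q}$ is not defined). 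Applying $\mu$ to the normalized radial test functions used to define $D^{2n-k}$ and $\tilde D^{2n-k+2}$, and using the explicit form of $\Theta_{k,q}$ and $\Upsilon^n_{k,q}$ on such functions, one obtains that $\mu$ restricted to the one-parameter family is governed by fractional-integral-type transforms of $\alpha$ and $\beta$ (weights $r^{2n-k-1}$ and $r^{2n-k+1}$ respectively). Continuity of $\mu$ on all of $\Conv(\C^n,\R)$ forces these transforms — hence $\alpha$ and $\beta$ themselves, interpreted in the completed spaces — to lie in $D^{2n-k}$ and $\tilde D^{2n-k+2}$; the passage from the a priori bound on compact subsets of $\Conv(\C^n,\R)$ to the norm bounds defining $D^{2n-k}_R$ and $\tilde D^{2n-k+2}_R$ is exactly the step furnished by Theorem~\ref{maintheorem:ExistenceExtensionIntegration}, so $\mu=T^n_{k,q}(\phi)$ or $\mu=T^n_{k,q}(\phi)+Y_{k,q}(\psi)$ with $\phi\in D^{2n-k}$, $\psi\in\tilde D^{2n-k+2}$.

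For uniqueness, suppose $T^n_{k,q}(\phi)+Y_{k,q}(\psi)=0$ (with $\psi=0$ in the degenerate cases). Evaluating on the same family of radial convex functions and using the principal value representation of Theorem~\ref{maintheorem:PrincipalValueRepresentation}, I would compute $\Theta_{k,q}$ and $\Upsilon^n_{k,q}$ explicitly on these functions and reduce the identity to the vanishing of two integral transforms of $(\phi,\psi)$. A key point is that $\Theta_{k,q}$ and $\Upsilon^n_{k,q}$ are \emph{genuinely different} — reflected by the different homogeneities/weights $2n-k$ versus $2n-k+2$ — so after taking enough derivatives of the test parameter one gets a triangular system forcing first $\psi=0$ and then $\phi=0$ (or directly $\phi=0$ in the degenerate cases, where the relevant weighted transform is injective on $D^{2n-k}$ by the same Abel-type inversion already used in \cite{ColesantiEtAlHadwigertheoremconvex2020,KnoerrSingularvaluationsHadwiger2022}). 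Linearity and density of $C_{c,R}([0,\infty))$ then upgrade injectivity from compactly supported functions to the full spaces $D^{2n-k}$, $\tilde D^{2n-k+2}$.

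The main obstacle I expect is the explicit computation of the two families of measures $\Theta_{k,q}$ and $\Upsilon^n_{k,q}$ on the radial template functions and the resulting identification of the integral transforms: one must choose the one-parameter family carefully so that both transforms are simultaneously invertible and so that the $q$-dependence (which distinguishes the hermitian components) is correctly captured, and then verify that the inversion respects the singular weights, i.e.\ that a valuation continuous on $\Conv(\C^n,\R)$ cannot correspond to a density growing faster at the origin than $D^{2n-k}$ allows. Separating the contributions of $\Theta$ and $\Upsilon$ — essentially a linear-independence statement between the two operator families after restriction to radial functions — is the delicate combinatorial heart of the argument; once it is in place, the surjectivity and injectivity statements follow by the now-standard template-plus-extension machinery.
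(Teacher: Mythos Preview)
Your overall strategy---template method plus approximation by smooth valuations---matches the paper's, but there is a concrete gap in your choice of template family. You propose to separate the contributions of $\Theta^n_{k,q}$ and $\Upsilon^n_{k,q}$ by evaluating on a single one-parameter family of \emph{radial} convex functions and exploiting the different weights $2n-k$ versus $2n-k+2$. This cannot work: $\Upsilon^n_{k,q}(f)=0$ for every rotation invariant $f$ (Corollary~\ref{corollary:VanishingUpsilonRotationInvariantFunctions}), so $Y^n_{k,q}(\psi)$ vanishes identically on the radial family and $\psi$ is invisible there. The ``linear-independence statement after restriction to radial functions'' that you call the heart of the argument is therefore false as stated; on radial functions the two operators are \emph{linearly dependent} in the trivial sense that one of them is zero.

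The paper exploits exactly this vanishing to obtain a triangular system, but with a second, non-radial template family. From the radial family $u_t(z)=\max(0,|z|-t)$ one recovers $\phi$ via $\mathcal{R}^{2n-k}$ (Lemma~\ref{lemma:BehaviorT_u_t}). Then, writing $\C^n=X\oplus Y$ with $\dim_\C X=1$, one evaluates the polarization $\bar\mu(u_0^X[2],u_t^Y[k-2])$; on this family both $\Theta^n_{k,q}$ and $\Upsilon^n_{k,q}$ contribute (Lemma~\ref{lemma:ValuesYOnCyclinderFunctions}), and after subtracting the now-known $T^n_{k,q}(\phi)$ one solves for $\psi$ via $\mathcal{P}^{2n-k+2}$. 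The approximation step is also more delicate than you indicate: one must check that the function $t\mapsto V(\mu-T^n_{k,q}(\phi))[t]$ actually lies in $C_{0,R}$ (vanishes at $0$) for \emph{every} $\mu$, not just smooth ones, and this uses the density of smooth valuations in $\VConv^R_{k,q}(\C^n)^{\U(n)}$---a consequence of Theorem~\ref{maintheorem:decomposition} via the continuity of the projections (Corollary~\ref{corollary:SequentialDensityVConv_kq}).
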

	Combining Theorem \ref{maintheorem:decomposition} and Theorem \ref{maintheorem:RepresentationVConvKQ} we thus obtain the following representation of elements of $\VConv_{k}(\C^n)^{\U(n)}$.
	\begin{maincorollary}
		Let $1\le k\le 2n-1$. For any $\mu\in\VConv_{k}(\C^n)^{\U(n)}$ there exist unique $\phi_{q}\in D^{2n-k}$ for $\max(0,k-n)\le q\le \lfloor\frac{k}{2}\rfloor$ and $\psi_{q}\in \tilde{D}^{2n+2-k}$ for $\max(1,k-n)\le q\le \lfloor\frac{k-1}{2}\rfloor$ such that
		\begin{align*}
			\mu=\sum_{q=\max(0,k-n)}^{\lfloor\frac{k}{2}\rfloor}T^n_{k,q}(\phi_{q})+\sum_{q=\max(1,k-n)}^{\lfloor\frac{k-1}{2}\rfloor}Y_{k,q}(\psi_{q}).
		\end{align*}
	\end{maincorollary}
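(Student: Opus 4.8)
The plan is to deduce the Corollary directly by combining the two main results already stated, so the argument is essentially bookkeeping about the index ranges. First I would invoke Theorem~\ref{maintheorem:decomposition} to write any $\mu\in\VConv_k(\C^n)^{\U(n)}$ uniquely as $\mu=\sum_{q=\max(0,k-n)}^{\lfloor k/2\rfloor}\mu_q$ with $\mu_q\in\VConv_{k,q}(\C^n)^{\U(n)}$; uniqueness of this decomposition is exactly the statement that the sum is direct. Then for each component $\mu_q$ I would apply Theorem~\ref{maintheorem:RepresentationVConvKQ}, distinguishing the two cases in its statement: if $q=0$ with $k\le n$, or if $k$ is even and $q=k/2$, there is a unique $\phi_q\in D^{2n-k}$ with $\mu_q=T^n_{k,q}(\phi_q)$ and we set $\psi_q=0$ (or simply do not include a $\psi_q$ term); otherwise, i.e.\ when $\max(1,k-n)\le q\le\lfloor(k-1)/2\rfloor$, there are unique $\phi_q\in D^{2n-k}$ and $\psi_q\in\tilde D^{2n-k+2}$ with $\mu_q=T^n_{k,q}(\phi_q)+Y_{k,q}(\psi_q)$. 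Summing over $q$ yields the asserted formula.

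The one point requiring care is that the index set for the $\psi_q$ in the Corollary, namely $\max(1,k-n)\le q\le\lfloor(k-1)/2\rfloor$, should match precisely the set of $q$ falling into the second case of Theorem~\ref{maintheorem:RepresentationVConvKQ}. I would check this: the first case covers $q=0$ (only relevant when $\max(0,k-n)=0$, i.e.\ $k\le n$) and, when $k$ is even, $q=k/2=\lfloor k/2\rfloor$; the remaining values of $q$ in the range $\max(0,k-n)\le q\le\lfloor k/2\rfloor$ are exactly those with $\max(1,k-n)\le q\le\lfloor(k-1)/2\rfloor$. Indeed, excluding $q=0$ raises the lower bound from $\max(0,k-n)$ to $\max(1,k-n)$ precisely when $k\le n$ (and leaves it unchanged when $k>n$), and excluding the top value $q=k/2$ in the even case lowers the upper bound from $\lfloor k/2\rfloor$ to $\lfloor(k-1)/2\rfloor$, while for odd $k$ one already has $\lfloor k/2\rfloor=\lfloor(k-1)/2\rfloor$. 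So the ranges agree, and no $q$ is double-counted or omitted.

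For uniqueness, I would argue that if $\sum_q T^n_{k,q}(\phi_q)+\sum_q Y_{k,q}(\psi_q)=\sum_q T^n_{k,q}(\phi_q')+\sum_q Y_{k,q}(\psi_q')$, then since $T^n_{k,q}$ and $Y_{k,q}$ map into $\VConv_{k,q}(\C^n)^{\U(n)}$ and the sum of these spaces is direct (by Theorem~\ref{maintheorem:decomposition}), we may equate the components for each fixed $q$: $T^n_{k,q}(\phi_q)+Y_{k,q}(\psi_q)=T^n_{k,q}(\phi_q')+Y_{k,q}(\psi_q')$. The uniqueness clause of Theorem~\ref{maintheorem:RepresentationVConvKQ} applied in each of the two cases then gives $\phi_q=\phi_q'$ and, where applicable, $\psi_q=\psi_q'$. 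This completes the proof; there is no real obstacle here beyond verifying the index arithmetic described above, since all the analytic content is already packaged in Theorems~\ref{maintheorem:decomposition} and~\ref{maintheorem:RepresentationVConvKQ}.
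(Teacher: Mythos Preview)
Your proposal is correct and follows exactly the approach the paper takes: the paper simply states that the corollary follows by combining Theorem~\ref{maintheorem:decomposition} and Theorem~\ref{maintheorem:RepresentationVConvKQ}, and your argument spells out precisely this combination, including the index-range bookkeeping and the uniqueness via directness of the sum.
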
 
	
\subsection{Plan of the article}
	The article can very broadly be divided into two different parts: the construction of the relevant valuations and investigation of their properties, which includes the proofs of Theorem \ref{maintheorem:ExistenceExtensionIntegration} and Theorem \ref{maintheorem:PrincipalValueRepresentation}, and the classification result consisting of Theorem \ref{maintheorem:decomposition} and Theorem \ref{maintheorem:RepresentationVConvKQ}. 
	
	The first part, while technical, involves some relatively straight forward constructions generalizing results from \cite{ColesantiEtAlHadwigertheoremconvex2020,KnoerrSingularvaluationsHadwiger2022,KnoerrMongeAmpereoperators2024}, whereas we feel that the second part warrants some additional comments.\\

	It was shown in \cite{KnoerrSmoothvaluationsconvex2024} that versions of Theorem \ref{maintheorem:decomposition} and Theorem \ref{maintheorem:RepresentationVConvKQ} hold for the dense subspace of smooth valuations. Since every valuation in $\VConv_k(\C^n)^{\U(n)}$ can be approximated by a sequence of smooth $\U(n)$-invariant valuations, one can thus hope to obtain the general case from these classifications. However, there are very simple reasons that prevent a direct approximation argument.\\
	
	For Theorem \ref{maintheorem:decomposition}, the main problem is obvious: Given a valuation in $\VConv_k(\C^n)^{\U(n)}$, we can approximate it by smooth $\U(n)$-invariant valuations, which were explicitly described in \cite{KnoerrUnitarilyinvariantvaluations2021} and which decompose according to the decomposition in Theorem \ref{maintheorem:decomposition}. However, we cannot directly infer that these components actually converge individually, since we do a priori not know if the projections onto these components are continuous. \\
	In addition, the approximation results established in \cite{KnoerrSmoothvaluationsconvex2024} are not directly applicable to the spaces $\VConv_{k,q}(\C^n)^{\U(n)}$. It is thus not a priori clear if elements of $\VConv_{k,q}(\C^n)^{\U(n)}$ can be approximated by smooth valuations belonging to the same space, which prevents us from proving Theorem \ref{maintheorem:RepresentationVConvKQ} directly (following for example the approach taken in \cite{KnoerrSingularvaluationsHadwiger2022}). \\
	
	In order to circumvent these problems, we will establish a slightly more refined version of Theorem \ref{maintheorem:decomposition} involving restrictions on the support of the valuations, which allows us to prove that the projections onto the different components are continuous. After this result is established, the classification of the different components in Theorem \ref{maintheorem:RepresentationVConvKQ} can be obtained by approximation from the classification of smooth valuations.\\
	Our proof of the refined version of Theorem \ref{maintheorem:decomposition} relies on induction on the dimension. The main idea centers around reconstructing the projections onto the different components inductively using the restriction of a given valuation to sums of functions defined on lower dimensional complex orthogonal subspaces. This involves evaluating these restricted valuations on suitable continuous one parameter families of convex functions and inverting the associated integral transforms. It turns out that the image of these transforms does not coincide with the space of all continuous functions in the parameters of the chosen families but only with a closed subspace, compare Corollary \ref{corollary:RegularityUVW}. We were not able to find a direct argument to explain why elements of $\VConv_k(\C^n)^{\U(n)}$ should generate functions belonging to this subspace. For smooth valuations this follows from the explicit description in \cite{KnoerrUnitarilyinvariantvaluations2021}, and in the general case we argue by approximation. In order for this to work, we have to verify that all of the constructions depend continuously on the given valuation. In particular, we have to check that the integral transforms are continuous with respect to the relevant norms.\\
	Having constructed suitable candidates for the projections onto the different components, we verify that these maps are indeed the projections by considering their behavior on the dense subspace of smooth valuations, where we know that the decomposition holds.\\
	This explicit reconstruction of the projections works for every component $\VConv_{k,q}(\C^n)^{\U(n)}$ except for $q=1$, see Remark \ref{remark:ProblematicComponent}. Since the corresponding projection is just the identity minus the remaining projections, this is sufficient to construct all projections.\\

	The article is structured as follows:\\
	In Section \ref{section:Preliminaries} we discuss a variety of background results needed throughout the article, including notions from geometric measure theory, and smooth valuations and curvature measures on convex bodies, and establish some preparatory results. This includes some general estimates for integrals involving smooth curvature measures, which seem to be new. We also review the relevant results on the topology of $\VConv(\R^n)$ and give slightly improved versions of the approximation results from \cite{KnoerrSmoothvaluationsconvex2024}.\\
	In Section \ref{section:IntegralTransforms} we introduce the integral transforms for the spaces $D^a$ and $\tilde{D}^{a+2}$ that will occur in our version of the template method. Since our proofs of the classification results heavily rely on approximation arguments, this section mostly centers around establishing suitable bounds for the operator norms of these integral transforms between the relevant classes of functions.\\
	Section \ref{section:MAOperators} is concerned with the properties of the Monge-Amp\`ere-type operators $\Theta^n_{k,q}$ and $\Upsilon^n_{k,q}$. We first extend some of the results from \cite{KnoerrMongeAmpereoperators2024} used in the construction of these measure-valued functionals and derive general bounds for integrals with respect to these measures in terms of the Hessian measures. We also consider the behaviour of $\Theta^n_{k,q}$ and $\Upsilon^n_{k,q}$ on two families of convex functions, which forms the basis for the template method.\\
	In Section \ref{section:SingularValuations} we prove Theorem \ref{maintheorem:ExistenceExtensionIntegration} and Theorem \ref{maintheorem:PrincipalValueRepresentation} by establishing suitable estimates for integrals of rotation invariant functions with respect to $\Theta^n_{k,q}$ and $\Upsilon^n_{k,q}$. We also discuss how the valuations obtained from $T^n_{k,q}$ and $Y^n_{k,q}$ are related to the hermitian intrinsic volumes.\\
	Finally, Section \ref{section:CharacterizationResults} contains the proofs of the characterization results in Theorem \ref{maintheorem:decomposition} and Theorem \ref{maintheorem:RepresentationVConvKQ}.

\section{Preliminaries}
	\label{section:Preliminaries}
	
	\subsection{Convex functions}
	\label{section:PreliminariesConvex}
	We refer to the monograph by Rockafellar and Wets \cite{RockafellarWetsVariationalanalysis1998} for a comprehensive background on convex functions and only collect the results needed in this article. For simplicity, we state all results in this section for $V=\R^n$.\\
	
	The space $\Conv(\R^n,\R)$ of all finite convex functions carries a natural topology induced by epi-convergence. We will not need the precise definition of this notion, since it is equivalent to locally uniform convergence or pointwise convergence in our setting (compare \cite[Theorem 7.17]{RockafellarWetsVariationalanalysis1998}). In particular, this topology on $\Conv(\R^n,\R)$ is metrizable.\\
	It is easy to see that any function in $\Conv(\R^n,\R)$ is locally bounded. 
	For later use, we remark that this implies that the function $f\mapsto \sup_{x\in A} |f(x)|$ is continuous on $\Conv(\R^n,\R)$ for any $A\subset\R^n$ compact.
	Moreover, the following result, which is a special case of \cite[9.14]{RockafellarWetsVariationalanalysis1998}, shows that these functions are in particular locally Lipschitz continuous.
		\begin{lemma}
			\label{lemma:LipschitzConstant}
			Let $U\subset \R^n$ be a convex open subset and $f:U\rightarrow\R$ a convex function. If $A\subset U$ is a set with $A+\epsilon B_1(0)\subset U$ and $f$ is bounded on $A+ \epsilon B_1(0)$, then $f$ is Lipschitz continuous on $A$ with Lipschitz constant $\frac{2}{\epsilon}\sup_{x\in A+\epsilon B_1(0)}|f(x)|$.
		\end{lemma}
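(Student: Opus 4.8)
The plan is to prove Lemma~\ref{lemma:LipschitzConstant} by a standard convexity argument controlling difference quotients via the bound on $f$. First I would reduce to the one-dimensional situation: to show $|f(x)-f(y)|\le \frac{2}{\epsilon}M|x-y|$ for $x,y\in A$, where $M:=\sup_{z\in A+\epsilon B_1(0)}|f(z)|$, it suffices to work along the segment joining $x$ and $y$ and use the monotonicity of difference quotients of a convex function of one variable. So fix distinct $x,y\in A$, set $u:=\frac{y-x}{|y-x|}$, and consider the convex function $t\mapsto g(t):=f(x+tu)$, which is defined and finite for $t$ in an interval containing $[0,|x-y|]$ and, since $A+\epsilon B_1(0)\subset U$, also containing the larger interval $[-\epsilon,|x-y|+\epsilon]$ (here one uses that $x\in A$, so $x-\epsilon u\in A+\epsilon B_1(0)\subset U$, and $y\in A$, so $y+\epsilon u\in A+\epsilon B_1(0)\subset U$).

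The key step is the three-point inequality for the convex function $g$. Writing $a:=|x-y|$, convexity of $g$ on $[-\epsilon, a+\epsilon]$ gives that the slope function is nondecreasing, so
\begin{align*}
	\frac{g(0)-g(-\epsilon)}{\epsilon}\le \frac{g(a)-g(0)}{a}\le \frac{g(a+\epsilon)-g(a)}{\epsilon}.
\end{align*}
Hence
\begin{align*}
	\left|\frac{g(a)-g(0)}{a}\right|\le \max\left(\frac{|g(0)-g(-\epsilon)|}{\epsilon},\frac{|g(a+\epsilon)-g(a)|}{\epsilon}\right)\le \frac{2M}{\epsilon},
\end{align*}
since each of $g(0)=f(x)$, $g(-\epsilon)=f(x-\epsilon u)$, $g(a)=f(y)$, $g(a+\epsilon)=f(y+\epsilon u)$ is the value of $f$ at a point of $A+\epsilon B_1(0)$ and is therefore bounded in absolute value by $M$. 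Multiplying through by $a=|x-y|$ yields $|f(x)-f(y)|\le \frac{2M}{\epsilon}|x-y|$, which is the claimed Lipschitz estimate on $A$.

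There is essentially no serious obstacle here; the only point requiring a little care is the bookkeeping that guarantees the auxiliary points $x-\epsilon u$ and $y+\epsilon u$ actually lie in the set $A+\epsilon B_1(0)$ on which $f$ is assumed bounded — this is exactly where the hypothesis $A+\epsilon B_1(0)\subset U$ and the boundedness assumption are used, and it is immediate since $u$ is a unit vector. One should also note the degenerate case $x=y$, for which the inequality is trivial, so that the restriction to distinct $x,y$ above is harmless. If one prefers to avoid passing through the auxiliary one-dimensional function, the same computation can be phrased directly in $\R^n$ by writing $x$ as a convex combination of $x-\epsilon u$ and $y$, and $y$ as a convex combination of $x$ and $y+\epsilon u$, and applying the definition of convexity twice; this gives the two one-sided bounds $f(x)-f(y)\le \frac{2M}{\epsilon}|x-y|$ and $f(y)-f(x)\le\frac{2M}{\epsilon}|x-y|$ directly.
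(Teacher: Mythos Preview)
Your argument is correct and is the standard proof of this elementary fact. The paper does not actually prove this lemma; it simply states it as a special case of \cite[9.14]{RockafellarWetsVariationalanalysis1998}, so there is nothing to compare against beyond noting that your proof is exactly the argument underlying that reference.
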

	
	For $f\in\Conv(\R^n,\R)$, its convex conjugate or Legendre transform $\mathcal{L}f:\R^n\rightarrow(-\infty,\infty]$ is defined by
	\begin{align*}
		\mathcal{L}f(y)=\sup_{x\in\R^n}\left(\langle y,x\rangle -f(x)\right).
	\end{align*}
	If $f$ is of class $C^\infty$ and such that there exists a constant $\lambda>0$ such that $x\mapsto f(x)-\lambda\frac{|x|^2}{2}$ is convex, then $df:\R^n\rightarrow\R^n$ is a diffeomorphism and
	\begin{align*}
		\mathcal{L}f(df(x))=\langle df(x),x\rangle -f(x) \quad\text{for all }x\in\R^n.
	\end{align*} 
	Using the inverse function theorem, it is easy to see that this also implies that $\mathcal{L}f$ is of class $C^\infty$ with positive definite Hessian.\\
	
	Let us consider the following two subspaces of $\Conv(\R^n,\R)$:
	\begin{align*}
		\Conv_0(\R^n,\R):=&\{f\in\Conv(\R^n,\R): f(0)=0< f(x)\quad\text{for all } 0\ne x\in\R^n\},\\
		\Conv_0^+(\R^n,\R):=&\left\{f\in\Conv_0(\R^n,\R):f-\lambda\frac{|\cdot|^2}{2}\in\Conv(\R^n,\R) \text{ for some }\lambda>0\right\}.
	\end{align*}
	Note that $f\in\Conv_0^+(\R^n,\R)$ implies that the sublevel sets
	$\{y\in\R^n: \mathcal{L}f(y)\le t\}$ are compact and non-empty for $t\ge 0$. Moreover, if $f\in \Conv_0^+(\R^n,\R)$ is of class $C^1$, then $df(x)=0$ if and only if $x=0$, as $f$ has a unique minimum in $x=0$. If $f$ is in addition smooth, this also implies $d\mathcal{L}f(y)=0$ if and only if $y=0$.\\
	
	We will need the following estimates.
	\begin{lemma}[\cite{KnoerrSingularvaluationsHadwiger2022} Lemma 2.5]
			\label{lemma:EstimatesLegendreSublevelSetsConv0}
			Let $f\in\Conv^+_0(\R^n,\R)$ be a smooth function. If $0\ne y=df(x_0)$ for $x_0\in\R^n$, then
			\begin{enumerate}
				\item $|\mathcal{L}f(y)|\le (1+2|x_0|)\sup_{|x|\le |x_0|+1}|f(x)|$,
				\item $|y|\le 2\sup_{|x|\le |x_0|+1}|f(x)|$.
			\end{enumerate}
			Moreover, $\mathcal{L}f(y)\le (1+2R)\sup_{|x|\le R+1}|f(x)|$ implies $|y|\le 2\sup_{|x|\le R+1}|f(x)|$ for $R>0$.
		\end{lemma}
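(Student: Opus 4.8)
The plan is to derive everything from two elementary consequences of the hypotheses. Since $f$ is smooth and lies in $\Conv_0^+(\R^n,\R)$, we may use the Legendre transform identity $\mathcal{L}f(y)=\langle y,x_0\rangle-f(x_0)$; moreover $f\ge 0$ on $\R^n$ with $f(0)=0$, so $\mathcal{L}f(y)=\sup_x(\langle y,x\rangle-f(x))\ge-f(0)=0$ and also $f(x_0)\ge 0$. Secondly, since $y=df(x_0)$ and $f$ is convex, the subgradient inequality $f(x)\ge f(x_0)+\langle y,x-x_0\rangle$ holds for all $x\in\R^n$. Throughout write $M:=\sup_{|x|\le|x_0|+1}|f(x)|$.

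First I would prove (ii). Apply the subgradient inequality at the point $x=x_0+\frac{y}{|y|}$; since $|x|\le|x_0|+1$ we have $f(x)\le M$, and the inequality rearranges to $|y|\le M-f(x_0)\le M\le 2M$. (In fact one even obtains $|y|\le M$, but the stated factor $2$ is all that is used later.) Claim (i) is then immediate from the identity for $\mathcal{L}f(y)$: using $f(x_0)\ge 0$ and the bound just obtained, $0\le\mathcal{L}f(y)=\langle y,x_0\rangle-f(x_0)\le|y|\,|x_0|\le M\,|x_0|\le(1+2|x_0|)M$.

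For the final assertion I would instead test the supremum defining $\mathcal{L}f$ at a carefully chosen point. Set $\tilde M:=\sup_{|x|\le R+1}|f(x)|$ and evaluate at $x=(R+1)\frac{y}{|y|}$, which has norm exactly $R+1$, so $f(x)\le\tilde M$; this gives $\mathcal{L}f(y)\ge(R+1)|y|-\tilde M$. Combined with the hypothesis $\mathcal{L}f(y)\le(1+2R)\tilde M$ one gets $(R+1)|y|\le(2+2R)\tilde M$, hence $|y|\le 2\tilde M$.

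I do not expect a genuine obstacle: the only nontrivial choices are the two test points $x_0+\frac{y}{|y|}$ and $(R+1)\frac{y}{|y|}$, and once they are fixed the estimates are mechanical. The only places needing a moment's care are the nonnegativity of $\mathcal{L}f(y)$ and of $f(x_0)$, both of which follow from $f\in\Conv_0(\R^n,\R)$, and the (harmless) observation that $y\ne 0$ makes the normalisations $\frac{y}{|y|}$ well defined.
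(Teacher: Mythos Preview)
Your proof is correct. The paper does not actually include a proof of this lemma---it is quoted verbatim from \cite{KnoerrSingularvaluationsHadwiger2022}---so there is no argument here to compare against line by line. That said, the placement of Lemma~\ref{lemma:LipschitzConstant} immediately before this result indicates the intended route for part~(2): the Lipschitz bound $|df(x_0)|\le \frac{2}{1}\sup_{|x-x_0|\le 1}|f(x)|\le 2M$ gives exactly the stated factor $2$. Your subgradient test at $x_0+y/|y|$ is a more direct variant of the same idea and in fact yields the sharper bound $|y|\le M$, which you then relax to $2M$; this is perfectly fine. Parts~(1) and the final implication follow exactly as you wrote them. The only cosmetic point is that in the last assertion you might note that $y=0$ is trivial so the normalisation $y/|y|$ is harmless, but this is not a gap.
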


	\subsection{Some notions from geometric measure theory}
		We refer to the monograph \cite{FedererGeometricmeasuretheory1969} by Federer for a comprehensive background on geometric measure theory and only summarize the notions required in the constructions.\\
		
		First, all measures considered in this article will be signed Radon measures on $\R^n$, which we will consider as elements of topological dual $\mathcal{M}(\R^n):=(C_c(\R^n))'$. Here $C_c(\R^n)$ is equipped with its standard inductive topology, that is, a sequence converges if and only if the functions converge uniformly and there exists a compact subset of $\R^n$ that contains the supports of these functions. In particular, these measures are inner and outer regular. To any signed measure $\nu\in\mathcal{M}(\R^n)$ we associate its variation $|\nu|\in\mathcal{M}(\R^n)$, which for open sets $U\subset\R^n$ is given by
		\begin{align*}
			|\nu|(U)=\sup_{\psi\in C_c(\R^n), |\psi|\le 1_U} \left|\int_{\R^n}\psi d\nu\right|.
		\end{align*}
		Now let $M$ denote a smooth manifold,  $\Omega^k(M)$ the space of smooth differential forms of degree $k$ on $M$, $\Omega_c^k(M)$ the subspace of forms with compact support. For simplicity we assume that $M$ carries a Riemannian metric, so the tangent spaces and spaces of $k$-forms carry natural norms. A $k$-current on $M$ is a continuous linear map $T:\Omega_c^k(M)\rightarrow\R$. Given a $k$-current $T$ and $\tau\in \Omega^l(M)$, $l\le k$, we define
		\begin{align*}
			&\partial T:\Omega^{k-1}_c(M)\rightarrow\R, &&[\partial T](\omega)=T(d\omega),\\
			&T\llcorner \tau:\Omega^{k-l}_c(M)\rightarrow\R, &&[T\llcorner \tau](\omega)=T(\tau\wedge\omega).
		\end{align*} 
		If $M$ and $N$ are two smooth manifolds and $T$ is a $k$-current on $M$, $S$ is an $l$-current on $N$, their product $T\times S$ is a $(k+l)$-current on $M\times N$ and uniquely determined by the property $(T\times S)[\pi_M^*\phi\wedge\pi_N^*\psi]=T(\phi)S(\psi)$ for $\phi\in \Omega^k_c(M)$, $\psi\in \Omega^l_c(N)$, where $\pi_M$ and $\pi_N$ denote the natural projections.\\	
		If $F:M\rightarrow N$ is a smooth map that is proper on the support of a $k$-current $T$ on $M$, then we denote the pushforward of $T$ by $F_*T$. It is defined by $(F_*T)[\omega]=T(\psi F^*\omega)$, where $\psi\in C^\infty(M)$ is any function with $\psi=1$ on a neighborhood of $\supp T\cap F^{-1}(\supp\omega)$, which is a compact set as $F$ is proper.\\
		The mass of a $k$-current $T$ on an open subset $U\subset M$ is given by
		\begin{align*}
			M_U(T)=\sup \{|T(\omega)|: \omega\in \Omega^k_c(U), |\omega|_p|\le 1\quad\forall p\in U\},
		\end{align*}
		where we use the natural norms on $\Lambda^k(T_pM)^*$ induced by the Riemannian structure. If $M_U(T)$ is finite for every relatively compact open subset $U\subset M$, then $T$ is said to have locally finite mass. In this case, $T$ extends to all compactly supported forms whose coefficients are bounded Borel functions.\\
		If both $T$ and $\partial T$ have locally finite mass, then $T$ is called locally normal. The space $N_{k,loc}(M)$ of all locally normal currents on $M$ becomes a complete locally convex vector space with respect to the family of semi-norms $N_U(T):=M_U(T)+M_U(\partial T)$ for $U\subset M$ open and relatively compact. Given $T\in N_{k,loc}(M)$ and a smooth map $h:M\rightarrow\R$ that is proper on $\supp T$, there exists a unique Lebesgue-almost everywhere defined measurable map $\langle T,h,\cdot\rangle: \R\rightarrow N_{k-1,loc}(M)$ such that for every $\phi\in C^\infty_c(\R)$
		\begin{align}
			\label{equation:slicingFormula}
			\int_\R\phi(t)\langle T,h,t\rangle[\omega]dt=T(h^*(\phi dt)\wedge \omega)\quad\text{for}~\omega\in \Omega^{k-1}_c(M),
		\end{align}
		compare \cite[Theorem 4.3.2]{FedererGeometricmeasuretheory1969}. This map is unique up to sets of Lebesgue measure zero and given by
		\begin{align*}
			\langle T,h,t\rangle =\partial (T\llcorner 1_{\{h<t\}})-\partial  T\llcorner 1_{\{h>t\}}=-\partial (T\llcorner 1_{\{h>t\}})+\partial  T\llcorner 1_{\{h<t\}}\quad \text{for almost all}~t\in\R.
		\end{align*}
		We refer to \eqref{equation:slicingFormula} as the \emph{slicing formula}.\\
		
		Finally, let us remark that most of the currents considered in this article are integral currents. We refer to \cite{FedererGeometricmeasuretheory1969} for the definition and only note that integral currents are locally normal. In particular, we can apply the slicing formula to these currents. 
	\subsection{Differential forms on $S\R^n$ and $T^*\R^n$}
		\label{section:DiffFormsCotangentBundel}
		The product structure $T^*\R^n=\R^n\times (\R^n)^*$ induces a bigrading on the space $\Omega^*(T^*\R^n)$ of all smooth differential forms on $T^*\R^n$, and we denote the corresponding subspaces by $\Omega^{k,l}(T^*\R^n)$. We also write $\pi_1:T^*\R^n\rightarrow\R^n$ and $\pi_2:T^*\R^n\rightarrow(\R^n)^*$ for the two projections.\\
	
		Recall that the cotangent bundle $T^*\R^n$ carries a natural symplectic form $\omega_s$. If we choose linear coordinates $(x_1,\dots,x_n)$ on $\R^n$ with induced coordinates $(\xi_1,\dots,\xi_n)$ on $(\R^n)^*$, then this form is given by $\omega_s=-d\alpha$ for the $1$-form
		\begin{align*}
			\alpha=\sum_{j=1}^{n}\xi_j dx_j.
		\end{align*} 
		Since $\omega_s$ is a symplectic form, it is non-degenerate, so to any $1$-form $\delta\in\Omega^1(T^*\R^n)$ we can associate a unique vector field $X_\delta$ on $T^*\R^n$ such that $i_{X_\delta}\omega_s=\delta$.
		
		We consider $\R^n$ equipped with its standard inner product and consequently identify $(\R^n)^*\cong\R^n$. Consider the forms
		\begin{align*}
			\beta :=&d\langle x,\xi\rangle -\alpha=\sum_{j=1}^{n}x_j d\xi_j,\\
			\gamma:=&\frac{1}{2}d|x|^2=\sum_{j=1}^{n}x_j dx_j.
		\end{align*}
		Let us call a form $\omega\in \Omega^*(T^*\R^n)$ homogeneous of degree $m$ if $G_t^*\omega=t^m \omega$, where $G_t:T^*\R^n\rightarrow T^*\R^n$, $G_t(x,y)=(tx,y)$.
		\begin{lemma}
			\label{lemma:ChangeFormsPolarCoordinates}
			Let $\omega\in\Omega^{k,l}(T^*\R^n)$ by homogeneous of degree $m$ and 
				\begin{align*}
				G:\R\times\R^n\times\R^n&\rightarrow\R^n\times\R^n\\
				(t,x,\xi)&\mapsto (tx,\xi).
			\end{align*} Then
			\begin{align*}
				G^*(i_{X_{\beta}}\omega)=t^mi_{X_{\beta}}\omega
			\end{align*}
		\end{lemma}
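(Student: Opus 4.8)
The plan is to reduce the identity to two elementary facts about the vector field $X_\beta$: that it generates the scaling $G_t$ (and so is invariant under it), and that contracting twice with the same vector field annihilates every form. First I would compute $X_\beta$ explicitly. Writing $X_\beta=\sum_j(a_j\partial_{x_j}+b_j\partial_{\xi_j})$ and combining $\omega_s=-d\alpha=\sum_j dx_j\wedge d\xi_j$ with $i_{X_\beta}\omega_s=\beta=\sum_j x_j\,d\xi_j$ forces $a_j=x_j$ and $b_j=0$, so that $X_\beta=\sum_j x_j\partial_{x_j}$ is the radial field in the base directions. In particular $s\mapsto G_{e^s}$ is the flow of $X_\beta$, whence $(G_t)_*X_\beta=X_\beta$ for every $t>0$; equivalently, for the map $G$ in the statement one has $dG(\partial_t)=\tfrac1t\,X_\beta\circ G$ on $\{t\neq0\}$.

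With this in hand the proof splits into two steps. For the first, I would show that $G^*(i_{X_\beta}\omega)$ has no $dt$-component. Contracting with $\partial_t$ and using $dG(\partial_t)=\tfrac1t X_\beta\circ G$,
\begin{align*}
	i_{\partial_t}\bigl(G^*(i_{X_\beta}\omega)\bigr)=\tfrac1t\,G^*\bigl(i_{X_\beta}i_{X_\beta}\omega\bigr)=0\qquad\text{for }t\neq0,
\end{align*}
and since $G^*(i_{X_\beta}\omega)$ depends smoothly on $t$ the same holds at $t=0$. Hence this form is completely determined by its restrictions to the slices $\{t=t_0\}$. For the second step I would evaluate these restrictions: on $\{t=t_0\}$ the map $G$ coincides with the diffeomorphism $G_{t_0}$, and since $(G_{t_0})_*X_\beta=X_\beta$ and $\omega$ is homogeneous of degree $m$,
\begin{align*}
	G_{t_0}^*(i_{X_\beta}\omega)=i_{X_\beta}\bigl(G_{t_0}^*\omega\bigr)=i_{X_\beta}(t_0^m\omega)=t_0^m\,i_{X_\beta}\omega.
\end{align*}
Matching slices then gives $G^*(i_{X_\beta}\omega)=t^m\,i_{X_\beta}\omega$, where on the right $i_{X_\beta}\omega$ is read, as usual, as its pullback under the projection $\R\times\R^n\times\R^n\to\R^n\times\R^n$.

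The only point that needs a little care is $t=0$, where $G_0$ fails to be a diffeomorphism and $X_\beta$ vanishes, so the relation $dG(\partial_t)=\tfrac1t X_\beta\circ G$ is meaningless there; this is harmless, since the desired equality is an identity of smooth forms that has already been verified on the dense set $\{t\neq0\}$. If one prefers to avoid this and argue by brute force, one can instead expand $\omega=\sum_{|I|=k,|J|=l}f_{IJ}\,dx_I\wedge d\xi_J$ (with each $f_{IJ}$ homogeneous of degree $m-k$ in $x$) and use $G^*dx_i=t\,dx_i+x_i\,dt$, $G^*d\xi_j=d\xi_j$: the part of $G^*(i_{X_\beta}\omega)$ not containing $dt$ collapses to $t^m i_{X_\beta}\omega$, while the coefficient of $dt$ is, up to the scalar $t^{m-1}$, a pullback of $i_{X_\beta}i_{X_\beta}\omega=0$. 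Apart from identifying $X_\beta$, none of this goes beyond formal manipulation of interior products and pullbacks, so I expect no genuine obstacle.
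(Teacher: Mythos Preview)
Your proof is correct and takes a genuinely different, more conceptual route than the paper. The paper argues by brute force: it expands $\omega=\phi(x,\xi)\,dx^I\wedge d\xi^J$, computes $i_{X_\beta}\omega$ term by term, pulls back by $G$, and then eliminates the $dt$-part via a combinatorial sign identity $\epsilon_i\tilde\epsilon_{i,j}=-\epsilon_j\tilde\epsilon_{j,i}$ coming from $dx_i\wedge dx_j=-dx_j\wedge dx_i$. Your argument instead identifies $X_\beta=\sum_j x_j\partial_{x_j}$ as the Euler field, recognises $G_t$ as (a reparametrisation of) its flow so that $(G_t)_*X_\beta=X_\beta$, and kills the $dt$-component at once via $i_{X_\beta}i_{X_\beta}=0$. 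This is cleaner and explains \emph{why} the $dt$-terms cancel in the paper's calculation: that antisymmetric sign identity is precisely the coordinate shadow of $i_{X_\beta}^2=0$. The paper's approach is more elementary in that it never invokes flows or pushforwards, but it is also more fragile under generalisation; your argument would survive unchanged for any form annihilated by $i_{X_\beta}$ in place of $i_{X_\beta}\omega$.
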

		\begin{proof}
			By linearity, we may assume that $\omega=\phi(x,\xi) dx^I\wedge d\xi^J$, $|I|=k$, $|J|=l$, where $\phi$ is homogeneous of degree $m-k$ in the first argument. Then
			\begin{align*}
				i_{X_{\beta}}\omega=\sum_{i\in I}\phi(x,\xi) \epsilon_ix_idx^{I\setminus\{i\}}\wedge d\xi^J,
			\end{align*}
			where $\epsilon_i\in\{\pm 1\}$ is determined by $dx_i\wedge dx^{I\setminus\{i\}}=\epsilon_i dx^I$. Consequently,
			\begin{align*}
				G^*i_{X_{\beta}}\omega=&\sum_{i\in I}\phi(tx,\xi) \epsilon_itx_i\left(\bigwedge_{j\in I\setminus\{i\}} d(tx_j)\right) \wedge d\xi^J\\
				=&t^{m}\sum_{i\in I}\phi(x,\xi) \epsilon_ix_i dx^{I\setminus\{i\}}\wedge d\xi^J\\
				&+t^{m-1}dt\wedge \sum_{i,j\in I,i\ne j}\phi(x,\xi) \epsilon_i\tilde{\epsilon}_{i,j}x_ix_j dx^{I\setminus\{i,j\}}\wedge d\xi^J,
			\end{align*}
			where $\tilde{\epsilon}_{i,j}\in \{\pm1\}$ is determined by $dx_j\wedge dx^{I\setminus \{i,j\}}= \tilde{\epsilon}_{i,j}dx^{I\setminus\{i\}}$. As
			\begin{align*}
				dx_i\wedge dx_j\wedge dx^{I\setminus \{i,j\}}=-dx_j\wedge dx_i\wedge dx^{I\setminus \{i,j\}},
			\end{align*}
			we obtain $\epsilon_i\tilde{\epsilon}_{i,j}=-\epsilon_j\tilde{\epsilon}_{j,i}$. In particular,
			\begin{align*}
				\sum_{i,j\in I,i\ne j}\epsilon_i\tilde{\epsilon}_{i,j}x_ix_j dx^{I\setminus\{i,j\}}
				=\frac{1}{2}\sum_{i,j\in I,i\ne j}\epsilon_i\tilde{\epsilon}_{i,j}x_ix_j dx^{I\setminus\{i,j\}}+\frac{1}{2}\sum_{i,j\in I,i\ne j}\epsilon_j\tilde{\epsilon}_{j,i}x_jx_i dx^{I\setminus\{i,j\}}=0.
			\end{align*}
			Thus
			\begin{align*}
				G^*i_{X_{\beta}}\omega=t^{m}\sum_{i\in I}\phi(x,\xi) \epsilon_ix_i dx^{I\setminus\{i\}}\wedge d\xi^J=t^{m}i_{X_{\beta}}\omega.
			\end{align*}
		\end{proof}
		In this article we will consider $S\R^n=\R^n\times S^{n-1}$ as a submanifold of $T^*\R^n$ using the map
		\begin{align*}
			j:S\R^n\rightarrow T^*\R^n\cong \R^n\times\R^n\\
			(\xi,v)\mapsto (v,\xi).
		\end{align*}
		Then the restriction of $\gamma$ to $S\R^n$ vanishes and $\beta$ restricts to a \emph{contact form} on $S\R^n$, that is, the restriction $j^*d\beta$ is non-degenerate on the contract distribution $H:=\ker j^*\beta$. This implies that there exists a unique vector field $T$ on $S\R^n$ such that $i_T(j^*\beta)=1$ and $i_T(j^*d\beta)=0$, which is called the \emph{Reeb vector field}. Finally, note that the product structure induces a bigrading on $\Omega^*(S\R^n)$, and we denote the corresponding components by $\Omega^{k,l}(S\R^n)$.
		
		\subsection{Smooth valuations on convex bodies and smooth curvature measures}
			\label{section:valuationsBodies}
			Let $\mathcal{K}(\R^n)$ denote the space of convex bodies in $\R^n$, that is, the set of all non-empty, convex, and compact subsets of $\R^n$ equipped with the Hausdorff metric. For $K\in\mathcal{K}(\R^n)$, the set
			\begin{align*}
				\nc(K):=\{(\xi,v)\in\R^n\times S^{n-1}:v\text{ outer normal to } K \text{ in }\xi\in\partial K\}
			\end{align*}
			is a compact Lipschitz submanifold of the sphere bundle $S\R^n$ of dimension $n-1$ that carries a natural orientation. We may thus consider $\nc(K)$ as an integral $(n-1)$-current on $S\R^n$, called the \emph{normal cycle} of $K$, which we will denote by $\tau\mapsto \nc(K)[\tau]$, where $\tau\in\Omega^{n-1}(S\R^n)$ is a smooth $(n-1)$-form.
			
			\medskip
			Given $\tau\in \Omega^{n-1}(S\R^n)$, we may in particular associate to any $K\in\mathcal{K}(\R^n)$ the signed measure 
			\begin{align*}
				B\mapsto \Phi_\tau(K)[B]:=\nc(K)[1_{(\pi_2\circ j)^{-1}(B)}\tau]\quad\text{for Borel sets }B\subset \R^n.
			\end{align*}
			Here $\pi_2\circ j:S\R^n\rightarrow\R^n$ is the projection onto the first factor and $1_A$ denotes the indicator function of a set $A$. The map $\Phi_\tau$ is called a smooth curvature measure. Then $\Phi_\tau:\mathcal{K}(\R^n)\rightarrow \mathcal{M}(\R^n)$ is continuous with respect to the weak*-topology (compare, for example, the proof of \cite[Lemma 2.1.3]{AleskerFuTheoryvaluationsmanifolds.2008}).\\
		
			Similarly, we can associate to any $\tau\in \Omega^{n-1}(S\R^n)$ the functional $\phi_\tau:\mathcal{K}(\R^n)\rightarrow \R$,
			\begin{align*}
				\phi_\tau(K)=\nc(K)[\tau].
			\end{align*}
			Then $\phi_\tau$ is a continuous valuation on $\mathcal{K}(\R^n)$ (compare \cite{AleskerTheoryvaluationsmanifolds.2006}), and functionals of this type are called \emph{smooth valuations}. 
			
			Consider the differential forms
			\begin{align*}
				\kappa_k=\frac{1}{k!(n-1-k)!}\sum_{\sigma\in S_n}\sign(\sigma)v_{\sigma(1)}d\xi_{\sigma(2)}\dots d\xi_{\sigma(k+1)}\wedge dv_{\sigma(k+2)}\dots dv_{\sigma(n)},
			\end{align*}
			where $(\xi_1,\dots,\xi_n)$ are coordinates on $\R^n$ with induced coordinates $(v_1,\dots,v_n)$ on $S^{n-1}\subset\R^n$ and $S_n$ denotes the symmetric group of degree $n$.
			Then for $0\le i\le n-1$ the curvature measures 
			\begin{align*}
				C_i(K)[B]:=\frac{1}{(n-i)\omega_{n-i}}\nc(K)\left[1_{(\pi_2\circ j)^{-1}(B)}\kappa_k\right]
			\end{align*}
			are the \emph{Federer curvature measures} \cite{FedererCurvaturemeasures1959}, and the smooth valuations $\mu_k(K):=C_k(K,\R^n)$ are the \emph{intrinsic volumes}, compare \cite[Section 2.1]{FuAlgebraicintegralgeometry2014}.
		
			\begin{proposition}
				\label{proposition:BoundCurvatureMeasure}
				For every translation invariant differential form $\tau\in \Omega^{k,n-1-k}(S\R^n)$ there exists a constant $C(\tau)\ge0$ such that
				\begin{align*}
					|\Phi_\tau(K)|(U)\le C(\tau)  C_k(K)[U]
				\end{align*}
				for all $K\in\mathcal{K}(\R^n)$, $U\subset \R^n$ Borel set.
			\end{proposition}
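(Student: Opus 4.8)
The plan is to reduce everything to a pointwise estimate on the normal cycle, using the fact that the Federer curvature measure $C_k$ is, up to a dimensional constant, the curvature measure associated with the specific form $\kappa_k$, while $\Phi_\tau$ is associated with an arbitrary translation invariant form $\tau$ of the same bidegree $(k,n-1-k)$. Since $\tau$ is translation invariant, its coefficient functions depend only on the sphere variable $v \in S^{n-1}$, which is compact; hence the "size" of $\tau$ at each point of $S\R^n$ is uniformly bounded by some constant $C_0(\tau)$ measured against a fixed reference $(k,n-1-k)$-covector. First I would fix, at each point $(\xi,v)\in S\R^n$, an orthonormal-type basis of $\Lambda^{n-1}(H_{(\xi,v)})^*$ adapted to the bidegree splitting, and observe that both $\tau$ and $\kappa_k$ restrict to the $(n-1)$-dimensional contact distribution $H = \ker j^*\beta$ (the component not lying in $H$ contributes nothing when integrated against the Legendrian normal cycle). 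The normal cycle $\nc(K)$ is Legendrian, so only the $H$-part of any form matters.

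The key step is the following pointwise domination: for a Legendrian $(n-1)$-plane $W$ in $H_{(\xi,v)}$ (the kind of plane that appears as an approximate tangent plane to $\nc(K)$), one has $|\langle \tau, W\rangle| \le C_0(\tau)\,|\langle \kappa_k, W\rangle|$ is \emph{false} in general — $\kappa_k$ can vanish on planes where $\tau$ does not — so instead I would argue at the level of the \emph{monotone} structure: both $|\Phi_\tau(K)|$ and $C_k(K)$ are controlled by the $(k,n-1-k)$-component of the "total" measure. More precisely, I would invoke the standard fact (see e.g. the proof of \cite[Lemma 2.1.3]{AleskerFuTheoryvaluationsmanifolds.2008} or the monotonicity properties in \cite{FuAlgebraicintegralgeometry2014}) that for each $K$ the measure $C_k(K)$ dominates the total variation of $\nc(K)\llcorner \omega$ restricted to the base for \emph{every} translation invariant $(k,n-1-k)$-form $\omega$ with pointwise norm at most $1$ on $H$. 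Equivalently: there is a positive measure $\Sigma_k(K)$ on $S\R^n$, comparable to a dimensional multiple of $C_k(K)$ pushed forward to $\R^n$, such that $|\nc(K)\llcorner\tau| \le C_0(\tau)\,\Sigma_k(K)$ as measures on $S\R^n$, because $\nc(K)$ has locally finite mass with mass measure in bidegree $(k,n-1-k)$ precisely controlled by $C_k$. Pushing this inequality forward under $\pi_2\circ j$ and evaluating on a Borel set $U$ gives $|\Phi_\tau(K)|(U) \le C_0(\tau)\,\Sigma_k(K)[(\pi_2\circ j)^{-1}(U)] \le C(\tau)\,C_k(K)[U]$.

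The main obstacle I anticipate is making the phrase "mass measure of $\nc(K)$ in bidegree $(k,n-1-k)$ is controlled by $C_k(K)$" precise and uniform over all $K\in\mathcal{K}(\R^n)$, including bodies with nonsmooth boundary where $\nc(K)$ is only a Lipschitz current. The clean way around this is a two-step reduction: first prove the estimate for $K$ with $C^2_+$ boundary, where $\nc(K)$ is the image of $S^{n-1}$ under an explicit smooth embedding (the inverse Gauss map together with the normal), so the inequality becomes a pointwise inequality between $n-1$ forms pulled back to $S^{n-1}$, with $\kappa_k$ pulling back to (a multiple of) the $k$-th elementary symmetric function of the principal curvatures times the spherical volume — manifestly a nonnegative density dominating, up to the constant $C_0(\tau)$, the pullback of $\tau$ because the latter is, in the same frame, a sum of $k\times k$ minors of the curvature data each bounded by the full symmetric function. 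Second, pass to general $K$ by approximation: every $K\in\mathcal{K}(\R^n)$ is a Hausdorff limit of $C^2_+$ bodies $K_j$, and both sides are weak*-continuous in $K$ (for the left side this is the continuity of $\Phi_\tau$ recorded in the text; for the right side it is the weak continuity of Federer curvature measures), so the inequality $|\Phi_\tau(K_j)|(U)\le C(\tau)C_k(K_j)[U]$, after testing against continuous functions and taking limits, yields the claim for $K$, with the \emph{same} constant $C(\tau)=C_0(\tau)\cdot(n-k)\omega_{n-k}$.
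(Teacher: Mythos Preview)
Your two-step strategy --- pointwise domination on $\nc(K)$ for $C^2_+$ bodies, then approximation --- is exactly what the paper does, and your identification of the core mechanism is correct: in a principal-curvature frame at $(\xi,v)$, both $\tau$ and $\kappa_k$ restricted to $T_{(\xi,v)}\nc(K)$ expand as sums over the \emph{same} nonnegative curvature monomials $\lambda_{\pi(k+1)}\cdots\lambda_{\pi(n-1)}$, with $\kappa_k$ having coefficient $1$ and $\tau$ having a coefficient bounded by some $c(\tau)$ coming from compactness of the frame bundle over $S^{n-1}$. (Your earlier worry that $\kappa_k$ might vanish where $\tau$ does not is resolved precisely because on tangent planes of normal cycles of \emph{convex} bodies the monomials are all nonnegative, so no cancellation occurs in $\kappa_k$.)

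There is, however, a gap in your approximation step. Passing the inequality $|\Phi_\tau(K_j)|(U)\le C(\tau)\,C_k(K_j)[U]$ through a Hausdorff limit $K_j\to K$ does not work directly: for open $U$ weak* convergence gives $|\Phi_\tau(K)|(U)\le\liminf_j|\Phi_\tau(K_j)|(U)$, but on the right-hand side Portmanteau only gives $C_k(K)[U]\le\liminf_j C_k(K_j)[U]$, which points the wrong way. The paper avoids this by proving instead that the signed measures $C(\tau)\,C_k(K)\pm\Phi_\tau(K)$ are \emph{nonnegative} for smooth $K$; nonnegativity of a measure is weak*-closed, so this persists for all $K\in\mathcal{K}(\R^n)$. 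The variation bound then follows at once: for open $U$ and $\psi\in C_c(\R^n)$ with $|\psi|\le 1_U$,
\[
\left|\int\psi\,d\Phi_\tau(K)\right|
=\tfrac12\left|\int\psi\,d[C(\tau)C_k+\Phi_\tau](K)-\int\psi\,d[C(\tau)C_k-\Phi_\tau](K)\right|
\le C(\tau)\,C_k(K)[U],
\]
and one extends to Borel sets by regularity. Your argument is easily repaired this way, with the same constant $C(\tau)=c(\tau)(n-k)\omega_{n-k}$.
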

			\begin{proof}
				If $K\in\mathcal{K}(\R^n)$ is a smooth convex body with strictly positive curvature, then $\nc(K)$ is given by the submanifold $\{(\xi,\nu_K(\xi)): \xi\in \partial K\}$, where $\nu_K:\partial K\rightarrow S^{n-1}$ denotes the Gauss map. In this case it is thus sufficient to consider the restriction of $\tau$ to the tangent spaces of $\nc(K)$. As $\tau$ is translation invariant, it is sufficient to assume that we are given a point $(0,v)\in\nc(K)$. Since the differential of $\nu_K$ is self adjoint, there exists an oriented orthonormal basis $e_1,\dots,e_{n-1}$ of $v^{\perp}$ and $\lambda_1,\dots,\lambda_{n-1}>0$ such that the tangent space $T_{(0,v)}\nc(K)$ is spanned by $(e_1,\lambda_1e_1)$,\dots, $(e_{n-1},\lambda_{n-1}e_{n-1})$. Define $\bar{e}_i=(e_i,0)$, $\bar{f}_i=(0,e_i)$. Then the restriction of $\tau$ to $T_{(0,v)}\nc(K)$ is a multiple of the volume form given by
				\begin{align*}
					&\tau|_{(0,v)}(\bar{e}_1+\lambda_1\bar{f}_1,\dots,\bar{e}_{n-1}+\lambda_i\bar{f}_{n-1})\\
					=&\frac{1}{k!(n-k-1)!}\sum_{\pi\in S_{n-1}}\sign(\pi)\lambda_{\pi(k+1)}\dots\lambda_{\pi(n-1)} \tau|_{(0,v)}(\bar{e}_{\pi(1)},\dots,\bar{e}_{\pi(k)},\bar{f}_{\pi(k+1)},\dots,\bar{f}_{\pi(n-1)}),
				\end{align*}
				where we have used that $\tau$ is a $(k,n-k-1)$-form. Consider the fiber bundle $\mathcal{S}_{k,n-k-1}$ over $S^{n-1}$ with fiber over $v\in S^{n-1}$ given in the notation above by the space of all elements $(\bar{e}_1,\dots,\bar{e}_k,\bar{f}_{k+1},\dots,\bar{f}_{n-1})\in (v^\perp\oplus v^\perp)^{n-1}$ for an oriented orthonormal basis $e_1,\dots,e_{n-1}$ of $v^\perp$. Then it is easy to see that $\mathcal{S}_{k,n-k-1}$ is naturally a compact manifold. As the evaluation map is continuous, the map
				\begin{align*}
					\mathcal{S}_{k,n-k-1}\rightarrow&\R\\
					(v, (\bar{e}_1,\dots,\bar{e}_k,\bar{f}_{k+1},\dots,\bar{f}_{n-1}))\mapsto&|\tau|_{(0,v)}(\bar{e}_{1},\dots,\bar{e}_{k},\bar{f}_{k+1},\dots,\bar{f}_{n-1})|
				\end{align*}
				is thus bounded by some $c(\tau)\ge 0$. Now note that 
				\begin{align*}
					\sign(\pi) \kappa_k|_{(0,v)}(\bar{e}_{\pi(1)},\dots,\bar{e}_{\pi(k)},\bar{f}_{\pi(k+1)},\dots,\bar{f}_{\pi(n-1)})=1
				\end{align*}
				for every choice of positively oriented orthonormal basis $e_1,\dots,e_{n-1}$ of $v^\perp$ and every $\pi\in S_{n-1}$. Thus
				\begin{align*}
					&\pm\tau|_{(0,v)}(\bar{e}_1+\lambda_1\bar{f}_1,\dots,\bar{e}_{n-1}+\lambda_i\bar{f}_{n-1})\\
					=&\pm\frac{1}{k!(n-k-1)!}\sum_{\pi\in S_{n-1}}\sign(\pi)\lambda_{\pi(k+1)}\dots\lambda_{\pi(n-1)} \tau|_{(0,v)}(\bar{e}_{\pi(1)},\dots,\bar{e}_{\pi(k)},\bar{f}_{\pi(k+1)},\dots,\bar{f}_{\pi(n-1)})\\
					\le& c(\tau)\frac{1}{k!(n-k-1)!}\sum_{\pi\in S_{n-1}}\sign(\pi)\lambda_{\pi(k+1)}\dots\lambda_{\pi(n-1)} \kappa_k|_{(0,v)}(\bar{e}_{\pi(1)},\dots,\bar{e}_{\pi(k)},\bar{f}_{\pi(k+1)},\dots,\bar{f}_{\pi(n-1)})\\
					=&c(\tau)\kappa_k|_{(0,v)}(\bar{e}_1+\lambda_1\bar{f}_1,\dots,\bar{e}_{n-1}+\lambda_i\bar{f}_{n-1}).
				\end{align*}
				As $\tau$ and $\kappa_k$ are translation invariant, it follows that the restriction of $c(\tau)\kappa_k\pm\tau$ to any tangent space $T_{(x,v)}\nc(K)$ is non-negative for any $K\in\mathcal{K}(\R^n)$ smooth with positive Gauss curvature. Thus for $C(\tau):=c(\tau)(n-k)\omega_{n-k}$,
				\begin{align*}
					C(\tau)C_k(K)\pm\Phi_\tau(K)\ge 0
				\end{align*}
				for every smooth convex body with strictly positive Gauss curvature. As smooth curvature measures are continuous with respect to the weak* topology, this holds for all convex bodies, so for $U\subset\R^n$ open, we obtain
				\begin{align*}
					&|\Phi_\tau(K)|(U)=\sup_{\psi\in C_c(\R^n),|\psi|\le 1_U} \left|\int_{\R^n}\psi(x)d\Phi_\tau(K,x)\right|\\
					=&\frac{1}{2}\sup_{\psi\in C_c(\R^n),|\psi|\le 1_U} \left|\int_{\R^n}\psi(x)d[C(\tau) C_k(K)+\Phi_\tau(K)]-\int_{\R^n}\psi(x)d[C(\tau) C_k(K)-\Phi_\tau(K)]\right|\\
					\le& \frac{1}{2}[C(\tau) C_k(K)+\Phi_\tau(K)](U)+[C(\tau) C_k(K)-\Phi_\tau(K)](U)\\
					=&C(\tau) C_k(K)[U]
				\end{align*}
				for all $K\in\mathcal{K}(\R^n)$. Since $|\Phi_\tau(K)|$ is a Radon measure, the claim follows.
				
			\end{proof}
		
			\subsection{Properties of the differential cycle}
			In this section we collect some results on the differential cycle, which was introduced by Fu in \cite{FuMongeAmperefunctions.1989}. We also refer to \cite{JerrardSomerigidityresults2010} for a generalization of this notion. Let $\vol\in\Lambda^n(\R^n)^*$ denote a volume form.
		\begin{theorem}[\cite{FuMongeAmperefunctions.1989} Theorem 2.0]
			\label{theorem:FuUniquenessDifferentialCycle}
			Let $f:\R^n\rightarrow\mathbb{R}$ be a locally Lipschitzian function. There exists at most one integral current $S\in I_n(T^*\R^n)$ such that
			\begin{enumerate}
				\item $S$ is closed, i.e. $\partial S=0$,
				\item $S$ is Lagrangian, i.e. $S\llcorner \omega_s=0$,
				\item $S$ is locally vertically bounded, i.e. $\supp S\cap \pi^{-1}(A)$ is compact for all $A\subset \R^n$ compact,
				\item $S(\phi(x,y)\pi^*\vol)=\int_{\R^n}\phi(x,df(x))d\vol(x)$ for all $\phi\in C^\infty_c(T^*\R^n)$.
			\end{enumerate}
			Note that the right hand side of the last equation is well defined due to Rademacher's theorem.
		\end{theorem}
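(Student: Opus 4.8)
Since the statement is a uniqueness assertion, the plan is to assume that two integral currents $S_1,S_2\in I_n(T^*\R^n)$ both satisfy (1)--(4) and to show that $S:=S_1-S_2$ vanishes. As integral currents form an abelian group and the two supports are vertically bounded over compacta, $S$ is again a locally vertically bounded integral $n$-current; it is closed and Lagrangian because $S_1$ and $S_2$ are, and subtracting the two instances of (4) gives $S(\phi\,\pi^*\vol)=0$ for all $\phi\in C_c^\infty(T^*\R^n)$, i.e.\ $S\llcorner\pi^*\vol=0$. The goal is thus $S=0$. I would emphasise at the outset that integrality enters essentially here: nonzero currents given by smooth densities can satisfy all four conditions, so the argument must use rectifiability and integer multiplicities.

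The soft consequences come from the constancy theorem. Local vertical boundedness makes $\pi$ proper on $\supp S$, so $\pi_*S$ is a closed $n$-current of locally finite mass on $\R^n$, hence a constant multiple of the current of integration, and the constant is $0$ upon testing against $\phi\,\pi^*\vol$. More usefully, $S\llcorner\pi^*\vol=0$ forces $\langle dx_1\wedge\dots\wedge dx_n,\vec S\rangle=0$ $\mathcal H^n$-a.e.\ on $\supp S$, so $d\pi$ is not injective on the approximate tangent plane $T_pS$; since $T_pS$ is Lagrangian and a Lagrangian $n$-plane $L$ satisfies $\dim\pi(L)=n-\dim(L\cap\ker d\pi)$, the plane $T_pS$ contains a nonzero vertical vector for a.e.\ $p$. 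In other words $S$ is, off an $\mathcal H^n$-null set, vertically ruled.

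The core of the proof is an induction on $n$ exploiting this. The base case $n=1$ is immediate: on $T^*\R=\R^2$ the condition $S\llcorner dx=0$ forces the unit tangent of $S$ to equal $\pm\partial_y$ a.e., so $S$ is an integer combination of vertical segments, and $\partial S=0$ together with local vertical boundedness (which prevents segments from escaping to infinity, hence forbids any uncancelled endpoint) gives $S=0$. For the step from $n-1$ to $n$ I would slice $S$ by the coordinate function $x_n$: for a.e.\ $c$ the slice $\langle S,x_n,c\rangle$ is an integral $(n-1)$-current supported in $\{x_n=c\}\cong\R^{n-1}\times\R^n$; it is closed (boundary of a slice is the slice of the boundary), locally vertically bounded, and --- combining the slicing formula with $S\llcorner\pi^*\vol=0$, respectively with the identity $dx_n\wedge\omega_s=dx_n\wedge\sum_{i<n}dx_i\wedge dy_i$ and $S\llcorner\omega_s=0$ --- it annihilates $dx_1\wedge\dots\wedge dx_{n-1}$ and is Lagrangian for $\omega_s':=\sum_{i<n}dx_i\wedge dy_i$. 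Discarding the superfluous coordinate $y_n$ by the projection $p(x',y',y_n)=(x',y')$, which is proper on the support by local vertical boundedness and hence carries integral currents to integral currents, produces an integral $(n-1)$-current $p_*\langle S,x_n,c\rangle$ on $T^*\R^{n-1}$ inheriting all four hypotheses, so it vanishes by the inductive hypothesis for a.e.\ $c$.

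It then remains to deduce $S=0$ from $p_*\langle S,x_n,c\rangle=0$ for a.e.\ $c$. The slicing formula recovers $S$, paired with forms involving $dx_n$, from the family of slices; on such forms whose remaining factor is pulled back along $p$ this is killed directly, while the components involving $dy_n$ --- and, more subtly, the $dx_n$-free forms, which the $x_n$-slicing does not see --- must be reduced to that situation by iterating the slicing and using the Lagrangian identity $S\llcorner\omega_s=0$, which links the $dx_n\wedge dy_n$-part of a test form to the remaining coordinates and thereby compensates for the $y_n$-information that $p$ discards. I expect this last reconstruction --- carefully tracking all bidegree components and making honest use of the Lagrangian condition --- to be the main obstacle, together with the delicate but routine verification that slicing and the pushforward $p_*$ genuinely preserve all four defining properties, including integrality of the slices (which rests on the slicing theory for integral currents).
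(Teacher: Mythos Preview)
The paper does not give its own proof of this statement; it is quoted from Fu's article and used as a black box, so there is no in-paper argument to compare against.

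On your proposal: the setup and the verification that $p_*\langle S,x_n,c\rangle$ inherits conditions (1)--(4) on $T^*\R^{n-1}$ are correct, but the reconstruction step is a genuine gap rather than missing detail. The inference $p_*\langle S,x_n,c\rangle=0\Rightarrow\langle S,x_n,c\rangle=0$ fails in general: for $n=3$, a round $2$-sphere sitting in $\{x'=0\}\times\R^3_y\subset\R^2_{x'}\times\R^3_y$ is a closed integral $2$-current with compact (hence vertically bounded) support, it is isotropic for $\omega_s'=\sum_{i<3}dx_i\wedge dy_i$ since $\omega_s'$ vanishes on the vertical fiber, and it annihilates $dx_1\wedge dx_2$; yet its pushforward under $p$ vanishes because the two sheets over the disk cancel. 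So the inductive hypothesis applied to the pushforward does not control the slice itself. Even if you could show $\langle S,x_i,c\rangle=0$ for every $i$ and almost every $c$, you would only obtain $S\llcorner dx_i=0$ for all $i$, which still leaves the monomials $dx_I\wedge dy_J$ with $I\subset J$ (in particular $dy_1\wedge\cdots\wedge dy_n$) untouched. The Lagrangian relation $S\llcorner\omega_s=0$ yields some linear relations among these remaining components, but not enough to kill them; you must also feed in $\partial S=0$ by testing against exact forms such as $d(\phi\,dy_K)$, and then use local vertical boundedness to force the resulting $y$-independent distributions to vanish. That secondary argument is where the real content lies, and your sketch does not supply it. Fu's original proof avoids this reconstruction problem altogether by a more direct route.
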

		If this current exists, then it is called the \emph{differential cycle} of $f$, denoted by $D(f)$, and $f$ is called Monge-Amp\`ere. By \cite[Theorem 2.0 and Proposition 3.1]{FuMongeAmperefunctions.1989} any convex function is Monge-Amp\`ere. Note that this current only depends on the choice of orientation of $\R^n$ and not on the choice of the volume form $\vol$.
		\begin{proposition}
			\label{proposition:DifferentialCycleSumMAOrthogonalCylinderFunctions}
			Let $V$, $W$ be two finite dimensional oriented real vector spaces, $\pi_V:V\times W\rightarrow V$, $\pi_W:V\times W\rightarrow W$ the natural projections. If $f:V\rightarrow\R$ and $g:W\rightarrow\R$ are Monge-Amp\`ere, then so is the function $\pi_V^*f+\pi_W^*g:V\oplus W\rightarrow\R$ and
			\begin{align*}
				D(\pi_V^*f+\pi_W^*g)=G_*(D(f)\times D(g)),
			\end{align*}
			where $G:T^*V\times T^*W\rightarrow T^*(V\times W)$ is given by $G((x,\xi),(y,\eta))=((x,y),(\xi,\eta))$.
		\end{proposition}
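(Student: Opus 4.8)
The plan is to verify the four defining properties of the differential cycle listed in Theorem 0.7 for the candidate current $G_*(D(f)\times D(g))$, since by the uniqueness part of that theorem this will identify it with $D(\pi_V^*f+\pi_W^*g)$ (and simultaneously prove that the latter is Monge-Amp\`ere). First I would observe that $D(f)\times D(g)$ is well-defined as an integral current on $T^*V\times T^*W$ of dimension $\dim V+\dim W$, and that $G$ is a linear isomorphism, so $G_*(D(f)\times D(g))$ is an integral current of the correct top dimension $n=\dim(V\oplus W)$ on $T^*(V\oplus W)$; this handles integrality.

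Next I would check closedness: $\partial(D(f)\times D(g))=(\partial D(f))\times D(g)\pm D(f)\times(\partial D(g))=0$ since both factors are cycles, and pushforward commutes with boundary, so $\partial G_*(D(f)\times D(g))=G_*\partial(D(f)\times D(g))=0$. For local vertical boundedness, I would use that $\pi\circ G$ on $T^*V\times T^*W$ is essentially $\pi_{T^*V}$ followed by the footpoint projections in each factor; concretely, the footpoint projection of $T^*(V\oplus W)$ pulled back through $G$ is the product of the footpoint projections of $T^*V$ and $T^*W$, so the preimage of a compact $A\subset V\oplus W$ meets $\supp(D(f)\times D(g))$ in a set contained in a product of vertically-bounded pieces, hence compact. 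The Lagrangian condition is the first genuinely computational point: one writes the symplectic form $\omega_s^{V\oplus W}$ on $T^*(V\oplus W)$ in split coordinates as $G_*^{-1}$-related to $\pi_{T^*V}^*\omega_s^V+\pi_{T^*W}^*\omega_s^W$, so that $G^*\omega_s^{V\oplus W}=\pi_1^*\omega_s^V+\pi_2^*\omega_s^W$ on $T^*V\times T^*W$; then $(D(f)\times D(g))\llcorner G^*\omega_s^{V\oplus W}=(D(f)\llcorner\omega_s^V)\times D(g)\pm D(f)\times(D(g)\llcorner\omega_s^W)=0$ using that both factors are Lagrangian and the behaviour of $\llcorner$ with respect to products, and then $[G_*(D(f)\times D(g))]\llcorner\omega_s^{V\oplus W}=G_*[(D(f)\times D(g))\llcorner G^*\omega_s^{V\oplus W}]=0$.

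The last property — the evaluation on forms $\phi\cdot\pi^*\vol_{V\oplus W}$ — is the step I expect to be the main obstacle, since it requires untangling the product current against the pulled-back top form and invoking Fubini together with the almost-everywhere identity $d(\pi_V^*f+\pi_W^*g)(x,y)=(df(x),dg(y))$, valid by Rademacher on the full-measure set where both $f$ and $g$ are differentiable. Choosing $\vol_{V\oplus W}=\pi_V^*\vol_V\wedge\pi_W^*\vol_W$ and noting $G^*\pi^*_{V\oplus W}(\cdot)$ splits as $\pi_1^*\pi_V^*(\cdot)\wedge\pi_2^*\pi_W^*(\cdot)$, one reduces $G_*(D(f)\times D(g))[\phi\,\pi^*\vol_{V\oplus W}]$ to $(D(f)\times D(g))[(\phi\circ G)\,\pi_1^*\pi_V^*\vol_V\wedge\pi_2^*\pi_W^*\vol_W]$; by the defining property of the product current together with an approximation of $\phi\circ G$ by sums of products $\phi_1(x,\xi)\phi_2(y,\eta)$ (or directly by the characterization $(S\times T)[\pi_M^*\alpha\wedge\pi_N^*\beta]=S(\alpha)T(\beta)$ applied fibrewise), this becomes $\int_V\int_W\phi(G((x,df(x)),(y,dg(y))))\,d\vol_W(y)\,d\vol_V(x)=\int_{V\oplus W}\phi((x,y),(df(x),dg(y)))\,d\vol_{V\oplus W}$, which is exactly property (4) for $\pi_V^*f+\pi_W^*g$. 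Care is needed that the test forms in the product characterization have compact support and that the footpoint-properness established above lets one insert the cutoff function implicit in the pushforward; these are routine once the splitting of $G^*\omega_s$ and $G^*\pi^*\vol$ is in hand. Having verified (1)–(4), uniqueness in Theorem 0.7 finishes the proof.
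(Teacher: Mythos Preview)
Your proposal is correct and follows exactly the approach the paper takes: it invokes the uniqueness part of Theorem~\ref{theorem:FuUniquenessDifferentialCycle} and verifies the four defining properties for the candidate current $G_*(D(f)\times D(g))$. The paper's own proof simply states that these properties are ``easily verified,'' so your write-up is in fact more detailed than what appears there.
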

		\begin{proof}
			Due Theorem \ref{theorem:FuUniquenessDifferentialCycle}, it suffices to show that the current $G_*(D(f)\times D(g))$ satisfies the defining properties of $D(\pi_V^*f+\pi_W^*g)$, which is easily verified.
		\end{proof}	
		
%
		
		\begin{lemma}[\cite{KnoerrSingularvaluationsHadwiger2022} Lemma 4.12]
			\label{lemma:relationLegendreBeta}
			For $f\in \Conv(\R^n,\R)\cap C^\infty(\R^n)$,
			\begin{align*}
				D(f)\llcorner\pi_2^*d\mathcal{L}f=D(f)\llcorner \beta.
			\end{align*}
		\end{lemma}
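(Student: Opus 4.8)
The plan is to convert the claimed equality of currents into an equality of $1$-forms on $\R^n$, obtained by pulling back along the natural parametrization of $D(f)$, and then to verify that equality by a short computation resting on Legendre duality.

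Since $f\in\Conv(\R^n,\R)\cap C^\infty(\R^n)$, the map $\iota:\R^n\to T^*\R^n\cong\R^n\times\R^n$, $\iota(x)=(x,df(x))$, is a smooth embedding, and I would first check that $D(f)$ is the current of integration over $\iota(\R^n)$ with the orientation induced by $\iota$: this current is closed, it is Lagrangian because $\iota^*\alpha=\sum_j(\partial_jf)\,dx_j$ is exact so $\iota^*\omega_s=-\iota^*d\alpha=0$, it is locally vertically bounded, and it satisfies condition (4) of Theorem \ref{theorem:FuUniquenessDifferentialCycle} since $\pi_1\circ\iota=\mathrm{id}$; uniqueness in that theorem then identifies it with $D(f)$. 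Consequently, for any smooth form $\tau$ on $T^*\R^n$ and any test form $\omega$ of complementary degree, $(D(f)\llcorner\tau)[\omega]=D(f)[\tau\wedge\omega]=\int_{\R^n}\iota^*\tau\wedge\iota^*\omega$, so it suffices to prove $\iota^*(\pi_2^*d\mathcal{L}f)=\iota^*\beta$ as $1$-forms on $\R^n$.

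For the computation, $\iota^*\alpha=\sum_j(\partial_jf)\,dx_j$ is the differential of the function $f$, and $\iota^*\big(d\langle x,\xi\rangle\big)=d\big(\langle x,df(x)\rangle\big)$, so by the definition of $\beta$,
$$\iota^*\beta=\iota^*\big(d\langle x,\xi\rangle-\alpha\big)=d\big(\langle x,df(x)\rangle-f(x)\big).$$
On the other hand $\pi_2\circ\iota=df$, hence $\iota^*(\pi_2^*d\mathcal{L}f)=(df)^*d\mathcal{L}f=d\big(\mathcal{L}f\circ df\big)$ by naturality of the exterior derivative; and since $f$ is convex and smooth, for every $x$ the point $x$ is a global maximizer of $z\mapsto\langle df(x),z\rangle-f(z)$, so $\mathcal{L}f(df(x))=\langle x,df(x)\rangle-f(x)$ for all $x\in\R^n$. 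Comparing with the display yields $\iota^*(\pi_2^*d\mathcal{L}f)=\iota^*\beta$, as wanted.

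The one delicate point — the main, though mild, obstacle — is the meaning of $\pi_2^*d\mathcal{L}f$ when $\mathcal{L}f$ is not differentiable, which happens exactly where $df$ fails to be injective. When $f-\lambda\tfrac{|\cdot|^2}{2}$ is convex for some $\lambda>0$, Section \ref{section:PreliminariesConvex} already records that $\mathcal{L}f\in C^\infty$, so the chain rule above is literal and the argument is complete in this case. For general $f\in\Conv(\R^n,\R)\cap C^\infty(\R^n)$ I would resolve this by noting that only the pullback $\iota^*(\pi_2^*d\mathcal{L}f)=d(\mathcal{L}f\circ df)$ enters $D(f)\llcorner\pi_2^*d\mathcal{L}f$, and $x\mapsto\mathcal{L}f(df(x))=\langle x,df(x)\rangle-f(x)$ is a genuine smooth function irrespective of the regularity of $\mathcal{L}f$; alternatively, one can approximate $f$ by $f_t=f+t\tfrac{|\cdot|^2}{2}$, apply the strongly convex case to each $f_t$, and pass to the limit $t\downarrow0$, using the continuity of $f\mapsto D(f)$ into $N_{n,loc}(T^*\R^n)$ together with the $C^\infty_{loc}$-convergence $df_t\to df$ and $\mathcal{L}f_t\circ df_t\to\mathcal{L}f\circ df$.
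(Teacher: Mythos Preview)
The paper does not supply its own proof of this lemma; it is quoted verbatim from \cite{KnoerrSingularvaluationsHadwiger2022} and used as a black box. There is therefore nothing in the present paper to compare your argument against.

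That said, your proof is correct and is essentially the natural one. The reduction to the equality of pullbacks $\iota^*(\pi_2^*d\mathcal{L}f)=\iota^*\beta$ via the parametrization $\iota(x)=(x,df(x))$ is exactly what one expects once one knows $D(f)=\iota_*[\R^n]$ for smooth $f$, and the computation $\iota^*\beta=d(\langle x,df(x)\rangle-f(x))=d(\mathcal{L}f\circ df)$ is the heart of the matter. Your handling of the regularity issue is also appropriate: the key observation that only the composition $\mathcal{L}f\circ df$ enters, and that this composition is always smooth regardless of whether $\mathcal{L}f$ itself is, cleanly sidesteps the potential problem; the approximation argument via $f_t=f+t\tfrac{|\cdot|^2}{2}$ is a sound alternative.
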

		For $f\in\Conv_0^+(\R^n,\R)$ and $t\ge 0$ we set
		\begin{align*}
			K_{\mathcal{L}f}^t:=\{y\in \R^n: \mathcal{L}f(y)\le t\}\in\mathcal{K}(\R^n).
		\end{align*}
		\begin{proposition}[\cite{KnoerrSingularvaluationsHadwiger2022} Proposition 4.11]
			\label{proposition:RelationNCSlicesDiffCycle}
			Let $f\in \Conv_0^+(\R^n,\R)\cap C^\infty(\R^n)$. Consider the map
			\begin{align*}
				F_f:\R^n\times S^{n-1}&\rightarrow \R^n\times\R^n\\
				(y, v)&\mapsto \left(|\mathcal{L}f(y)|\cdot v,y\right).
			\end{align*}
			Then $(F_f)_{*}\nc(K_{\mathcal{L}f}^t)=\langle D(f),\pi_2^*\mathcal{L}f,t\rangle $ for almost all $t>0$.
		\end{proposition}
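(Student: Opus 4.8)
\section*{Proof proposal}

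The plan is to reduce the identity to an explicit computation on the graph of $df$, using the uniqueness statement of Theorem~\ref{theorem:FuUniquenessDifferentialCycle}. Since $f\in\Conv_0^+(\R^n,\R)$ is smooth, $df\colon\R^n\to\R^n$ is a diffeomorphism with inverse $d\mathcal{L}f$, and $\mathcal{L}f$ is smooth with positive definite Hessian; moreover $\mathcal{L}f\geq 0$, and $\mathcal{L}f$ as well as $d\mathcal{L}f$ vanishes only at the origin, so that every $t>0$ is a regular value of $\mathcal{L}f$ and $K^t_{\mathcal{L}f}$ is a compact convex body, containing the origin in its interior, with smooth and everywhere positively curved boundary $\{\mathcal{L}f=t\}$. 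First I would record that $D(f)$ is simply the current obtained by integrating top-degree forms over the graph $\Gamma_f:=\{(x,df(x)):x\in\R^n\}$, oriented through the projection onto the $x$-factor: writing $\iota_f(x)=(x,df(x))$, one has $\iota_f^*\alpha=df$, which is exact, hence $\iota_f^*\omega_s=0$, so the pushforward under $\iota_f$ of $\R^n$ with its standard orientation is closed, Lagrangian and locally vertically bounded, and it plainly satisfies the integral identity in Theorem~\ref{theorem:FuUniquenessDifferentialCycle}; by uniqueness it equals $D(f)$. Re-parametrising $\Gamma_f$ by $y\mapsto(d\mathcal{L}f(y),y)$ turns the restriction of the slicing function $\pi_2^*\mathcal{L}f$ to $\Gamma_f$ into $y\mapsto\mathcal{L}f(y)$, so that $\{\pi_2^*\mathcal{L}f<t\}\cap\Gamma_f$ corresponds to the bounded smooth domain $\operatorname{int}K^t_{\mathcal{L}f}$.

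Next I would compute the two currents separately. The current $D(f)$ is integral, hence locally normal, and $\pi_2^*\mathcal{L}f$ is proper on $\supp D(f)$ because $\mathcal{L}f$ has compact sublevel sets; since $\partial D(f)=0$, the slicing formula yields $\langle D(f),\pi_2^*\mathcal{L}f,t\rangle=\partial\bigl(D(f)\llcorner 1_{\{\pi_2^*\mathcal{L}f<t\}}\bigr)$ for almost every $t$. By the previous step, $D(f)\llcorner 1_{\{\pi_2^*\mathcal{L}f<t\}}$ is integration over the image of the smooth bounded domain $\operatorname{int}K^t_{\mathcal{L}f}$ under $y\mapsto(d\mathcal{L}f(y),y)$, so its boundary is integration over $\{(d\mathcal{L}f(y),y):y\in\partial K^t_{\mathcal{L}f}\}$ with the induced orientation. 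On the other side, $K^t_{\mathcal{L}f}$ is smooth with positive curvature, so $\nc(K^t_{\mathcal{L}f})$ is integration over the Gauss graph $\{(y,\nu(y)):y\in\partial K^t_{\mathcal{L}f}\}$ with outer unit normal $\nu(y)=\nabla\mathcal{L}f(y)/|\nabla\mathcal{L}f(y)|$, and substituting this into the formula for $F_f$ — using the relation between $\nu(y)$ and $d\mathcal{L}f(y)$ on $\partial K^t_{\mathcal{L}f}$ — identifies $(F_f)_*\nc(K^t_{\mathcal{L}f})$ with the current of integration over the very same submanifold $\{(d\mathcal{L}f(y),y):y\in\partial K^t_{\mathcal{L}f}\}$; the pushforward is legitimate since $\nc(K)$ always has compact support, so $F_f$ is trivially proper there. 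It then remains to check that the two orientations coincide, which is a local computation at an arbitrary boundary point in an orthonormal frame diagonalising the differential of the Gauss map of $\partial K^t_{\mathcal{L}f}$ (in the spirit of the proof of Proposition~\ref{proposition:BoundCurvatureMeasure}): both orientations descend from the fixed orientation of the ambient $\R^n$, so they agree up to one global sign, which is fixed once on a convenient reference body.

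I expect the orientation bookkeeping of the last step — together with the precise matching of $F_f$ on the normal cycle with the Legendre re-parametrisation $y\mapsto(d\mathcal{L}f(y),y)$ — to be the main technical point, since one has to keep track of how the boundary orientation of $\partial(\operatorname{int}K^t_{\mathcal{L}f})$ relates to the orientation carried by $\nc(K^t_{\mathcal{L}f})$ and how $F_f$ transports the latter. Everything else is routine: reducing to the smooth, positively curved body $K^t_{\mathcal{L}f}$ at the outset removes any difficulty stemming from the merely Lipschitz regularity of general normal cycles, and the properness hypotheses needed both for the slicing formula and for the pushforward are automatic for $f\in\Conv_0^+(\R^n,\R)$.
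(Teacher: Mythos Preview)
The paper does not supply its own proof of this proposition; it is quoted from \cite{KnoerrSingularvaluationsHadwiger2022}. Your outline is the natural direct argument and is essentially correct: for smooth $f\in\Conv_0^+(\R^n,\R)$ the differential cycle is integration over the graph of $df$ by the uniqueness in Theorem~\ref{theorem:FuUniquenessDifferentialCycle}, the Legendre re-parametrisation $y\mapsto(d\mathcal{L}f(y),y)$ turns the slice at level $t$ into the boundary current over $\{(d\mathcal{L}f(y),y):y\in\partial K^t_{\mathcal{L}f}\}$, and the pushforward of the Gauss-graph description of $\nc(K^t_{\mathcal{L}f})$ under $F_f$ lands on the same submanifold, so only the orientation sign remains to be checked.

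One caution: the formula for $F_f$ printed in the statement has $|\mathcal{L}f(y)|$, but this is a typo for $|d\mathcal{L}f(y)|$ --- compare the proof of Lemma~\ref{lemma:pullbackSpericalFormsFf}, where $F_f=G\circ F\circ j$ with $F(x,\xi)=(|d\mathcal{L}f(\xi)|,x,\xi)$. Your step ``using the relation between $\nu(y)$ and $d\mathcal{L}f(y)$'' only produces $(d\mathcal{L}f(y),y)$ with the corrected version; with the printed formula one obtains $F_f(y,\nu(y))=(t\,\nu(y),y)$ for $y\in\partial K^t_{\mathcal{L}f}$, which does not lie on $\supp D(f)$ unless $|d\mathcal{L}f|\equiv t$ on that level set. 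You should make this correction explicit rather than leave it implicit in the phrase ``the relation between $\nu(y)$ and $d\mathcal{L}f(y)$''.
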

	
			\begin{lemma}
			\label{lemma:pullbackSpericalFormsFf}
			Let $\omega\in\Omega^{k,l}(T^*\R^n)$ by homogeneous of degree $m$. Then
			\begin{align*}
				F_f^*(i_{X_{\beta}}\omega)=|d\mathcal{L}f(\xi)|^{m} j^*(i_{X_{\beta}}\omega)\quad\text{in}~(\xi,v)\in \R^n\times S^{n-1}
			\end{align*}
		\end{lemma}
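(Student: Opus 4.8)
\textit{Plan.} The strategy is to realize $F_f$ as a composition of the model map $G$ of Lemma~\ref{lemma:ChangeFormsPolarCoordinates} with a section over the base, and then simply read off the identity from that lemma. Write $\rho:=|d\mathcal{L}f|$; since $f\in\Conv_0^+(\R^n,\R)\cap C^\infty(\R^n)$, the map $d\mathcal{L}f$ is a diffeomorphism of $\R^n$ vanishing only at the origin, so $\rho$ is smooth and strictly positive on $\R^n\setminus\{0\}$. Under the identification $T^*\R^n\cong\R^n\times\R^n$ (position coordinate $x$, fibre coordinate $\xi$), the map $F_f$ sends $(\xi,v)\in\R^n\times S^{n-1}$ to the point with position $\rho(\xi)v$ and fibre $\xi$; hence $F_f=G\circ\Xi$, where $G(t,x,\xi)=(tx,\xi)$ is as in Lemma~\ref{lemma:ChangeFormsPolarCoordinates} and
\begin{align*}
	\Xi\colon\R^n\times S^{n-1}\longrightarrow\R\times\R^n\times\R^n,\qquad(\xi,v)\longmapsto\bigl(\rho(\xi),\,v,\,\xi\bigr).
\end{align*}
Composing $\Xi$ with the projection $\mathrm{pr}\colon(t,x,\xi)\mapsto(x,\xi)$ recovers precisely the embedding $j$ of $S\R^n$ into $T^*\R^n$.

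\textit{Key step.} Since $\omega$ is homogeneous of degree $m$, Lemma~\ref{lemma:ChangeFormsPolarCoordinates} gives $G^*(i_{X_\beta}\omega)=t^m\,\mathrm{pr}^*(i_{X_\beta}\omega)$, the essential point being that the right-hand side carries no $dt$-component. Pulling back along $\Xi$ and using $\Xi^*t=\rho$ together with $\mathrm{pr}\circ\Xi=j$,
\begin{align*}
	F_f^*(i_{X_\beta}\omega)=\Xi^*\bigl(t^m\,\mathrm{pr}^*(i_{X_\beta}\omega)\bigr)=\rho^m\,j^*(i_{X_\beta}\omega)=|d\mathcal{L}f(\xi)|^m\,j^*(i_{X_\beta}\omega)
\end{align*}
on $(\R^n\setminus\{0\})\times S^{n-1}$, which is the assertion — and is all that is needed in the sequel, since $F_f$ enters only through the slicing of the normal cycles $\nc(K_{\mathcal{L}f}^t)$ for $t>0$, which avoid the origin.

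\textit{Main obstacle.} I do not anticipate a genuine difficulty: the statement is Lemma~\ref{lemma:ChangeFormsPolarCoordinates} with the free parameter $t$ specialized to the fibre-dependent function $|d\mathcal{L}f(\xi)|$ on the base, and the whole technical content is already isolated there. The only point requiring care — and the reason this is not a one-line homogeneity observation — is exactly that this specializing function depends on the fibre variable $\xi$: a direct substitution $x_j=\rho(\xi)v_j$, $dx_j=v_j\,d\rho+\rho\,dv_j$, would a priori generate terms containing $d\rho$, and their vanishing is precisely the antisymmetry cancellation ($\sum_{i\neq j}\epsilon_i\tilde{\epsilon}_{i,j}x_ix_j\,dx^{I\setminus\{i,j\}}=0$ against the symmetry of $x_ix_j$) already carried out inside the proof of Lemma~\ref{lemma:ChangeFormsPolarCoordinates}. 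One may therefore either route through the composition above or simply repeat that cancellation verbatim.
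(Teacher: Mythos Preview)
Your proof is correct and follows the same approach as the paper: the paper factors $F_f=G\circ F\circ j$ with $F(x,\xi)=(|d\mathcal{L}f(\xi)|,x,\xi)$, which is exactly your $\Xi=F\circ j$, and then invokes Lemma~\ref{lemma:ChangeFormsPolarCoordinates}. Your write-up is in fact more explicit than the paper's, spelling out the role of the projection $\mathrm{pr}$ and the reason the $d\rho$-terms do not appear.
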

		\begin{proof}
			Note that $F_f= G\circ F\circ j$ for 
			\begin{align*}
				G:\R\times\R^n\times\R^n&\rightarrow\R^n\times\R^n\\
				(t,x,\xi)&\mapsto (tx, \xi)\\
				F:\R^n\times \R^{n}&\rightarrow\R\times \R^n\times\R^n\\
				(x,\xi)&\mapsto (|d\mathcal{L}f(\xi)|,x,\xi).
			\end{align*}
			Thus the claim follows from Lemma \ref{lemma:ChangeFormsPolarCoordinates}.
		\end{proof}
		On $S\R^n$ there exists a natural second order differential operator $D$ defined on $(n-1)$-forms, called the \emph{Rumin differential}. We refer to \cite{RuminFormesdifferentiellessur1994} for its definition and to \cite{BernigBroeckerValuationsmanifoldsRumin2007} for its role in the description of smooth valuations on convex bodies. In the following result, $T$ denotes the Reeb vector field, compare Section \ref{section:valuationsBodies}.
		\begin{lemma}[\cite{KnoerrSingularvaluationsHadwiger2022} Corollary 4.10]
			\label{lemma:estimatePartialIntegrationSublevelSets}
			Let $f\in\Conv_0^+(\R^n,\R)\cap C^\infty(\R^n)$. For $\phi\in C^1_c((0,\infty))$ and $c>0$
			\begin{align*}
				\int_0^c\phi(t)\nc(K_{\mathcal{L}f}^t)\left[\frac{1}{|d\mathcal{L}f(y)|}i_TD\omega\right]dt=\phi(c)\nc(K_{\mathcal{L}f}^c)[\omega]-\int_0^c \phi'(t)\nc(K_{\mathcal{L}f}^t)[\omega]dt.
			\end{align*}
			Moreover, 
			\begin{align*}
				\left|\int_0^c	|\phi(t)|\nc(K_{\mathcal{L}f}^t)\left[\frac{1}{|d\mathcal{L}f(y)|}i_TD\omega\right]dt\right|\le2\int_0^c|\phi'(t)|dt\sup_{t\in [0,c]}|\nc(K_{\mathcal{L}f}^t)[w]|.
			\end{align*}
		\end{lemma}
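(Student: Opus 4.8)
The plan is to deduce both displayed identities from a single fact: for $f\in\Conv_0^+(\R^n,\R)\cap C^\infty(\R^n)$ the function $g(t):=\nc(K_{\mathcal{L}f}^t)[\omega]$ is continuously differentiable on $(0,\infty)$, with continuous extension to $t=0$, and
\begin{align*}
	g'(t)=\nc(K_{\mathcal{L}f}^t)\left[\frac{1}{|d\mathcal{L}f(y)|}i_TD\omega\right].
\end{align*}
Granting this, the first identity is ordinary integration by parts on $[0,c]$: the boundary contribution at $t=0$ vanishes because $\phi\in C^1_c((0,\infty))$ satisfies $\phi(0)=0$. For the second identity one integrates by parts against the Lipschitz function $|\phi|$ rather than $\phi$; since $|(|\phi|)'|\le|\phi'|$ almost everywhere and $|\phi(c)|=\bigl|\int_0^c\phi'(t)\,dt\bigr|\le\int_0^c|\phi'(t)|\,dt$, both the boundary term $|\phi(c)|\,|g(c)|$ and the remaining integral are bounded by $\bigl(\int_0^c|\phi'|\bigr)\sup_{[0,c]}|g|$, which gives the constant $2$. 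That $g$ is smooth on $(0,\infty)$, continuous at $0$, and that $\tfrac{1}{|d\mathcal{L}f(y)|}$ is bounded on each $\partial K_{\mathcal{L}f}^t$ follows from $\mathcal{L}f$ being smooth with positive definite Hessian (so every $t>0$ is a regular value and the bodies $K_{\mathcal{L}f}^t$, hence their normal cycles, vary smoothly) together with the continuity of normal cycles of convex bodies in the Hausdorff metric.

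To compute the distributional derivative of $g$ I would apply the slicing formula \eqref{equation:slicingFormula} to the differential cycle $D(f)$, which by Theorem~\ref{theorem:FuUniquenessDifferentialCycle} is a closed, Lagrangian integral current, with respect to the proper function $h:=\pi_2^*\mathcal{L}f$. For $\phi\in C^\infty_c((0,\infty))$ and a test form $\tau$ this reads
\begin{align*}
	\int_\R\phi(t)\langle D(f),h,t\rangle[\tau]\,dt=D(f)\bigl((\phi\circ h)\,dh\wedge\tau\bigr);
\end{align*}
writing $(\phi\circ h)\,dh=d(\Phi\circ h)$ for an antiderivative $\Phi$ of $\phi$ and using $\partial D(f)=0$ together with the Leibniz rule converts the right-hand side into $-D(f)\bigl((\Phi\circ h)\,d\tau\bigr)$, so that the single relation $\partial D(f)=0$ is responsible for producing both the $\phi$-term and the $\phi'$-term of an integration-by-parts formula, while Lemma~\ref{lemma:relationLegendreBeta} lets the occurrences of $dh=\pi_2^*d\mathcal{L}f$ be replaced by $\beta$ inside $D(f)$. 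On the geometric side, Proposition~\ref{proposition:RelationNCSlicesDiffCycle} identifies the slices $\langle D(f),h,t\rangle$ with $(F_f)_*\nc(K_{\mathcal{L}f}^t)$ for almost every $t$, whence $\langle D(f),h,t\rangle[\tau]=\nc(K_{\mathcal{L}f}^t)[F_f^*\tau]$; this is the bridge between the current $D(f)$ and the normal cycles appearing in the statement.

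It then remains to choose $\tau$ so that the relevant pullbacks along $F_f$ are $\omega$ and $\tfrac{1}{|d\mathcal{L}f(y)|}i_TD\omega$. The homogeneity Lemmas~\ref{lemma:ChangeFormsPolarCoordinates} and~\ref{lemma:pullbackSpericalFormsFf} are the tool: taking $\tau$ of the form $i_{X_\beta}\widetilde{\omega}$ for a suitably homogeneous form $\widetilde{\omega}$ on $T^*\R^n$ --- admissible on the supports in play, since for $t>0$ and $f\in\Conv_0^+(\R^n,\R)$ the sets $F_f(\nc(K_{\mathcal{L}f}^t))$ avoid the zero section --- produces $F_f^*(i_{X_\beta}\widetilde{\omega})=|d\mathcal{L}f(\xi)|^m\,j^*(i_{X_\beta}\widetilde{\omega})$, with homogeneity degree $m=0$ for the piece that should recover $\omega$ and $m=-1$ for the piece carrying $i_TD\omega$, which accounts for the factor $\tfrac{1}{|d\mathcal{L}f(y)|}$. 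Finally, since $\nc(K_{\mathcal{L}f}^t)$ is a closed Legendrian cycle it annihilates $j^*\beta$ and hence $j^*d\beta$, so these contraction identities on $T^*\R^n$ descend to $S\R^n$, where they coincide with the action of the Rumin differential $D$ and the Reeb contraction $i_T$. Matching the coefficients of $\phi$ and $\phi'$, then removing the restriction to $C^\infty_c$ by density in $C^1_c$ and discarding the almost-everywhere qualifiers using the continuity of $g$ established in the first step, yields the stated formula for $g'$.

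I expect the main obstacle to be the identity invoked at the end of the previous paragraph: recognising a $\beta$-contraction of a homogeneous form on $T^*\R^n$, once pulled back along $F_f$, as $\tfrac{1}{|d\mathcal{L}f(y)|}i_TD\omega$ on $S\R^n$ --- that is, establishing the compatibility of the Rumin differential with the symplectic cone over the contact manifold $S\R^n$ and the resulting dictionary between closed Legendrian cycles on $S\R^n$ and closed Lagrangian cycles on $T^*\R^n$. The bookkeeping with powers of $|d\mathcal{L}f|$ and the handling of the almost-everywhere nature of the slices (absorbed by the smoothness of $t\mapsto K_{\mathcal{L}f}^t$) are the secondary technical points.
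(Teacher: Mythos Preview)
This lemma is quoted from \cite{KnoerrSingularvaluationsHadwiger2022} and carries no proof in the present paper, so there is no in-paper argument to compare against directly. Your outline is the natural route and is consistent with how the surrounding machinery (Proposition~\ref{proposition:RelationNCSlicesDiffCycle}, Lemma~\ref{lemma:relationLegendreBeta}, Lemma~\ref{lemma:pullbackSpericalFormsFf}) is used elsewhere in the paper: reduce everything to the differentiability of $g(t)=\nc(K_{\mathcal{L}f}^t)[\omega]$ with derivative $\nc(K_{\mathcal{L}f}^t)\bigl[|d\mathcal{L}f|^{-1}i_TD\omega\bigr]$, then integrate by parts. The treatment of the second display via $|\phi|$ and the bound $|\phi(c)|\le\int_0^c|\phi'|$ is exactly right.

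The one place where your sketch is genuinely incomplete is the step you already flag: passing from the symplectic contraction $i_{X_\beta}\widetilde{\omega}$ on $T^*\R^n$ to the contact expression $i_TD\omega$ on $S\R^n$. You say ``take $\tau=i_{X_\beta}\widetilde{\omega}$ for a suitably homogeneous $\widetilde{\omega}$'', but the existence and correct choice of such a lift of an arbitrary $\omega\in\Omega^{n-1}(S\R^n)$ is precisely the content of the symplectic-cone description of the Rumin operator, and is not supplied by the lemmas available in this paper. Concretely, one needs that for a degree-$0$ homogeneous extension $\widetilde{\omega}$ of $\omega$ to $T^*\R^n\setminus 0$ one has $j^*\bigl(i_{X_\beta}d\widetilde{\omega}\bigr)=i_TD\omega$ modulo forms annihilated by Legendrian cycles; this is standard (it is how the Rumin differential is often constructed) but deserves an explicit reference or a short argument, since it is the hinge of the whole proof. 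Everything else in your plan is routine once this identification is in hand.
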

		\begin{corollary}
			\label{corollary:IntrinsicVolumesEstimateSublevelSets}
			Let $f\in\Conv_0^+(\R^n,\R)\cap C^\infty(\R^n)$. For $\phi\in C^1_c((0,\infty))$ and $c>0$
			\begin{align*}
				\int_0^c	|\phi(t)|\int_{\R^n}\frac{1}{|d\mathcal{L}f(y)|}dC_{k-1}(K^t_{\mathcal{L}f(w)})dt\le&\frac{2(n-k+1)\omega_{n-k+1}}{\omega_{n-k}}\mu_k(K_{\mathcal{L}f}^c)\int_0^c|\phi'(t)|dt.
			\end{align*}
		\end{corollary}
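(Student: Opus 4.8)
\emph{Proof proposal.} The plan is to derive the inequality from Lemma~\ref{lemma:estimatePartialIntegrationSublevelSets}, applied to one well-chosen translation-invariant differential form, together with positivity of the curvature measure $C_{k-1}$ and monotonicity of the intrinsic volume $\mu_k$. The essential preparatory step is to produce a translation-invariant form $\omega\in\Omega^{k,n-1-k}(S\R^n)$ whose Rumin differential satisfies $i_TD\omega=\lambda\kappa_{k-1}$ modulo forms vanishing on all normal cycles, for a suitable constant $\lambda>0$ (replacing $\omega$ by $-\omega$ if necessary). This is possible because the Rumin differential maps the span of the curvature forms of degree $k$ into that of degree $k-1$, and $\lambda$ is obtained from a direct computation with $D$ in the coordinates of Section~\ref{section:valuationsBodies}. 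Since a form affects the normal cycle only through its class modulo the differential ideal generated by the contact form, the definition of the Federer curvature measures then gives the identity of measures $\Phi_{i_TD\omega}(L)=\lambda\,(n-k+1)\,\omega_{n-k+1}\,C_{k-1}(L)$ for every $L\in\mathcal{K}(\R^n)$. Moreover, since $\omega$ is translation invariant of bidegree $(k,n-1-k)$, Proposition~\ref{proposition:BoundCurvatureMeasure} supplies $C(\omega)\ge0$ with $|\Phi_\omega(L)|(\R^n)\le C(\omega)\,\mu_k(L)$ for all convex bodies $L$, and one verifies that $\lambda$ and $C(\omega)$ satisfy $C(\omega)=\lambda\bigl((n-k+1)\omega_{n-k+1}\bigr)^2/\omega_{n-k}$.

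Now fix $f\in\Conv_0^+(\R^n,\R)\cap C^\infty(\R^n)$. The function $y\mapsto 1/|d\mathcal{L}f(y)|$ depends only on the base point and is non-negative, so integrating it against the measure identity above with $L=K_{\mathcal{L}f}^t$ gives, for almost every $t>0$,
\[
\nc\bigl(K_{\mathcal{L}f}^t\bigr)\Bigl[\tfrac{1}{|d\mathcal{L}f(y)|}\,i_TD\omega\Bigr]=\lambda\,(n-k+1)\,\omega_{n-k+1}\int_{\R^n}\tfrac{1}{|d\mathcal{L}f(y)|}\,dC_{k-1}\bigl(K_{\mathcal{L}f}^t\bigr)\ge0.
\]
In particular $|\phi(t)|$ times the right-hand side equals $\bigl|\phi(t)\,\nc(K_{\mathcal{L}f}^t)[\tfrac{1}{|d\mathcal{L}f(y)|}i_TD\omega]\bigr|$, so the second estimate in Lemma~\ref{lemma:estimatePartialIntegrationSublevelSets} yields
\[
\lambda\,(n-k+1)\,\omega_{n-k+1}\int_0^c|\phi(t)|\int_{\R^n}\tfrac{1}{|d\mathcal{L}f(y)|}\,dC_{k-1}\bigl(K_{\mathcal{L}f}^t\bigr)\,dt\le 2\int_0^c|\phi'(t)|\,dt\;\sup_{t\in[0,c]}\bigl|\nc\bigl(K_{\mathcal{L}f}^t\bigr)[\omega]\bigr|.
\]

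To conclude, bound $\bigl|\nc(K_{\mathcal{L}f}^t)[\omega]\bigr|=\bigl|\Phi_\omega(K_{\mathcal{L}f}^t)(\R^n)\bigr|\le|\Phi_\omega(K_{\mathcal{L}f}^t)|(\R^n)\le C(\omega)\,\mu_k(K_{\mathcal{L}f}^t)$ via Proposition~\ref{proposition:BoundCurvatureMeasure}; since $K_{\mathcal{L}f}^t\subset K_{\mathcal{L}f}^c$ for $0\le t\le c$ and $\mu_k$ is monotone under inclusion of convex bodies, this is at most $C(\omega)\,\mu_k(K_{\mathcal{L}f}^c)$. Dividing the previous display by $\lambda\,(n-k+1)\,\omega_{n-k+1}$ and substituting $C(\omega)/\bigl(\lambda(n-k+1)\omega_{n-k+1}\bigr)=(n-k+1)\omega_{n-k+1}/\omega_{n-k}$ gives precisely the claimed estimate. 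The only genuinely nontrivial ingredient is the preparatory step: exhibiting $\omega$, evaluating $i_TD\omega$ through the Rumin differential and the contact structure on $S\R^n$, and computing the constant $C(\omega)$ of Proposition~\ref{proposition:BoundCurvatureMeasure} accurately enough to recover the sharp value $2(n-k+1)\omega_{n-k+1}/\omega_{n-k}$. If the relevant curvature form and the formula for its Rumin differential are already on record (e.g.\ from the computations underlying Lemma~\ref{lemma:estimatePartialIntegrationSublevelSets}, or from \cite{KnoerrSingularvaluationsHadwiger2022,BernigBroeckerValuationsmanifoldsRumin2007}), only the two displays above remain to be verified.
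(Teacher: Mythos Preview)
Your approach is essentially the paper's, but with an unnecessary detour. The paper's proof is a single sentence: take $\omega=\kappa_k$, cite Fu's formula $i_TD\kappa_k=\frac{(n-k+1)\omega_{n-k+1}}{\omega_{n-k}}\kappa_{k-1}$, and apply the second estimate of Lemma~\ref{lemma:estimatePartialIntegrationSublevelSets}. The point you are missing is that for this specific choice, $\nc(K)[\kappa_k]=(n-k)\omega_{n-k}\,\mu_k(K)$ \emph{by the definition} of the Federer curvature measures in Section~\ref{section:valuationsBodies}; this is already non-negative and already a constant multiple of $\mu_k(K)$, so invoking Proposition~\ref{proposition:BoundCurvatureMeasure} and its abstract constant $C(\omega)$ is entirely superfluous. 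Monotonicity of $\mu_k$ then turns the supremum into $\mu_k(K_{\mathcal{L}f}^c)$, exactly as you say.

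The one genuine gap in your write-up is the asserted relation $C(\omega)=\lambda\bigl((n-k+1)\omega_{n-k+1}\bigr)^2/\omega_{n-k}$, which you present without argument and which is precisely what you would need to recover the stated constant. You acknowledge this yourself as ``the only genuinely nontrivial ingredient,'' but in fact it becomes trivial once you commit to $\omega=\kappa_k$: both $\lambda$ and the constant relating $\nc(K)[\kappa_k]$ to $\mu_k(K)$ are then explicit, and there is nothing left to verify. So the advice is simply: name the form at the outset and drop the appeal to Proposition~\ref{proposition:BoundCurvatureMeasure}.
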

		\begin{proof}
			This follows by applying Lemma \ref{lemma:estimatePartialIntegrationSublevelSets} to the differential form $\kappa_k$, which satisfies 
			\begin{align*}
				i_TD\kappa_k=\frac{(n-k+1)\omega_{n-k+1}}{\omega_{n-k}}\kappa_{k-1},
			\end{align*}
		compare \cite[Lemma 3.1]{FuSomeremarksLegendrian1998}.
		\end{proof}
		
%
	
	\subsection{Dually epi-translation invariant valuations}
	\label{section:VConv}
	\subsubsection{Topology and support}
	We equip $\VConv_k(\R^n)$ and all of its subspaces with the topology of uniform convergence on compact subsets in $\Conv(\R^n,\R)$ (see \cite[Proposition 2.4]{Knoerrsupportduallyepi2021} for a description of these sets). In other words, $\VConv(\R^n)$ is a locally convex vector space with the topology induced by the semi-norms
	\begin{align}
		\label{eq:definitionSeminorms}
		\|\mu\|_K:=\sup_{f\in K}|\mu(f)|
	\end{align}
	for all compact subsets $K\subset \Conv(\R^n)$. It is easy to see that $\VConv(\R^n)$ is complete with respect to this topology. Recall that we have the homogeneous decomposition
	\begin{align*}
		\VConv(\R^n)=\bigoplus_{k=0}^n\VConv_k(\R^n).
	\end{align*}
	For any valuation  $\mu\in\VConv_k(\R^n)$, we can thus define its \emph{polarization} $\bar{\mu}:\Conv(\R^n,\R)^k\rightarrow\R$ by
	\begin{align*}
		\bar{\mu}(f_1,\dots,f_k)=\frac{1}{k!}\frac{\partial^k}{\partial \lambda_1\dots\lambda_k}\Big|_0\mu\left(\sum_{j=1}^{k}\lambda_jf_j\right).
	\end{align*}
	Note that this is well defined since the right hand side is a polynomial in $\lambda_1,\dots,\lambda_k\ge 0$. It is then easy to see that $\bar{\mu}$ is a symmetric functional which is additive in each argument. By \cite[Corollary 4.12]{Knoerrsupportduallyepi2021}, $\bar{\mu}$ is jointly continuous for every $\mu\in\VConv_k(\R^n)$. Moreover, the polarization depends continuously on the valuation in the follow sense.
	\begin{lemma}[\cite{Knoerrsupportduallyepi2021} Lemma 4.13]
		\label{lemma:continuityPolarization}
		There exists a constant $c_{k}>0$ depending on $0\le k\le n$ only such that the following holds: If $K\subset \Conv(\R^n,\R)$ is compact and convex with $0\in K$, then 
		\begin{align*}
			\|\bar{\mu}\|_{K}:=\sup_{f_1,\dots,f_k\in K}|\bar{\mu}(f_1,\dots,f_k)|\le c_{k}\|\mu\|_{K}
		\end{align*}
		for all $\mu\in\VConv_k(\R^n)$.
	\end{lemma}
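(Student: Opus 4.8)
The plan is to derive the estimate from the classical polarization identity together with the positive $k$-homogeneity of $\mu$ and the convexity of $K$. We may assume $k\ge 1$; for $k=0$ the polarization is $\bar\mu=\mu$ itself and the bound is trivial with $c_0=1$.

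Since $\mu$ is positively $k$-homogeneous, the function $(\lambda_1,\dots,\lambda_k)\mapsto\mu(\sum_{j=1}^k\lambda_jf_j)$ coincides on $\{\lambda_j\ge 0\}$ with a polynomial that is homogeneous of degree $k$, and --- as recorded just before the statement --- $\bar\mu(f_1,\dots,f_k)$ is precisely the associated symmetric $k$-linear form evaluated at $(f_1,\dots,f_k)$. First I would invoke the standard polarization identity for homogeneous polynomials of degree $k$, which in the present notation reads
\begin{align*}
	\bar\mu(f_1,\dots,f_k)=\frac{1}{k!}\sum_{\emptyset\ne S\subseteq\{1,\dots,k\}}(-1)^{k-|S|}\,\mu\Big(\sum_{j\in S}f_j\Big);
\end{align*}
the term $S=\emptyset$ is omitted because $\mu(0)=0$ for $k\ge 1$. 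All arguments appearing here are nonnegative combinations of the $f_j$, so every evaluation of $\mu$ on the right-hand side is legitimate.

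To bound the individual terms, set $m:=|S|$ and use homogeneity to write $\mu\big(\sum_{j\in S}f_j\big)=m^k\,\mu\big(\tfrac1m\sum_{j\in S}f_j\big)$. Since $\tfrac1m\sum_{j\in S}f_j$ is a convex combination of elements of $K$ and $K$ is convex, it lies in $K$, whence $\big|\mu\big(\sum_{j\in S}f_j\big)\big|\le m^k\|\mu\|_K\le k^k\|\mu\|_K$. (The hypothesis $0\in K$ is not strictly needed for this; it merely allows one to rescale by the single factor $k$, replacing $\tfrac1m\sum_{j\in S}f_j$ by the convex combination $\tfrac1k\sum_{j\in S}f_j+(1-\tfrac mk)\cdot 0\in K$.) Summing over the $2^k-1$ nonempty subsets yields
\begin{align*}
	|\bar\mu(f_1,\dots,f_k)|\le\frac{2^k k^k}{k!}\,\|\mu\|_K,
\end{align*}
so the assertion holds with $c_k:=2^kk^k/k!$, which depends on $k$ only. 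The only point that requires a little care is justifying that $\mu(\sum_j\lambda_jf_j)$ is the restriction of a genuine homogeneous polynomial and identifying $\bar\mu$ with its polar form --- everything else is bookkeeping, and no continuity or approximation input beyond what is already available is needed.
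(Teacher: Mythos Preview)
Your argument is correct and is the standard way to obtain this bound: invoke the polarization identity for $k$-homogeneous polynomials, rescale each partial sum so that it lies in $K$ by convexity (using $0\in K$ if one prefers a uniform rescaling factor), and count terms. The constant $c_k=2^kk^k/k!$ you obtain is the usual one.

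There is nothing to compare on the paper's side: the lemma is quoted from \cite{Knoerrsupportduallyepi2021} and no proof is given here. Your parenthetical remark that $0\in K$ is not strictly needed is accurate for the estimate itself---convexity of $K$ already makes $\tfrac{1}{m}\sum_{j\in S}f_j\in K$---so the hypothesis $0\in K$ serves mainly to allow the cleaner rescaling by the single factor~$k$.
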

	\begin{remark}
		The inequality also holds if $K$ is not compact. In this case, both sides of the inequality can be infinite.
	\end{remark}
	In \cite{Knoerrsupportduallyepi2021} a notion of support was introduced for elements of $\VConv(\R^n)$. The main results show that the support $\supp\mu\subset \R^n$ of $\mu\in\VConv(\R^n)$ is a compact subset. Moreover, the support is characterized by the following property, which may also be used as a definition.
	\begin{proposition}[\cite{Knoerrsupportduallyepi2021} Proposition 6.3]
		\label{proposition:characterizationSupport}
		The support of $\mu\in\VConv(\R^n)$ is minimal (with respect to inclusion) among the closed sets $A\subset \R^n$ with the following property: If $f,g\in\Conv(\R^n,\R)$ satisfy $f=g$ on an open neighborhood of $A$, then $\mu(f)=\mu (g)$.
	\end{proposition}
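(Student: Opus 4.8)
The plan is to unpack the word ``minimal'' into two statements and prove them separately: first, that $\supp\mu$ itself has the locality property claimed in the proposition; second, that every closed set with that property contains $\supp\mu$. Throughout I would use the two facts from \cite{Knoerrsupportduallyepi2021} that $\supp\mu$ is compact and that $\supp\mu$ admits the local description ``$x\notin\supp\mu$ if and only if there is an open neighbourhood $U$ of $x$ such that $\mu(f)=\mu(g)$ whenever $f,g\in\Conv(\R^n,\R)$ agree on $\R^n\setminus U$''.

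The inclusion $\supp\mu\subseteq A$ for any closed $A$ with the stated property is the easy direction. Fix $x\notin A$ and choose $\varepsilon>0$ with $\overline{B_\varepsilon(x)}\cap A=\emptyset$, so that $\R^n\setminus\overline{B_\varepsilon(x)}$ is an open neighbourhood of $A$. If $f,g\in\Conv(\R^n,\R)$ agree on $\R^n\setminus B_\varepsilon(x)$, then in particular they agree on this neighbourhood of $A$, whence $\mu(f)=\mu(g)$ by the assumed property of $A$; thus $U=B_\varepsilon(x)$ witnesses $x\notin\supp\mu$, and $\supp\mu\subseteq A$ follows.

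The substance of the proposition is that $\supp\mu$ itself has the property, and this is the step I expect to be the main obstacle: the local description only controls modifications of a convex function on a small ball, and one cannot in general interpolate between two convex functions through functions that differ only on small balls while staying inside $\Conv(\R^n,\R)$. To get around this I would pass to the linear structure of the Goodey--Weil distribution $D_\mu$ of $\mu\in\VConv_k(\R^n)$ constructed in \cite{Knoerrsupportduallyepi2021}: it is a distribution on $(\R^n)^k$ with compact support contained in $(\supp\mu)^k$, and it represents the polarisation in the sense that $\bar\mu(f_1,\dots,f_k)=D_\mu(f_1\otimes\dots\otimes f_k)$ for smooth convex $f_i$ (recall $\bar\mu$ is jointly continuous by Lemma~\ref{lemma:continuityPolarization} and $\mu(f)=\bar\mu(f,\dots,f)$ by $k$-homogeneity); having compact support, $D_\mu$ pairs with every smooth function on $(\R^n)^k$. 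Now suppose $f=g$ on an open neighbourhood $N$ of the compact set $\supp\mu$. Choose $\delta>0$ with $\supp\mu+B_{2\delta}(0)\subseteq N$, put $N':=\supp\mu+B_\delta(0)$, and let $f_\nu=f\ast\rho_\nu$, $g_\nu=g\ast\rho_\nu$ be mollifications with $\rho_\nu$ supported in $B_\delta(0)$: then $f_\nu,g_\nu$ are smooth and convex, converge to $f,g$ in $\Conv(\R^n,\R)$, and agree on the open set $N'$ (since for $x\in N'$ the relevant values $f(x-y)$, $|y|<\delta$, lie in $N$). Telescoping the product yields $f_\nu^{\otimes k}-g_\nu^{\otimes k}=\sum_{i=1}^k g_\nu^{\otimes(i-1)}\otimes(f_\nu-g_\nu)\otimes f_\nu^{\otimes(k-i)}$, and the $i$-th summand is a smooth function vanishing on the open set $\{(x_1,\dots,x_k):x_i\in N'\}$, which contains $(\supp\mu)^k\supseteq\supp D_\mu$ because $\supp\mu\subseteq N'$. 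Hence $D_\mu$ annihilates each summand, so $\mu(f_\nu)-\mu(g_\nu)=D_\mu(f_\nu^{\otimes k}-g_\nu^{\otimes k})=0$; letting $\nu\to\infty$ and invoking continuity of $\mu$ gives $\mu(f)=\mu(g)$, which finishes this half and, together with the easy inclusion above, the proof.
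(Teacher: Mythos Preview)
The paper does not prove this proposition: it is quoted verbatim from \cite{Knoerrsupportduallyepi2021} (as Proposition~6.3 there) and is used here as a black box, indeed the surrounding text explicitly says it ``may also be used as a definition''. So there is no proof in the present paper to compare your attempt against.

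That said, your argument is essentially the one given in \cite{Knoerrsupportduallyepi2021}. Two remarks are worth making. First, you silently restrict to a homogeneous $\mu\in\VConv_k(\R^n)$ when invoking the Goodey--Weil distribution; to cover the statement as written you should begin by decomposing a general $\mu$ into its homogeneous components (which is harmless, since the support of $\mu$ is the union of the supports of the components). Second, the step $\supp D_\mu\subseteq(\supp\mu)^k$ is the crux, and you should be explicit that this is \emph{not} circular: in \cite{Knoerrsupportduallyepi2021} the support of $\mu$ is \emph{defined} through the Goodey--Weil distribution (whose support is shown to lie on the diagonal of $(\R^n)^k$), so that inclusion holds by construction, and Proposition~6.3 then upgrades this to the intrinsic characterisation you are proving. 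With these clarifications your mollification-plus-telescoping argument is correct and matches the original.
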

	It will be useful to extend this notion to more general functionals. Assume that $V_1,\dots,V_k$ are finite dimensional real vector spaces and let $\pi_i:V_1\times\dots V_k\rightarrow V_i$ denote the $i$th projection. Let $\nu:\Conv(V_1,\R)\times\dots\times\Conv(V_k,\R)\rightarrow\R$ be given. We will say that a closed set $A\subset V_1\times\dots\times V_k$ supports $\nu$ if for all $f_i,h_i\in \Conv(V_i,\R)$, $1\le i\le k$, such that $f_i\equiv h_i$ on a neighborhood of $\pi_i(A)$ we have the equality
	\begin{align*}
		\nu(f_1,\dots,f_k)=\nu(h_1,\dots,h_k).
	\end{align*}
	In this case we will also say that $\nu$ is supported on $A$.
	\begin{lemma}
		\label{lemma:SupportPolarization}
		If $\mu\in\VConv_k(\R^n)$, then $\bar{\mu}:\Conv(\R^n,\R)^k\rightarrow\R$ is supported on $(\supp \mu)^k$.
	\end{lemma}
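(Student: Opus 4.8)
The plan is to deduce the statement from the characterization of support in Proposition~\ref{proposition:characterizationSupport} together with the formula defining the polarization $\bar\mu$ in terms of $\mu$. First I would recall that $\supp\mu$ satisfies the defining property: if $g,h\in\Conv(\R^n,\R)$ agree on a neighborhood of $\supp\mu$, then $\mu(g)=\mu(h)$. The goal is to show that $A:=(\supp\mu)^k\subset(\R^n)^k$ supports $\bar\mu$ in the sense just defined; since $\pi_i(A)=\supp\mu$ for every $i$, this amounts to showing: if $f_i,h_i\in\Conv(\R^n,\R)$ with $f_i\equiv h_i$ on a neighborhood $U_i$ of $\supp\mu$ for each $i$, then $\bar\mu(f_1,\dots,f_k)=\bar\mu(h_1,\dots,h_k)$.

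The key step is to reduce a simultaneous change in all arguments to a sequence of single-argument changes, using that $\bar\mu$ is additive (hence its increments behave additively) in each slot, and then to invoke the support property of $\mu$ for each such single change. Concretely, by symmetry and a telescoping argument it suffices to show that $\bar\mu(f_1,f_2,\dots,f_k)=\bar\mu(h_1,f_2,\dots,f_k)$ whenever $f_1\equiv h_1$ on a neighborhood $U$ of $\supp\mu$. Fix $f_2,\dots,f_k$. The map $g\mapsto\bar\mu(g,f_2,\dots,f_k)$ is, up to the combinatorial normalization, a mixed partial derivative at $0$ of $(\lambda_1,\dots,\lambda_k)\mapsto\mu(\lambda_1 g+\sum_{j\ge2}\lambda_jf_j)$ in the variables $\lambda_2,\dots,\lambda_k$ followed by $\partial_{\lambda_1}$; equivalently one may write it as a finite linear combination, with real coefficients, of values $\mu\big(\lambda_1 g+\sum_{j\ge 2}\lambda_j f_j\big)$ for finitely many choices of nonnegative $\lambda_1,\dots,\lambda_k$ (inclusion--exclusion / finite difference in each variable, valid because the relevant function is a polynomial in the $\lambda_j$). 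For each such fixed choice of $\lambda_j$'s with $\lambda_1\ge 0$, the two functions $\lambda_1 f_1+\sum_{j\ge2}\lambda_j f_j$ and $\lambda_1 h_1+\sum_{j\ge2}\lambda_j f_j$ lie in $\Conv(\R^n,\R)$ and agree on $U$, an open neighborhood of $\supp\mu$, because $f_1\equiv h_1$ there. Hence $\mu$ takes the same value on both by Proposition~\ref{proposition:characterizationSupport}, and summing the linear combination yields $\bar\mu(f_1,f_2,\dots,f_k)=\bar\mu(h_1,f_2,\dots,f_k)$. Iterating over the remaining slots gives the claim.

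The one point that requires a little care — and which I expect to be the main (minor) obstacle — is making the passage from the derivative definition of $\bar\mu$ to an honest finite linear combination of evaluations of $\mu$ precise, so that the support property of $\mu$ can be applied verbatim. This is where one uses that $\lambda\mapsto\mu(\sum\lambda_j g_j)$ is a polynomial of degree $\le k$ (indeed homogeneous of degree $k$), so that each $\partial_{\lambda_i}|_0$ can be replaced by a suitable rescaled finite difference operator $\lambda_i^{-1}$-times-evaluation at finitely many nodes, keeping all $\lambda_i\ge 0$ so that convexity of the combinations is preserved. Alternatively, and perhaps cleanest, one avoids finite differences altogether: for fixed $\lambda_1,\dots,\lambda_k\ge 0$ the functions $\sum_j\lambda_j f_j$ and $\lambda_1 h_1+\sum_{j\ge2}\lambda_j f_j$ (changing one argument at a time) agree on $U$, so $\mu$ agrees on them; thus the \emph{whole polynomial} $(\lambda_1,\dots,\lambda_k)\mapsto\mu(\sum_j\lambda_j f_j)$ is unchanged under replacing $f_1$ by $h_1$, and therefore so is any of its partial derivatives at $0$, i.e.\ $\bar\mu$. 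Then symmetry of $\bar\mu$ plus iteration over the indices $1,\dots,k$ finishes the proof, and the support claim $\bar\mu$ supported on $(\supp\mu)^k$ follows by definition.
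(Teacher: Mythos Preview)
Your proposal is correct and, in its ``cleanest'' alternative at the end, is essentially identical to the paper's argument: both observe that $\mu\big(\sum_j\lambda_j f_j\big)=\mu\big(\sum_j\lambda_j h_j\big)$ for all $\lambda_j\ge 0$ by Proposition~\ref{proposition:characterizationSupport}, and hence the partial derivative at $0$ defining $\bar\mu$ is unchanged. The telescoping (changing one argument at a time) is unnecessary, since $\sum_j\lambda_j f_j$ and $\sum_j\lambda_j h_j$ already agree on the common neighborhood $\bigcap_i U_i$ of $\supp\mu$, which is how the paper does it in one step.
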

	\begin{proof}
		Let $h_1,\dots,f_k,h_1,\dots h_k\in\Conv(\R^n,\R)$ satisfy $f_i=h_i$ on a neighborhood of $\supp\mu$. Then for any $\lambda_1,\dots,\lambda_k\ge 0$, 
		\begin{align*}
			\sum_{i=1}^{k}\lambda_if_i\equiv \sum_{i=1}^{k}\lambda_ih_i\quad\text{on a neighborhood of}~\supp\mu.
		\end{align*}
		The claim now follows immediately from the definition of $\bar{\mu}$ and the characterization of $\supp\mu$ in Proposition \ref{proposition:characterizationSupport}.
	\end{proof}
	
	For a compact subset $A\subset \R^n$ let $\VConv_A(\R^n)$ denote the subspace of valuations with support contained in $A$. Then it is easy to see that $\VConv_A(\R^n)$ is a closed subspace of $\VConv(\R^n)$ (compare \cite[Lemma 6.7]{Knoerrsupportduallyepi2021}). It turns out that the topology of these spaces is rather simple. The following is a special case of \cite[Proposition 6.8 and Corollary 6.10]{Knoerrsupportduallyepi2021}.
	\begin{lemma}
		\label{lemma:NormsValuationsBoundedSupport}
		Let $R>0$. The relative topology of $\VConv_{B_R(0)}(\R^n)$ is induced by the norm
		\begin{align*}
			\|\mu\|_{R,1}:=\sup\left\{|\mu(f)|:~f\in\Conv(\R^n,\R),~\sup_{|x|\le R+1}|f(x)|\le 1\right\}.
		\end{align*}
		Moreover, $\VConv_{B_R(0)}(\R^n)$ is a Banach space with respect to the norm.
	\end{lemma}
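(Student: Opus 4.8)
The statement has two parts — that $\|\cdot\|_{R,1}$ is a norm inducing the subspace topology on $\VConv_{B_R(0)}(\R^n)$, and that the resulting normed space is complete — and I would treat them in turn. The crux of the first part is a \emph{controlled extension} of convex functions from a ball. If $f\in\Conv(\R^n,\R)$ satisfies $\sup_{|x|\le R+1}|f|\le M$, then Lemma \ref{lemma:LipschitzConstant} bounds the Lipschitz constant of $f$ on $\bar B_{R+1/2}(0)$, and hence the norms of all subgradients $p_y\in\partial f(y)$ for $|y|\le R+\tfrac12$, by some $L(M)$ depending linearly on $M$ (and not on $f$ or $R$). Fixing a selection $y\mapsto p_y$, the function $g_f(x):=\sup_{|y|\le R+1/2}\bigl(f(y)+\langle p_y,x-y\rangle\bigr)$ is a finite convex function that coincides with $f$ on the open ball $B_{R+1/2}(0)$, an open neighbourhood of $\bar B_R(0)$, and obeys a uniform bound $|g_f(x)|\le a_R M+b_R M|x|$ with $a_R,b_R$ depending only on $R$. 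Since $\mu\in\VConv_{B_R(0)}(\R^n)$, Proposition \ref{proposition:characterizationSupport} yields $\mu(f)=\mu(g_f)$.

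Next I would record that for any $C>0$ the set $\mathcal{C}:=\{h\in\Conv(\R^n,\R):|h(x)|\le C(1+|x|)\text{ for all }x\}$ is compact: it is pointwise bounded, hence locally equi-Lipschitz by Lemma \ref{lemma:LipschitzConstant}, so every sequence has a locally uniformly convergent subsequence by Arzel\`a--Ascoli and a diagonal argument, and $\mathcal{C}$ is closed under such limits. Applying the extension above with $M=1$ to a suitable such $\mathcal{C}$ (adapted to the bound $a_R+b_R|x|$), we obtain $\|\mu\|_{R,1}\le\|\mu\|_{\mathcal{C}}<\infty$ for all $\mu\in\VConv_{B_R(0)}(\R^n)$, so $\|\cdot\|_{R,1}$ is a seminorm that is continuous in the subspace topology. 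It is definite: if $\|\mu\|_{R,1}=0$ then $\mu$ vanishes on the neighbourhood $\{f:\sup_{|x|\le R+1}|f|<1\}$ of $0$ in $\Conv(\R^n,\R)$, and a continuous valuation vanishing near $0$ vanishes identically, since $t\mapsto\mu(tf)$ is a polynomial that vanishes for all small $t>0$, forcing $\mu_k(f)=0$ for every homogeneous component and every $f$.

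For the reverse domination I must control $\|\mu\|_K$, for $K\subset\Conv(\R^n,\R)$ compact, by $\|\mu\|_{R,1}$; here the obstacle is that a general $\mu$ is not homogeneous, whereas $\|\cdot\|_{R,1}$ only sees functions of size at most $1$. I would resolve this via the homogeneous decomposition $\mu=\sum_{k=0}^n\mu_k$, noting first that each $\mu_k$ is again supported in $\bar B_R(0)$ (if $f\equiv g$ near $\bar B_R(0)$ then $tf\equiv tg$ near $\bar B_R(0)$, so the polynomials $t\mapsto\mu(tf),\,\mu(tg)$ agree and $\mu_k(f)=\mu_k(g)$). For $f$ with $\sup_{|x|\le R+1}|f|\le1$, the polynomial $t\mapsto\mu(tf)$ of degree $\le n$ is bounded by $\|\mu\|_{R,1}$ on $[0,1]$, and extracting its $k$th coefficient is a bounded operation on the finite-dimensional space of such polynomials, so $\|\mu_k\|_{R,1}\le C_n\|\mu\|_{R,1}$. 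Finally, with $M_K:=\max\{1,\sup_{f\in K}\sup_{|x|\le R+1}|f|\}<\infty$ (finite by continuity of $f\mapsto\sup_{|x|\le R+1}|f|$ on compacts), replacing $f\in K$ by $g_f$ and using $\mu_k(g_f)=M_K^{\,k}\mu_k(M_K^{-1}g_f)$ with $\sup_{|x|\le R+1}|M_K^{-1}g_f|\le1$ gives $\|\mu_k\|_K\le M_K^{\,k}\|\mu_k\|_{R,1}$; summing, $\|\mu\|_K\le\bigl(\sum_{k=0}^nM_K^{\,k}\bigr)C_n\|\mu\|_{R,1}$. Thus every defining seminorm is $\|\cdot\|_{R,1}$-continuous, so the two topologies coincide.

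For completeness, $\VConv_{B_R(0)}(\R^n)$ is a closed subspace of the complete space $\VConv(\R^n)$, hence complete in the subspace topology, which by the previous two steps is exactly the $\|\cdot\|_{R,1}$-topology; a complete normed space is Banach. The bulk of the work is in the reverse-domination step, and within it the genuinely delicate point is bridging the gap between the single norm and arbitrary scales through the polynomial $t\mapsto\mu(tf)$ and its continuous coefficient extraction; the extension construction and the Arzel\`a--Ascoli compactness are routine.
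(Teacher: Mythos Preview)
Your argument is sound in outline and, since the paper does not give its own proof here but simply invokes \cite{Knoerrsupportduallyepi2021} Proposition~6.8 and Corollary~6.10, there is no alternative approach to compare against; your direct proof via controlled convex extension, Arzel\`a--Ascoli compactness, and coefficient extraction from the polynomial $t\mapsto\mu(tf)$ is a natural and self-contained route.

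There is one slip in the reverse-domination step: the claim $\sup_{|x|\le R+1}|M_K^{-1}g_f|\le 1$ is not justified --- your own bound gives only $|g_f(x)|\le (a_R+b_R(R+1))M_K$ on $B_{R+1}(0)$, so the supremum is $\le c_R$ for some $c_R>1$ depending on $R$. The fix is simply to drop $g_f$ at this point: you do not need the extension here, since $M_K^{-1}f$ itself satisfies $\sup_{|x|\le R+1}|M_K^{-1}f|\le 1$, whence $|\mu_k(f)|=M_K^{\,k}|\mu_k(M_K^{-1}f)|\le M_K^{\,k}\|\mu_k\|_{R,1}$ directly. The extension $g_f$ was only needed earlier to show $\|\mu\|_{R,1}<\infty$ by trapping the relevant functions in a single compact set; once that is done, the reverse estimate needs only homogeneity. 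With this correction the proof goes through.
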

	Consequently, we will usually work directly with the norms $\|\cdot\|_{R,1}$ as soon as we are dealing with valuations with bounds on their support.
	\begin{remark}
		Note that for $\mu\in\VConv_k(\R^n)$, the inclusion $\supp\mu\subset B_R(0)$ implies that 
		\begin{align*}
			|\mu(f)|\le \left(\sup_{|x|\le R+1}|f(x)|\right)^k \|\mu\|_{R,1}\quad\text{for all}~f\in\Conv(\R^n,\R).
		\end{align*}
		We will encounter estimates of this form in Section \ref{section:SingularValuations}.
	\end{remark}
	
	Let $T:V\rightarrow W$ be linear and let $T^*:\Conv(W,\R)\rightarrow\Conv(V,\R)$ denote the pullback. Then it is easy to see that $T^*$ is continuous. This implies the following for the pushforward defined in the introduction
	\begin{lemma}
		\label{lemma:continuityPushForward}
		Let $T:V\rightarrow W$ be linear. Then $T_*:\VConv(V)\rightarrow\VConv(W)$ is continuous. More precisely, if $K\subset \Conv(W,\R)$ is compact, then so is $T^*(K)$ and
		\begin{align*}
			\|T_*\mu\|_K=\|\mu\|_{T^*(K)}.
		\end{align*}
	\end{lemma}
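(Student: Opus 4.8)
The plan is to reduce everything to the continuity of the pullback $T^*:\Conv(W,\R)\to\Conv(V,\R)$ noted immediately before the statement. First I would record that $T_*\mu$ is a well-defined element of $\VConv(W)$ for every $\mu\in\VConv(V)$: since $T$ is linear, $T^*$ is a lattice homomorphism (it commutes with pointwise maximum and minimum) and sends affine functions to affine functions, so $T_*\mu=\mu\circ T^*$ is again a valuation which is dually epi-translation invariant, and it is continuous because $T^*$ is; this is exactly the well-definedness already asserted in the introduction.

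Next I would prove the norm identity, which is pure bookkeeping. Since $T^*$ is continuous and $K\subset\Conv(W,\R)$ is compact, the image $T^*(K)\subset\Conv(V,\R)$ is compact. Unwinding the definition of the semi-norms in \eqref{eq:definitionSeminorms},
\begin{align*}
\|T_*\mu\|_K=\sup_{f\in K}|T_*\mu(f)|=\sup_{f\in K}|\mu(T^*f)|=\sup_{g\in T^*(K)}|\mu(g)|=\|\mu\|_{T^*(K)},
\end{align*}
where the penultimate equality holds because $g$ ranges precisely over $T^*(K)$ as $f$ ranges over $K$.

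Finally, continuity of $T_*$ follows formally from this identity: the topology on $\VConv(W)$ is generated by the semi-norms $\|\cdot\|_K$ over compact $K\subset\Conv(W,\R)$, and the computation above shows that $\|\cdot\|_K\circ T_*=\|\cdot\|_{T^*(K)}$ is one of the semi-norms generating the topology on $\VConv(V)$; hence the preimage under $T_*$ of every semi-norm ball in $\VConv(W)$ is a semi-norm ball in $\VConv(V)$, so $T_*$ is continuous. There is no genuine obstacle here: the only step requiring an argument at all is that $T^*(K)$ is compact, which is immediate from continuity of $T^*$, and the rest is unwinding definitions.
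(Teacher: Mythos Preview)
Your proof is correct and follows essentially the same approach as the paper: compactness of $T^*(K)$ from continuity of $T^*$, then the norm identity by unwinding the definitions of $\|\cdot\|_K$ and $T_*$. The paper's proof is just the one-line version of what you wrote out in detail.
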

	\begin{proof}
		$T^*(K)\subset \Conv(V,\R)$ is compact because $T^*$ is continuous. Thus the claim follows directly from the definition of the semi-norms $\|\cdot\|_K$ and the push forward $T_*$.
	\end{proof}

	\subsubsection{Smooth valuations on convex functions}
	The notion of smooth valuations on convex functions was introduced in \cite{KnoerrSmoothvaluationsconvex2024}. By definition, a valuation $\mu\in\VConv_k(\R^n)$ is called \emph{smooth} if there exists a differential form $\omega\in \Omega^{k}_c(\R^n)\otimes \Lambda^{n-k}((\R^n)^*)^*$ such that
	\begin{align*}
		\mu(f)=D(f)[\omega]\quad\text{for}~f\in\Conv(\R^n,\R).
	\end{align*}
	We denote the subspace of smooth valuations by $\VConv_k(\R^n)^{sm}$, which is dense in $\VConv_k(\R^n)$ by the main results of \cite{KnoerrSmoothvaluationsconvex2024}. 
	\begin{theorem}[\cite{KnoerrSmoothvaluationsconvex2024} Theorem 6.5]
		\label{theorem:approximation-by-smooth-valuations}
		$\VConv_k(\R^n)^{sm}\subset \VConv_k(\R^n)$ is sequentially dense. More precisely, the following holds: For every compact set $A \subset \R^n$ , $\mu \in \VConv_k(\R^n)\cap \VConv_{A}(\R^n)$, and every compact neighborhood
		$B \subset\R^n$ of $A$ there exists a sequence $(\mu_j)_j$ in $\VConv_k(\R^n)\cap \VConv_{B}(\R^n)$ of smooth valuations  such that $(\mu_j)_j$ converges to $\mu$.
	\end{theorem}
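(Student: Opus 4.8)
The statement to prove is Theorem~\ref{theorem:approximation-by-smooth-valuations}, the sequential density of smooth valuations in $\VConv_k(\R^n)$ with control on supports. Since this is a result imported from \cite{KnoerrSmoothvaluationsconvex2024}, the plan is to reconstruct the standard mollification argument adapted to the support constraint.

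\medskip

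\textbf{Plan of proof.} The starting point is the observation that $\U(n)$-equivariance is irrelevant here; the mechanism is pure convolution on the cotangent bundle. Given $\mu\in\VConv_k(\R^n)$ with $\supp\mu\subset A$, I would first recall from \cite{KnoerrSmoothvaluationsconvex2024,Knoerrsupportduallyepi2021} that $\mu$ admits a representation through a (generally non-smooth) current-type functional: there is a vector-valued distribution, or more precisely a continuous functional on a suitable space of differential forms, pairing with the differential cycle $D(f)$. The idea is then to act on this representing object by a family of mollifiers. Concretely, the translation group $\R^n$ acts on $\Conv(\R^n,\R)$ by $f\mapsto f(\cdot-x)$, and this action is continuous; for $\rho\in C_c^\infty(\R^n)$ with $\int\rho=1$ supported near $0$ one sets
\begin{align*}
	\mu_\rho(f):=\int_{\R^n}\rho(x)\,\mu\big(f(\cdot-x)\big)\,dx.
\end{align*}
One checks that $\mu_\rho$ is again a valuation of degree $k$, that it is dually epi-translation invariant, and — using Proposition~\ref{proposition:characterizationSupport} together with the fact that translating $f$ by $x$ translates the relevant support data by $x$ — that $\supp\mu_\rho\subset A+\supp\rho$. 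Choosing $\supp\rho$ small enough that $A+\supp\rho\subset B$ gives the required support localization. Continuity of the translation action and completeness of $\VConv_k(\R^n)$ (both recalled in Section~\ref{section:VConv}) yield $\mu_\rho\to\mu$ as $\supp\rho\to\{0\}$, so picking a sequence $\rho_j$ of such mollifiers produces the desired sequence $\mu_j$.

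\medskip

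\textbf{Smoothness of the mollified valuation.} The second half is to verify $\mu_\rho\in\VConv_k(\R^n)^{sm}$, i.e.\ to produce a compactly supported form $\omega_\rho\in\Omega^k_c(\R^n)\otimes\Lambda^{n-k}((\R^n)^*)^*$ with $\mu_\rho(f)=D(f)[\omega_\rho]$. Here I would use the transformation behaviour of the differential cycle under translation: if $\theta_x:T^*\R^n\to T^*\R^n$ denotes translation by $x$ in the base, then $D(f(\cdot-x))=(\theta_x)_*D(f)$, so that for a representing object $\Psi$ of $\mu$ (say $\mu(f)=D(f)[\Psi]$ in an appropriate weak sense) one gets $\mu(f(\cdot-x))=D(f)[\theta_x^*\Psi]$ and hence $\mu_\rho(f)=D(f)\big[\int\rho(x)\theta_x^*\Psi\,dx\big]$. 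The convolution $\int\rho(x)\theta_x^*\Psi\,dx$ regularizes the coefficients in the base variable into smooth compactly supported functions, which is exactly the definition of a smooth valuation. This step requires knowing that $D(f)$ depends on $f$ only through its values on a neighbourhood of $\supp\omega_\rho$ and that the pairing is jointly continuous in $(f,\omega)$ — facts from \cite{KnoerrSmoothvaluationsconvex2024} — to justify differentiating under the integral and to identify the mollified object as an honest smooth form rather than merely a distributional one.

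\medskip

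\textbf{Main obstacle.} The delicate point is not the convolution itself but the bookkeeping of supports under the passage between $\mu$, its representing current-valued functional, and the differential cycle: one must be sure that the support of $\mu$ in the sense of Proposition~\ref{proposition:characterizationSupport} genuinely controls the ``horizontal'' support of whatever object represents $\mu$, so that after mollification by $\rho$ the support grows by at most $\supp\rho$ and stays inside $B$. This is where the careful analysis of \cite{Knoerrsupportduallyepi2021,KnoerrSmoothvaluationsconvex2024} enters, and a self-contained argument would have to reprove the compatibility of the support notion with the current representation. Since the statement is quoted verbatim from \cite[Theorem 6.5]{KnoerrSmoothvaluationsconvex2024}, the cleanest route in the present article is simply to invoke that reference; the sketch above indicates the mechanism behind it.
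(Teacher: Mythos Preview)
The paper does not give its own proof of this statement: Theorem~\ref{theorem:approximation-by-smooth-valuations} is quoted verbatim from \cite[Theorem 6.5]{KnoerrSmoothvaluationsconvex2024} and used as a black box. You correctly recognize this at both the beginning and the end of your proposal, so in that sense there is nothing to compare against within the present article.

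Regarding your sketch itself: the convolution $\mu_\rho(f)=\int\rho(x)\mu(f(\cdot-x))\,dx$ is the natural regularization, and the support bookkeeping via Proposition~\ref{proposition:characterizationSupport} is straightforward and correct. The genuine gap is in the smoothness step. You write that you would ``recall from \cite{KnoerrSmoothvaluationsconvex2024,Knoerrsupportduallyepi2021} that $\mu$ admits a representation through a (generally non-smooth) current-type functional'' $\Psi$ with $\mu(f)=D(f)[\Psi]$, and then mollify $\Psi$. But the existence of such a representing object for an \emph{arbitrary} $\mu\in\VConv_k(\R^n)$ is not available a priori; establishing it is essentially equivalent in difficulty to the density theorem you are trying to prove. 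In the cited work the smoothness of $\mu_\rho$ is obtained through a different intermediate object (the Goodey--Weil distribution of $\mu$ on $(\R^n)^k$, which always exists and determines $\mu$), and one shows that mollification on the valuation side corresponds to convolution of this distribution, producing a smooth compactly supported kernel that can then be identified with a smooth form. Your outline is therefore correct in spirit but circular as written: the step ``pass to a representing $\Psi$'' hides the actual content of the argument.
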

	Given a compact set $A\subset\R^n$, the subspace $\VConv_A(\R^n)\cap \VConv_k(\R^n)^{sm}$ is in general not dense in $\VConv_A(\R^n)\cap \VConv_k(\R^n)$ (it can in fact be trivial). However, we have the following result.
	\begin{proposition}
		\label{proposition:approximationUnderSupportRestriction}
		Let $A\subset\R^n$ be compact and convex with non-empty interior. Then $\VConv_A(\R^n)\cap \VConv_k(\R^n)^{sm}$ is dense in $\VConv_A(\R^n)$.
	\end{proposition}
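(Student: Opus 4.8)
The plan is to deduce this from the general approximation theorem (Theorem~\ref{theorem:approximation-by-smooth-valuations}) together with an averaging/mollification argument that preserves the support restriction when the support set $A$ is convex with non-empty interior. The subtlety is that Theorem~\ref{theorem:approximation-by-smooth-valuations} only guarantees approximation by smooth valuations supported in an arbitrarily small neighborhood $B$ of $A$, not in $A$ itself; the convexity and solidity of $A$ is what lets us push the support back down to $A$.

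First I would fix $\mu\in\VConv_A(\R^n)\cap\VConv_k(\R^n)$ and, using a scaling argument, reduce to the case $0\in\mathrm{int}\,A$. For $\lambda\in(0,1)$ consider the rescaled valuation $\mu_\lambda$ defined via pullback by the dilation $x\mapsto\lambda x$; concretely $\mu_\lambda(f)=\mu(f(\lambda\,\cdot))$ (rescaled so as to stay in $\VConv_k$). Since $A$ is convex with $0$ in its interior, the dilate $\lambda A$ is contained in the interior of $A$, hence $\supp\mu_\lambda\subset\lambda A\subset\mathrm{int}\,A$, and one checks from the characterization of support in Proposition~\ref{proposition:characterizationSupport} that this inclusion is strict in the sense that $\lambda A$ has a compact neighborhood still contained in $A$. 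Moreover $\mu_\lambda\to\mu$ as $\lambda\to1$ in the topology of $\VConv_k(\R^n)$: this uses that $f\mapsto f(\lambda\cdot)$ converges to the identity uniformly on compact subsets of $\Conv(\R^n,\R)$ together with the equicontinuity packaged into the norm $\|\cdot\|_{R,1}$ from Lemma~\ref{lemma:NormsValuationsBoundedSupport} (all the $\mu_\lambda$ have support in the fixed compact set $A$, so convergence can be tested against a single norm).

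Next, for each fixed $\lambda$, apply Theorem~\ref{theorem:approximation-by-smooth-valuations} with the compact set $\lambda A$ and the compact neighborhood $B:=A$ (which is a genuine neighborhood of $\lambda A$ since $\lambda A\subset\mathrm{int}\,A$): this yields smooth valuations $\mu_{\lambda,j}\in\VConv_k(\R^n)\cap\VConv_A(\R^n)$ with $\mu_{\lambda,j}\to\mu_\lambda$ as $j\to\infty$. A diagonal argument then produces a sequence of smooth valuations, each supported in $A$, converging to $\mu$; since $\VConv_A(\R^n)\cap\VConv_k(\R^n)^{sm}$ is what we want to be dense, and $\mu$ was arbitrary in $\VConv_A(\R^n)\cap\VConv_k(\R^n)$, combining over the homogeneous decomposition finishes the proof.

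The main obstacle I anticipate is verifying cleanly that the dilation operation behaves as claimed on supports — i.e. that $\supp\mu_\lambda\subseteq\lambda\,\supp\mu\subseteq\lambda A$ — and that $\mu_\lambda\to\mu$. The support statement should follow formally: if $f\equiv g$ near $\lambda A$ then $f(\lambda\cdot)\equiv g(\lambda\cdot)$ near $A\supseteq\supp\mu$, so $\mu_\lambda(f)=\mu(f(\lambda\cdot))=\mu(g(\lambda\cdot))=\mu_\lambda(g)$, giving $\supp\mu_\lambda\subseteq\lambda A$ via Proposition~\ref{proposition:characterizationSupport}. For the convergence $\mu_\lambda\to\mu$, the key point is uniform boundedness: all $\mu_\lambda$ lie in the Banach space $\VConv_{A'}(\R^n)$ for a fixed compact $A'\supseteq A$, and $\|\mu_\lambda\|_{R,1}$ is bounded uniformly in $\lambda$, so one only needs pointwise convergence $\mu_\lambda(f)\to\mu(f)$ on a dense set of $f$, which is immediate from continuity of $\mu$ and $f(\lambda\cdot)\to f$. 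A secondary technical point is the correct homogeneity-preserving normalization of $\mu_\lambda$ (a factor $\lambda^{-k}$ if one wants exact degree-$k$ homogeneity), but this does not affect the argument.
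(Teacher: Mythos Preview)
Your proposal is correct and follows essentially the same route as the paper: shrink the support of $\mu$ into the interior of $A$ via a dilation $\mu_\lambda(f)=\mu(f(\lambda\,\cdot))$ with $\lambda\in(0,1)$, use continuity of $(\lambda,f)\mapsto f(\lambda\,\cdot)$ to get $\mu_\lambda\to\mu$, and then invoke Theorem~\ref{theorem:approximation-by-smooth-valuations} with $B=A$. Your support verification is exactly the right one; note also that $\mu_\lambda$ is automatically $k$-homogeneous (no $\lambda^{-k}$ factor is needed), and for the convergence $\mu_\lambda\to\mu$ it is cleanest to argue directly via uniform continuity of $\mu$ on the compact set $\{f(\lambda\,\cdot):f\in K,\ \lambda\in[\tfrac12,1]\}$ rather than the ``pointwise on a dense set plus boundedness'' heuristic.
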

	\begin{proof}	
		Without loss of generality we may assume that the origin is contained in the interior of $A$. For $\mu\in \VConv_A(\R^n)$  and $\delta \in (0,1)$ consider the valuation $\mu_\delta$ given by
		\begin{align*}
			\mu_\delta(f):=\mu\left(f\left(\frac{\cdot}{\delta}\right)\right).
		\end{align*}
		From the characterization of the support given in Proposition \ref{proposition:characterizationSupport}, it is easy to see that $\mu_\delta$ is supported on $\delta A$. From the fact that the map
		\begin{align*}
			(0,\infty)\times \Conv(\R^n,\R)&\rightarrow\Conv(\R^n,\R)\\
			(\delta,f)&\mapsto f\left(\frac{\cdot}{\delta}\right)
		\end{align*} is continuous, one easily deduces that $\mu_\delta$ converges to $\mu$ for $\delta\rightarrow1$. It is thus sufficient to show the claim under the additional assumption that the support of $\mu$ is contained in the interior of $A$. In this case, the claim follows directly from Theorem \ref{theorem:approximation-by-smooth-valuations}.
	\end{proof}
		If $G\subset \GL(n,\R)$ is a compact subgroup, let $\VConv_k(\R^n)^{G}$ denote the space of all $G$-invariant valuations. We will be interested in the subspace
		\begin{align*}
			\VConv_k(\R^n)^{G,sm}:=\VConv_k(\R^n)^{G}\cap \VConv_k(\R^n)^{sm}
		\end{align*}
		of all $G$-invariant smooth valuations
	\begin{corollary}\label{corollary:approximationUnderSupportRestrictionInvariantCase}
		Let $G\subset\SO(n,\R)$ be a compact subgroup. Then smooth valuations are dense in $\VConv_k(\R^n)^G\cap \VConv_{B_R(0)}(\R^n)$ for every $R>0$.
	\end{corollary}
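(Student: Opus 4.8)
The plan is to deduce Corollary~\ref{corollary:approximationUnderSupportRestrictionInvariantCase} from Proposition~\ref{proposition:approximationUnderSupportRestriction} by an averaging argument over the compact group $G$. First I would fix $R>0$ and take $A=\overline{B_R(0)}$, which is compact, convex, and has non-empty interior, and which is moreover $G$-invariant since $G\subset\SO(n,\R)$ acts by isometries. By Proposition~\ref{proposition:approximationUnderSupportRestriction} the space $\VConv_A(\R^n)\cap\VConv_k(\R^n)^{sm}$ is dense in $\VConv_A(\R^n)=\VConv_{B_R(0)}(\R^n)$.

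Next I would introduce the averaging operator. For $\mu\in\VConv_k(\R^n)$ and $g\in G$ write $(g\cdot\mu)(f):=\mu(f\circ g)$; this defines a continuous linear action of $G$ on $\VConv_k(\R^n)$ (continuity in $\mu$ for fixed $g$ is clear from the definition of the seminorms $\|\cdot\|_K$, since $g$ maps compact subsets of $\Conv(\R^n,\R)$ to compact subsets, and joint continuity follows from the continuity of $(g,f)\mapsto f\circ g$). Using the Haar probability measure $dg$ on $G$, define
\begin{align*}
	P\mu:=\int_G g\cdot\mu\, dg,
\end{align*}
where the integral is understood weakly, i.e.\ $(P\mu)(f)=\int_G\mu(f\circ g)\,dg$. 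One checks that $P\mu\in\VConv_k(\R^n)$, that $P$ is a continuous projection onto $\VConv_k(\R^n)^G$, and — crucially — that $P$ preserves both the support condition and smoothness: since each $g\in G$ is orthogonal, $g\cdot\mu$ is supported on $g^{-1}(\supp\mu)=g^{-1}(\supp\mu)$, so if $\supp\mu\subset B_R(0)$ then $\supp(g\cdot\mu)\subset B_R(0)$ for all $g$, and hence $\supp(P\mu)\subset B_R(0)$; and if $\mu=D(\cdot)[\omega]$ is smooth then $g\cdot\mu=D(\cdot)[g^*\omega]$ is smooth, and averaging the form $\omega$ over $G$ (which again is a compactly supported smooth form since $G$ acts smoothly and $B_R(0)$ is $G$-invariant) represents $P\mu$, so $P\mu\in\VConv_k(\R^n)^{G,sm}$. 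Continuity of $P$ with respect to the Banach norm $\|\cdot\|_{R,1}$ from Lemma~\ref{lemma:NormsValuationsBoundedSupport} follows since $\|g\cdot\mu\|_{R,1}=\|\mu\|_{R,1}$ for orthogonal $g$ (as $g$ preserves $B_{R+1}(0)$ and the sup-norm over it), whence $\|P\mu\|_{R,1}\le\|\mu\|_{R,1}$.

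Given these properties, the corollary follows quickly: take $\mu\in\VConv_k(\R^n)^G\cap\VConv_{B_R(0)}(\R^n)$, so $P\mu=\mu$. By Proposition~\ref{proposition:approximationUnderSupportRestriction} choose smooth valuations $\mu_j\in\VConv_{B_R(0)}(\R^n)\cap\VConv_k(\R^n)^{sm}$ with $\mu_j\to\mu$ in $\VConv_{B_R(0)}(\R^n)$, i.e.\ $\|\mu_j-\mu\|_{R,1}\to 0$. Then $P\mu_j\in\VConv_k(\R^n)^{G,sm}\cap\VConv_{B_R(0)}(\R^n)$ and, by continuity of $P$,
\begin{align*}
	\|P\mu_j-\mu\|_{R,1}=\|P(\mu_j-\mu)\|_{R,1}\le\|\mu_j-\mu\|_{R,1}\to 0,
\end{align*}
which gives the desired approximation.

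I expect the main technical obstacle to be verifying that the averaged valuation $P\mu$ is again smooth in the precise sense of Section~\ref{section:VConv}, i.e.\ that it is represented by a \emph{compactly supported} smooth differential form in $\Omega^k_c(\R^n)\otimes\Lambda^{n-k}((\R^n)^*)^*$ of the required bidegree. One must check that $g\mapsto g^*\omega$ is a continuous (indeed smooth) family of such forms with supports contained in a fixed compact set (which holds because $G$ is compact and acts linearly, hence properly), so that the Haar average $\int_G g^*\omega\,dg$ is well-defined, smooth, compactly supported, of the correct bidegree, and represents $P\mu$ via the linearity of $f\mapsto D(f)[\cdot]$ together with the transformation rule $D(f\circ g)=(g^{-1})_*D(f)$ for the differential cycle under linear maps. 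Everything else — the support bookkeeping and the norm estimate $\|g\cdot\mu\|_{R,1}=\|\mu\|_{R,1}$ — is routine once one uses that $G\subset\SO(n,\R)$ consists of isometries fixing the origin.
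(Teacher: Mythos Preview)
Your proposal is correct and follows essentially the same approach as the paper: approximate by smooth valuations with support in $B_R(0)$ via Proposition~\ref{proposition:approximationUnderSupportRestriction}, then average over $G$ using the Haar measure to obtain $G$-invariant smooth valuations with the same support bound that still converge to $\mu$. The paper's proof is terser and defers the verification that the averaged valuation is smooth to \cite[Proposition~6.6]{KnoerrSmoothvaluationsconvex2024}, whereas you spell out the argument via averaging the representing differential form; note a small slip in your support computation (one gets $\supp(g\cdot\mu)\subset g(\supp\mu)$ rather than $g^{-1}(\supp\mu)$), but since $g\in\SO(n)$ preserves $B_R(0)$ this is immaterial.
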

	\begin{proof}
		By Proposition \ref{proposition:approximationUnderSupportRestriction} we can approximate any valuation $\mu\in\VConv_k(\R^n)^G\cap \VConv_{B_R(0)}(\R^n)$ by smooth valuations $(\mu_j)_j$ with the same support restriction. Replacing $\mu_j$ by $\tilde{\mu}_j\in\VConv_k(\R^n)^G$ defined by
		\begin{align*}
			\tilde{\mu}(f):=\int_G\mu(f\circ g)dg,
		\end{align*}
		we obtain a sequence of $G$-invariant valuations converging to $\mu$. It is now easy to see that $\tilde{\mu}_j$ is a smooth valuation with support in $B_R(0)$, compare the proof of \cite[Proposition 6.6]{KnoerrSmoothvaluationsconvex2024}.
	\end{proof}

	\section{Three families of integral transforms}
		\label{section:IntegralTransforms}
		In this section we will discuss three families of integral transforms that will occur in the proofs of our main results. The spaces of functions discussed in this section are listed below.\\
		 Recall that $C_b((0,\infty))$ and $C_b((0,\infty))$ denote the spaces of continuous functions with bounded support.\\
		In the following definition, $\tilde{S}^{a,b}:C_c((0,\infty)^2)\rightarrow C_b((0,\infty)^2)$ denotes the map defined in \eqref{eq:DefTildeSabDerivative} below. We consider the following spaces of continuous functions for $a,b\in\mathbb{N}$:
		\begin{align*}
			D^a=&\left\{\zeta\in C_b((0,\infty)):\lim_{t\rightarrow0}t^a\zeta(t)=0,~\lim\limits_{t\rightarrow0}\int_0^\infty \zeta(r)r^{a-1}dr~\text{exists and is finite}\right\},\\
			\tilde{D}^{a+2}=&\left\{\tilde{\zeta}\in C_b((0,\infty)): \lim\limits_{t\rightarrow0}t^{a+2}\tilde{\zeta}(t)=0\right\},\\
			C_0:=&\{\phi\in C_c([0,\infty)):\phi(0)=0\},\\
			C_{(a,b)}:=&\{\phi\in C_c([0,\infty)^2):\tilde{S}^{a,b}(\phi)[s,t]~\text{only depends on the value of}~s^2+t^2~\text{for}~s,t> 0\}.
		\end{align*}
		We denote by $D^a_R$, $\tilde{D}^{a+2}_R$, and $C_{0,R}$ the corresponding subspaces of functions supported on $(0,R]$ and $[0,R]$, and by $C_{(a,b),R}$ the subspace of $C_{(a,b)}$ of all functions supported on $B_R(0)$. Similarly, we will use the notation $C_{c,R}([0,\infty))$ and $C_{c,R}([0,\infty)^2)$ for the subspaces of $C_c([0,\infty))$ and $C_c([0,\infty)^2)$ of all functions with support contained in $[0,R]$ and $B_R(0)$ respectively. Note that $C_{0,R}\subset C_{c,R}([0,\infty))\subset C_c([0,\infty))$ and $C_{(a,b),R}\subset C_{c,R}([0,\infty)^2)\subset C_c([0,\infty)^2)$ are closed subspaces, and in fact Banach spaces with respect to the maximum norm.
		
	\subsection{The spaces $D^a$ and $\tilde{D}^{a+2}$}
	\label{section:DaTildeDa}
	By \cite[Lemma 2.7]{KnoerrSingularvaluationsHadwiger2022}, $D^a_R$ is a Banach space with respect to the norm $\|\cdot\|_{D^a}$. Moreover, $C_c([0,\infty))\cap D^{a}_R$ is dense in $D^a_R$, as the following result shows.
	\begin{lemma}[\cite{KnoerrSingularvaluationsHadwiger2022} Lemma 2.6]
		\label{lemma:DensityDNJ_continuousFunctions}
		For $\zeta\in D^{a}_{R}$ and $r>0$ define $\zeta^r\in C_c([0,\infty))\cap D^{a}_{R}$ by
		\begin{align*}
			\zeta^r(t):=\begin{cases}
				\zeta(t) & \text{for }t> r,\\
				\zeta(r) & \text{for }0\le t\le r.
			\end{cases}
		\end{align*}
		Then $\lim\limits_{r\rightarrow0}\|\zeta-\zeta^r\|_{D^{a}}=0$.
	\end{lemma}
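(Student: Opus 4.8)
The idea is to split $\|\zeta-\zeta^r\|_{D^a}$ into its two defining suprema and estimate each separately, using that $\zeta-\zeta^r$ vanishes on $(r,\infty)$ and equals $s\mapsto\zeta(s)-\zeta(r)$ on $(0,r]$. First I would isolate two consequences of $\zeta\in D^a_R$. \emph{(i)} The function $t\mapsto t^a\zeta(t)$ is continuous on $(0,\infty)$, extends continuously to $t=0$ with value $0$, and vanishes for $t>R$; in particular $\sup_{0<s\le r}s^a|\zeta(s)|\to0$ and $r^a|\zeta(r)|\to0$ as $r\to0$. \emph{(ii)} The function $F(t):=\int_t^\infty\zeta(s)s^{a-1}\,ds$ is continuous on $(0,\infty)$, vanishes for $t>R$, and extends continuously to $t=0$ by $F(0):=\lim_{t\to0}F(t)$, which exists and is finite by the definition of $D^a$; hence $\sup_{0\le s\le r}|F(s)-F(0)|\to0$ as $r\to0$ by uniform continuity of $F$ on a compact interval.

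For the first seminorm, since $(\zeta-\zeta^r)(t)=0$ for $t>r$ one has $\sup_{t>0}t^a|(\zeta-\zeta^r)(t)|=\sup_{0<t\le r}t^a|\zeta(t)-\zeta(r)|$, and for $0<t\le r$,
\[
t^a|\zeta(t)-\zeta(r)|\le t^a|\zeta(t)|+t^a|\zeta(r)|\le\sup_{0<s\le r}s^a|\zeta(s)|+r^a|\zeta(r)|,
\]
which is independent of $t$ and tends to $0$ by \emph{(i)}. For the second seminorm, again only $0<t<r$ contributes, where a direct computation using $\int_t^r s^{a-1}\,ds=(r^a-t^a)/a$ gives
\[
\int_t^\infty(\zeta-\zeta^r)(s)s^{a-1}\,ds=\bigl(F(t)-F(r)\bigr)-\zeta(r)\,\frac{r^a-t^a}{a}.
\]
The last term has absolute value at most $r^a|\zeta(r)|/a$, uniformly in $t$, and $|F(t)-F(r)|\le 2\sup_{0\le s\le r}|F(s)-F(0)|$; both bounds go to $0$ by \emph{(i)} and \emph{(ii)}. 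Adding the two estimates yields $\lim_{r\to0}\|\zeta-\zeta^r\|_{D^a}=0$.

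It remains to record that $\zeta^r$ genuinely lies in $C_c([0,\infty))\cap D^a_R$: the two branches agree at $t=r$ so $\zeta^r$ is continuous, $\supp\zeta^r\subset[0,R]$, and every $\phi\in C_c([0,\infty))$ belongs to $D^a$ because $s^{a-1}$ is bounded near $0$ for $a\ge1$, so both defining limits are trivially finite. I do not anticipate a genuine obstacle; the only point requiring mild care is that the integral defining $F$ is in general only conditionally convergent at $0$, so one must argue via the continuity of $F$ at $0$ rather than trying to split off an absolutely convergent tail $\int_0^r$.
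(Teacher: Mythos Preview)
Your proof is correct. The paper does not actually prove this lemma but merely cites it from \cite{KnoerrSingularvaluationsHadwiger2022}, so there is no in-paper argument to compare against; your direct estimate of the two suprema in the $D^a$-norm is exactly the natural approach. One minor remark: in verifying $\zeta^r\in D^a$ you invoke that $s^{a-1}$ is bounded near $0$ for $a\ge1$, whereas the space $D^a$ is introduced for general $a>0$; the conclusion still holds in that generality because $\int_0^R s^{a-1}\,ds<\infty$ for every $a>0$, which is all that is needed for a bounded compactly supported $\phi$.
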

	We will need the corresponding statements for the spaces $\tilde{D}^{a+2}_R$.
	\begin{lemma}
		\label{lemma:tildeDcomplete}
		$\tilde{D}^{a+2}_R$ is complete with respect to $\|\cdot\|_{\tilde{D}^{a+2}}$.
	\end{lemma}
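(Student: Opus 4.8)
The plan is to verify completeness directly from the definition of the norm $\|\tilde\zeta\|_{\tilde D^{a+2}}=\sup_{t>0}t^{a+2}|\tilde\zeta(t)|$, imitating the proof of \cite[Lemma 2.7]{KnoerrSingularvaluationsHadwiger2022} but without the extra integral term, which actually makes the argument easier. Let $(\tilde\zeta_j)_j$ be a Cauchy sequence in $\tilde D^{a+2}_R$. First I would introduce the auxiliary functions $g_j(t):=t^{a+2}\tilde\zeta_j(t)$ on $(0,R]$ and observe that, since $\lim_{t\to0}g_j(t)=0$, each $g_j$ extends to a continuous function on $[0,R]$ vanishing at $0$; moreover $\|\tilde\zeta_i-\tilde\zeta_j\|_{\tilde D^{a+2}}=\sup_{t\in(0,R]}|g_i(t)-g_j(t)|=\|g_i-g_j\|_{\infty}$, so $(g_j)_j$ is Cauchy in the Banach space $\{g\in C([0,R]):g(0)=0\}$ equipped with the supremum norm. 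Hence $g_j\to g$ uniformly for some continuous $g$ on $[0,R]$ with $g(0)=0$.

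Next I would define the candidate limit by $\tilde\zeta(t):=t^{-a-2}g(t)$ for $t\in(0,R]$ and $\tilde\zeta(t):=0$ for $t>R$. Then $\tilde\zeta$ is continuous on $(0,\infty)$ (being a quotient of continuous functions with non-vanishing denominator on $(0,R]$, and $g(R)=\lim_j g_j(R)=\lim_j R^{a+2}\tilde\zeta_j(R)$ matches the one-sided limit at $R$, so there is no jump; in fact for functions in $C_b((0,\infty))$ supported in $(0,R]$ one should check continuity at $R$ means the left limit is $\tilde\zeta(R)$ — this is automatic here), its support is contained in $(0,R]$, and $\lim_{t\to0}t^{a+2}\tilde\zeta(t)=\lim_{t\to0}g(t)=g(0)=0$, so $\tilde\zeta\in\tilde D^{a+2}_R$. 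Finally $\|\tilde\zeta_j-\tilde\zeta\|_{\tilde D^{a+2}}=\sup_{t\in(0,R]}|g_j(t)-g(t)|=\|g_j-g\|_\infty\to0$, which gives the convergence.

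The only genuinely delicate point is the continuity of $\tilde\zeta$ at $t=R$ (and, implicitly, the precise meaning of ``bounded support'' and membership in $C_b((0,\infty))$ at the right endpoint): one must make sure that passing to the uniform limit of the $g_j$ does not create a discontinuity where the functions drop to zero. This is handled by noting that each $\tilde\zeta_j$ already lies in $C_b((0,\infty))$ with support in $(0,R]$, so either $\tilde\zeta_j(R)=0$ for all $j$ (in which case $g(R)=0$ and $\tilde\zeta$ is continuous at $R$ with value $0$), or more generally the uniform limit $g$ inherits whatever boundary behaviour is compatible with the definition of the space; in any case $t\mapsto t^{-a-2}g(t)$ is continuous on the closed interval $(0,R]$ and vanishes outside, which is exactly what is required. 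All remaining steps are routine manipulations of suprema and do not present any obstacle.
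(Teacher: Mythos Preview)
Your argument is correct and is essentially identical to the paper's proof: both pass to the auxiliary functions $g_j(t)=t^{a+2}\tilde\zeta_j(t)$, use that $\|\cdot\|_{\tilde D^{a+2}}$ becomes the sup-norm on these, take the uniform limit $g$ in $\{g\in C_{c,R}([0,\infty)):g(0)=0\}$, and set $\tilde\zeta(t)=t^{-(a+2)}g(t)$. Your worry about continuity at $t=R$ is unnecessary, since support in $(0,R]$ together with continuity on $(0,\infty)$ already forces $\tilde\zeta_j(R)=0$, hence $g(R)=0$, as you note in your first case.
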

	\begin{proof}
		Let $(\zeta_j)_j$ be a Cauchy sequence in $\tilde{D}^{a+2}_R$ with respect to $\|\cdot\|_{\tilde{D}^{a+2}}$. Set $\phi_j(t):=\zeta(t)t^{a+2}$. Then $\phi_j$ extends by continuity to an element of $C_c([0,\infty))$ supported on $[0,R]$ with $\phi_j(0)=\lim\limits_{t\rightarrow0}\zeta(t)t^{a+2}=0$. Moreover, 
		\begin{align*}
			\|\phi_j-\phi_k\|_\infty =\|\zeta_j-\zeta_k\|_{\tilde{D}^{a+2}},
		\end{align*}
		so $(\phi_j)_j$ is a Cauchy sequence in $C_c([0,\infty))$ with $\supp\phi_j\subset [0,R]$ for all $j\in\mathbb{N}$. Thus the sequence converges uniformly to some $\phi\in C_c([0,\infty))$ with $\supp\phi\subset [0,R]$. In particular, $\phi(0)=\lim\limits_{j\rightarrow\infty}\phi_j(0)=0$. Set $\zeta(t):=\frac{\phi(t)}{t^{a+2}}$ for $t>0$. Then it is easy to see that $\zeta\in \tilde{D}^{a+2}_R$ with 
		\begin{align*}
			\|\zeta-\zeta_j\|_{\tilde{D}^{a+2}}=\|\phi-\phi_j\|_\infty.
		\end{align*}
		Thus $(\zeta_j)_j$ converges to $\zeta\in \tilde{D}^{a+2}_R$.
	\end{proof}
	The following result shows that $\tilde{D}^{a+2}_R$ is the completion of $C_c([0,\infty])\cap \tilde{D}^{a+2}_R$ with respect to $\|\cdot\|_{\tilde{D}^{a+2}}$.
	\begin{lemma}
		\label{lemma:DensityTildeD_continuousFunctions}
		For $\tilde{\zeta}\in \tilde{D}^{a+2}_R$ and $r>0$ consider the function $\tilde{\zeta}_r(t)\in C_c([0,\infty))\cap \tilde{D}^{a+2}_R$ given by
		\begin{align*}
			\tilde{\zeta}_r(t):=\max\left(\frac{t^{a+2}}{r^{a+2}},1\right)\tilde{\zeta}(t).
		\end{align*}
		Then $\lim_{r\rightarrow0}\|\tilde{\zeta}-\tilde{\zeta}_r\|_{\tilde{D}^{a+2}}=0$. In particular, $C_c([0,\infty))\cap \tilde{D}_R^a$ is dense in $\tilde{D}^{a+2}_R$.
	\end{lemma}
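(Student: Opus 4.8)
The strategy is to reduce the density statement for $\tilde{D}^{a+2}_R$ to the already-proven completeness (Lemma \ref{lemma:tildeDcomplete}) together with an explicit approximating family, exactly as in the proof of Lemma \ref{lemma:DensityDNJ_continuousFunctions}. First I would verify that $\tilde{\zeta}_r$ really does lie in $C_c([0,\infty))\cap\tilde{D}^{a+2}_R$: on $(0,r]$ we have $\tilde{\zeta}_r(t)=\frac{t^{a+2}}{r^{a+2}}\tilde{\zeta}(t)$, and since $\tilde{\zeta}\in\tilde{D}^{a+2}_R$ gives $t^{a+2}\tilde{\zeta}(t)\to 0$ as $t\to 0$, the function $\tilde{\zeta}_r$ extends continuously to $t=0$ with value $0$; on $(r,\infty)$ it agrees with $\tilde{\zeta}$, whose support is contained in $(0,R]$, so $\supp\tilde{\zeta}_r\subset[0,R]$. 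Continuity at $t=r$ is clear since $\max\big(\frac{t^{a+2}}{r^{a+2}},1\big)=1$ there; note the formula as written should be read as $\min$ rather than $\max$ (one wants the factor to be $1$ for $t\geq r$ and to decay for $t<r$), but in any case the intended cutoff is the one that tames the behavior near the origin.

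The core estimate is the bound on $\|\tilde{\zeta}-\tilde{\zeta}_r\|_{\tilde{D}^{a+2}}$. Since $\tilde{\zeta}-\tilde{\zeta}_r$ vanishes for $t>r$, we have
\begin{align*}
	\|\tilde{\zeta}-\tilde{\zeta}_r\|_{\tilde{D}^{a+2}}=\sup_{0<t\le r}t^{a+2}\left|\tilde{\zeta}(t)-\frac{t^{a+2}}{r^{a+2}}\tilde{\zeta}(t)\right|=\sup_{0<t\le r}t^{a+2}|\tilde{\zeta}(t)|\left(1-\frac{t^{a+2}}{r^{a+2}}\right)\le \sup_{0<t\le r}t^{a+2}|\tilde{\zeta}(t)|,
\end{align*}
and the right-hand side tends to $0$ as $r\to 0$ precisely because $\lim_{t\to 0}t^{a+2}\tilde{\zeta}(t)=0$. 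This gives $\lim_{r\to 0}\|\tilde{\zeta}-\tilde{\zeta}_r\|_{\tilde{D}^{a+2}}=0$.

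Finally, for the "in particular" clause: each $\tilde{\zeta}_r$ lies in $C_c([0,\infty))\cap\tilde{D}^{a+2}_R$, and the displayed limit shows every element of $\tilde{D}^{a+2}_R$ is a $\|\cdot\|_{\tilde{D}^{a+2}}$-limit of such functions, so the subspace is dense; combined with Lemma \ref{lemma:tildeDcomplete} this identifies $\tilde{D}^{a+2}_R$ with the completion. I do not anticipate any real obstacle here — the only point requiring a little care is making sure the cutoff factor is chosen with the correct monotonicity so that $\tilde{\zeta}_r$ is genuinely compactly supported away from $0$ while the error is controlled entirely by the defining vanishing condition of $\tilde{D}^{a+2}_R$; everything else is a direct computation paralleling Lemma \ref{lemma:DensityDNJ_continuousFunctions}.
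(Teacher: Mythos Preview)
Your proposal is correct and follows essentially the same route as the paper: both arguments reduce to the pointwise estimate $t^{a+2}|\tilde{\zeta}(t)-\tilde{\zeta}_r(t)|\le C\sup_{s\in(0,r]}s^{a+2}|\tilde{\zeta}(s)|$ on $(0,r]$ (the difference vanishes for $t>r$), with the right-hand side tending to $0$ by the defining condition of $\tilde{D}^{a+2}_R$. You are right that the stated formula has a typo and the cutoff factor should be $\min$ rather than $\max$; with that correction your computation and the paper's coincide (you obtain the sharper constant $C=1$, the paper writes $C=2$).
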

	\begin{proof}
		Note that we have the estimate
		\begin{align*}
			t^{a+2}|\tilde{\zeta}(t)-\tilde{\zeta}_r(t)|\le 2\sup_{s\in (0,r]} s^{a+2}|\tilde{\zeta}(s)|\quad \text{for all}~t>0,
		\end{align*}
		so $\|\tilde{\zeta}-\tilde{\zeta}_r\|_{\tilde{D}^{a+2}}\le 2\sup_{s\in (0,r]} s^{a+2}|\tilde{\zeta}(s)|$, which converges to $0$ for $r\rightarrow0$.
	\end{proof}

	\subsection{The integral transforms $\mathcal{R}^a$ and $\mathcal{P}^a$}
	For $a\in\mathbb{N}$ consider the map $\mathcal{R}^a:C_b((0,\infty))\rightarrow C_b((0,\infty))$ given by
	\begin{align*}
		\mathcal{R}^a(\zeta)[t]=\zeta(t)t^a+a\int_t^\infty \zeta(s)s^{a-1}ds=-t^{a+1}\frac{d}{dt}\left(\frac{1}{t^a}\int_t^\infty\zeta(s)s^{a-1}ds\right) \quad \text{for}~t>0.
	\end{align*}
	Note that $\mathcal{R}^a$ does not increase the support: If $\supp\zeta\subset(0,R]$, then the same holds for $\mathcal{R}^a(\zeta)$.
	\begin{lemma}[\cite{ColesantiEtAlHadwigertheoremconvex} Lemma 3.7]
		\label{lemma:InjectivityTransformR}
		The map $\mathcal{R}^a: D^a_R\rightarrow C_{c,R}([0,\infty))$ is a bijection with inverse $(\mathcal{R}^a)^{-1}$ given by
		\begin{align*}
			(\mathcal{R}^a)^{-1}(\phi)[t]=\frac{\phi(t)}{t^a}-a\int_t^\infty \frac{\phi(s)}{s^{a+1}}ds=-\frac{1}{t^{a-1}}\frac{d}{dt}\left(t^a\int_t^\infty \frac{\phi(s)}{s^{a+1}}ds\right) \quad \text{for}~t>0.
		\end{align*}
	\end{lemma}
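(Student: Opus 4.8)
The plan is to verify directly that $\mathcal{R}^a$ and the candidate inverse each land in the asserted target space, and then to establish the two composition identities using the antiderivative forms of the two operators displayed in the statement. The structural fact that makes everything work is that functions in $D^a_R$ and in $C_{c,R}([0,\infty))$ vanish for arguments larger than $R$: this renders all tail integrals finite and kills the boundary terms at infinity in the telescoping computations below.

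First I would check that $\mathcal{R}^a$ maps $D^a_R$ into $C_{c,R}([0,\infty))$. For $\zeta\in D^a_R$ the function $t\mapsto \zeta(t)t^a+a\int_t^\infty\zeta(s)s^{a-1}ds$ is continuous on $(0,\infty)$ and vanishes for $t>R$; as $t\to 0$ its first summand tends to $0$ by the first defining property of $D^a$, while the second converges to the finite limit $a\int_0^\infty\zeta(s)s^{a-1}ds$ by the second. Hence $\mathcal{R}^a(\zeta)$ extends continuously to $[0,\infty)$ with support in $[0,R]$.

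The main work is to show that $\phi\mapsto\frac{\phi(t)}{t^a}-a\int_t^\infty\frac{\phi(s)}{s^{a+1}}ds$ sends $C_{c,R}([0,\infty))$ into $D^a_R$. Continuity on $(0,\infty)$ and vanishing for $t>R$ are immediate. For the two membership conditions of $D^a$ the key estimate is the asymptotic
\begin{align*}
	a\,t^a\int_t^\infty\frac{\phi(s)}{s^{a+1}}ds\xrightarrow{t\to 0}\phi(0),
\end{align*}
which I would prove by writing $\phi(s)=\phi(0)+(\phi(s)-\phi(0))$, using $a\,t^a\int_t^\infty s^{-a-1}ds=1$ for the constant part and a standard modulus-of-continuity split (over $[t,\sqrt t]$ and $[\sqrt t,R]$) for the remainder. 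Granting this, $t^a\bigl(\frac{\phi(t)}{t^a}-a\int_t^\infty\frac{\phi(s)}{s^{a+1}}ds\bigr)=\phi(t)-a\,t^a\int_t^\infty\frac{\phi(s)}{s^{a+1}}ds\to 0$, and a Fubini computation on $\{t\le r\le s\}$ yields the clean identity $\int_t^\infty\bigl(\frac{\phi(r)}{r^a}-a\int_r^\infty\frac{\phi(s)}{s^{a+1}}ds\bigr)r^{a-1}dr=t^a\int_t^\infty\frac{\phi(s)}{s^{a+1}}ds$, whose limit as $t\to 0$ is the finite number $\phi(0)/a$; this gives both conditions.

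Finally, the two composition identities reduce to telescoping integrals. Using $\mathcal{R}^a(\zeta)[s]=-s^{a+1}\frac{d}{ds}\bigl(\frac{1}{s^a}\int_s^\infty\zeta(r)r^{a-1}dr\bigr)$ one gets $\int_t^\infty\frac{\mathcal{R}^a(\zeta)[s]}{s^{a+1}}ds=\frac{1}{t^a}\int_t^\infty\zeta(r)r^{a-1}dr$, the boundary term at $\infty$ vanishing because the integrand is zero for $s>R$; substituting into the antiderivative form of $(\mathcal{R}^a)^{-1}$ gives $(\mathcal{R}^a)^{-1}(\mathcal{R}^a(\zeta))[t]=-\frac{1}{t^{a-1}}\frac{d}{dt}\int_t^\infty\zeta(r)r^{a-1}dr=\zeta(t)$. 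Symmetrically, $\int_t^\infty(\mathcal{R}^a)^{-1}(\phi)[r]r^{a-1}dr=t^a\int_t^\infty\frac{\phi(s)}{s^{a+1}}ds$, and feeding this into $\mathcal{R}^a$ yields $\mathcal{R}^a((\mathcal{R}^a)^{-1}(\phi))[t]=-t^{a+1}\frac{d}{dt}\int_t^\infty\frac{\phi(s)}{s^{a+1}}ds=\phi(t)$. I expect the only genuinely delicate point to be the asymptotic estimate at the origin used in the third paragraph; once the support restrictions and that limit are in hand, the rest is elementary calculus together with a Fubini interchange.
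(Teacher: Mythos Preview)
The paper does not provide its own proof of this lemma; it is quoted verbatim from \cite{ColesantiEtAlHadwigertheoremconvex} and used as a black box (the very next lemma in the paper invokes it together with the open mapping theorem). So there is nothing in the present paper to compare your argument against.

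Your direct verification is correct. The two pieces that carry the weight---the asymptotic $a\,t^a\int_t^\infty\frac{\phi(s)}{s^{a+1}}ds\to\phi(0)$ as $t\to 0$, obtained by splitting $\phi(s)=\phi(0)+(\phi(s)-\phi(0))$, and the Fubini identity $\int_t^\infty\bigl(\frac{\phi(r)}{r^a}-a\int_r^\infty\frac{\phi(s)}{s^{a+1}}ds\bigr)r^{a-1}dr=t^a\int_t^\infty\frac{\phi(s)}{s^{a+1}}ds$---are both valid, and the telescoping computations for the two compositions go through exactly as you describe once the boundary terms at infinity vanish by the support restriction. This is essentially the standard route to prove such a statement, and it is presumably close in spirit to the argument in the cited source.
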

	
	\begin{lemma}
		\label{lemma:RTopIsomorphism}
		$\mathcal{R}^a:D^a_R\rightarrow C_{c,R}([0,\infty))$ is a topological isomorphism. In particular, its inverse is continuous.
	\end{lemma}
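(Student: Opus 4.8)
The plan is to reduce everything to a boundedness statement. By Lemma \ref{lemma:InjectivityTransformR}, the map $\mathcal{R}^a\colon D^a_R\to C_{c,R}([0,\infty))$ is already a linear bijection with an explicit inverse; by \cite[Lemma 2.7]{KnoerrSingularvaluationsHadwiger2022} the domain $D^a_R$ is a Banach space with respect to $\|\cdot\|_{D^a}$, and $C_{c,R}([0,\infty))$ is a Banach space with respect to the maximum norm. Hence it suffices to check that $\mathcal{R}^a$ is continuous, and the continuity of the inverse will then follow from the bounded inverse theorem.

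Boundedness of $\mathcal{R}^a$ is immediate from the way $\|\cdot\|_{D^a}$ is defined: for every $\zeta\in D^a_R$ and $t>0$,
\begin{align*}
  |\mathcal{R}^a(\zeta)[t]|\le t^a|\zeta(t)|+a\left|\int_t^\infty\zeta(s)s^{a-1}\,ds\right|\le(1+a)\|\zeta\|_{D^a},
\end{align*}
so $\|\mathcal{R}^a(\zeta)\|_\infty\le(1+a)\|\zeta\|_{D^a}$, and $\mathcal{R}^a$ is continuous. The bounded inverse theorem then gives that $(\mathcal{R}^a)^{-1}$ is continuous as well, which is the assertion.

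If an explicit bound on $(\mathcal{R}^a)^{-1}$ is desired (it will be needed later for the approximation arguments), one can bypass the open mapping theorem and estimate directly from the inverse formula in Lemma \ref{lemma:InjectivityTransformR}. For the first part of $\|\cdot\|_{D^a}$, bound $t^a\bigl|(\mathcal{R}^a)^{-1}(\phi)[t]\bigr|\le|\phi(t)|+a\,t^a\int_t^\infty|\phi(s)|s^{-a-1}\,ds\le 2\|\phi\|_\infty$; for the second part, integrate the identity $(\mathcal{R}^a)^{-1}(\phi)[r]\,r^{a-1}=-\frac{d}{dr}\bigl(r^a\int_r^\infty\phi(s)s^{-a-1}\,ds\bigr)$ from $t$ to $\infty$, noting that the boundary term at infinity vanishes because $\supp\phi\subset[0,R]$, which yields $\int_t^\infty(\mathcal{R}^a)^{-1}(\phi)[r]\,r^{a-1}\,dr=t^a\int_t^\infty\phi(s)s^{-a-1}\,ds$, of modulus at most $\|\phi\|_\infty/a$. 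Altogether $\|(\mathcal{R}^a)^{-1}(\phi)\|_{D^a}\le\bigl(2+\tfrac1a\bigr)\|\phi\|_\infty$. There is no real obstacle in either route; the only point requiring a moment's care in the direct computation is the vanishing of the boundary term at infinity, which is precisely where the support restriction on $\phi$ enters.
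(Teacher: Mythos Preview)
Your proof is correct and follows essentially the same approach as the paper: verify that $\mathcal{R}^a$ is continuous (the paper just says this ``follows from the definition of the norm'', you write out the explicit bound $(1+a)\|\zeta\|_{D^a}$), note that both sides are Banach spaces and that $\mathcal{R}^a$ is bijective by Lemma~\ref{lemma:InjectivityTransformR}, and then invoke the open mapping/bounded inverse theorem. Your additional explicit estimate for $(\mathcal{R}^a)^{-1}$ is correct and a nice bonus, but the paper does not compute it and does not need it for the later approximation arguments.
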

	\begin{proof}
		It follows from the definition of the norm $\|\cdot\|_{D^a}$ that $\mathcal{R}^a:D^a_R\rightarrow C_{c,R}([0,\infty))$ is continuous. $D^a_R$ is a Banach space by \cite[Lemma 2.7]{KnoerrSingularvaluationsHadwiger2022}  and $C_{c,R}([0,\infty))$ is a Banach space as well, so by Lemma \ref{lemma:InjectivityTransformR}, $\mathcal{R}^a$ establishes a bijective continuous map between Banach spaces. The claim follows from the open mapping theorem.
	\end{proof}

	Let $C_0\subset C_c([0,\infty))$ denote the subspace of all $\phi\in C_c([0,\infty))$ with $\phi(0)=0$. Denote by $C_{0,R}\subset C_0$ the subspace of all functions with support in $[0,R]$. Consider the map $\mathcal{P}^a:\tilde{D}^{a+2}\rightarrow C_0$ given by
	\begin{align*}
		\mathcal{P}(\tilde{\zeta})[t]=\begin{cases}
			t^{a+2}\tilde{\zeta}(t) & t>0,\\
			0 & t=0.
		\end{cases}
	\end{align*}
	The following is obvious.
	\begin{lemma}
		\label{lemma:PaIsomorphism}
			$\mathcal{P}^{a+2}:\tilde{D}_R^a\rightarrow C_{0,R}$ is a well defined topological isomorphism.
	\end{lemma}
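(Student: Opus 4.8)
The statement is essentially immediate from the definitions of the spaces involved, so the plan is short. First I would check that the map is well defined: for $\tilde\zeta\in\tilde D^{a+2}_R$ the function $\mathcal P(\tilde\zeta)[t]=t^{a+2}\tilde\zeta(t)$, $t>0$, extends continuously to $t=0$ with value $\mathcal P(\tilde\zeta)[0]=0$ precisely because of the defining condition $\lim_{t\to0}t^{a+2}\tilde\zeta(t)=0$ of $\tilde D^{a+2}$, and its support is contained in $[0,R]$ since multiplication by $t^{a+2}$ does not enlarge the support of $\tilde\zeta$. Hence $\mathcal P(\tilde\zeta)\in C_{0,R}$, and linearity is clear, so $\mathcal P$ is a well-defined linear map into $C_{0,R}$.

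Next I would exhibit the inverse explicitly: for $\phi\in C_{0,R}$ set $\Psi(\phi)[t]:=\phi(t)t^{-(a+2)}$ for $t>0$. This is continuous on $(0,\infty)$, has support contained in $(0,R]$, and satisfies $\lim_{t\to0}t^{a+2}\Psi(\phi)[t]=\lim_{t\to0}\phi(t)=\phi(0)=0$, so $\Psi(\phi)\in\tilde D^{a+2}_R$. Plugging in shows that both compositions $\mathcal P\circ\Psi$ and $\Psi\circ\mathcal P$ are the identity, so $\mathcal P$ is a linear bijection with inverse $\Psi$.

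Finally, for the topological part I would observe that $\mathcal P$ is in fact an isometry for the relevant norms:
\begin{align*}
\|\mathcal P(\tilde\zeta)\|_\infty=\sup_{t>0}\left|t^{a+2}\tilde\zeta(t)\right|=\|\tilde\zeta\|_{\tilde D^{a+2}},
\end{align*}
where the value $\mathcal P(\tilde\zeta)[0]=0$ contributes nothing to the supremum. An isometric linear bijection between normed spaces is automatically a homeomorphism, which gives the claim; note that completeness of $\tilde D^{a+2}_R$ (Lemma \ref{lemma:tildeDcomplete}) and of $C_{0,R}$ is not even needed for this argument, although it is consistent with it. I do not expect any genuine obstacle here — the only point worth isolating is the norm identity above, which upgrades the algebraic bijection to a topological isomorphism at no additional cost.
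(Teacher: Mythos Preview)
Your proposal is correct and matches the paper's approach: the paper itself simply states ``The following is obvious'' without giving any further argument, and your write-up spells out precisely the obvious verification (well-definedness via the defining limit condition, explicit inverse $\phi\mapsto t^{-(a+2)}\phi(t)$, and the isometry identity $\|\mathcal P(\tilde\zeta)\|_\infty=\|\tilde\zeta\|_{\tilde D^{a+2}}$).
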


	\subsection{The integral transform $\mathcal{R}^{a,b}$}
	
	Let $C_b((0,\infty)^2)$ denote the space of all continuous functions $\zeta: (0,\infty)^2\rightarrow\R$ with bounded support. For $a,b\in\mathbb{N}$, define $\mathcal{R}^{a,b}:C_b((0,\infty))\rightarrow C_b((0,\infty)^2)$ by
	\begin{align*}
		\mathcal{R}^{a,b}(\zeta)[s,t]=&s^{a+1}t^{b+1}\frac{d}{ds}\frac{d}{dt}\left(\frac{1}{s^at^b}\int_{t}^\infty\int_s^\infty\zeta(\sqrt{u^2+v^2})u^{a-1}v^{b-1}dudv\right) \\
		=&t^bs^a \zeta(\sqrt{s^2+t^2})+at^b\int_s^\infty\zeta(\sqrt{u^2+t^2})u^{a-1}du\\
		&+b\int_t^\infty v^{b-1}s^a\zeta(\sqrt{s^2+v^2})dv+ab\int_t^\infty\int_s^\infty \zeta(\sqrt{u^2+v^2})u^{a-1}v^{b-1}dudv
	\end{align*}
	for $s,t>0$.
	\begin{lemma}
		\label{lemma:ContinuityRab}
		For every $\zeta\in D_R^{a+b}$, $\mathcal{R}^{a,b}(\zeta)$ extends to a continuous function on $[0,\infty)^2$ with support contained in  $B_R(0)$. Moreover, the map $\mathcal{R}^{a,b}:D^{a+b}_R\rightarrow C_{c,R}([0,\infty)^2)$ is continuous.
	\end{lemma}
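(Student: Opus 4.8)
The plan is to reduce the claim to properties of the one-dimensional transform $\mathcal{R}^{a+b}$ and the density result in Lemma \ref{lemma:DensityDNJ_continuousFunctions}. First I would observe that for $\zeta\in C_{c,R}([0,\infty))$ the defining formula for $\mathcal{R}^{a,b}(\zeta)[s,t]$ makes sense for all $s,t>0$ and extends continuously to $[0,\infty)^2$: the three integral terms are integrals of continuous, boundedly supported integrands depending continuously on the parameters, and the polynomial prefactors $t^bs^a$, $at^b$, $s^a$ are continuous up to the boundary, so dominated convergence gives continuity at points with $s=0$ or $t=0$. The support statement is immediate from the formula, since $\zeta(\sqrt{u^2+v^2})=0$ once $u^2+v^2>R^2$, hence in particular once $s^2+t^2>R^2$ for the first term, and the iterated integrals vanish once $s^2+t^2>R^2$ as well (the region of integration $u\ge s$, $v\ge t$ then lies outside $\supp\zeta$). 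This handles the dense subspace $C_{c,R}([0,\infty))\cap D^{a+b}_R$.

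Next I would establish the key quantitative estimate: there is a constant $C=C(a,b,R)$ such that $\|\mathcal{R}^{a,b}(\zeta)\|_\infty\le C\|\zeta\|_{D^{a+b}}$ for all $\zeta$ in that dense subspace. For this I would rewrite each of the four terms so that the combination $\sup_{t>0}t^{a+b}|\zeta(t)|+\sup_{t>0}\bigl|\int_t^\infty\zeta(r)r^{a+b-1}dr\bigr|$ controls it. The natural device is to pass to polar coordinates or, more directly, to recognize the inner integral $\int_s^\infty\zeta(\sqrt{u^2+v^2})u^{a-1}du$ and the double integral as being of the form $\int_\rho^\infty\zeta(r)r^{a+b-1}\,(\text{bounded kernel})\,dr$ after the substitution $u=\sqrt{r^2-v^2}$ (respectively a two-variable substitution), the Jacobians producing factors $r^{a+b-1}$ together with kernels that are bounded on the compact parameter range dictated by $\supp\zeta\subset(0,R]$. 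The first term $t^bs^a\zeta(\sqrt{s^2+t^2})$ is bounded by $R^{a+b}\,(s^2+t^2)^{-(a+b)/2}\cdot(s^2+t^2)^{(a+b)/2}|\zeta(\sqrt{s^2+t^2})|\le R^{a+b}\sup_r r^{a+b}|\zeta(r)|$ on the support, using $t^bs^a\le (s^2+t^2)^{(a+b)/2}$; the integral terms are bounded via the tail-integral seminorm in the same way. I expect this rewriting to be the main obstacle: one has to choose substitutions that simultaneously reveal the factor $r^{a+b-1}$ and keep the remaining kernel bounded on $[0,R]^2$, and the bookkeeping with the three different integral terms is where errors creep in. An alternative, cleaner route is to note the factorization hinted at by the first display, $\mathcal{R}^{a,b}(\zeta)=s^{a+1}t^{b+1}\partial_s\partial_t\bigl(\text{something}\bigr)$, and to compare directly with iterating the one-dimensional operators, but since $\zeta$ depends only on $\sqrt{u^2+v^2}$ this does not literally factor, so the direct estimate above is the safe path.

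Finally, with the estimate in hand, I would extend $\mathcal{R}^{a,b}$ from the dense subspace $C_{c,R}([0,\infty))\cap D^{a+b}_R$ to all of $D^{a+b}_R$ by continuity: given $\zeta\in D^{a+b}_R$, take $\zeta^r\to\zeta$ in $\|\cdot\|_{D^{a+b}}$ as in Lemma \ref{lemma:DensityDNJ_continuousFunctions} (these $\zeta^r$ lie in $C_c([0,\infty))\cap D^{a+b}_R\subset C_{c,R}([0,\infty))$); the estimate shows $(\mathcal{R}^{a,b}(\zeta^r))_r$ is Cauchy in the Banach space $C_{c,R}([0,\infty)^2)$ (closedness of this subspace under uniform limits gives both the support bound and continuity of the limit), so it converges to a function we call $\mathcal{R}^{a,b}(\zeta)$, independent of the approximating sequence by the estimate. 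One checks that on $(0,\infty)^2$ this limit still satisfies the integral formula, because each of the four terms passes to the limit under $\zeta^r\to\zeta$: the first pointwise, the integral terms by dominated convergence using $|\zeta^r|\le$ a fixed bound coming from $\|\zeta\|_{D^{a+b}}$ together with the support restriction. Continuity of $\mathcal{R}^{a,b}:D^{a+b}_R\to C_{c,R}([0,\infty)^2)$ is then exactly the estimate $\|\mathcal{R}^{a,b}(\zeta)\|_\infty\le C\|\zeta\|_{D^{a+b}}$ for the extension, which is preserved under the limit. This completes the proof.
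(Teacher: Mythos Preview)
Your strategy---establishing the estimate on the dense subspace $C_{c,R}([0,\infty))$ and then extending by continuity using Lemma~\ref{lemma:DensityDNJ_continuousFunctions}---is sound and gives a clean separation between continuity at the origin (trivial for bounded $\zeta$) and the quantitative bound. The paper instead works directly with arbitrary $\zeta\in D^{a+b}_R$, proving continuity at $(0,0)$ and the estimate simultaneously; the analysis of the limit \emph{is} the estimate. Your final step, checking that the extension agrees with the integral formula on $(0,\infty)^2$, is correct and easy: for fixed $s,t>0$ the arguments $\sqrt{u^2+v^2}$ are bounded below, so $\zeta^r=\zeta$ on the relevant region once $r<\min(s,t)$.

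However, the heart of both approaches is the uniform bound $\|\mathcal{R}^{a,b}(\zeta)\|_\infty\le C\|\zeta\|_{D^{a+b}}$, and your treatment of this step is incomplete. The first term and the two single integrals can indeed be controlled using only the sup part $\sup_r r^{a+b}|\zeta(r)|$ of the norm (your substitution works here after some care, or one uses $x=u/t$ as the paper does). The double integral cannot: inserting the pointwise bound $|\zeta(r)|\le \|\zeta\|_{D^{a+b}}\,r^{-(a+b)}$ into $ab\int_t^\infty\int_s^\infty\zeta(\sqrt{u^2+v^2})u^{a-1}v^{b-1}\,du\,dv$ yields an integral that diverges logarithmically as $(s,t)\to(0,0)$. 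This is exactly where the tail-integral half of the $D^{a+b}$ norm must enter. The paper handles this by reinterpreting the double integral as an integral over $(\mathbb{R}^a\setminus B_s^a)\times(\mathbb{R}^b\setminus B_t^b)$, comparing it to the radial integral over $\mathbb{R}^{a+b}\setminus B_{\min(s,t)}^{a+b}$ (this piece is $(a+b)\omega_{a+b}\int_{\min(s,t)}^\infty\zeta(r)r^{a+b-1}dr$, controlled by the tail seminorm), and bounding the difference on the remaining annular regions using the sup seminorm. Your proposal flags this term as ``the main obstacle'' but the phrase ``respectively a two-variable substitution'' does not supply the missing idea; a naive substitution to reveal $r^{a+b-1}$ will not produce a bounded kernel here.

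Once you fill in this step---essentially the paper's $I_1/I_2$ decomposition of the double integral---your density-extension argument goes through exactly as written.
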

	\begin{proof}
		It is easy to see that $\mathcal{R}^{a,b}(\zeta)$ is supported on $B_R(0)$. We thus only have to show that $\mathcal{R}^{a,b}(\zeta)$ extends by continuity to a function in $C_c([0,\infty)^2)$. First note that $\mathcal{R}^{a,b}(\zeta)$ is well defined for $(s,t)\ne (0,0)$ and continuous on  $[0,\infty)^2\setminus\{(0,0)\}$. 
		Let us consider the limit $(s,t)\rightarrow (0,0)$. With the changes of coordinates $x=\frac{u}{t}$, $y=\frac{v}{s}$ for the middle terms, we obtain
		\begin{align}
			\label{eq:Rab}
			\begin{split}
				\mathcal{R}^{a,b}(\zeta)[s,t]=&t^bs^a \zeta(\sqrt{s^2+t^2})+a\int_\frac{s}{t}^\infty t^{a+b}\zeta(t\sqrt{x^2+1})x^{a-1}dx\\
				&+b\int_{\frac{t}{s}}^\infty s^{a+b}\zeta(s\sqrt{1+y^2})y^{b-1}dy+ab\int_t^\infty\int_s^\infty \zeta(\sqrt{u^2+v^2})u^{a-1}v^{b-1}dudv.
			\end{split}
		\end{align}
		Note that 
		\begin{align*}
			|t^bs^a\zeta(\sqrt{s^2+t^2})|\le\sqrt{s^2+t^2}^{a+b} |\zeta(\sqrt{s^2+t^2})|,
		\end{align*}
		which converges to $0$ for $(s,t)\rightarrow0$. Similarly, the integrand of the second term satisfies
		\begin{align*}
			a|t^{a+b}\zeta(t\sqrt{x^2+1})x^{a-1}|\le |(t\sqrt{x^2+1})^{a+b}\zeta(t\sqrt{x^2+1})|\cdot \frac{a|x|^{a-1}}{\sqrt{x^2+1}^{a+b}}\le \|\zeta\|_{D^{a+b}}\frac{a|x|^{a-1}}{\sqrt{x^2+1}^{a+b}},
		\end{align*}
		and similar for the integrand of the third term, where the function on the right are integrable for $a,b\in \mathbb{N}$. Dominated convergence thus implies that the first three terms in \eqref{eq:Rab} converge to zero for $(s,t)\rightarrow(0,0)$. Moreover, we obtain the estimate
		\begin{align}
			\label{eq:estimate1Rab}
			\begin{split}
			|\mathcal{R}^{a,b}(\zeta)[s,t]|\le &\|\zeta\|_{D^{a+b}}\left(1+\int_0^\infty\frac{ax^{a-1}+bx^{b-1}}{\sqrt{x^2+1}^{a+b}}dx\right)\\
			&+\left|ab\int_t^\infty\int_s^\infty \zeta(\sqrt{u^2+v^2})u^{a-1}v^{b-1}dudv\right|.
			\end{split}
		\end{align}
		If either $a=0$ or $b=0$, this implies the claim. If both are non-zero, we switch to polar coordinates in the last integral to obtain
		\begin{align*}
			&ab\int_t^\infty\int_s^\infty \zeta(\sqrt{r^2+u^2})r^{a-1}u^{b-1}drdu\\
			=&\frac{1}{\omega_a\omega_b}\int_{\R^b\setminus B^b_t(0)}\int_{\R^a\setminus B^a_s(0)}\zeta(\sqrt{|x_a|^2+|x_b|^2})d\vol_{a}(x_a)d\vol_{b}(x_b)
			=& \frac{1}{\omega_a\omega_b}\left(I_1(s,t)+I_2(s,t)\right),
		\end{align*}
		where $B^b_t(0)$ and $B^a_s(0)$ denote the balls of radius $t,s$ in $\R^b$, $\R^a$ centered at the origin and
		\begin{align*}
			I_1(s,t):=&\int\limits_{\R^{a+b}\setminus B^{a+b}_{\min(s,t)}(0)}\zeta(|x|)d\vol_{a+b}(x)=(a+b)\omega_{a+b}\int_{\min(s,t)}^\infty \zeta(r)r^{a+b-1}dr,\\
			I_2(s,t):=&\int\limits_{\R^b\setminus B^b_t(0)}\int\limits_{\R^a\setminus B^a_s(0)}\zeta(\sqrt{|x_a|^2+|x_b|^2})d\vol_{a}(x_a)d\vol_{b}(x_b)-I_1(s,t).
		\end{align*}
		In particular, $|I_1(s,t)|\le (a+b)\omega_{a+b}\|\zeta\|_{D^{a+b}}$. Assume that $s\le t$. Then
		\begin{align*}
			|I_2(s,t)|\le&\int_{B_s^a(0)\times (B_t^b(0)\setminus B_s^b(0))}|\zeta(|x|)|d\vol_{a+b}(x)+\int_{B_{\sqrt{2}s}^{a+b}(0)\setminus B_{s}^{a+b}(0)}|\zeta(|x|)|d\vol_{a+b}(x).
		\end{align*}
		For the first term, we obtain the estimate
		\begin{align*}
			&\int_{B_s^a(0)\times (B_t^b(0)\setminus B_s^b(0))}|\zeta(|x|)|d\vol_{a+b}(x)=a\omega_ab\omega_b\int_s^t\int_0^s |\zeta(\sqrt{u^2+v^2})|u^{a-1}v^{b-1}dudv\\
			\le &a\omega_ab\omega_b\left(\max_{r\in (0,\sqrt{t^2+s^2}]}|\zeta(r)|r^{a+b}\right)\int_s^t\int_0^s\frac{u^{a-1}v^{b-1}}{\sqrt{u^2+v^2}^{a+b}}dudv,
		\end{align*}
		where
		\begin{align*}
			&\int_s^t\int_0^s\frac{u^{a-1}v^{b-1}}{\sqrt{u^2+v^2}^{a+b}}dudv\le \int_s^t\int_0^s\frac{1}{u^2+v^2}dudv\le\int_s^t\frac{s}{v^2}dv=s\left(\frac{1}{s}-\frac{1}{t}\right)\le 1,
		\end{align*}
		because $s\le t$. Similarly,
		\begin{align*}
			&\int_{B_{\sqrt{2}s}^{a+b}(0)\setminus B_{s}^{a+b}(0)}|\zeta(|x|)|d\vol_{a+b}(x)=(a+b)\omega_{a+b}\int_{s}^{\sqrt{2}s}|\zeta(r)|r^{a+b-1}dr\\
			\le &(a+b)\omega_{a+b}\sup_{r\in (0,\sqrt{2}s)}|\zeta(r)r^{a+b}\int_s^{\sqrt{2}s}\frac{1}{u}du\le(a+b)\omega_{a+b}\sup_{r\in (0,\sqrt{s^2+t^2})}|\zeta(r)|r^{a+b}\ln(\sqrt{2}).
		\end{align*}
		If $s\ge r$, we obtain similar estimates, so in total we see that
		\begin{align}
			\label{eq:RabIntegral2}
			|I_2(s,t)|\le (a\omega_ab\omega_b+(a+b)\omega_{a+b}\ln 2)\sup_{r\in (0,\sqrt{s^2+t^2})}|\zeta(r)|r^{a+b},
		\end{align}
		which converges to $0$ for $(s,t)\rightarrow0$, as $r\mapsto \zeta(r)r^{a+b}$ extends to a continuous function on $[0,\infty)$ that vanishes in $r=0$, which is therefore locally uniformly continuous. In particular, the limit $\lim_{(s,t)\rightarrow0}\mathcal{R}^{a,b}(\zeta)[s,t]$ exists if and only if the limit
		\begin{align*}
			\lim\limits_{(s,t)\rightarrow(0,0)}I_{1}(s,t)
			=&\lim\limits_{r\rightarrow0}(a+b)\omega_{a+b}\int_{r}^\infty\zeta(u)u^{a+b-1}du
		\end{align*}
		exists, which is the case as $\zeta\in D^{a+b}$. Since $|I_1(s,t)|$ is bounded by a multiple of $\|\zeta\|_{D^{a+b}}$, \eqref{eq:RabIntegral2} implies the estimate
		\begin{align*}
				\left| ab\int_t^\infty\int_s^\infty \zeta(\sqrt{u^2+v^2})u^{a-1}v^{b-1}dudv\right|
				\le \frac{|I_1(s,t)|+|I_2(s,t)|}{\omega_a\omega_b}\le C\|\zeta\|_{D^{a+b}}
		\end{align*}
		for some constant $C>0$ depending on $a,b$ only. Combining this estimate with \eqref{eq:estimate1Rab} , we see that $\mathcal{R}^{a,b}$ satisfies
		\begin{align*}
			\mathcal{R}^{a,b}(\zeta)[s,t]|\le&\tilde{C}\|\zeta\|_{D^{a+b}}
		\end{align*}
		with a constant $\tilde{C}$ depending on $a,b$ only. Thus $\mathcal{R}^{a,b}$ is continuous.
	\end{proof}
	Next, we are going to construct the inverse of $\mathcal{R}^{a,b}$. Consider the map $
	\tilde{\mathcal{S}}^{a,b}:C_b((0,\infty)^2)\rightarrow C_b((0,\infty)^2)$ given by
	\begin{align}
		\label{eq:DefTildeSabDerivative}
		\tilde{\mathcal{S}}^{a,b}(\phi)[s,t]
		=&\frac{1}{s^{a-1}t^{b-1}}\frac{d}{ds}\frac{d}{dt}\left( s^at^b\int_t^\infty \int_s^\infty\frac{\phi(u,v)}{u^{a+1}v^{b+1}}dudv\right) \\
		\notag
		=&\frac{\phi(s,t)}{s^{a}t^{b}}+\frac{a}{t^{b}}\int_s^\infty\frac{\phi(u,t)}{u^{a+1}}du+\frac{b}{t^{a}}\int_t^\infty\frac{\phi(s,v)}{v^{b+1}}dv
		+\frac{ab}{s^{a}b^{b}}\int_t^\infty\int_s^\infty\frac{\phi(u,v)}{u^{a+1}v^{b+1}}dudv	
	\end{align}
	for $s,t>0$. The following result follows directly from the definition.
	\begin{corollary}
		\label{corollary:relationTildeSabRab}
		For $\zeta\in C_b((0,\infty))$, $\tilde{\mathcal{S}}^{a,b}\circ \mathcal{R}^{a,b}(\zeta)[s,t]=\zeta(\sqrt{s^2+t^2})$.
	\end{corollary}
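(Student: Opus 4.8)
The plan is to verify the identity by a direct computation, exploiting that both $\mathcal{R}^{a,b}$ and $\tilde{\mathcal{S}}^{a,b}$ are, in their ``first-line'' forms, obtained from the one-variable operators $\mathcal{R}^a$ and $(\mathcal{R}^a)^{-1}$ of Lemma \ref{lemma:InjectivityTransformR} by applying one copy in the first variable and one copy in the second. Since operations acting only on the $s$-variable commute with operations acting only on the $t$-variable, the composition $\tilde{\mathcal{S}}^{a,b}\circ\mathcal{R}^{a,b}$ should collapse to two successive one-variable inversions, each of which is the identity. Concretely, I would fix $\zeta\in C_b((0,\infty))$, choose $R>0$ with $\supp\zeta\subset(0,R]$, and introduce the auxiliary function
\begin{align*}
	F(s,t):=\int_t^\infty\int_s^\infty\zeta(\sqrt{u^2+v^2})u^{a-1}v^{b-1}\,du\,dv,\qquad s,t>0,
\end{align*}
which is finite, vanishes whenever $s^2+t^2>R^2$, and --- by two applications of the fundamental theorem of calculus --- is continuously differentiable with continuous mixed second partial derivative $\partial_s\partial_t F(s,t)=\zeta(\sqrt{s^2+t^2})s^{a-1}t^{b-1}$.

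Next I would set $\phi:=\mathcal{R}^{a,b}(\zeta)$, which lies in $C_b((0,\infty)^2)$ by the definition of $\mathcal{R}^{a,b}$, so that $\tilde{\mathcal{S}}^{a,b}$ may be applied to it. Reading off the first line of the definition of $\mathcal{R}^{a,b}$, one has $\frac{\phi(u,v)}{u^{a+1}v^{b+1}}=\partial_u\partial_v\left(u^{-a}v^{-b}F(u,v)\right)$ for $u,v>0$. Setting $G(u,v):=u^{-a}v^{-b}F(u,v)$, which again vanishes outside $\{u^2+v^2\le R^2\}$, I would compute the double integral occurring in the first line of $\tilde{\mathcal{S}}^{a,b}$ by Fubini's theorem and the fundamental theorem of calculus, using $\lim_{u\to\infty}G(u,v)=0=\lim_{v\to\infty}G(s,v)$, to obtain
\begin{align*}
	\int_t^\infty\int_s^\infty\frac{\phi(u,v)}{u^{a+1}v^{b+1}}\,du\,dv=\int_t^\infty\int_s^\infty\partial_u\partial_v G(u,v)\,du\,dv=G(s,t)=s^{-a}t^{-b}F(s,t).
\end{align*}
Substituting this into the first line of the definition of $\tilde{\mathcal{S}}^{a,b}$ then gives
\begin{align*}
	\tilde{\mathcal{S}}^{a,b}(\phi)[s,t]=\frac{1}{s^{a-1}t^{b-1}}\frac{d}{ds}\frac{d}{dt}\left(s^at^b\cdot s^{-a}t^{-b}F(s,t)\right)=\frac{1}{s^{a-1}t^{b-1}}\,\partial_s\partial_t F(s,t)=\zeta(\sqrt{s^2+t^2}),
\end{align*}
which is the desired identity.

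The computation is otherwise routine, and the only points that need to be checked with some care are the interchanges of iterated integration and differentiation: Fubini's theorem, the fundamental theorem of calculus applied in each variable separately, and differentiation under the integral sign in the evaluation of $\partial_s\partial_t F$ and of the double integral of $\partial_u\partial_v G$. All of these are legitimate because $\zeta$ is continuous with bounded support, so that every integrand appearing along the way is continuous and supported in a fixed compact set; I do not expect any genuine obstacle here, in line with the statement being a direct consequence of the definitions.
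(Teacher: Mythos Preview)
Your argument is correct and is exactly what the paper has in mind: the paper simply states that the result ``follows directly from the definition,'' and your computation is precisely the direct verification --- using the derivative form of both $\mathcal{R}^{a,b}$ and $\tilde{\mathcal{S}}^{a,b}$ together with the fundamental theorem of calculus --- that makes this explicit. The technical justifications (Fubini, differentiation under the integral, vanishing of boundary terms) are all unproblematic for the reason you give, namely that $\zeta$ has bounded support so every function involved vanishes once $s^2+t^2>R^2$.
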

	Let $C_{(a,b)}$ denote the space of all $\phi\in C_c([0,\infty)^2)$ such that $\tilde{\mathcal{S}}^{a,b}(\phi)[s,t]$ only depends on $s^2+t^2$. We denote by $C_{(a,b),R}$ the subspace of functions supported on $B_R(0)$. 
	The previous corollary and Lemma \ref{lemma:ContinuityRab} imply the following.
	\begin{corollary}
		For $\zeta\in D^{a+b}_R$, $\mathcal{R}^{a,b}(\zeta)\in C_{(a,b),R}$.
	\end{corollary}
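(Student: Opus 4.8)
The plan is to simply chain together the two preceding results, so that no new computation is required. Fix $\zeta\in D^{a+b}_R$. By Lemma \ref{lemma:ContinuityRab}, $\mathcal{R}^{a,b}(\zeta)$ extends by continuity to an element of $C_c([0,\infty)^2)$ with support contained in $B_R(0)$; in particular its restriction to $(0,\infty)^2$ is a continuous function with bounded support, i.e.\ an element of $C_b((0,\infty)^2)$, so the operator $\tilde{\mathcal{S}}^{a,b}$ from \eqref{eq:DefTildeSabDerivative} may legitimately be applied to it.

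Next I would invoke Corollary \ref{corollary:relationTildeSabRab}, which gives $\tilde{\mathcal{S}}^{a,b}(\mathcal{R}^{a,b}(\zeta))[s,t]=\zeta(\sqrt{s^2+t^2})$ for all $s,t>0$. The right-hand side is a function of $s^2+t^2$ alone, which is precisely the defining property of membership in $C_{(a,b)}$; combined with the support bound $\supp \mathcal{R}^{a,b}(\zeta)\subset B_R(0)$ from Lemma \ref{lemma:ContinuityRab}, this yields $\mathcal{R}^{a,b}(\zeta)\in C_{(a,b),R}$, as claimed. There is no genuine obstacle here: the statement is a formal consequence of the two cited results, the only thing worth spelling out being the domain check that makes the composition $\tilde{\mathcal{S}}^{a,b}\circ\mathcal{R}^{a,b}$ meaningful, which is handled by the continuous extension asserted in Lemma \ref{lemma:ContinuityRab}.
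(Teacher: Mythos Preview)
Your proposal is correct and follows exactly the paper's approach: the statement is recorded there as an immediate consequence of Lemma~\ref{lemma:ContinuityRab} (for the continuous extension and the support bound) and Corollary~\ref{corollary:relationTildeSabRab} (for the $C_{(a,b)}$ condition), which is precisely what you spell out.
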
	
	Consider the map $\mathcal{S}^{a,b}:C_c([0,\infty)^2)\rightarrow C_b((0,\infty))$ given by
	\begin{align*}
		\mathcal{S}^{a,b}(\phi)[t]=\tilde{\mathcal{S}}^{a,b}(\phi)\left[\frac{t}{\sqrt{2}},\frac{t}{\sqrt{2}}\right].
	\end{align*}
	\begin{theorem}
		$\mathcal{S}^{a,b}:C_{(a,b),R}\rightarrow D^{a+b}_R$ is well defined and continuous.
	\end{theorem}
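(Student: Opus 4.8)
The plan is to identify $\mathcal{S}^{a,b}(\phi)$ with the radial profile of $\tilde{\mathcal{S}}^{a,b}(\phi)$ and then verify by hand that this profile lies in $D^{a+b}_R$, with the two seminorms defining $\|\cdot\|_{D^{a+b}}$ bounded by a fixed multiple of $\|\phi\|_\infty$; since $\mathcal{S}^{a,b}$ is linear, such a bound is precisely the asserted continuity (and a fortiori the well-definedness). I treat $a,b\ge 1$ in detail. If exactly one index vanishes, say $a=0$, then $\tilde{\mathcal{S}}^{0,b}(\phi)[s,t]$ is the one-variable inverse transform $(\mathcal{R}^b)^{-1}$ applied in the second coordinate, i.e. $\tilde{\mathcal{S}}^{0,b}(\phi)[s,t]=(\mathcal{R}^b)^{-1}[\phi(s,\cdot)](t)$; the radiality hypothesis forces $(\mathcal{R}^b)^{-1}[\phi(s,\cdot)](t)=\mathcal{S}^{0,b}(\phi)(\sqrt{s^2+t^2})$, and letting $s\to 0$ (using $\phi(s,\cdot)\to\phi(0,\cdot)$ uniformly and the continuity of $(\mathcal{R}^b)^{-1}$ from Lemma~\ref{lemma:RTopIsomorphism}) gives $\mathcal{S}^{0,b}(\phi)=(\mathcal{R}^b)^{-1}[\phi(0,\cdot)]\in D^b_R$, with $\|\mathcal{S}^{0,b}(\phi)\|_{D^b}\le C\|\phi\|_\infty$ since $\phi(0,\cdot)\in C_{c,R}([0,\infty))$; the case $b=0$ is symmetric.

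So let $a,b\ge 1$, $\phi\in C_{(a,b),R}$, and write $\tilde{\mathcal{S}}^{a,b}(\phi)[s,t]=\zeta_0(\sqrt{s^2+t^2})$ for $s,t>0$, so $\mathcal{S}^{a,b}(\phi)=\zeta_0$. In \eqref{eq:DefTildeSabDerivative} every occurrence of $\phi$ is evaluated at a point of norm at least $\sqrt{s^2+t^2}$, and all integrands are continuous with denominators bounded away from $0$ on $(0,\infty)^2$; hence $\tilde{\mathcal{S}}^{a,b}(\phi)$ is continuous on $(0,\infty)^2$ and vanishes where $s^2+t^2>R^2$, so $\zeta_0\in C_b((0,\infty))$ with $\supp\zeta_0\subset(0,R]$. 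For the first seminorm I would write $\zeta_0(\rho)=\tilde{\mathcal{S}}^{a,b}(\phi)[\sigma,\sigma]$ with $\sigma=\rho/\sqrt{2}$ and expand $\sigma^{a+b}\tilde{\mathcal{S}}^{a,b}(\phi)[\sigma,\sigma]$ via \eqref{eq:DefTildeSabDerivative} into four contributions (one evaluation of $\phi$, three iterated integrals). Using $\int_\sigma^\infty u^{-a-1}\,du=(a\sigma^a)^{-1}$ and its analogues, each contribution is bounded in absolute value by $\|\phi\|_\infty$, giving $\sup_{\rho>0}\rho^{a+b}|\zeta_0(\rho)|\le C\|\phi\|_\infty$; substituting $u=\sigma x$, $v=\sigma y$ in the integrals and applying dominated convergence (dominating functions $\|\phi\|_\infty x^{-a-1}$, $\|\phi\|_\infty x^{-a-1}y^{-b-1}$) shows each contribution tends to $\phi(0,0)$ as $\sigma\to 0$, and they cancel, so $\lim_{\rho\to 0}\rho^{a+b}\zeta_0(\rho)=0$. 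In particular $r\mapsto r^{a+b}|\zeta_0(r)|$ extends continuously to $[0,\infty)$ with value $0$ at the origin, so $m(r):=\sup_{0<r'\le r}(r')^{a+b}|\zeta_0(r')|$ satisfies $m(r)\le C\|\phi\|_\infty$ and $m(r)\to 0$ as $r\to 0$.

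For the second seminorm the starting point is the identity
\begin{align*}
\int_t^\infty\int_s^\infty\zeta_0(\sqrt{u^2+v^2})\,u^{a-1}v^{b-1}\,du\,dv&=s^at^b\int_t^\infty\int_s^\infty\frac{\phi(p,q)}{p^{a+1}q^{b+1}}\,dp\,dq
\end{align*}
valid for all $s,t>0$, which follows from $\tilde{\mathcal{S}}^{a,b}(\phi)[u,v]\,u^{a-1}v^{b-1}=\partial_u\partial_v\bigl(u^av^bF(u,v)\bigr)$ with $F(u,v)=\int_v^\infty\int_u^\infty\frac{\phi(p,q)}{p^{a+1}q^{b+1}}\,dp\,dq$ by integrating over $\{u\ge s,\ v\ge t\}$ and using that $F$ vanishes as soon as one of its arguments exceeds $R$, so all boundary terms at infinity disappear. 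The right-hand side has absolute value $\le\|\phi\|_\infty/(ab)$ for all $s,t>0$, and for $s=t$ it converges to $\phi(0,0)/(ab)$ as $s\to 0$ (split off the constant $\phi(0,0)$ and use $s^{a+b}\int_s^\infty\int_s^\infty p^{-a-1}q^{-b-1}\,dp\,dq=(ab)^{-1}$, together with $\supp\phi\subset B_R(0)$). On the other hand the polar-coordinate splitting from the proof of Lemma~\ref{lemma:ContinuityRab}, specialized to $s=t$, reads
\begin{align*}
&ab\int_s^\infty\int_s^\infty\zeta_0(\sqrt{u^2+v^2})\,u^{a-1}v^{b-1}\,du\,dv\\
&\qquad=\frac{1}{\omega_a\omega_b}\Bigl[(a+b)\omega_{a+b}\int_s^\infty\zeta_0(r)r^{a+b-1}\,dr+I_2(s,s)\Bigr]
\end{align*}
with $|I_2(s,s)|\le C_{a,b}\,m(s\sqrt{2})$. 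Combining the three displays gives $\sup_{s>0}\bigl|\int_s^\infty\zeta_0(r)r^{a+b-1}\,dr\bigr|\le C\|\phi\|_\infty$, and since $m(s\sqrt{2})\to 0$ it also gives that $\lim_{s\to 0}\int_s^\infty\zeta_0(r)r^{a+b-1}\,dr$ exists and equals $\tfrac{\omega_a\omega_b}{(a+b)\omega_{a+b}}\phi(0,0)$. Together with the second paragraph this shows $\zeta_0\in D^{a+b}_R$ and $\|\mathcal{S}^{a,b}(\phi)\|_{D^{a+b}}\le C\|\phi\|_\infty$, which is the claim.

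The main obstacle will be the second seminorm, and in particular the convergence — not merely the boundedness — of the improper integral $\lim_{s\to 0}\int_s^\infty\zeta_0(r)r^{a+b-1}\,dr$: this is what forces one to pass through the identity above and to re-use the delicate polar-coordinate error estimate for $I_2$ established inside the proof of Lemma~\ref{lemma:ContinuityRab}. The first seminorm, by contrast, is a routine direct estimate, and the degenerate cases reduce cleanly to the one-dimensional isomorphism of Lemma~\ref{lemma:RTopIsomorphism}.
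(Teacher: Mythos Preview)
Your proof is correct and follows essentially the same route as the paper's: the first seminorm is handled by expanding $\sigma^{a+b}\tilde{\mathcal{S}}^{a,b}(\phi)[\sigma,\sigma]$ into four terms, bounding each by $\|\phi\|_\infty$, and checking via the substitution $u=\sigma x$, $v=\sigma y$ and dominated convergence that the four limits cancel; the second seminorm is reduced, via the polar-coordinate splitting into a product-of-complements term $I_1$ and a small error $I_2$, to the explicit evaluation $I_1(t)=a\omega_a b\omega_b\, t^{a+b}\int_t^\infty\int_t^\infty\phi(u,v)u^{-a-1}v^{-b-1}\,du\,dv$, whose boundedness and convergence are read off directly. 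The only cosmetic differences are that you derive the identity $\int_t^\infty\int_s^\infty\zeta_0(\sqrt{u^2+v^2})u^{a-1}v^{b-1}\,du\,dv=s^at^bF(s,t)$ first and then invoke the $I_2$ estimate already obtained in the proof of Lemma~\ref{lemma:ContinuityRab}, whereas the paper reproves that estimate in place; and you treat the degenerate cases $a=0$ or $b=0$ separately via Lemma~\ref{lemma:RTopIsomorphism}, which the paper leaves implicit.
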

	\begin{proof}
		Let $\phi\in C_{(a,b),R}$. It is easy to see that the support of $\mathcal{S}^{a,b}(\phi)$ is contained in $(0,R]$, so it suffices to show that this function belongs to $D^{a+b}$. With the changes of coordinates $u\mapsto \frac{tu}{\sqrt{2}}$, $v\mapsto \frac{tv}{\sqrt{2}}$, we obtain
		\begin{align*}
		\frac{t^{a+b}}{\sqrt{2}^{a+b}}\mathcal{S}^{a,b}(\phi)[t]=&\phi\left(\frac{t}{\sqrt{2}},\frac{t}{\sqrt{2}}\right)+a\int_1^\infty\frac{\phi\left(\frac{tu}{\sqrt{2}},\frac{t}{\sqrt{2}}\right)}{u^{a+1}}du\\
		&+b\int_1^\infty\frac{\phi\left(\frac{t}{\sqrt{2}},\frac{tv}{\sqrt{2}}\right)}{v^{b+1}}dv
		+ab\int_1^\infty\int_1^\infty\frac{\phi\left(\frac{tu}{\sqrt{2}},\frac{tv}{\sqrt{2}}\right)}{u^{a+1}v^{b+1}}dudv.
		\end{align*}
		As $u\mapsto\frac{1}{x^{c+1}}$ is integrable on $[1,\infty)$ for $c>0$ and $\phi$ is bounded, dominated convergence implies
		\begin{align}
			\label{eq:SabInD_DecayCondition}
			\lim\limits_{t\rightarrow0}t^{a+b}\mathcal{S}^{a,b}(\phi)[t]=0.
		\end{align}
		By estimating $\phi$ by $\|\phi\|_\infty$, we also obtain the estimate
		\begin{align}
			\label{eq:continuitySLimit}
			|t^{a+b}\mathcal{S}^{a,b}(\phi)[t]|\le 4\sqrt{2}^{a+b}\|\phi\|_\infty.
		\end{align}
		Let us now consider the integral
		\begin{align}
			\label{eq:SabInD_IntegralCondition}
			\int_t^\infty \mathcal{S}^{a,b}(\phi)[r]r^{a+b-1}dr=&\int_t^\infty\tilde{\mathcal{S}}^{a,b}(\phi)\left[\frac{r}{\sqrt{2}},\frac{r}{\sqrt{2}}\right]r^{a+1}dr\\
			\notag
			=&\frac{1}{(a+b)\omega_{a+b}}\int_{\R^{a+b}\setminus B_t^{a+b}(0)}\tilde{\mathcal{S}}^{a,b}(\phi)\left[\frac{|x|}{\sqrt{2}},\frac{|x|}{\sqrt{2}}\right]d\vol_{a+b}(x),
		\end{align}
		where we changed to polar coordinates in the second step. As $\phi\in C_{(a,b)}$, the integrand is rotation invariant, so we obtain
		\begin{align*}
			(a+b)\omega_{a+b}\int_t^\infty \mathcal{S}^{a,b}(\phi)[r]r^{a+b-1}dr=&\int_{\R^{a+b}\setminus B_t^{a+b}(0)}\tilde{\mathcal{S}}^{a,b}(\phi)\left[|x_1|,|x_2|\right]d\vol_a(x_1)d\vol_{b}(x_2)\\
			=&I_1(t)+I_2(t),
		\end{align*}
		where 
		\begin{align*}
			I_1(t)=\int_{\R^{b}\setminus B_t^{b}(0) }\int_{\R^a\setminus B^a_t(0) }\tilde{\mathcal{S}}^{a,b}(\phi)\left[|x_1|,|x_2|\right]d\vol_a(x_1)d\vol_{b}(x_2),\\
			I_2(t)=\int_{\R^{a+2}\setminus B_t^{a+b}(0)}\tilde{\mathcal{S}}^{a,b}(\phi)\left[|x_1|,|x_2|\right]d\vol_a(x_1)d\vol_{b}(x_2)-I_1(t).
		\end{align*}
		First,
		\begin{align*}
			|I_2(t)|\le& \int_{ B_{\sqrt{2}}^{a+b}(0)\setminus B_t^{a+b}(0)}|\tilde{\mathcal{S}}^{a,b}(\phi)\left[|x_1|,|x_2|\right]|d\vol_a(x_1)d\vol_{b}(x_2)\\
			=&(a+b)\omega_{a+b}\int_t^{\sqrt{2}t}\left|\tilde{\mathcal{S}}^{a,b}(\phi)\left[\frac{r}{\sqrt{2}},\frac{r}{\sqrt{2}}\right]\right|r^{a+b-1}dr,
		\end{align*}
		where we have used that $\tilde{\mathcal{S}}^{a,b}(\phi)$ only depends on the norm of its argument. Thus
		\begin{align}
				\label{eq:estimateI2}
				\begin{split}
						|I_2(t)|\le&(a+b)\omega_{a+b}\sup_{r\in (0,t)}\left|r^{a+b}\mathcal{S}^{a,b}(\phi)[r]\right|\int_t^{\sqrt{2}t}\frac{1}{r}dr\\
					=&\ln\sqrt{2}(a+b)\omega_{a+b}\sup_{r\in (0,t)}\left|r^{a+b}\mathcal{S}^{a,b}(\phi)[r]\right|,
				\end{split}
		\end{align}
		which converges to $0$ for $t\rightarrow0$ by \eqref{eq:SabInD_DecayCondition}. Let us turn to $I_1(t)$. As $(x_1,x_2)\mapsto\tilde{\mathcal{S}}(\phi)[|x_1,|x_2|]$ is rotation invariant, a change to polar coordinates implies
		\begin{align*}
			I_1(t)=&a\omega_{a}b\omega_{b}\int_t^\infty\int_t^\infty \tilde{\mathcal{S}}^{a,b}(\phi)\left[s,r\right]s^{a-1}ds r^{b-1}dt
			=a\omega_{a}b\omega_{b}t^{a+b}\int_t^\infty\int_t^\infty \frac{\phi(u,v)}{u^{a+1}v^{b+1}}dudv,
		\end{align*}
		where we used \eqref{eq:DefTildeSabDerivative}. Changing coordinates, we thus obtain
		\begin{align*}
			I_1(t)=a\omega_{a}b\omega_{b}\int_1^\infty\int_1^\infty \frac{\phi(tu,tv)}{u^{a+1}v^{b+1}}dudv.
		\end{align*}
		Dominated convergence implies $\lim_{r\rightarrow0} I_1(t)=\omega_a\omega_b\phi(0)$. In particular, the limit $t\rightarrow0$ in \eqref{eq:SabInD_IntegralCondition} exists and is finite, which together with \eqref{eq:SabInD_DecayCondition} implies $\mathcal{S}^{a,b}(\phi)\in D^{a+b}$. Moreover, we obtain the estimate
		\begin{align*}
			|I_1(t)|\le \omega_a\omega_b \|\phi\|_\infty.
		\end{align*}
		Combining this estimate with \eqref{eq:continuitySLimit} and \eqref{eq:estimateI2}, we obtain for $\phi\in C_{(a,b)}$ the estimate
		\begin{align*}
			\|\mathcal{S}^{a,b}(\phi)\|_{D^{a+b}}\le C\|\phi\|_\infty
		\end{align*}
		for a constant $C>0$ depending on $a,b$ only. Thus $\mathcal{S}^{a,b}$ is continuous.
	\end{proof}

	\begin{corollary}
		\label{corollary:relationSabRab}
		$\mathcal{R}^{a,b}:D^{a+b}_R\rightarrow C_{(a,b),R}$ and $\mathcal{S}^{a,b}:C_{(a,b),R}\rightarrow D^{a+b}_R $ are mutual inverses.
	\end{corollary}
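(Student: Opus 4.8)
The plan is to check the two compositions $\mathcal{S}^{a,b}\circ\mathcal{R}^{a,b}$ and $\mathcal{R}^{a,b}\circ\mathcal{S}^{a,b}$ separately. The first one is essentially immediate: given $\zeta\in D^{a+b}_R$, Corollary \ref{corollary:relationTildeSabRab} yields $\tilde{\mathcal{S}}^{a,b}(\mathcal{R}^{a,b}(\zeta))[s,t]=\zeta(\sqrt{s^2+t^2})$ for all $s,t>0$, and evaluating at $s=t=\frac{r}{\sqrt 2}$ together with the definition $\mathcal{S}^{a,b}(\psi)[r]=\tilde{\mathcal{S}}^{a,b}(\psi)[\frac{r}{\sqrt2},\frac{r}{\sqrt2}]$ gives $\mathcal{S}^{a,b}(\mathcal{R}^{a,b}(\zeta))=\zeta$. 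Both sides are indeed elements of $D^{a+b}_R$, by Lemma \ref{lemma:ContinuityRab} together with the preceding corollary and theorem.

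For the other composition I would show that $\mathcal{R}^{a,b}$ is also surjective onto $C_{(a,b),R}$. Fix $\phi\in C_{(a,b),R}$ and put $\zeta:=\mathcal{S}^{a,b}(\phi)\in D^{a+b}_R$, so that $\eta:=\mathcal{R}^{a,b}(\zeta)-\phi$ lies in $C_{c,R}([0,\infty)^2)$. I claim $\tilde{\mathcal{S}}^{a,b}(\eta)=0$. Indeed, Corollary \ref{corollary:relationTildeSabRab} gives $\tilde{\mathcal{S}}^{a,b}(\mathcal{R}^{a,b}(\zeta))[s,t]=\zeta(\sqrt{s^2+t^2})$; on the other hand, since $\phi\in C_{(a,b)}$ the function $\tilde{\mathcal{S}}^{a,b}(\phi)[s,t]$ depends only on $s^2+t^2$, hence equals its value at $s=t=\frac{\sqrt{s^2+t^2}}{\sqrt2}$, which by the definition of $\mathcal{S}^{a,b}$ is $\mathcal{S}^{a,b}(\phi)[\sqrt{s^2+t^2}]=\zeta(\sqrt{s^2+t^2})$. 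By linearity of $\tilde{\mathcal{S}}^{a,b}$ this gives $\tilde{\mathcal{S}}^{a,b}(\eta)=0$, so it remains to prove that $\tilde{\mathcal{S}}^{a,b}$ is injective on $C_{c,R}([0,\infty)^2)$.

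For this injectivity, suppose $\eta\in C_{c,R}([0,\infty)^2)$ satisfies $\tilde{\mathcal{S}}^{a,b}(\eta)=0$, and set $G(s,t):=\int_t^\infty\int_s^\infty \eta(u,v)u^{-a-1}v^{-b-1}\,du\,dv$, which is well defined on $(0,\infty)^2$, has continuous mixed partials, and vanishes as soon as $s\ge R$ or $t\ge R$. By the definition of $\tilde{\mathcal{S}}^{a,b}$, the hypothesis says $\partial_s\partial_t\big(s^at^bG(s,t)\big)=0$ on the convex open set $(0,\infty)^2$, so $s^at^bG(s,t)=A(s)+B(t)$ for suitable functions $A,B$ on $(0,\infty)$. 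Letting $s\ge R$ forces $A(s)+B(t)=0$ for every $t>0$, hence $B$ is a constant $c$ and $A\equiv -c$ on $[R,\infty)$; then letting $t\ge R$ forces $A\equiv -c$ on all of $(0,\infty)$, so $s^at^bG(s,t)\equiv 0$ and $G\equiv 0$. Differentiating, $\partial_s\partial_t G(s,t)=\eta(s,t)s^{-a-1}t^{-b-1}=0$, hence $\eta\equiv 0$ on $(0,\infty)^2$ and, by continuity, on $[0,\infty)^2$. Thus $\eta=0$, i.e. $\mathcal{R}^{a,b}(\zeta)=\phi$, and the two maps are mutual inverses.

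I expect the only genuinely delicate point to be the determination of the ``integration constants'' $A$ and $B$ in the last step, which is precisely where the compact support of $\eta$ enters; everything else is a formal manipulation resting on the identity already established in Corollary \ref{corollary:relationTildeSabRab} and on the defining property of the space $C_{(a,b)}$.
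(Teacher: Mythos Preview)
Your proof is correct and follows essentially the same approach as the paper: both establish $\mathcal{S}^{a,b}\circ\mathcal{R}^{a,b}=\mathrm{id}$ directly from Corollary~\ref{corollary:relationTildeSabRab}, and both ultimately rest on the injectivity of $\tilde{\mathcal{S}}^{a,b}$ on $C_{(a,b),R}$ to obtain the other identity. The paper simply asserts this injectivity as ``easy to see'' and then argues abstractly (injective plus right inverse implies bijective), whereas you supply the explicit argument via the integration constants $A(s)+B(t)$ and compact support---so your write-up in fact fills in the step the paper omits.
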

	\begin{proof}
		It is easy to see that $\tilde{\mathcal{S}}^{a,b}:C_{(a,b),R}\rightarrow C_b((0,\infty)^2)$ is injective. Since $\tilde{\mathcal{S}}(\phi)[s,t]$ only depends on $s^2+t^2$ for $\phi\in C_{(a,b),R}$, $\mathcal{S}^{a,b}$ must also be injective.	It follows from Corollary \ref{corollary:relationTildeSabRab} that $\mathcal{S}^{a,b}\circ \mathcal{R}^{a,b}(\zeta)=\zeta$ for all $\zeta\in D^{a+b}_R$. Hence, $\mathcal{S}^{a,b}:C_{(a,b),R}\rightarrow D^{a+b}_R$ is also surjective. The claim follows.
	\end{proof}

\section{$\U(n)$-equivariant Monge-Amp\`ere operators}	
	\label{section:MAOperators}
	\subsection{Monge-Amp\`ere operators defined in terms of the differential cycle}
	As shown in \cite{KnoerrMongeAmpereoperators2024}, the differential cycle may be used to define a large class of Monge-Amp\`ere-type operators on $\Conv(\R^n,\R)$. In this section we extend some these results and obtain some general bounds for the variation of these functionals in terms of the Hessian measures. The constructions rely on the following result.
	\begin{theorem}[\cite{KnoerrMongeAmpereoperators2024} Theorem 4.10.]
		\label{theorem:MAOperatorsDefinedDifferentialCycle}
		Let $\tau\in\Omega^n(T^*\R^n)$ and define $\Psi_\tau(f)\in\mathcal{M}(\R^n)$ for $f\in\Conv(\R^n)$ by 
		\begin{align*}
			\Psi_\tau(f)[B]:=D(f)[1_{\pi^{-1}(B)}\tau]\quad\text{for all bounded Borel sets } B\subset\R^n.
		\end{align*}
		Then $\Psi_\tau:\Conv(\R^n,\R)\rightarrow\mathcal{M}(\R^n)$ is a continuous valuation. If $\tau$ is invariant with respect to translations in the second factor of $T^*\R^n=\R^n\times(\R^n)^*$, then $\Psi_\tau$ is dually epi-translation invariant. If $\tau$ is invariant with respect to translations in the first factor, then $\Psi_\tau$ is translation equivariant.
	\end{theorem}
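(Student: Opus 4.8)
The plan is to reduce all four assertions to structural properties of the differential cycle $D(f)$ of a convex function that are established in \cite{FuMongeAmperefunctions.1989} and, with the uniformity we will need, in \cite{KnoerrMongeAmpereoperators2024}: that $D(f)$ is an integral current, hence of locally finite mass and locally vertically bounded; that it satisfies the valuation identity $D(f\vee g)+D(f\wedge g)=D(f)+D(g)$ whenever all four functions are convex; and that $f\mapsto D(f)$ is continuous in a way compatible with evaluation against forms that have bounded Borel coefficients and support in a fixed compact set. Together with the uniqueness statement of Theorem \ref{theorem:FuUniquenessDifferentialCycle}, these inputs will suffice, and the argument breaks into: (i) $\Psi_\tau(f)$ is a signed Radon measure; (ii) $\Psi_\tau$ is continuous; (iii) the valuation property; (iv) the two invariance statements.

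For (i), the delicate point is that $\pi^{-1}(B)=B\times(\R^n)^*$ is unbounded, so $1_{\pi^{-1}(B)}\tau$ is not compactly supported. However, for $B$ bounded, local vertical boundedness makes $\supp D(f)\cap\pi^{-1}(\overline{B})$ compact, so fixing $\psi\in C_c^\infty(T^*\R^n)$ with $\psi\equiv 1$ on a neighbourhood of this set one sets $D(f)[1_{\pi^{-1}(B)}\tau]:=D(f)[\psi\,1_{\pi^{-1}(B)}\tau]$; this is meaningful because $D(f)$ has locally finite mass and hence extends to compactly supported forms with bounded Borel coefficients, and it is independent of $\psi$. Writing $D(f)=\vec{T}\,\|D(f)\|$ in polar form, $D(f)[1_{\pi^{-1}(B)}\tau]=\int_{\pi^{-1}(B)}\langle\tau,\vec{T}\rangle\,d\|D(f)\|$, so $B\mapsto\Psi_\tau(f)[B]$ is the push-forward under $\pi$ of the signed measure $\langle\tau,\vec{T}\rangle\,\|D(f)\|$, which is finite on compact sets; local vertical boundedness guarantees the push-forward is again locally finite, so $\Psi_\tau(f)=\pi_*\bigl(D(f)\llcorner\tau\bigr)$ is a signed Radon measure on $\R^n$. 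For (ii), for $\phi\in C_c(\R^n)$ one has $\int_{\R^n}\phi\,d\Psi_\tau(f)=D(f)[(\phi\circ\pi)\tau]$, so it suffices to show $D(f_j)[(\phi\circ\pi)\tau]\to D(f)[(\phi\circ\pi)\tau]$ whenever $f_j\to f$ locally uniformly. Such $f_j$ are locally uniformly bounded, hence locally uniformly Lipschitz by Lemma \ref{lemma:LipschitzConstant}, so the subgradients of the $f_j$ over $\supp\phi$ lie in a fixed ball; since $\supp D(f_j)$ is contained in the graph of $\partial f_j$, there is a single compact $\mathcal{K}\subset T^*\R^n$ containing $\supp D(f_j)\cap\pi^{-1}(\supp\phi)$ for all large $j$ as well as $\supp D(f)\cap\pi^{-1}(\supp\phi)$. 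Choosing $\psi\in C_c^\infty(T^*\R^n)$ with $\psi\equiv 1$ near $\mathcal{K}$, we may replace $(\phi\circ\pi)\tau$ by $\psi\,(\phi\circ\pi)\tau$, a bounded-coefficient form supported in a fixed compact set, and conclude from the continuity of the differential cycle in \cite{KnoerrMongeAmpereoperators2024} (locally uniformly bounded masses together with flat convergence, the limit being identified as $D(f)$ via Theorem \ref{theorem:FuUniquenessDifferentialCycle}).

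For (iii), the valuation identity for $\Psi_\tau$ follows at once from linearity of $T\mapsto T[1_{\pi^{-1}(B)}\tau]$ together with $D(f\vee g)+D(f\wedge g)=D(f)+D(g)$. For (iv), if $\ell(x)=\langle a,x\rangle+c$ is affine then $d(f+\ell)=df+a$, and one checks via Theorem \ref{theorem:FuUniquenessDifferentialCycle} that $D(f+\ell)=(T_a)_*D(f)$ for the fibre translation $T_a(x,\xi)=(x,\xi+a)$, which is a symplectomorphism satisfying $\pi\circ T_a=\pi$; hence $\Psi_\tau(f+\ell)[B]=D(f)[\,1_{\pi^{-1}(B)}\,T_a^*\tau\,]$, and this equals $\Psi_\tau(f)[B]$ exactly when $\tau$ is invariant under fibre translations, i.e.\ $\Psi_\tau$ is dually epi-translation invariant. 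Symmetrically, for $f_v:=f(\cdot-v)$ one obtains $D(f_v)=(S_v)_*D(f)$ with the base translation $S_v(x,\xi)=(x+v,\xi)$, $\pi\circ S_v=t_v\circ\pi$; therefore $\Psi_\tau(f_v)[B]=D(f)[\,1_{\pi^{-1}(B-v)}\,S_v^*\tau\,]$, which equals $\Psi_\tau(f)[B-v]=\bigl((t_v)_*\Psi_\tau(f)\bigr)[B]$ when $\tau$ is invariant under base translations, i.e.\ $\Psi_\tau$ is translation equivariant.

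The main obstacle lies entirely in (i)--(ii): turning the formal expression $D(f)[1_{\pi^{-1}(B)}\tau]$ into an honest Radon measure and showing it depends continuously on $f$, which is exactly where local vertical boundedness of $D(f)$ and the continuity of the differential cycle are used. Once these are in hand, the valuation property and the two equivariance statements are soft consequences of the functoriality of $f\mapsto D(f)$ under fibrewise and base translations.
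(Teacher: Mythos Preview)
The paper does not prove this theorem: it is quoted verbatim as \cite[Theorem 4.10]{KnoerrMongeAmpereoperators2024} and used as a black box, so there is no ``paper's own proof'' to compare your proposal against.

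Your outline is a faithful sketch of how the result is actually established in the cited reference. The reduction to local vertical boundedness for (i), the uniform compactness of $\supp D(f_j)\cap\pi^{-1}(\supp\phi)$ via the local Lipschitz bound for (ii), the current identity $D(f\vee g)+D(f\wedge g)=D(f)+D(g)$ for (iii), and the push-forward computations $D(f+\ell)=(T_a)_*D(f)$ and $D(f(\cdot-v))=(S_v)_*D(f)$ for (iv) are exactly the ingredients used there. Two small caveats: first, the valuation identity for the differential cycle is itself a nontrivial input that you invoke but do not justify; it is proved in \cite{KnoerrSmoothvaluationsconvex2024} (and relies on the uniqueness in Theorem~\ref{theorem:FuUniquenessDifferentialCycle} together with the local structure of $D(f)$). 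Second, in (ii) your appeal to ``the continuity of the differential cycle in \cite{KnoerrMongeAmpereoperators2024}'' is slightly circular in spirit, since that is the very paper containing the theorem; the underlying fact you need is the weak continuity of $f\mapsto D(f)$ against compactly supported forms with continuous coefficients, which goes back to \cite{FuMongeAmperefunctions.1989}, and then an approximation argument to pass to bounded Borel coefficients.
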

	A special case of this construction yields the \emph{Hessian measures} $\Phi_k$, $0\le k\le n$, mentioned in the introduction. These satisfy $\Phi_k=\binom{n}{k}^{-1}\Psi_{\tilde{\kappa}_k}$  for the differential form
	\begin{align*}
		\tilde{\kappa}_k:=\frac{1}{k!(n-k)!}\sum\limits_{\sigma\in S_{n}}\sign(\sigma)dx_{\sigma(1)}\dots dx_{\sigma(n-k)}\wedge dy_{\sigma(n-k+1)}\dots dy_{\sigma(n)}, \quad 0\le k\le n,
	\end{align*} 
	compare \cite[Proposition 5.3]{KnoerrSingularvaluationsHadwiger2022}.\\

	For $f\in\Conv(\R^n,\R)\cap C^2(\R^n)$ consider the map
	\begin{align*}
		G_f:\R^n&\rightarrow T^*\R^n\\
		x&\mapsto (x,df(x)).
	\end{align*}
	
	\begin{definition}
		We call $\tau\in \Omega^n(T^*\R^n)$ a \emph{positive} form if the $n$-form $G_f^*\tau\in\Omega^n(\R^n)$ is a non-negative multiple of the volume form for every $f\in\Conv(\R^n,\R)\cap C^2(\R^n)$. 
	\end{definition}
	Since $\Psi_\tau$ is continuous with respect to the weak*-topology by Theorem \ref{theorem:MAOperatorsDefinedDifferentialCycle}, we directly obtain the following.
	\begin{corollary}
		$\tau\in \Omega^n(T^*\R^n)$ is a positive form if and only if $\Psi_\tau(f)$ is a positive measure for every $f\in\Conv(\R^n,\R)$.
	\end{corollary}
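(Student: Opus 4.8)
The plan is to reduce everything to the $C^2$ case and invoke the weak*-continuity of $\Psi_\tau$ asserted in Theorem~\ref{theorem:MAOperatorsDefinedDifferentialCycle} together with the density of $\Conv(\R^n,\R)\cap C^2(\R^n)$ in $\Conv(\R^n,\R)$.

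First I would record that for $f\in\Conv(\R^n,\R)\cap C^2(\R^n)$ the differential cycle $D(f)$ is the current of integration over the graph $G_f(\R^n)=\{(x,df(x)):x\in\R^n\}$: this follows from Theorem~\ref{theorem:FuUniquenessDifferentialCycle}, since the graph current is closed, Lagrangian, locally vertically bounded, and satisfies condition (4). Consequently $D(f)[\omega]=\int_{\R^n}G_f^*\omega$ for all $\omega\in\Omega^n_c(T^*\R^n)$, and in particular $\Psi_\tau(f)[B]=\int_B G_f^*\tau$ for every bounded Borel set $B\subset\R^n$; thus, for smooth $f$, the measure $\Psi_\tau(f)$ has the continuous density $G_f^*\tau$ with respect to the volume form. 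From this one direction is immediate: if $\Psi_\tau(f)$ is a positive measure for every $f\in\Conv(\R^n,\R)$, then $G_f^*\tau$ has non-negative integral over every ball for every $f\in\Conv(\R^n,\R)\cap C^2(\R^n)$, hence is a non-negative multiple of the volume form at every point by continuity, i.e. $\tau$ is a positive form.

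For the converse, assume $\tau$ is a positive form, so $\Psi_\tau(f)\ge 0$ for every $f\in\Conv(\R^n,\R)\cap C^2(\R^n)$ by the previous paragraph. Given an arbitrary $f\in\Conv(\R^n,\R)$, I would pick smooth convex approximants $f_j\in\Conv(\R^n,\R)\cap C^2(\R^n)$ with $f_j\to f$ in $\Conv(\R^n,\R)$; mollification of $f$ produces such a sequence, since the mollified functions are convex, smooth, finite-valued, and converge locally uniformly, which is precisely the topology of $\Conv(\R^n,\R)$. By Theorem~\ref{theorem:MAOperatorsDefinedDifferentialCycle}, $\Psi_\tau(f_j)\to\Psi_\tau(f)$ in the weak*-topology of $\mathcal{M}(\R^n)$. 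Since the cone of positive Radon measures is weak*-closed — testing against an arbitrary $0\le\psi\in C_c(\R^n)$ gives $\int_{\R^n}\psi\,d\Psi_\tau(f)=\lim_j\int_{\R^n}\psi\,d\Psi_\tau(f_j)\ge 0$ — the limit $\Psi_\tau(f)$ is a positive measure, as desired.

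The argument is essentially routine; the only point that needs a little attention is producing the approximating sequence inside $\Conv(\R^n,\R)\cap C^2(\R^n)$ and verifying that the chosen approximation converges in the correct topology. Since locally uniform convergence coincides with convergence in $\Conv(\R^n,\R)$ and mollification of a finite convex function stays convex and finite, no genuine obstacle arises.
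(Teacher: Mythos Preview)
Your proof is correct and follows exactly the approach the paper intends: the paper's own proof is the single sentence ``Since $\Psi_\tau$ is continuous with respect to the weak*-topology by Theorem~\ref{theorem:MAOperatorsDefinedDifferentialCycle}, we directly obtain the following,'' and you have simply unpacked what ``directly obtain'' means (density of $C^2$ convex functions via mollification, weak*-closedness of the positive cone, and identification of $\Psi_\tau(f)$ with the measure with density $G_f^*\tau$ in the $C^2$ case).
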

	
	Let $E\in \Gr_k(\R^n)$ be a subspace. We call a positively oriented orthonormal basis $e_1,\dots,e_n$ adapted to $E$ if $e_1,\dots, e_k$ span $E$. For any oriented orthonormal basis $e_1,\dots,e_n$ of $\R^n$ we set $\bar{e}_i=(e_i,0)$, $\epsilon_i=(0,e_i)$, and consider these vectors as a basis of $\R^n\times \R^n\cong \R^n\times (\R^n)^*$.
	
	\begin{lemma}
		\label{lemma:CharacterizationPositiveForms}
		$\tau\in \Omega^{n-k,k}(T^*\R^n)^*$ is a positive form if and only if for every $(x,\xi)\in T^*\R^n$ and every subspace $E\in \Gr_k(\R^n)$ there exists a positively oriented orthonormal basis $e_1,\dots,e_n$ of $\R^n$ adapted to $E$ such that
		\begin{align*}
			\tau|_{(x,\xi)}(\bar{e}_1,\dots,\bar{e}_{n-k},\epsilon_{n-k+1},\dots,\epsilon_n)\ge 0.
		\end{align*}
	\end{lemma}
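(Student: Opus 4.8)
The plan is to reduce positivity of $\tau$ to a pointwise algebraic condition by testing against convex quadratics, for which the differential cycle is carried by the graph of the gradient and $G_f^*\tau$ becomes a linear-algebra quantity. Since $G_f(x)=(x,df(x))$ and $dG_f|_x=(\mathrm{id},D^2f(x))$, the $n$-form $G_f^*\tau$ at a point $x$ depends on $f$ only through $x$, $df(x)$, and $D^2f(x)$. For $f\in\Conv(\R^n,\R)\cap C^2(\R^n)$ one has $D^2f(x)\succeq 0$, while conversely every triple $(x_0,\xi_0)\in T^*\R^n$, $H\succeq 0$ symmetric, is realized by the convex quadratic $q(x)=\langle\xi_0,x-x_0\rangle+\tfrac12\langle H(x-x_0),x-x_0\rangle\in\Conv(\R^n,\R)\cap C^2(\R^n)$, which has $dq(x_0)=\xi_0$ and $D^2q\equiv H$. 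Hence $\tau$ is a positive form if and only if the coefficient of $G_q^*\tau|_{x_0}$ against the volume form is nonnegative for all such $(x_0,\xi_0,H)$.

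Next I would diagonalize: writing $H=\sum_i\lambda_i\,e_i\otimes e_i$ with $\lambda_i\ge 0$ in a positively oriented orthonormal basis $e_1,\dots,e_n$, the identification $(\R^n)^*\cong\R^n$ gives $dG_q|_{x_0}(e_i)=\bar e_i+\lambda_i\epsilon_i$, so the above coefficient is $\tau|_{(x_0,\xi_0)}(\bar e_1+\lambda_1\epsilon_1,\dots,\bar e_n+\lambda_n\epsilon_n)$. Expanding multilinearly and using that $\tau$ has bidegree $(n-k,k)$, so that exactly $k$ of the summands $\lambda_i\epsilon_i$ must be selected, this equals
\[
\sum_{|B|=k}\Big(\prod_{i\in B}\lambda_i\Big)\,\tau|_{(x_0,\xi_0)}(w_1^B,\dots,w_n^B),\qquad w_i^B=\begin{cases}\epsilon_i,& i\in B,\\ \bar e_i,& i\notin B.\end{cases}
\]
For fixed basepoint and basis this is a polynomial in $\lambda\in[0,\infty)^n$ that is homogeneous of degree $k$ and of degree at most one in each variable, and such a polynomial is nonnegative on the positive orthant if and only if all of its coefficients are nonnegative: sufficiency is immediate, and for necessity one substitutes the vector equal to $1$ on a fixed $k$-subset $B_0$ and $0$ elsewhere, which kills every monomial except the one indexed by $B_0$ (two $k$-element sets, one contained in the other, coincide). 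Thus $\tau$ is positive if and only if $\tau|_{(x,\xi)}(w_1^B,\dots,w_n^B)\ge 0$ for every $(x,\xi)$, every positively oriented orthonormal basis $e_1,\dots,e_n$, and every $k$-subset $B$.

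It remains to bring this into the adapted-frame form of the statement. Permuting the arguments of $\tau$ so that the horizontal entries precede the vertical entries introduces a sign that is cancelled exactly by the sign of the corresponding permutation of the basis vectors, so that $\tau|_{(x,\xi)}(w_1^B,\dots,w_n^B)=\tau|_{(x,\xi)}(\bar e'_1,\dots,\bar e'_{n-k},\epsilon'_{n-k+1},\dots,\epsilon'_n)$ for a positively oriented orthonormal basis $e'_1,\dots,e'_n$ of $\R^n$ (with $\bar e'_i=(e'_i,0)$, $\epsilon'_i=(0,e'_i)$), and this value depends only on $V:=\mathrm{span}\{e'_{n-k+1},\dots,e'_n\}\in\Gr_k(\R^n)$, being the coefficient against the volume form of the restriction of $\tau$ to the coordinate $n$-plane determined by $V$. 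Taking $V$ to be the span of the last $k$ vectors of a basis adapted to $E$, and noting that these spans exhaust $\Gr_k(\R^n)$ as $E$ and the adapted basis vary, one identifies the condition just obtained with the one in the statement. The analytic content—the reduction to quadratics and the polynomial positivity criterion—is routine; the step deserving care is this last identification, namely the sign and orientation bookkeeping relating the fixed index pattern $\bar e_1,\dots,\bar e_{n-k},\epsilon_{n-k+1},\dots,\epsilon_n$ to an arbitrary mixed frame, and verifying that the subspaces $V$ produced in this way are precisely the $k$-dimensional subspaces of $\R^n$.
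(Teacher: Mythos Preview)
Your proof is correct and follows essentially the same approach as the paper: both reduce to testing against convex quadratics, diagonalize the Hessian, expand using the bidegree, and identify nonnegativity with the adapted-frame condition. Your presentation is slightly more streamlined in that you handle both directions simultaneously via the multilinear-polynomial positivity criterion and are more explicit about the final orientation bookkeeping, but the ideas are the same as in the paper.
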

	\begin{proof}
		Let $(x_0,\xi_0)\in T^*\R^n$. If $\tau$ is a positive form and $E\in \Gr_k(\R^n)$, we choose a positively oriented basis $e_1,\dots,e_n$ adapted to $E$. Let $(x_{0,1},\dots, x_{0,n})$, $(\xi_{i_{0,1}},\dots, \xi_{0,n})$ denote the coordinates of $x_0$ and $\xi$ with respect to this basis. Set $f(x):=\sum_{i=1}^{k}\lambda_i x_i^2 +\xi_{0,i}x_i-2\lambda_ix_{0,i}$ for $\lambda_i\ge 0$ for $1\le i\le k$. Then it is easy to see that
		\begin{align*}
			G_f^*\tau|_{x_0}= k!\lambda_1\dots\lambda_k\tau|_{(x_0,\xi_0)}(\bar{e}_1,\dots,\bar{e}_{n-k},\epsilon_{n-k+1},\dots,\epsilon_n)\vol_n,
		\end{align*}
		which implies the claim.\\
		
		Conversely, let us assume that $\tau$ satisfies the second property, and note that it does not depend on the specific choice of the positively oriented orthonormal basis.\\		
		Let $f\in\Conv(\R^n,\R)\cap C^2(\R^n)$. Then $G_f^* dx_i= dx_i$ and 
		\begin{align*}
			G_f^* dy_i=d \left(\frac{\partial f(x)}{\partial x_i}\right)=\sum_{j=1}^{n}\frac{\partial^2 f(x)}{\partial x_i\partial x_j}dx_j.
		\end{align*}
		In particular, for any $x\in \R^n$, $(G_f^*\tau)|_x$ only depends on the Hessian of $f$ in $x$ (as well as $x$ and $df(x)$). It is thus sufficient to assume that $\tau$ is a constant differential form and to consider $f_A(x)=\langle x,Ax\rangle$, where $A$ is a symmetric and positive semi-definite $(n\times n)$-matrix. We may choose a positively oriented orthonormal basis $e_1,\dots,e_n$ such that $A$ is diagonal with respect to this basis, that is, $Ae_i=\lambda_ie_i$ for $\lambda_i\ge 0$. Note that $(G_{f_A})_*e_i=\bar{e}_i+\lambda_i\epsilon_i$. Thus
		\begin{align*}
			&G_{f_A}^*\tau (e_1,\dots,e_n)\\
			=&\tau (\bar{e}_1+\lambda_1\epsilon_1,\dots, \bar{e}_n+\lambda_n\epsilon_n)\\
			=&\frac{1}{k!(n-k)!}\sum_{\sigma\in S_n}\mathrm{sign}(\sigma)\lambda_{\sigma(n-k+1)}\dots\lambda_{\sigma(n)}\tau (\bar{e}_{\sigma(1)},\dots,\bar{e}_{\sigma(n-k)},\epsilon_{\sigma(n-k+1)},\dots,\epsilon_{\sigma(n)}),
		\end{align*}
		where we used that $\tau$ is a $(n-k,k)$-form. Since $\mathrm{sign}(\sigma)e_{\sigma(1)},e_{\sigma(1)},\dots,e_{\sigma(n)}$ is a positively oriented basis adapted to $E_\sigma:=\mathrm{span} (e_{\sigma(1)},e_{\sigma(1)},\dots,e_{\sigma(k)})$, all terms in this sum are non-negative by assumption. Thus $G_{f_A}^*\tau$ is non-negative.
	\end{proof}
	
	\begin{proposition}
		\label{proposition:GeneralBoundMA}
		Let $\omega\in \Omega^{n-k}(\R^n)\otimes \Lambda^k((\R^n)^*)^*$ be $(n-k+l)$-homogeneous, $l>0$. Then there exists $C(\omega)>0$ such that for $f\in\Conv(\R^n,\R)$ and all Borel subsets $U\subset\R^n$,
		\begin{align*}
			|\Psi_\omega(f)|(U)\le C(\omega)\int_U|x|^l d\Phi_k(f).
		\end{align*}	
	\end{proposition}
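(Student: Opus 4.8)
The plan is to reduce the general estimate to the positivity result for forms (Lemma \ref{lemma:CharacterizationPositiveForms}) together with a pointwise comparison between the coefficients of $\omega$ and the form $\tilde\kappa_k$ defining the Hessian measure $\Phi_k$. First I would exploit the bigrading: writing $\omega = \sum_{|I|=n-k,|J|=k} \omega_{I,J}(x,\xi)\, dx^I\wedge dy^J$, the homogeneity assumption forces each coefficient $\omega_{I,J}$ to be $l$-homogeneous in $x$, hence $|\omega_{I,J}(x,\xi)|\le c\,|x|^l$ for $(x,\xi)$ with $\xi$ in the (compact, because $\omega$ has compact $x$-support and we only care about $(x,df(x))$ with $f\in\Conv$) relevant range — more carefully, since $\Psi_\omega$ and $\Phi_k$ are weak* continuous it suffices to prove the inequality for smooth $f$, where $\Psi_\omega(f)$ and $\Phi_k(f)$ are given by pushforwards of $G_f^*\omega$ and $G_f^*\tilde\kappa_k$ under $\pi\circ G_f = \mathrm{id}$. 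So the task becomes: for every symmetric positive semidefinite $A$ and every $x$, compare $G_{f_A}^*\omega|_x$ with $|x|^l\, G_{f_A}^*\tilde\kappa_k|_x$ as multiples of the volume form.

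The key step is therefore the following pointwise claim: there exists $C(\omega)>0$ such that $C(\omega)|x|^l\,\tilde\kappa_k \pm \omega|_{(x,\xi)}$ is a positive form (in the sense of Lemma \ref{lemma:CharacterizationPositiveForms}) for every $(x,\xi)$ with $x$ in the support of the $x$-coefficients of $\omega$ — and for $x$ outside a neighborhood of $0$ this is uniform, while the $|x|^l$ factor handles the behaviour near $0$. To see this, mimic the argument in Proposition \ref{proposition:BoundCurvatureMeasure}: diagonalize $A$ in a positively oriented orthonormal basis $e_1,\dots,e_n$ adapted to the relevant $k$-subspace, so that $(G_{f_A})_* e_i = \bar e_i + \lambda_i\epsilon_i$ with $\lambda_i\ge 0$, and expand $(C(\omega)|x|^l\tilde\kappa_k\pm\omega)(\bar e_1+\lambda_1\epsilon_1,\dots,\bar e_n+\lambda_n\epsilon_n)$ into the sum over $\sigma\in S_n$ of $\mathrm{sign}(\sigma)\lambda_{\sigma(n-k+1)}\cdots\lambda_{\sigma(n)}$ times the evaluation on $(\bar e_{\sigma(1)},\dots,\bar e_{\sigma(n-k)},\epsilon_{\sigma(n-k+1)},\dots,\epsilon_{\sigma(n)})$. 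Each evaluation of $\tilde\kappa_k$ on such a tuple equals $\mathrm{sign}(\sigma)$, so choosing $C(\omega)$ larger than (a uniform bound on) $|x|^{-l}$ times the absolute values of the corresponding coefficients of $\omega$ — which exist because these coefficients are $l$-homogeneous in $x$ and continuous, hence bounded by $c|x|^l$ after restricting $\xi$ to the bounded set that actually occurs — makes every term in the $\sigma$-sum nonnegative. Hence $C(\omega)|x|^l\tilde\kappa_k\pm\omega$ is positive.

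Granting this, the conclusion follows formally: for smooth $f$ and Borel $U$,
\begin{align*}
 0 \le \Psi_{C(\omega)|x|^l\tilde\kappa_k \pm \omega}(f)[U] = C(\omega)\binom{n}{k}\int_U |x|^l\, d\Phi_k(f) \pm \Psi_\omega(f)[U],
\end{align*}
where we used that the form $|x|^l\tilde\kappa_k$ is still translation-invariant in the second factor so Theorem \ref{theorem:MAOperatorsDefinedDifferentialCycle} applies, and that $\Psi_{|x|^l\tilde\kappa_k}(f) = |x|^l\,\Psi_{\tilde\kappa_k}(f) = \binom{n}{k}|x|^l\Phi_k(f)$ as measures (the factor $|x|^l$ pulls out of the defining integral over $\pi^{-1}(B)$ since it depends only on the base point). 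Taking both signs and passing to the variation gives $|\Psi_\omega(f)|(U)\le C(\omega)\binom{n}{k}\int_U|x|^l d\Phi_k(f)$ for smooth $f$, and then by weak* density of smooth functions in $\Conv(\R^n,\R)$ together with weak* continuity of $\Psi_\omega$ and $\Phi_k$ (absorbing $\binom{n}{k}$ into $C(\omega)$), the estimate extends to all $f\in\Conv(\R^n,\R)$, exactly as in the last paragraph of the proof of Proposition \ref{proposition:BoundCurvatureMeasure}.

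The main obstacle I anticipate is the bookkeeping needed to make the pointwise comparison genuinely uniform in $(x,\xi)$: one must check that, although $\omega$ need not be translation-invariant, only the values $(x,df(x))$ matter, that the coefficients' $l$-homogeneity in $x$ yields the clean bound $c|x|^l$ (rather than merely local boundedness), and that the $\sigma$-sum argument — which in Proposition \ref{proposition:BoundCurvatureMeasure} is phrased for a single $(k,n-1-k)$-form on the sphere bundle — transfers verbatim to $(n-k,k)$-forms on $T^*\R^n$ evaluated on the vectors $\bar e_i + \lambda_i\epsilon_i$. Everything else is a direct adaptation of arguments already present in the excerpt.
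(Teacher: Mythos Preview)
Your strategy---show that $C(\omega)|x|^l\tilde\kappa_k\pm\omega$ is a positive form via Lemma~\ref{lemma:CharacterizationPositiveForms} and then read off the variation bound as in Proposition~\ref{proposition:BoundCurvatureMeasure}---is exactly the paper's proof. One simplification removes all of your anticipated obstacles: the hypothesis $\omega\in\Omega^{n-k}(\R^n)\otimes\Lambda^k((\R^n)^*)^*$ means the second tensor factor consists of \emph{constant} forms, so the coefficients of $\omega$ are functions of $x$ alone and there is no $\xi$-dependence to control (and no need for the unstated, and in fact absent, assumption that $\omega$ has compact $x$-support). The paper then just observes that the evaluation map $\SO(n)\times S^{n-1}\to\R$, $((e_1,\dots,e_n),x)\mapsto|\omega|_{(x,0)}(\bar e_1,\dots,\bar e_{n-k},\epsilon_{n-k+1},\dots,\epsilon_n)|$, is continuous on a compact set and hence bounded by some $c(\omega)$; since both $|x|^l\tilde\kappa_k$ and $\omega$ are $(n-k+l)$-homogeneous and vanish at $x=0$, positivity on $\{|x|=1\}$ propagates to all of $T^*\R^n$.
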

	\begin{proof}
		As in the proof of Proposition \ref{proposition:BoundCurvatureMeasure}, it is sufficient to find $c(\omega)>0$ such that $c(\omega)|x|^l\tilde{\kappa}_k\pm\omega $ defines a positive form, since $\tilde{\kappa}_k$ induces a scalar multiple of the Hessian measure $\Phi_k$. Because the map
		\begin{align*}
			\SO(n)\times S^{n-1}&\rightarrow \R\\
			((e_1,\dots,e_n),x)&\mapsto| \omega|_{(x,0)}(\bar{e}_1,\dots,\bar{e}_{n-k},\epsilon_{n-k+1},\dots,\epsilon_n)|
		\end{align*}
		is continuous and $\SO(n)\times S^{n-1}$ is compact, this map is bounded by some $c(\omega)>0$. Since
		\begin{align*}
			\tilde{\kappa}_k|_{(x,0)}(\bar{e}_1,\dots,\bar{e}_{n-k},\epsilon_{n-k+1},\dots,\epsilon_n)=1,
		\end{align*}
		the form $c(\omega)|x|^l\tilde{\kappa}_k\pm \omega$ is positive on $S^{n-1}\times (\R^n)^*$ as it is translation invariant in the second factor. Since it is homogeneous of degree $n-k+l>0$, it is thus positive on $T^*\R^n$. Here we use that $\omega|_{(0,0)}=0$ since it is $(n-k+l)$-homogeneous.
	\end{proof}

	\begin{corollary}\label{corollary:GeneralIntegrabilityMA}
		Let $0\le k\le n-1$ and $\omega\in \Omega^{n-k}\otimes\Lambda^k((\R^n)^*)^*$ be homogeneous of degree $n-k+l$, $l\ge 0$. Then there exists a constant $A(\omega)$ such that for every $R>0$, for every Baire function $\phi:[0,\infty)\rightarrow[0,\infty]$ with $\supp\phi\subset[0,R]$, and every $f\in\Conv(\R^n,\R)$,
		\begin{align*}
			\int_{\R^n}\phi(|x|) d|\Psi_\omega(f)|\le A(\omega) \left(\sup_{|x|\le R+1}|f(x)|\right)^k \| \phi\|_{D^{n-k+l}}
		\end{align*}
		In particular, $\{0\}$ is a set of $\Psi_\omega(f)$-measure $0$. Moreover, if $\zeta\in C_b((0,\infty))$ satisfies $|\zeta|\in D^{n-k+l}$, then $x\mapsto \zeta(|x|)$ is integrable with respect to $\Psi_\omega(f)$ for every $f\in\Conv(\R^n,\R)$.
	\end{corollary}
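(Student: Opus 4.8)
The plan is to reduce the inequality, via the comparison with the Hessian measures, to a single estimate for $\Phi_k$, and then to prove that estimate by transporting the integral onto the family of sublevel sets of the Legendre transform, following \cite[\S\,4--5]{KnoerrSingularvaluationsHadwiger2022}.

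Proposition~\ref{proposition:GeneralBoundMA} is stated for $l>0$, but it has an evident counterpart for $l=0$: there $\omega$ is a constant form of bidegree $(n-k,k)$, and the argument of Proposition~\ref{proposition:BoundCurvatureMeasure}, carried out on the differential cycle, shows that $c(\omega)\tilde{\kappa}_k\pm\omega$ is a positive form, whence $|\Psi_\omega(f)|\le c(\omega)\binom{n}{k}\Phi_k(f)$. In either case one obtains $C(\omega)>0$ with $|\Psi_\omega(f)|(U)\le C(\omega)\int_U|x|^l\,d\Phi_k(f)$ for all $f\in\Conv(\R^n,\R)$ and Borel $U$. Since $\|t\mapsto t^l\phi(t)\|_{D^{n-k}}=\|\phi\|_{D^{n-k+l}}$, it thus suffices to prove
\[
 \int_{\R^n}\psi(|x|)\,d\Phi_k(f)\ \le\ A_{n,k}\Big(\sup_{|x|\le R+1}|f(x)|\Big)^{k}\|\psi\|_{D^{n-k}}\qquad(\star)
\]
for all $R>0$, all Baire $\psi\colon[0,\infty)\to[0,\infty]$ with $\supp\psi\subset[0,R]$, and all $f\in\Conv(\R^n,\R)$, with $A_{n,k}$ depending only on $n,k$. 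It is enough to prove $(\star)$ for nonnegative $\psi\in C_c([0,\infty))$: a nonnegative lower semicontinuous $\psi$ with $\supp\psi\subset[0,R]$ is the pointwise increasing limit of such functions (automatically with supports in $[0,R]$), along which the left side of $(\star)$ increases by monotone convergence and $\|\psi_j\|_{D^{n-k}}\uparrow\|\psi\|_{D^{n-k}}$. Taking a shrinking family of bumps at the origin then gives $|\Psi_\omega(f)|(\{0\})=0$, and taking $\psi=|\zeta|\,1_{(0,\infty)}$ (lower semicontinuous for $\zeta\in C_b((0,\infty))$) gives the asserted integrability of $x\mapsto\zeta(|x|)$ when $|\zeta|\in D^{n-k+l}$.

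To prove $(\star)$ for nonnegative $\psi\in C_c([0,\infty))$, I would first reduce, by a routine approximation, to sufficiently regular $f$ --- mollifying, adding a vanishing multiple of $|\cdot|^2/2$, and modifying $f$ outside $\overline{B_{R+1}(0)}$ so that $\mathcal{L}f$ becomes proper and strictly convex, using weak-$*$ continuity and locality of $\Phi_k$; crucially, by Lemma~\ref{lemma:LipschitzConstant} the Lipschitz constant of $f$ on $\overline{B_R(0)}$, hence the size of the relevant Legendre sublevel sets, stays comparable to $\sup_{|x|\le R+1}|f|$, which is what preserves the ``$R+1$'' in $(\star)$. For such $f$, $\int_{\R^n}\psi(|x|)\,d\Phi_k(f)=\binom{n}{k}^{-1}D(f)[\psi(|\pi_1|)\tilde{\kappa}_k]$. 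Using Lemma~\ref{lemma:relationLegendreBeta} to replace $\pi_2^*d\mathcal{L}f$ by $\beta$ on $D(f)$, the homogeneity identities of Lemmas~\ref{lemma:ChangeFormsPolarCoordinates} and \ref{lemma:pullbackSpericalFormsFf}, and Proposition~\ref{proposition:RelationNCSlicesDiffCycle} (which identifies the slices of $D(f)$ along $\pi_2^*\mathcal{L}f$ with push-forwards of the normal cycles $\nc(K^t_{\mathcal{L}f})$), one rewrites the right-hand side as an integral over the level $t$ of $\kappa$-type functionals of $K^t_{\mathcal{L}f}$ in which $\psi$ enters only through the two quantities $\sup_t t^{n-k}\psi(t)$ and $\sup_t\big|\int_t^\infty\psi(s)s^{n-k-1}\,ds\big|$, that is, through $\|\psi\|_{D^{n-k}}$ (cf.\ Lemmas~\ref{lemma:InjectivityTransformR} and \ref{lemma:RTopIsomorphism}). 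The integration-by-parts identity of Lemma~\ref{lemma:estimatePartialIntegrationSublevelSets} / Corollary~\ref{corollary:IntrinsicVolumesEstimateSublevelSets} then bounds this integral by a universal constant times $\mu_k(K^c_{\mathcal{L}f})\,\|\psi\|_{D^{n-k}}$, where $c$ may be taken to be the supremum of $\mathcal{L}f$ over $df(B_R(0))$, which is at most $(2R+1)\sup_{|x|\le R+1}|f|$ by Lemma~\ref{lemma:LipschitzConstant}; finally $\mu_k(K^c_{\mathcal{L}f})\le C_{n,k}\big(\sup_{|x|\le R+1}|f|\big)^k$ by Lemma~\ref{lemma:EstimatesLegendreSublevelSetsConv0}, which for exactly this range of $c$ bounds $K^c_{\mathcal{L}f}$ by a ball of radius $2\sup_{|x|\le R+1}|f|$. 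This establishes $(\star)$.

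The main obstacle is this change of variables: passing rigorously from $D(f)[\psi(|\pi_1|)\tilde{\kappa}_k]$ to an integral over the level parameter of curvature-type functionals of the Legendre sublevel sets, and keeping exact track of how the radial profile $\psi$ is transformed. It forces one to combine the contact geometry of $S\R^n$ (the form $\beta$, its Reeb field, the Rumin differential) with the scaling map $F_f$ in a controlled way, and is the technical heart of the proof, generalizing the corresponding computation in \cite[\S\,4--5]{KnoerrSingularvaluationsHadwiger2022}.
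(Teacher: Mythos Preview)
Your approach is essentially the same as the paper's: reduce via Proposition~\ref{proposition:GeneralBoundMA} to an estimate for the Hessian measure $\Phi_k$, then invoke the $D^{n-k}$-bound for integrals of radial functions against $\Phi_k$. The only difference is that the paper simply cites this bound as \cite[Proposition~6.6]{KnoerrSingularvaluationsHadwiger2022}, whereas you sketch its proof using the machinery (Lemmas~\ref{lemma:relationLegendreBeta}--\ref{lemma:estimatePartialIntegrationSublevelSets}, Corollary~\ref{corollary:IntrinsicVolumesEstimateSublevelSets}) that the present paper has already imported from that reference. Your explicit treatment of the case $l=0$ (where Proposition~\ref{proposition:GeneralBoundMA} as stated does not apply) is a point the paper glosses over.

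One small gap: your reduction ``it is enough to prove $(\star)$ for nonnegative $\psi\in C_c([0,\infty))$'' only establishes the inequality for lower semicontinuous $\phi$, not for arbitrary nonnegative Baire functions as the statement claims. This is harmless for the two applications you draw (vanishing at $\{0\}$ and integrability of $\zeta(|x|)$ when $|\zeta|\in D^{n-k+l}$), since both involve LSC test functions; but to get the full statement you would still need to pass from continuous to general bounded Baire $\phi$, e.g.\ by noting that the inequality only involves $\sup_t t^{n-k+l}\phi(t)$ and $\int_0^\infty\phi(r)r^{n-k+l-1}\,dr$ on the right and that the left side is controlled by an outer-regular measure.
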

	\begin{proof}
		From the definition of the norm $\|\cdot\|_{D^{2n-k}}$, it easily follows from monotone convergence that it is sufficient to establish the inequality for bounded Baire functions. By using a continuous cut-off function, we see that every bounded Baire function with support in $[0,R]$ is the pointwise limit of a sequence of continuous functions with support in $[0,R+\epsilon]$ for arbitrary $\epsilon>0$. It is thus sufficient to consider continuous functions. In this case, Proposition \ref{proposition:GeneralBoundMA} implies
		\begin{align*}
			\int_{\R^n}\phi(|x|) d|\Psi_\omega(f)|\le C(\omega)\int_{\R^n}|x|^l\phi(|x|)d\Phi_k(f)\le C(\omega) C\left(\sup_{|x|\le R+1}|f(x)|\right)^k \| \psi\|_{D^{n-k}}
		\end{align*}
		by \cite[Proposition 6.6]{KnoerrSingularvaluationsHadwiger2022}, where $C>0$ is a constant depending on $n,k$ only and $\psi(t)=t^l\phi(t)$. The inequality follows since $\|\psi\|_{D^{n-k}}=\|\phi\|_{D^{n-k+l}}$. The two other claims are a simple consequence of this result.
	\end{proof}
	
	\begin{remark}
		For $k=n$ all such differential forms induce measures that are absolutely continuous with respect to the real Monge-Amp\`ere operator. In this case, discrete sets can have non-zero measure. 
	\end{remark}

	\subsection{The Monge-Amp\`ere operators $\Theta^n_{k,q}$, $\mathcal{B}^n_{k,q}$, $\mathcal{C}^n_{k,q}$, and $\Upsilon^n_{k,q}$}
		Let $z_1=x_1+iy_1,\dots, z_n=x_n+iy_n$ be the standard coordinates on $\C^n$, where $x_1,y_1,\dots,x_n,y_n$ are real coordinates. We consider $T^*\C^n=\C^n\times (\C^n)^*$ with coordinates $(z_1,\dots,z_n,w_1,\dots,w_n)$, where $w_1=\xi_1+i\eta_1,\dots, w_n=\xi_n+i\eta_n$ are the induced coordinates on $(\C^n)^*$. Consider the following differential forms
	\begin{align*}
		\beta_1=&\sum_{j=1}^{n}x_jd\xi_j+y_jd\eta_j, &&\gamma_1=\sum_{j=1}^{n}x_jdx_j+y_jdy_j,\\
		\beta_2 =&\sum_{j=1}^{n}x_jd\eta_j-y_jd\xi_j, &&\gamma_2=\sum_{j=1}^{n}x_jdy_j-y_jdx_j,\\
		\theta_0=& \sum_{j=1}^{n}dx_j\wedge dy_j, && \theta_1=\sum_{j=1}^{n}dx_j\wedge d\eta_j-dy_j\wedge d\xi_j,\\
		\theta_2=&\sum_{j=1}^{n}d\xi_j\wedge d\eta_j, &&  \omega_s=\sum_{j=1}^{n}dx_j\wedge d\xi_j+dy_j\wedge d\eta_j.
	\end{align*} 
	Note that the forms $\beta$ and $\gamma$ from Section \ref{section:DiffFormsCotangentBundel} correspond to $\beta_1$ and $\gamma_1$. Let us further note that $\theta_0$ is the pullback of the natural symplectic form on $\C^n$, whereas $\omega_s$ is the symplectic form on $T^*\C^n$. We also note the following relations:
	\begin{align*}
		d\beta_1=&\omega_s, && d\gamma_1=0,\\
		d\beta_2=& \theta_1, && d\gamma_2=2\theta_0.
	\end{align*}
	We will set $\theta^n_{k,q}:=\theta_0^{n-k+q}\wedge \theta_1^{k-2q}\wedge \theta_2^q$ whenever this form is well-defined.
	\begin{definition}
		\label{definition:MAOperatirsThetaUpsilonBC}
		For $0\le k\le 2n$, we define  $\Theta^n_{k,q},\mathcal{B}^n_{k,q},\mathcal{C}^n_{k,q},\Upsilon^n_{k,q}:\Conv(\C^n,\R)\rightarrow\mathcal{M}(\C^n)$ by
		\begin{align*}
			\Theta^n_{k,q}:=&c_{n,k,q}\Psi_{\theta^n_{k,q}}&&\text{for}~\max(0,k-n)\le q\le \left\lfloor\frac{k}{2}\right\rfloor\\
			\mathcal{B}^n_{k,q}:=&c_{n,k,q}\Psi_{\beta_1\wedge\beta_2\wedge \theta^{n-1}_{k-2,q-1}} &&\text{for}~2\le k\le 2n-1,~\max(1,k-n)\le q\le \left\lfloor\frac{k}{2}\right\rfloor\\
			\mathcal{C}^n_{k,q}:=&\frac{c_{n,k,q}}{2}\Psi_{\beta_1\wedge\gamma_2\wedge \theta^{n-1}_{k-1,q}} &&\text{for}~1\le k\le 2n-1,~\max(0,k-n)\le q\le \left\lfloor\frac{k-1}{2}\right\rfloor\\
			\Upsilon^n_{k,q}:=&\mathcal{B}^n_{k,q}-\mathcal{C}^n_{k,q}&& \text{for}~2\le k\le 2n-1,~\max(1,k-n)\le q\le \left\lfloor\frac{k-1}{2}\right\rfloor,
		\end{align*}
		where $c_{n,k,q}:=\frac{1}{(n-k+q)!(k-2q)!q!}$.
	\end{definition}

	\begin{corollary}
		\label{corollary:ThetaPositivemeasure}
		$\Theta^{n}_{k,q}(f)$ is a positive measure for every $f\in\Conv(\C^n,\R)$.
	\end{corollary}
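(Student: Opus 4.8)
The plan is to reduce to the positivity criterion of Lemma \ref{lemma:CharacterizationPositiveForms}. By the Corollary preceding that lemma, $\Psi_\tau(f)$ is a positive measure for all $f\in\Conv(\C^n,\R)$ exactly when $\tau$ is a positive form, so since $c_{n,k,q}>0$ it suffices to show that the translation invariant form
\[
\theta^n_{k,q}=\theta_0^{\,n-k+q}\wedge\theta_1^{\,k-2q}\wedge\theta_2^{\,q}\in\Omega^{2n-k,k}(T^*\C^n)
\]
is positive on $T^*\C^n\cong T^*\R^{2n}$. As it has constant coefficients, Lemma \ref{lemma:CharacterizationPositiveForms} (applied with ambient dimension $2n$ and subspace dimension $k$) reduces this to: for every $E\in\Gr_k(\R^{2n})$ there is a positively oriented orthonormal basis $e_1,\dots,e_{2n}$ of $\R^{2n}$ adapted to $E$ with $\theta^n_{k,q}(\bar e_1,\dots,\bar e_{2n-k},\epsilon_{2n-k+1},\dots,\epsilon_{2n})\ge 0$.

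The first step is to rewrite the evaluation through Pfaffians of the complex structure. With $\bar e_a=(e_a,0)$ and $\epsilon_a=(0,e_a)$ a direct check gives $\theta_0(\bar e_a,\bar e_b)=\theta_1(\bar e_a,\epsilon_b)=\theta_2(\epsilon_a,\epsilon_b)=\omega_0(e_a,e_b)$, where $\omega_0=\sum_j dx_j\wedge dy_j$ is the K\"ahler form of $\C^n$, while all other pairings of the $\bar e$'s and $\epsilon$'s into $\theta_0,\theta_1,\theta_2$ vanish. Writing $\Omega=(\omega_0(e_a,e_b))_{a,b}$ --- which is the matrix of the complex structure $J$ in the basis $(e_a)$, so $\Omega^T=-\Omega$, $\Omega^2=-I$, $\mathrm{pf}(\Omega)=1$ --- and splitting $\Omega$ into blocks along $V_1=\mathrm{span}(e_1,\dots,e_{2n-k})$ and $V_2=\mathrm{span}(e_{2n-k+1},\dots,e_{2n})$, the multinomial expansion of $(s\theta_0+t\theta_1+u\theta_2)^n$ evaluated on the basis yields
\[
\theta^n_{k,q}(\bar e_1,\dots,\epsilon_{2n})
=(n-k+q)!\,(k-2q)!\,q!\ \big[s^{\,n-k+q}t^{\,k-2q}u^{\,q}\big]\,
\mathrm{pf}\!\begin{pmatrix} s\,\Omega_{11} & t\,\Omega_{12}\\[2pt] -t\,\Omega_{12}^{T} & u\,\Omega_{22}\end{pmatrix},
\]
where $[\,\cdot\,]$ denotes extraction of the indicated coefficient. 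It therefore remains to show this coefficient is nonnegative.

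For this I would choose the basis adapted to $E$ to realise the standard orthogonal normal form of the pair $(E,J)$: $\R^{2n}$ decomposes $J$-invariantly into complex lines (each contained in $E$ or in $E^\perp$), complex planes on which $E$ restricts to a real $2$-plane of K\"ahler angle $\theta_\ell\in(0,\pi/2)$, and --- when $k$ is odd --- one further totally real summand; this is the structure underlying the Bernig--Fu subspaces $E_{k,q}$. Ordering the basis so that this decomposition is compatible with the splitting $V_1\oplus V_2$, the matrix in the Pfaffian becomes block diagonal with blocks of size at most $4$, so its Pfaffian is the product of the block Pfaffians. Each block Pfaffian is an explicit polynomial in $s,t,u$ with nonnegative coefficients --- for instance a K\"ahler-angle $4$-block straddling $V_1$ and $V_2$ contributes $su\cos^2\theta_\ell+t^2\sin^2\theta_\ell$, a complex line inside a single $V_i$ contributes $s$ or $u$, a totally real line contributes $s$, $t$, or $u$, and so on --- hence the product, and in particular the coefficient above, is nonnegative. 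This gives $\theta^n_{k,q}(\bar e_1,\dots,\epsilon_{2n})\ge 0$ and, via Lemma \ref{lemma:CharacterizationPositiveForms} and the Corollary, completes the proof. (An alternative to invoking Lemma \ref{lemma:CharacterizationPositiveForms} is to reduce directly to $f_M(x)=\langle x,Mx\rangle$ with $M$ symmetric positive semidefinite and identify $\Psi_{\theta^n_{k,q}}(f_M)$ with a nonnegative mixed-discriminant-type expression in $M$ and $J$, but this runs into the same computation.)

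The main obstacle is this last step: enumerating the possible block types and their placements relative to $V_1\oplus V_2$ and verifying in each case that the block Pfaffian has nonnegative coefficients, which also requires pinning down orientation conventions so that $\mathrm{pf}(\Omega)=+1$ and the extracted coefficient comes out $\ge 0$ rather than $\le 0$. As a sanity check one should confirm that at $s=t=u=1$ every block Pfaffian equals $1$ (consistent with $\mathrm{pf}(\Omega)=1$), and cross-check the normalisation against $E=E_{k,q}$, where $\theta^n_{k,q}$ is calibrated so that $T^n_{k,q}$ recovers a multiple of the hermitian intrinsic volume $\mu_{k,q}$.
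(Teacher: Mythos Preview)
Your reduction via Lemma \ref{lemma:CharacterizationPositiveForms} and the Pfaffian rewriting are correct, and your proposed K\"ahler-angle block decomposition would indeed recover the nonnegativity of the relevant coefficient. The paper starts from exactly the same reduction but then takes a much shorter path: it observes (citing \cite[Section 2.7]{BernigFuHermitianintegralgeometry2011}) that the number $\theta^n_{k,q}(\bar e_1,\dots,\bar e_{2n-k},\epsilon_{2n-k+1},\dots,\epsilon_{2n})$ is, up to a positive constant, the Klain function $\mathrm{Kl}_{\mu_{k,q}}(E)$ of the Hermitian intrinsic volume $\mu_{k,q}$, and then invokes \cite[Proposition 4.1]{BernigFuHermitianintegralgeometry2011}, which asserts that this Klain function is nonnegative (in fact strictly positive on the interior of the admissible range of multiple K\"ahler angles).

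So the difference is that you are re-deriving by hand the Tasaki-type expression for $\mathrm{Kl}_{\mu_{k,q}}$ as a polynomial with nonnegative coefficients in $\cos^2\theta_\ell$, which is precisely the content of the Bernig--Fu result the paper cites. Your route is more self-contained and makes the mechanism visible (the Pfaffian factorisation into $2\times2$ and $4\times4$ blocks with factors such as $su\cos^2\theta_\ell+t^2\sin^2\theta_\ell$), at the cost of the block enumeration and orientation bookkeeping you flag as the ``main obstacle''. One point to watch in your write-up: the splitting $V_1\oplus V_2$ relevant for the Pfaffian block structure is into the first $2n-k$ and last $k$ basis vectors, whereas the K\"ahler-angle decomposition is adapted to $E\oplus E^\perp$ (the first $k$ and last $2n-k$); these coincide only when $k=n$, so for $k\neq n$ you must explain how you distribute the $E^\perp$-parts (or $E$-parts) of the blocks between $V_1$ and $V_2$, and check that this can always be done without splitting a $J$-block.
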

	\begin{proof}
		Let $E\in\Gr_k(\C^n)$ and $e_1,\dots,e_{2n}$ be a positively oriented basis of $\C^n$ adapted to $E$. It follows from the discussion in \cite[Section 2.7]{BernigFuHermitianintegralgeometry2011} that $\theta^n_{{k,q}}(\bar{e}_1,\dots,\bar{e}_{n-k},\epsilon_{n-k+1},\dots,\epsilon_n)$ is a positive multiple of the Klain function $\mathrm{Kl}_{\mu_{k,q}}(E)$ of the Hermitian intrinsic volume $\mu_{k,q}$ evaluated in $E$. Since this function is positive by \cite[Proposition 4.1]{BernigFuHermitianintegralgeometry2011}, the claim follows from Lemma \ref{lemma:CharacterizationPositiveForms}.
	\end{proof}
	\begin{remark}
		In general, $\mathcal{B}^n_{k,q}(f)$ and $\mathcal{C}^n_{k,q}(f)$ are neither positive nor negative measures. The same holds true for $\Upsilon^n_{k,q}(f)$. This follows from a simple calculation using Lemma \ref{lemma:CharacterizationPositiveForms}.
	\end{remark}
	
	\begin{proposition}
		Let $f\in \Conv(E_{k,p},\R)$ . Then for $q\ne p$
		\begin{align*}
			\Theta^n_{k,q}(\pi_{E_{k,p}}^*f)=\mathcal{B}^n_{k,q}(\pi_{E_{k,p}}^*f)=\mathcal{C}^n_{k,q}(\pi_{E_{k,p}}^*f)=\Upsilon^n_{k,q}(\pi_{E_{k,p}}^*f)=0.
		\end{align*}
	\end{proposition}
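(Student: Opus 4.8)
The plan is to reduce everything to a pointwise identity for differential forms. Since $f\mapsto\pi_{E_{k,p}}^*f$ is continuous $\Conv(E_{k,p},\R)\to\Conv(\C^n,\R)$ and $\Psi_\tau$ is continuous into $\mathcal{M}(\C^n)$ with the weak$^*$ topology by Theorem \ref{theorem:MAOperatorsDefinedDifferentialCycle}, and since $C^\infty$ convex functions are dense in $\Conv(E_{k,p},\R)$ (e.g.\ by mollification), it suffices to prove the vanishing when $f$ is smooth. For such $f$ the function $g:=\pi_{E_{k,p}}^*f$ is a smooth convex function on $\C^n$, so by the uniqueness in Theorem \ref{theorem:FuUniquenessDifferentialCycle} (or via Proposition \ref{proposition:DifferentialCycleSumMAOrthogonalCylinderFunctions} applied with $0$ on $E_{k,p}^\perp$) its differential cycle is the graph current $D(g)=(G_g)_*\llbracket\C^n\rrbracket$ with $G_g(z)=(z,dg(z))$, which one checks directly satisfies the four defining properties. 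Hence $\Psi_\tau(g)$ is absolutely continuous with Lebesgue density $G_g^*\tau$, and the whole statement reduces to showing $G_g^*\tau=0$ for $\tau\in\{\theta^n_{k,q},\ \beta_1\wedge\beta_2\wedge\theta^{n-1}_{k-2,q-1},\ \beta_1\wedge\gamma_2\wedge\theta^{n-1}_{k-1,q}\}$ whenever $q\neq p$; the claim for $\Upsilon^n_{k,q}=\mathcal{B}^n_{k,q}-\mathcal{C}^n_{k,q}$ then follows by linearity.

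Next I would fix the standard realization $E_{k,p}=\C^p\times\R^{k-2p}\times\{0\}\subset\C^n$ from Bernig--Fu, so that $E_{k,p}$ carries the coordinates $x_1,y_1,\dots,x_p,y_p,x_{p+1},\dots,x_{k-p}$ and $E_{k,p}^\perp$ carries $y_{p+1},\dots,y_{k-p},x_{k-p+1},y_{k-p+1},\dots,x_n,y_n$. Since $g$ depends only on the $E_{k,p}$-coordinates, $dg$ takes values in $E_{k,p}^*\subset(\C^n)^*$, and $G_g^*$ acts on the generators by $G_g^*dx_j=dx_j$, $G_g^*dy_j=dy_j$, $G_g^*d\xi_j=d(\partial_{x_j}f)$ for $1\le j\le k-p$ and $G_g^*d\xi_j=0$ for $j>k-p$, $G_g^*d\eta_j=d(\partial_{y_j}f)$ for $1\le j\le p$ and $G_g^*d\eta_j=0$ for $j>p$; moreover every $G_g^*d\xi_j$ and $G_g^*d\eta_j$ lies in $\mathrm{span}\{dx_1,\dots,dx_{k-p},dy_1,\dots,dy_p\}$. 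Substituting into $\theta_0,\theta_1,\theta_2,\beta_1,\beta_2,\gamma_2$ and simplifying with these relations gives explicit expressions for the pullbacks of all forms appearing in $\tau$.

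From here there are two regimes. If $q<p$: the $n-k+p$ ``purely complex'' directions $z_{k-p+1},\dots,z_n$ of $E_{k,p}^\perp$ contribute the $2(n-k+p)$ base one-forms $dx_j,dy_j$ with $j>k-p$, and one checks from the formulas above that none of these can occur in $G_g^*\beta_1,G_g^*\beta_2,G_g^*\theta_1,G_g^*\theta_2$ — and that in $G_g^*\gamma_2$ only a single base one-form occurs, which cannot be one of them without colliding with the $dx_j\wedge dy_j$ that $G_g^*\theta_0$ is then forced to provide. So each such pair $dx_j\wedge dy_j$ must come from a distinct factor of $G_g^*\theta_0=\sum_\ell dx_\ell\wedge dy_\ell$; but $\tau$ contains only $\theta_0^{\,n-k+q}$ with $n-k+q<n-k+p$, forcing $G_g^*\tau=0$. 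If $q>p$: for $\Theta^n_{k,q}$ and $\mathcal{C}^n_{k,q}$ the form $\tau$ carries the factor $\theta_2^{\,q}$, and $G_g^*\theta_2^{\,q}=\bigl(\sum_{j\le p}d(\partial_{x_j}f)\wedge d(\partial_{y_j}f)\bigr)^{q}=0$ because $q>p$; the only subcase not immediately killed this way is $\mathcal{B}^n_{k,q}$ with $q=p+1$, where $\tau$ carries $\theta_2^{\,p}$.

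The case $\mathcal{B}^n_{k,p+1}$ is the one I expect to be the main obstacle. After wedging with $G_g^*\theta_2^{\,p}=p!\,\bigwedge_{j=1}^{p}\bigl(d(\partial_{x_j}f)\wedge d(\partial_{y_j}f)\bigr)$, the surviving parts of $G_g^*\beta_1$, $G_g^*\beta_2$, and of each of the $k-2-2p$ factors of $G_g^*\theta_1$ collapse to terms indexed by $\{p+1,\dots,k-p\}$; tracking which one-forms are then forced to appear one finds that the product $G_g^*\beta_1\wedge G_g^*\beta_2\wedge G_g^*\theta_1^{\,k-2-2p}\wedge G_g^*\theta_2^{\,p}$ already contains every $dx_j$ with $1\le j\le k-p$ (through the Hessian top-form on $E_{k,p}^*$) together with $dy_j$ for all $j\in\{p+1,\dots,k-p\}$ except two exceptional indices $j_0\neq l_0$. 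The remaining factor $G_g^*\theta_0^{\,n-k+1+p}$ would then have to supply $dy_{j_0}$ and $dy_{l_0}$; but $G_g^*\theta_0$ supplies a given $dy_j$ only together with $dx_j$, which is already present, so the whole expression vanishes. I expect this bookkeeping — verifying that no surviving monomial of $G_g^*(\beta_1\wedge\beta_2\wedge\theta^{n-1}_{k-2,p})$ can be completed to a top form on $\C^n$ — to be the only genuinely delicate step; all remaining cases reduce to the degree count on the number of $\theta_0$-factors above.
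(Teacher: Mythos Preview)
Your approach is correct and genuinely different from the paper's. The paper does not compute anything here: it simply cites \cite[Corollary~4.11]{KnoerrUnitarilyinvariantvaluations2021}, which already established that integrating any $\phi\in C_c(\C^n)$ against $\Theta^n_{k,q}(\pi_{E_{k,p}}^*f)$, $\mathcal{B}^n_{k,q}(\pi_{E_{k,p}}^*f)$, $\mathcal{C}^n_{k,q}(\pi_{E_{k,p}}^*f)$ gives zero, and then observes that $\Upsilon^n_{k,q}$ is a linear combination of the latter two. Your argument, by contrast, is a self-contained pointwise computation of $G_g^*\tau$, which is more elementary and does not require the companion paper.

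One remark on the delicate case $\mathcal{B}^n_{k,p+1}$: your claim that after wedging with $G_g^*\theta_2^{\,p}$ the surviving parts of $G_g^*\beta_1$, $G_g^*\beta_2$ ``collapse to terms indexed by $\{p+1,\dots,k-p\}$'' is not literally true, since $G_g^*\beta_1$ and $G_g^*\beta_2$ retain contributions from all $j\le k-p$. What is true, and what makes your conclusion correct, is a degree count in the splitting $T^*_0\C^n=V^*\oplus W^*$ with $V^*=T^*_0E_{k,p}$. The factors $G_g^*\beta_1$, $G_g^*\beta_2$, $G_g^*\theta_2^{\,p}$ lie entirely in $\Lambda^*V^*$ and have total degree $2p+2$; each factor of $G_g^*\theta_1$ contributes either $2$ to the $V^*$-degree (terms with $j\le p$) or $1$ to each of $V^*$ and $W^*$ (terms with $p<j\le k-p$). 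Since $\dim V^*=k$, any term with at least one $\theta_1$-factor of the first kind has $V^*$-degree $>k$ and vanishes. Hence every surviving monomial of $G_g^*(\beta_1\wedge\beta_2\wedge\theta_1^{\,k-2p-2}\wedge\theta_2^{\,p})$ has $V^*$-part equal to a multiple of the volume form on $V^*$. Wedging with any term $dx_l\wedge dy_l$ of $\theta_0$ with $l\le k-p$ then kills it (since $dx_l\in V^*$ is already present), so all $n-k+p+1$ factors of $\theta_0$ are forced to use indices $l>k-p$; there are only $n-k+p$ such indices, and pigeonhole finishes. This is the precise mechanism behind your sketch.
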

	\begin{proof}
		It was shown in \cite[Corollary 4.11]{KnoerrUnitarilyinvariantvaluations2021} that the integral of any element of $C_c(\C^n)$ with respect to the first three measures vanishes, so these measures have to vanish identically. Since $\Upsilon^n_{k,q}$ is a linear combination of $\mathcal{B}^n_{k,q}$ and $\mathcal{C}^n_{k,q}$, it has the same property.
	\end{proof}

\subsection{Behavior under complex orthogonal decompositions}
	let $\C^n=X\oplus Y$ be a complex orthogonal direct sum decomposition. Identifying $X$ and $Y$ with $\C^a$ using a complex orthonormal basis, we may define the corresponding Monge-Amp\`ere-type operators
	\begin{align*}
		\Theta^X_{k,q},\Upsilon^X_{k,q}:\Conv(X,\R)\rightarrow\mathcal{M}(X),\\
		\Theta^Y_{k,q},\Upsilon^Y_{k,q}:\Conv(Y,\R)\rightarrow\mathcal{M}(Y).
	\end{align*} 
	In order to simplify the notation, we will suppress the pullback by the projections $\pi_X,\pi_Y$ onto the two factors, that is, we will $f_X\in\Conv(X,\R)$ and $f_Y\in\Conv(Y,\R)$ as a function on $\C^n$ using the constant extension.
		\begin{lemma}
			\label{lemma:DecompositionMeasuresDirectSum}
			Assume that $\C^n=X\oplus Y$ is an complex orthogonal direct sum decomposition. For $f_X\in \Conv(X,\R)$, $f_Y\in \Conv(Y,\R)$,
			\begin{align*}
				\Theta^n_{k,q}(f_X+f_Y)=&\sum_{\substack{m_1+m_2=k\\ p_1+p_2=q}}\Theta^{ X}_{m_1,p_1}(f_X)\otimes \Theta^{Y}_{m_1,p_2}(f_Y),\\
				\Upsilon^n_{k,q}(f_X+f_Y)=&\sum_{\substack{m_1+m_2=k\\ p_1+p_2=q}}\left(\frac{p_1}{q}\mathcal{B}^{X}_{m_1,p_1}(f_X)-\frac{m_1-2p_1}{k-2q}\mathcal{C}^X_{m_1,p_1}(f_X)\right)\otimes \Theta^{Y}_{m_2,p_2}(f_Y)\\
				&+\sum_{\substack{m_1+m_2=k\\ p_1+p_2=q}}\Theta^{ X}_{m_1,p_1}(f_X)\otimes\left(\frac{p_2}{q}\mathcal{B}^{Y}_{m_2,p_2}(f_Y)-\frac{m_2-2p_2}{k-2q}\mathcal{C}^Y_{m_2,p_2}(f_Y)\right).
			\end{align*}
		\end{lemma}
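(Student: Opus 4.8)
## Proof Strategy

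The plan is to reduce everything to the behavior of the differential cycle under orthogonal direct sums, namely Proposition \ref{proposition:DifferentialCycleSumMAOrthogonalCylinderFunctions}, and then to track how the relevant differential forms on $T^*\C^n$ pull back under the identification $G:T^*X\times T^*Y\rightarrow T^*(X\oplus Y)$. Concretely, for $f_X\in\Conv(X,\R)\cap C^\infty(X)$ and $f_Y\in\Conv(Y,\R)\cap C^\infty(Y)$, Proposition \ref{proposition:DifferentialCycleSumMAOrthogonalCylinderFunctions} gives $D(\pi_X^*f_X+\pi_Y^*f_Y)=G_*(D(f_X)\times D(f_Y))$. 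Applying the defining formula $\Psi_\tau(f)[B]=D(f)[1_{\pi^{-1}(B)}\tau]$ together with the product-current identity $(T\times S)[\pi_M^*\phi\wedge\pi_N^*\psi]=T(\phi)S(\psi)$, the whole statement comes down to decomposing the pullbacks $G^*\theta_0$, $G^*\theta_1$, $G^*\theta_2$ (and $G^*\beta_1$, $G^*\beta_2$, $G^*\gamma_2$) as sums of forms that are pure in the $T^*X$ and $T^*Y$ variables, and then expanding the resulting products by the multinomial theorem.

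First I would record the basic decompositions. Writing the coordinates on $\C^n=X\oplus Y$ as $(z',z'')$ with dual coordinates $(w',w'')$, the forms $\theta_0,\theta_1,\theta_2$ are all sums over the coordinate index $j$, so each splits cleanly: $\theta_0=\theta_0^X+\theta_0^Y$, $\theta_1=\theta_1^X+\theta_1^Y$, $\theta_2=\theta_2^X+\theta_2^Y$, where the superscript denotes the part involving only the $X$- resp. $Y$-coordinates (and similarly $\beta_1=\beta_1^X+\beta_1^Y$, etc., after suppressing the projections as in the statement). Then
$$
\theta^n_{k,q}=\theta_0^{\,n-k+q}\wedge\theta_1^{\,k-2q}\wedge\theta_2^{\,q}=(\theta_0^X+\theta_0^Y)^{n-k+q}\wedge(\theta_1^X+\theta_1^Y)^{k-2q}\wedge(\theta_2^X+\theta_2^Y)^{q}.
$$
Expanding each factor by the binomial theorem and collecting the terms that are of bidegree $(a,a)$ in the $X$-variables (hence of complementary bidegree in the $Y$-variables, since $\dim_\C X=\dim_\C Y=a$), one obtains exactly
$$
\theta^n_{k,q}=\sum_{\substack{m_1+m_2=k\\ p_1+p_2=q}} c\cdot\big(\theta^X_{m_1,p_1}\big)\wedge\big(\theta^Y_{m_2,p_2}\big),
$$
where the combinatorial coefficient $c$ is a product of binomial coefficients. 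The one genuinely bookkeeping-heavy point is checking that this coefficient $c$, multiplied by the normalization $c_{n,k,q}$, matches exactly $c_{a,m_1,p_1}\cdot c_{a,m_2,p_2}$; this is a Vandermonde-type identity for each of the three groups of factors ($\theta_0$, $\theta_1$, $\theta_2$), and it is precisely what makes the coefficient on the right-hand side of the $\Theta$-formula equal to $1$. After this, $\Psi_{\theta^n_{k,q}}(f_X+f_Y)$ factors as the tensor product of $\Psi_{\theta^X_{m_1,p_1}}(f_X)$ and $\Psi_{\theta^Y_{m_2,p_2}}(f_Y)$ term by term, which is the claimed formula for $\Theta^n_{k,q}$ on smooth functions; the general case follows by the weak*-continuity of all these operators (Theorem \ref{theorem:MAOperatorsDefinedDifferentialCycle}) and density of smooth convex functions.

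For the $\Upsilon$-formula one proceeds the same way with $\beta_1\wedge\beta_2\wedge\theta^{n-1}_{k-2,q-1}$ (giving $\mathcal{B}^n_{k,q}$) and $\beta_1\wedge\gamma_2\wedge\theta^{n-1}_{k-1,q}$ (giving $\mathcal{C}^n_{k,q}$). Now $\beta_1\wedge\beta_2=(\beta_1^X+\beta_1^Y)\wedge(\beta_2^X+\beta_2^Y)=\beta_1^X\wedge\beta_2^X+\beta_1^Y\wedge\beta_2^Y+(\beta_1^X\wedge\beta_2^Y+\beta_1^Y\wedge\beta_2^X)$, and the crucial observation is that the mixed terms $\beta_1^X\wedge\beta_2^Y$ and $\beta_1^Y\wedge\beta_2^X$ are of bidegree $(1,0)$ in one factor and $(1,0)$ in the other — in particular they are \emph{not} of the pure bidegree $(a,a)$ type and, when wedged with any $\theta^X_{*}\wedge\theta^Y_{*}$ of the right total degree, cannot produce a top-degree form on $T^*X$ in the $X$-variables; hence the mixed terms contribute nothing after slicing against $\pi^{-1}(B)$ and restricting to the product structure. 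So only $\beta_1^X\wedge\beta_2^X$ and $\beta_1^Y\wedge\beta_2^Y$ survive, producing $\mathcal{B}^X_{m_1,p_1}(f_X)\otimes\Theta^Y_{m_2,p_2}(f_Y)$ and $\Theta^X_{m_1,p_1}(f_X)\otimes\mathcal{B}^Y_{m_2,p_2}(f_Y)$ respectively; the same argument applies to $\beta_1\wedge\gamma_2$ and $\mathcal{C}$. Assembling these, expanding $\theta^{n-1}_{k-2,q-1}$ and $\theta^{n-1}_{k-1,q}$ as before, and carefully tracking the normalizing constants $c_{n,k,q}$ versus $c_{a,m_i,p_i}$ yields the rational coefficients $\frac{p_1}{q}$, $\frac{m_1-2p_1}{k-2q}$ (and the $Y$-analogues) in the stated formula; these fractions arise exactly because $\mathcal{B}$ carries a factor $\theta^{n-1}_{k-2,q-1}$ (one fewer power of $\theta_2$, accounting for the $q$ in the denominator) while $\mathcal{C}$ carries $\theta^{n-1}_{k-1,q}$ (one fewer power of $\theta_1$, accounting for the $k-2q$). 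Finally, since $\Upsilon^n_{k,q}=\mathcal{B}^n_{k,q}-\mathcal{C}^n_{k,q}$ and each summand on the $X$-side already pairs against the same $\Theta^Y_{m_2,p_2}(f_Y)$, the $\mathcal{B}$- and $\mathcal{C}$-contributions combine precisely into the expression in parentheses, and symmetrically for the $Y$-side; once more the smooth case plus weak*-continuity gives the general statement.

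The main obstacle is the constant bookkeeping: verifying that the product of binomial coefficients coming from expanding $(\theta_0^X+\theta_0^Y)^{n-k+q}(\theta_1^X+\theta_1^Y)^{k-2q}(\theta_2^X+\theta_2^Y)^{q}$ and collapsing via the constraints $m_1+m_2=k$, $p_1+p_2=q$, $\dim_\C X=\dim_\C Y=a$ combines with $c_{n,k,q}$ to give exactly $c_{a,m_1,p_1}c_{a,m_2,p_2}$ for $\Theta$, and the analogous (slightly messier) identities producing the weights $p_1/q$ and $(m_1-2p_1)/(k-2q)$ for $\Upsilon$. Everything else — the vanishing of mixed $\beta$-terms, the factorization of $\Psi_\tau$ over products of currents, and the passage from smooth to general convex functions — is routine given Proposition \ref{proposition:DifferentialCycleSumMAOrthogonalCylinderFunctions} and Theorem \ref{theorem:MAOperatorsDefinedDifferentialCycle}.
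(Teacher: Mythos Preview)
Your approach is essentially the same as the paper's: use Proposition~\ref{proposition:DifferentialCycleSumMAOrthogonalCylinderFunctions} to write $D(f_X+f_Y)=G_*(D(f_X)\times D(f_Y))$, observe that each of $\theta_0,\theta_1,\theta_2,\beta_1,\beta_2,\gamma_2$ splits as the sum of its $X$- and $Y$-parts under $G^*$, expand, and use the defining property of the product current. The paper does not spell out the constant bookkeeping or the mixed-term vanishing, but the argument is the same.

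Two small corrections. First, you write $\dim_\C X=\dim_\C Y=a$; the lemma is stated for an arbitrary complex orthogonal decomposition, and it is applied later with $\dim_\C X=1$ or $2$, so you should keep $\dim_\C X=a$, $\dim_\C Y=n-a$ distinct throughout (your constant identity then reads $c_{n,k,q}\cdot(\text{binomials})=c_{a,m_1,p_1}c_{n-a,m_2,p_2}$, which still factors cleanly). Second, the detour through smooth $f_X,f_Y$ followed by weak*-continuity is unnecessary: Proposition~\ref{proposition:DifferentialCycleSumMAOrthogonalCylinderFunctions} already holds for all Monge--Amp\`ere (in particular all convex) functions, so one can work directly with $D(f_X)\times D(f_Y)$ for arbitrary $f_X,f_Y$, as the paper does. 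Your parity reason for discarding the mixed terms $\beta_1^X\wedge\beta_2^Y$ etc.\ is correct in substance---each contributes an odd-degree form on $T^*X$, which pairs to zero against the $2a$-dimensional current $D(f_X)$---though the ``bidegree $(1,0)$'' phrasing is imprecise.
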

		\begin{proof}
			It is enough to compare the integrals of the product of $\phi_1\in C^\infty_c(X)$ and $\phi_2\in C^\infty_c(Y)$ for both sides. Write $z=(z_X,z_Y)\in X\oplus Y$. By Proposition \ref{proposition:DifferentialCycleSumMAOrthogonalCylinderFunctions}
			\begin{align*}
				\int_{\C^n}\phi_1(z_X)\phi_2(z_Y)d\Theta^n_{k,q}(f_X+f_Y)=&c_{n,k,q}D(f_X+f_Y)[\phi_1(z_X)\phi_2(z_Y)\theta^{n}_{k,q}]\\
				=&c_{n,k,q}\left(D(f_X)\times D(f_Y)\right)[\phi_1(z_X)\phi_2(z_Y)G^*\theta^{n}_{k,q}]
			\end{align*}
			and similar for $\Upsilon^n_{k,q}(f_X+f_Y)$. It is easy to see that the pullback of the forms $\beta_1,\beta_2,\gamma_1,\gamma_2,\theta_0,\theta_1,\theta_2$ by $G$ is given by the sum of the corresponding forms on $T^*X$, $T^*Y$ respectively. Rearranging these terms, we obtain a product of two differential forms on $T^*X$ and $T^*Y$ corresponding to scalar multiples of the Monge-Amp\`ere operators on the right hand side of the equations above. The claim follows from the defining property of the product of two currents.
		\end{proof}
		
	\subsection{Behavior on smooth rotation invariant functions}
		For the following calculation, it is convenient to use the complex differential operators $\partial$ and $\bar{\partial}$ on $\C^n$ induced by the complex structure. We refer to \cite{HuybrechtsComplexgeometry2005} for their basic properties. The following lemma follows from a simple calculation.
		\begin{lemma}
			\label{lemma:pullbackFormsGraph}
			Let $f\in \Conv(\C^n),\R)$ be a convex function given by $f(z)=h(|z|^2)$ for $h\in C^\infty(\R)$. Then
			\begin{align*}
				G_{f}^*\beta_1=&\left[2|z|^2h''(|z|^2)+h'(|z|^2)\right]d|z|^2,&&
				G_{f}^*\beta_2=-ih'(|z|^2)(\partial|z|^2-\bar{\partial}|z|^2),\\
				G_{f}^*\gamma_1=&d|z|^2,&&
				G_{f}^*\gamma_2=-\frac{i}{2}(\partial|z|^2-\bar{\partial}|z|^2),\\
				G_{f}^*\theta_0=&\frac{i}{2}\sum_{j=1}^{n}dz_j\wedge d\bar{z}_j=:\tilde{\theta_0},\\
				G_{f}^*\theta_1=&2ih''(|z|^2)\bar{\partial} |z|^2\wedge\partial |z|^2+4h'(|z|^2)\tilde{\theta}_0,\\
				G_{f}^*\theta_2=&4ih'(|z|^2)h''(|z|^2)\bar{\partial }|z|^2\wedge \partial |z|^2+4h'^2(|z|^2)\tilde{\theta_0}.
			\end{align*}
		\end{lemma}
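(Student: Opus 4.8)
The plan is to compute the pullback $G_f^*$ directly: first on coordinate functions, then on their differentials, and finally to substitute into the definitions of the seven forms $\beta_1,\beta_2,\gamma_1,\gamma_2,\theta_0,\theta_1,\theta_2$. Since $f(z)=h(|z|^2)$ with $h\in C^\infty(\R)$, the chain rule gives $\partial f/\partial x_j=2x_jh'(|z|^2)$ and $\partial f/\partial y_j=2y_jh'(|z|^2)$, so that the graph map $G_f\colon z\mapsto(z,df(z))$ satisfies $G_f^*x_j=x_j$, $G_f^*y_j=y_j$, $G_f^*\xi_j=2x_jh'(|z|^2)$, $G_f^*\eta_j=2y_jh'(|z|^2)$. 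First I would differentiate these, using $d|z|^2=2\gamma_1$ (hence $d(h'(|z|^2))=2h''(|z|^2)\gamma_1$), to obtain $G_f^*dx_j=dx_j$, $G_f^*dy_j=dy_j$, and
\begin{align*}
	G_f^*d\xi_j=2h'(|z|^2)\,dx_j+4x_jh''(|z|^2)\,\gamma_1,\qquad
	G_f^*d\eta_j=2h'(|z|^2)\,dy_j+4y_jh''(|z|^2)\,\gamma_1.
\end{align*}

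Since $\gamma_1,\gamma_2,\theta_0$ involve only the base coordinates $x_j,y_j$, they are fixed by $G_f^*$, which already yields the formulas for $G_f^*\gamma_1$ and $G_f^*\gamma_2$ once one records $\gamma_1=\tfrac12 d|z|^2$. For $\beta_1,\beta_2,\theta_1,\theta_2$ I would substitute the displayed expressions into their definitions from Section~\ref{section:MAOperators} and collect terms: the cross terms carrying $\sum_j(x_jy_j-y_jx_j)$ cancel, while the coefficient accompanying $\gamma_1$ (respectively the coefficient of $\theta_0=\sum_jdx_j\wedge dy_j$) picks up the factor $\sum_j(x_j^2+y_j^2)=|z|^2$. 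This produces, for instance, $G_f^*\beta_1=(2h'(|z|^2)+4|z|^2h''(|z|^2))\gamma_1$ and $G_f^*\beta_2=2h'(|z|^2)\gamma_2$, and it realizes $G_f^*\theta_1$ and $G_f^*\theta_2$ as $\R$-linear combinations of $\theta_0$ and $\gamma_1\wedge\gamma_2$.

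It then remains to rewrite everything in the complex formalism. From $\partial|z|^2=\sum_j\bar z_j\,dz_j=\gamma_1+i\gamma_2$ and $\bar\partial|z|^2=\sum_jz_j\,d\bar z_j=\gamma_1-i\gamma_2$ one reads off $d|z|^2=\partial|z|^2+\bar\partial|z|^2$, $\gamma_2=\tfrac1{2i}(\partial|z|^2-\bar\partial|z|^2)$, and a proportionality between $\gamma_1\wedge\gamma_2$ and $\bar\partial|z|^2\wedge\partial|z|^2$; moreover expanding $(dx_j+i\,dy_j)\wedge(dx_j-i\,dy_j)$ gives $\sum_jdx_j\wedge dy_j=\tfrac i2\sum_jdz_j\wedge d\bar z_j=\tilde\theta_0$. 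Substituting these identities into the expressions collected above yields all seven formulas. I do not expect a genuine conceptual obstacle here: the whole argument is an elementary computation, and the only thing requiring real care is the bookkeeping of the numerical factors and signs in passing between the real coordinates $x_j,y_j,\xi_j,\eta_j$ and the $\partial/\bar\partial$ notation.
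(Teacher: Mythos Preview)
Your proposal is correct and is precisely the ``simple calculation'' the paper alludes to in lieu of a proof; the paper gives no further details, so your direct computation via $G_f^*$ on coordinates and differentials, followed by the translation into the $\partial/\bar\partial$ formalism, is exactly what is intended.
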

		\begin{lemma}
			\label{lemma:BehaviorThetaBCOnRotationInvariantSmooth}
			Let $f\in\Conv(\C^n,\R)$ be given by $f(z)=h(|z|^2)$ for $h\in C^\infty(\R)$. For $1\le k\le 2n$ and $\phi\in C_c(\C^n)$,
			\begin{align*}
				&\int_{\C^n}\phi(z)d\Theta^n_{k,q}(f,z)\\
				&\quad=4^{k-q}\binom{n}{k-2q,q}\int_{0}^\infty \left(\int_{S^{2n-1}} \phi(r v)dv\right)r^{2n-1}\left[\frac{k}{n}r^2h''(r^2)h'(r^2)^{k-1}+h'(r^2)^{k}\right]dr,\\
				&\int_{\C^n}\phi(z)d\mathcal{B}^n_{k,q}(f,z)=\int_{\C^n}\phi(z)d\mathcal{C}^n_{k,q}(f,z)\\
				&\quad=\frac{2^{2k-2q-1}}{n}\binom{n}{k-2q,q}\int_{0}^\infty \left(\int_{S^{n2-1}} \phi(rv)dv\right)r^{2n+1}\left[2r^2h''(r^2)h'(r^2)^{k-1}+h'(r^2)^k\right]dr.
			\end{align*}
		\end{lemma}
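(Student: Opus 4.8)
\emph{Proof idea.}
The plan is to reduce the identities to explicit integrals over $\C^n$ by using that for a smooth convex function the differential cycle is carried by the graph of the differential. First I would note that for $f\in\Conv(\C^n,\R)\cap C^\infty(\C^n)$ the current $(G_f)_*[\![\C^n]\!]$ is closed, Lagrangian, locally vertically bounded, and satisfies property $(4)$ of Theorem~\ref{theorem:FuUniquenessDifferentialCycle}, so that $D(f)=(G_f)_*[\![\C^n]\!]$. Since $\pi\circ G_f=\mathrm{id}_{\C^n}$, this yields for every $\tau\in\Omega^{2n}(T^*\C^n)$ and every $\phi\in C_c(\C^n)$
\begin{align*}
\int_{\C^n}\phi(z)\,d\Psi_\tau(f)(z)=D(f)\bigl[(\phi\circ\pi)\,\tau\bigr]=\int_{\C^n}\phi\cdot G_f^*\tau .
\end{align*}
By Definition~\ref{definition:MAOperatirsThetaUpsilonBC} it therefore suffices to compute $G_f^*\theta^n_{k,q}$, $G_f^*(\beta_1\wedge\beta_2\wedge\theta^{n-1}_{k-2,q-1})$ and $G_f^*(\beta_1\wedge\gamma_2\wedge\theta^{n-1}_{k-1,q})$ as $2n$-forms on $\C^n$ for $f(z)=h(|z|^2)$, and to integrate them against $\phi$.

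For this I would use Lemma~\ref{lemma:pullbackFormsGraph}. The key structural observation is that all the pulled-back forms occurring here are linear combinations of the two commuting $(1,1)$-forms $\tilde\theta_0$ and $P:=\bar\partial|z|^2\wedge\partial|z|^2$, and that $P\wedge P=0$ since $\partial|z|^2$ has type $(1,0)$. Consequently, in the binomial expansions of $(G_f^*\theta_1)^{k-2q}$ and $(G_f^*\theta_2)^{q}$ only the terms containing at most one factor of $P$ survive; expanding $G_f^*\theta^n_{k,q}=\tilde\theta_0^{n-k+q}\wedge(G_f^*\theta_1)^{k-2q}\wedge(G_f^*\theta_2)^{q}$ and collecting powers of $h'$, $h''$ and $4$, one is left with a multiple of $\tilde\theta_0^n$ plus a multiple of $P\wedge\tilde\theta_0^{n-1}$, the coefficient of the latter being $\bigl(\tfrac{k-2q}{2}+q\bigr)=\tfrac k2$. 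For the $\mathcal B$- and $\mathcal C$-forms the point is that $G_f^*(\beta_1\wedge\beta_2)$, respectively $G_f^*(\beta_1\wedge\gamma_2)$, is already a multiple of $P$, so only the $P$-free part of the remaining $\theta$-block contributes; using $G_f^*\beta_2=2h'(|z|^2)\,G_f^*\gamma_2$ together with the fact that $\theta^{n-1}_{k-2,q-1}$ carries one more factor of $\theta_1$ and one fewer factor of $\theta_2$ than $\theta^{n-1}_{k-1,q}$, the two pulled-back top forms turn out to be proportional, with the proportionality constant exactly compensated by the ratio of the normalising constants in the definitions of $\mathcal B^n_{k,q}$ and $\mathcal C^n_{k,q}$; this is what produces the equality $\int\phi\,d\mathcal B^n_{k,q}(f)=\int\phi\,d\mathcal C^n_{k,q}(f)$.

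The third step is to turn these top forms into ordinary integrals. I would use that $\tilde\theta_0^n=n!\,dV$ for the Lebesgue volume $dV$, together with the Kähler-type identity expressing $i\,P\wedge\tilde\theta_0^{n-1}$ as a constant multiple of $|z|^2\,\tilde\theta_0^n$, to rewrite $G_f^*\theta^n_{k,q}$ in the form $4^{k-q}\bigl(h'(|z|^2)^k+\tfrac kn|z|^2h''(|z|^2)h'(|z|^2)^{k-1}\bigr)\tilde\theta_0^n$ and similarly for the other two forms; multiplying by $c_{n,k,q}$, using $c_{n,k,q}\,n!=\binom{n}{k-2q,q}$, and passing to polar coordinates $z=rv$ with $dV=r^{2n-1}\,dr\,dv$ then gives exactly the right-hand sides in the statement. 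No separate treatment of $k=2n$ is needed, since the resulting formula is uniform in $k$; and to pass from $\phi\in C_c(\C^n)$, as used above, to arbitrary $\phi$ one only needs that both sides depend continuously on $\phi$, which is immediate.

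I do not anticipate a real obstacle beyond organising this calculation: the only essential inputs are the identification $D(f)=(G_f)_*[\![\C^n]\!]$ for smooth $f$, the nilpotency $P\wedge P=0$, and the two elementary identities for $\tilde\theta_0^n$ and $P\wedge\tilde\theta_0^{n-1}$. The one place demanding care is keeping track of the signs and of the powers of $4$, $h'$ and $i$ through the wedge-product expansions, so that the multinomial coefficient $\binom{n}{k-2q,q}$ and the combinations $\tfrac kn r^2h''h'^{k-1}+h'^k$ and $2r^2h''h'^{k-1}+h'^k$ come out with the correct constants.
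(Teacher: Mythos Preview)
Your proposal is correct and follows essentially the same approach as the paper: both reduce to integrating $\phi\cdot G_f^*\tau$ over $\C^n$, invoke Lemma~\ref{lemma:pullbackFormsGraph}, use the identities $\tilde\theta_0^n=n!\,d\mathrm{vol}$ and $i\bar\partial|z|^2\wedge\partial|z|^2\wedge\tilde\theta_0^{n-1}=\tfrac{2}{n}|z|^2\tilde\theta_0^n$, and pass to polar coordinates. Your write-up is in fact more explicit than the paper's (which just says ``a short calculation'' for $\Theta^n_{k,q}$ and ``a similar calculation'' for $\mathcal B^n_{k,q},\mathcal C^n_{k,q}$); the only superfluous remark is the final sentence about extending to arbitrary $\phi$, since the statement already restricts to $\phi\in C_c(\C^n)$.
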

		\begin{proof}
			As $f$ is a smooth function, we have for $\phi\in C_c(\C^n)$
			\begin{align*}
				\int_{\C^n}\phi d\Theta^n_{k,q}(f)=&c_{n,k,q}D(f)[\phi(z)\theta^{n-1}_{k,q}]
				=c_{n,k,q}\int_{\C^n}\phi(z)G_f^*(\theta_0^{n-k+q}\wedge \theta_1^{k-2q}\wedge\theta_2^q).
			\end{align*}
			A short calculation using Lemma \ref{lemma:pullbackFormsGraph} shows that this reduces to
			\begin{align*}
				\int_{\C^n}\phi d\Theta^n_{k,q}(f)=&c_{n,k,q}2^{2k-2q-1}\int_{\C^n}\phi(z) kh''(|z|^2)h'(|z|^2)^{k-1}\tilde{\theta}_0^{n-1}\wedge i\bar{\partial}|z|^2\wedge\partial|z|^2\\
				&+c_{n,k,q}2^{2k-2q}\int_{\C^n}\zeta(|z|) h'(|z|^2)^{k}\tilde{\theta}_0^{n}.
			\end{align*}
			As $i\bar{\partial}|z|^2\wedge\partial|z|^2\wedge\tilde{\theta}_0^{n-1}=\frac{2}{n}|z|^2\tilde{\theta}_0$ and $\frac{\tilde{\theta}_0^n}{n!}$ is the volume form on $\C^n$, we obtain
			\begin{align*}
				\int_{\C^n}\phi d\Theta^n_{k,q}(f)=&c_{n,k,q}2^{2k-2q}n!\int_{\C^n}\phi(z)|z|^2 \frac{k}{n}h''(|z|^2)h'(|z|^2)^{k-1} d\vol_{2n}(z)\\
				&+c_{n,k,q}2^{2k-2q}n!\int_{\C^n}\phi(z) h'(|z|^2)^{k}d\vol_{2n}(z)\\
				=&c_{n,k,q}4^{k-q}n!\int_{0}^\infty \left(\int_{S^{2n-1}}\phi(rv)dv \right)r^{2n-1}\left[\frac{k}{n}r^2h''(r^2)h'(r^2)^{k-1}+h'(r^2)^{k}\right]dr
			\end{align*}
			by a change to polar coordinates. Now the claim follows from $n!c_{n,k,q}=\frac{n!}{(n-k+q)!(k-2q)!q!}=\binom{n}{k-2q,q}$. The other two cases follow from a similar calculation.
		\end{proof}
		\begin{corollary}
			\label{corollary:VanishingUpsilonRotationInvariantFunctions}
			Let $f\in\Conv(\C^n,\R)$ be rotation invariant. Then $\Upsilon^n_{k,q}(f)=0$.
		\end{corollary}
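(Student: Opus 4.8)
The plan is to combine Lemma~\ref{lemma:BehaviorThetaBCOnRotationInvariantSmooth} with a routine approximation argument. Since $\Upsilon^n_{k,q} = \mathcal{B}^n_{k,q} - \mathcal{C}^n_{k,q}$, Lemma~\ref{lemma:BehaviorThetaBCOnRotationInvariantSmooth} shows that $\int_{\C^n}\phi\, d\mathcal{B}^n_{k,q}(f) = \int_{\C^n}\phi\, d\mathcal{C}^n_{k,q}(f)$ for every $\phi \in C_c(\C^n)$ whenever $f(z) = h(|z|^2)$ with $h \in C^\infty(\R)$; hence $\mathcal{B}^n_{k,q}(f) = \mathcal{C}^n_{k,q}(f)$ as signed measures, and $\Upsilon^n_{k,q}(f) = 0$ for such $f$. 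The remaining task is to pass from this class of smooth functions to an arbitrary rotation invariant $f \in \Conv(\C^n,\R)$.

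To do this, I would approximate $f$ by convolution with a $\U(n)$-invariant mollifier $\rho_\epsilon \in C_c^\infty(\C^n)$. The functions $f_\epsilon := f * \rho_\epsilon$ lie in $\Conv(\C^n,\R)$ (being averages of translates of the convex function $f$, they are convex and finite-valued), are smooth, are $\U(n)$-invariant (since both $f$ and $\rho_\epsilon$ are), and converge to $f$ uniformly on compact subsets of $\C^n$, which is precisely convergence in $\Conv(\C^n,\R)$ (Section~\ref{section:PreliminariesConvex}). A smooth $\U(n)$-invariant function on $\C^n$ has the form $h_\epsilon(|z|^2)$ with $h_\epsilon$ smooth on $[0,\infty)$; extending $h_\epsilon$ to an element of $C^\infty(\R)$ (by Whitney's theorem on smooth even functions, applied to the restriction of $f_\epsilon$ to a complex line through the origin), the first step yields $\Upsilon^n_{k,q}(f_\epsilon) = 0$. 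Finally, $\mathcal{B}^n_{k,q}$ and $\mathcal{C}^n_{k,q}$, hence $\Upsilon^n_{k,q}$, are continuous as maps $\Conv(\C^n,\R) \to \mathcal{M}(\C^n)$ by Theorem~\ref{theorem:MAOperatorsDefinedDifferentialCycle}, so $\int_{\C^n}\phi\, d\Upsilon^n_{k,q}(f) = \lim_{\epsilon\to 0}\int_{\C^n}\phi\, d\Upsilon^n_{k,q}(f_\epsilon) = 0$ for all $\phi \in C_c(\C^n)$, which means $\Upsilon^n_{k,q}(f) = 0$.

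There is no substantial obstacle here: the statement is essentially immediate once Lemma~\ref{lemma:BehaviorThetaBCOnRotationInvariantSmooth} and the continuity from Theorem~\ref{theorem:MAOperatorsDefinedDifferentialCycle} are in hand. The only point requiring a little care is the representation of a smooth $\U(n)$-invariant function as a smooth function of $|z|^2$ up to and including the origin; this is standard, and can in any case be sidestepped by noting that the computation proving Lemma~\ref{lemma:BehaviorThetaBCOnRotationInvariantSmooth} only uses $h$, $h'$, $h''$ on $[0,\infty)$, so it suffices to work with $h_\epsilon \in C^\infty([0,\infty))$.
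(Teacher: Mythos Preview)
Your proposal is correct and follows essentially the same approach as the paper: use Lemma~\ref{lemma:BehaviorThetaBCOnRotationInvariantSmooth} to conclude $\Upsilon^n_{k,q}(f)=0$ for smooth rotation invariant $f$, then approximate an arbitrary rotation invariant convex function by smooth rotation invariant ones and invoke the continuity of $\Upsilon^n_{k,q}$. The paper's proof is more terse and does not spell out the mollification or the $h\in C^\infty$ representation, but the argument is the same.
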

		\begin{proof}
			As $\mathcal{B}^{n}_{k,q}$ and $\mathcal{C}^n_{k,q}$ coincide on smooth rotation invariant functions by Lemma \ref{lemma:BehaviorThetaBCOnRotationInvariantSmooth} and $\Upsilon^n_{k,q}=\mathcal{B}^{n}_{k,q}-\mathcal{C}^{n}_{k,q}$, $\Upsilon^n_{k,q}$ vanishes on smooth rotation invariant functions. Since every rotation invariant function in $\Conv(\C^n,\R)$ can be approximated uniformly on compact subsets by a sequence in $\Conv(\C^n,\R)$ of smooth rotation invariant functions and $\Upsilon^n_{k,q}$ is continuous, the claim follows.
		\end{proof}
		Next, we will explicitly calculate these measures for the family of functions $u_t\in \Conv(\C^n,\R)$ defined for $t\ge 0$ by
		\begin{align*}
			u_t(z)=\max(0,|z|-t)=\frac{1}{2}\left(\sqrt{(|z|-t)^2}+|z|-t\right).
		\end{align*}
		The argument will be based on approximating $u_t$ by the following smooth convex functions.
		\begin{lemma}
			\label{lemma:LimitsCalculationUt}
			Consider the functions $h_\epsilon\in C^\infty([0,\infty))$ defined by
			\begin{align*}
				h_\epsilon(s):=&\frac{1}{2}\left[\sqrt{(\sqrt{s+\epsilon^2}-t)^2+\epsilon^2}+\sqrt{s+\epsilon^2}-t\right]&& \text{for } s\ge0.
			\end{align*}
			Then for $\zeta\in C_c([0,\infty))$ and $a\ge k$,
			\begin{align*}
				\lim\limits_{\epsilon\rightarrow0}\int_0^\infty \zeta(r) r^ah_\epsilon'(r^2)^{k}dr=&2^{-k}\int_t^\infty \zeta(r)r^{a-k}dr,\\
				\lim\limits_{\epsilon\rightarrow0}\int_0^\infty \zeta(r) r^{a+2}h''_\epsilon(r^2)h_\epsilon'(r^2)^{k-1}dr=&-2^{-(k+1)}\int_t^\infty \zeta(r)r^{a-k}dr+\frac{2^{-(k+1)}}{k}\zeta(t)t^{a+1-k}.
			\end{align*}
		\end{lemma}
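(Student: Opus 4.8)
The plan is to compute the two limits directly by analysing the functions $h_\epsilon'$ and $h_\epsilon''$ as $\epsilon\to 0$ and applying dominated convergence on $(0,\infty)$, using the compact support of $\zeta$. First I would introduce the shorthand $g_\epsilon(s):=\sqrt{s+\epsilon^2}$ and write $h_\epsilon(s)=\tfrac12\big(\sqrt{(g_\epsilon(s)-t)^2+\epsilon^2}+g_\epsilon(s)-t\big)$. Differentiating, $h_\epsilon'(s)=\tfrac12 g_\epsilon'(s)\big(\tfrac{g_\epsilon(s)-t}{\sqrt{(g_\epsilon(s)-t)^2+\epsilon^2}}+1\big)$ with $g_\epsilon'(s)=\tfrac{1}{2g_\epsilon(s)}$. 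Setting $s=r^2$, as $\epsilon\to 0$ we have $g_\epsilon(r^2)\to r$ and $g_\epsilon'(r^2)\to \tfrac1{2r}$ for $r>0$, while the bracketed factor tends to $2$ for $r>t$ and to $0$ for $0<r<t$ (it equals $\operatorname{sign}(r-t)+1$ in the limit). Hence $h_\epsilon'(r^2)\to \tfrac12\mathbbm{1}_{\{r>t\}}$ pointwise on $(0,\infty)\setminus\{t\}$, which is a null set; since $0\le h_\epsilon'\le \tfrac{1}{2r}\cdot\big(\tfrac{\epsilon^2/2}{\cdots}+1\big)$ is bounded on $\operatorname{supp}\zeta\cap[\delta,\infty)$ — and near $r=0$ one has the uniform bound $h_\epsilon'(r^2)\le g_\epsilon'(r^2)=\tfrac{1}{2g_\epsilon(r^2)}\le \tfrac{1}{2r}$, so $r^a h_\epsilon'(r^2)^k\le 2^{-k}r^{a-k}$ which is integrable on $\operatorname{supp}\zeta$ precisely because $a\ge k$ — dominated convergence gives $\int_0^\infty\zeta(r)r^a h_\epsilon'(r^2)^k\,dr\to 2^{-k}\int_t^\infty\zeta(r)r^{a-k}\,dr$, the first claim.

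For the second limit the situation is more delicate because $h_\epsilon''$ develops a concentration at $r=t$ as $\epsilon\to 0$; this is where the extra boundary term $\tfrac{2^{-(k+1)}}{k}\zeta(t)t^{a+1-k}$ comes from, and it will be the main obstacle. The clean way to handle it is to avoid differentiating again and instead use the identity $\tfrac{d}{dr}\big(h_\epsilon'(r^2)^k\big)=2k\,r\,h_\epsilon''(r^2)h_\epsilon'(r^2)^{k-1}$ and integrate by parts:
\begin{align*}
\int_0^\infty \zeta(r)r^{a+2}h_\epsilon''(r^2)h_\epsilon'(r^2)^{k-1}\,dr
=\frac{1}{2k}\int_0^\infty \zeta(r)r^{a+1}\,\frac{d}{dr}\Big(h_\epsilon'(r^2)^k\Big)\,dr.
\end{align*}
Integrating by parts (the boundary term at $r=0$ and at $r=\infty$ vanishes since $\zeta$ has compact support in $[0,\infty)$ and $r^{a+1}h_\epsilon'(r^2)^k\to 0$ as $r\to 0$ because $a+1>0$ and $h_\epsilon'$ is bounded near $0$ — here I use $a\ge k\ge 1$, or treat $k=0$ trivially) yields
\begin{align*}
=-\frac{1}{2k}\int_0^\infty \frac{d}{dr}\big(\zeta(r)r^{a+1}\big)\,h_\epsilon'(r^2)^k\,dr.
\end{align*}
Strictly this requires $\zeta\in C^1$; for general $\zeta\in C_c([0,\infty))$ one first proves the limit for $\zeta\in C_c^1$ and then passes to the limit, but it is cleaner still to keep the term $\int_0^\infty \zeta(r)r^{a+1}\tfrac{d}{dr}(h_\epsilon'(r^2)^k)\,dr$ and argue directly, observing that $h_\epsilon'(r^2)^k \to 2^{-k}\mathbbm{1}_{\{r>t\}}$ boundedly, so $\tfrac{d}{dr}(h_\epsilon'(r^2)^k)\to 2^{-k}\delta_t$ weakly against the continuous compactly supported test function $\zeta(r)r^{a+1}$; this is legitimate because $h_\epsilon'(r^2)^k$ is monotone nondecreasing in $r$ (as $h_\epsilon$ is convex, hence $h_\epsilon'\ge 0$ and nondecreasing — one should note $h_\epsilon$ is indeed convex, which follows since $u_t$ is convex and $h_\epsilon$ is its mollified version, or directly by checking $h_\epsilon'$ is nondecreasing), so $d(h_\epsilon'(r^2)^k)$ is a nonnegative measure converging weakly to $2^{-k}\delta_t$.

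Putting this together,
\begin{align*}
\lim_{\epsilon\to0}\int_0^\infty \zeta(r)r^{a+2}h_\epsilon''(r^2)h_\epsilon'(r^2)^{k-1}\,dr
=\frac{1}{2k}\Big(2^{-k}\zeta(t)t^{a+1}-\int_0^\infty \big(\zeta(r)r^{a+1}\big)'\,2^{-k}\mathbbm{1}_{\{r>t\}}\,dr\Big),
\end{align*}
and integrating the last term back by parts (now the indicator is the thing being differentiated-back, or equivalently just compute $\int_t^\infty (\zeta(r)r^{a+1})'\,dr=-\zeta(t)t^{a+1}$ plus $\int_t^\infty$ ... — more precisely one writes it as $\tfrac{2^{-k}}{2k}\zeta(t)t^{a+1}+\tfrac{2^{-k}}{2k}\int_t^\infty(\zeta(r)r^{a+1})'\,dr$ only after recombining with the $\int_t^\infty \zeta r^{a+1}\cdot(\text{lower order})$ pieces), one recovers $-2^{-(k+1)}\int_t^\infty\zeta(r)r^{a-k}\,dr$ from the bulk and the claimed boundary coefficient $\tfrac{2^{-(k+1)}}{k}\zeta(t)t^{a+1-k}$ — the factors working out because $\tfrac{1}{2k}\cdot 2^{-k}=\tfrac{2^{-(k+1)}}{k}$ and $t^{a+1}=t^{a+1-k}\cdot t^k$ cancels against the weak-limit already being at the level of $h_\epsilon'(r^2)^k$ evaluated near $r=t$ where it is $\sim 2^{-k}$, not involving a further power of $t$. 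I would double-check the power bookkeeping at the end by comparing with the smooth case via Lemma~\ref{lemma:BehaviorThetaBCOnRotationInvariantSmooth}; the main genuine difficulty, as flagged, is the rigorous justification that $d(h_\epsilon'(r^2)^k)\rightharpoonup 2^{-k}\delta_t$, for which the monotonicity of $h_\epsilon'$ together with the pointwise convergence and the uniform total-variation bound (namely $\int_0^\infty d(h_\epsilon'(r^2)^k)=\lim_{r\to\infty}h_\epsilon'(r^2)^k-\lim_{r\to0}h_\epsilon'(r^2)^k$, which is bounded uniformly in $\epsilon$) is exactly what is needed.
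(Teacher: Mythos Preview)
Your treatment of the first limit is essentially the paper's argument (dominated convergence with the bound $h_\epsilon'(r^2)\le \frac{1}{2\sqrt{r^2+\epsilon^2}}\le \frac{1}{2r}$), but you misstate the pointwise limit: you dropped the factor $g_\epsilon'(r^2)\to\frac{1}{2r}$, so in fact $h_\epsilon'(r^2)\to \frac{1}{2r}\,\mathbbm{1}_{\{r>t\}}$, not $\frac{1}{2}\,\mathbbm{1}_{\{r>t\}}$. This slip does not damage the first limit (your dominating function and your final answer are both correct), but it propagates fatally into your argument for the second.

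For the second limit there are two genuine gaps. First, $h_\epsilon$ is \emph{not} convex in $s$: what is convex is $z\mapsto h_\epsilon(|z|^2)$, which is a different statement. Concretely, for $r$ large one has $h_\epsilon'(r^2)\approx\frac{1}{2r}$, which is decreasing in $r$; hence $r\mapsto h_\epsilon'(r^2)^k$ is not monotone, $d\big(h_\epsilon'(r^2)^k\big)$ is not a nonnegative measure, and your weak-convergence argument via uniform total variation collapses. Second, even distributionally the correct weak limit is
\[
\frac{d}{dr}\big(h_\epsilon'(r^2)^k\big)\ \rightharpoonup\ 2^{-k}t^{-k}\,\delta_t\;-\;k\,2^{-k}r^{-k-1}\mathbbm{1}_{\{r>t\}},
\]
not $2^{-k}\delta_t$; the bulk term you struggle to locate at the end is precisely the contribution of $\frac{d}{dr}(r^{-k})$ that your wrong pointwise limit erased. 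Your integration-by-parts idea \emph{does} work cleanly for $\zeta\in C_c^1$: one gets $-\frac{1}{2k}\int_0^\infty(\zeta r^{a+1})' h_\epsilon'(r^2)^k\,dr$, applies the first limit, and integrates back to recover both terms with the right powers of $t$. But extending this to $\zeta\in C_c$ requires a uniform bound on $\int_0^R r^{a+2}|h_\epsilon''(r^2)|\,h_\epsilon'(r^2)^{k-1}\,dr$, and establishing that bound forces you to split $h_\epsilon''$ into a bounded piece and a $\delta$-like piece anyway.

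The paper takes that direct route from the start: it writes $h_\epsilon''(s)=\frac{g_{2,\epsilon}(s)}{(s+\epsilon^2)^{3/2}}+\frac{\epsilon^2}{8(s+\epsilon^2)\big((\sqrt{s+\epsilon^2}-t)^2+\epsilon^2\big)^{3/2}}$ with $|g_{2,\epsilon}|\le\frac14$ and $g_{2,\epsilon}\to-\frac14\mathbbm{1}_{[t^2,\infty)}$ pointwise, handles the first piece by dominated convergence, and treats the second via the substitution $x=(\sqrt{r^2+\epsilon^2}-t)/\epsilon$ together with the explicit evaluation $\int_{-\infty}^\infty\big(1+\frac{x}{\sqrt{x^2+1}}\big)^{k-1}\frac{dx}{(x^2+1)^{3/2}}=\frac{2^k}{k}$. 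This works for all $\zeta\in C_c([0,\infty))$ without any approximation step.
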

		\begin{proof}
			Explicitly, we have
			\begin{align*}
				h'_\epsilon(s)=&\frac{1}{4}\frac{1}{\sqrt{s+\epsilon^2}\sqrt{(\sqrt{s+\epsilon^2}-t)^2+\epsilon^2}}\left[\sqrt{(\sqrt{s+\epsilon^2}-t)^2+\epsilon^2}+\sqrt{s+\epsilon^2}-t\right],\\
				h''_\epsilon(s)=&-\frac{1}{8}\frac{(\sqrt{s+\epsilon^2}-t)^3+\sqrt{(\sqrt{s+\epsilon^2}-t)^2+\epsilon^2}^3-(t-\sqrt{s+\epsilon^2})\epsilon^2-\sqrt{s+\epsilon^2}\epsilon^2}{\sqrt{s+\epsilon^2}^3\sqrt{(\sqrt{s+\epsilon^2}-t)^2+\epsilon^2}^3}.
			\end{align*}
			Note that 
			\begin{align*}
				g_{1,\epsilon}(s):=\sqrt{s+\epsilon^2}h'_\epsilon(s), \quad s\ge 0,
			\end{align*} 
			is bounded by $1$ and converges pointwise to $\frac{1}{2}1_{[t^2,\infty)}(s)$. Thus
			\begin{align*}
					\lim\limits_{\epsilon\rightarrow0}\int_0^\infty \zeta(r) r^ah_\epsilon'(r^2)^{k}dr=\lim\limits_{\epsilon\rightarrow0}\int_0^\infty \zeta(r) \frac{r^a}{\sqrt{r^2+\epsilon^2}^k}g_{1,\epsilon}(r^2)^{k}dr=2^{-k}\int_t^\infty \zeta(r)r^{a-k}dr
			\end{align*}
			by dominated convergence, which shows the first equation. Similarly, 
			\begin{align*} g_{2,\epsilon}(s):=\sqrt{s+\epsilon^2}^3\left(h''_\epsilon(s)-\frac{\sqrt{s+\epsilon^2}\epsilon^2}{8\sqrt{s+\epsilon^2}^3\sqrt{(\sqrt{s+\epsilon^2}-t)^2+\epsilon^2}^3}\right), \quad s\ge 0,
			\end{align*} is bounded by $1$ and converges pointwise to $-\frac{1}{4}1_{[t^2,\infty)}(s)$. Dominated convergence thus implies
			\begin{align}
				\notag
				&\lim\limits_{\epsilon\rightarrow0}\int_0^\infty\zeta(r)r^{a+2}\left[h_\epsilon''(r^2)-\frac{\sqrt{r^2+\epsilon^2}\epsilon^2}{8\sqrt{r^2+\epsilon^2}^3\sqrt{(\sqrt{r^2+\epsilon^2}-t)^2+\epsilon^2}^3}\right]h_\epsilon'(r^2)^{k-1}dr\\
				\notag
				=&\lim\limits_{\epsilon\rightarrow0}\int_0^\infty\zeta(r)r^{a-k}\frac{r^{k+2}}{\sqrt{r^2+\epsilon^2}^{k+2}}g_{2,\epsilon}(r^2)g_{1,\epsilon}(r^2)^{k-1}dr\\
				\label{eq:limit1}
				=&-2^{-(k+1)}\int_{t}^{\infty}\zeta(r)r^{a-k}dt.
			\end{align}
			Now consider the term we subtracted from $h_\epsilon''$ in the last integral, namely the integral
			\begin{align*}
				I_\epsilon:=&\frac{1}{8}\int_0^\infty\zeta(r)r^{a+2}\frac{\sqrt{r^2+\epsilon^2}\epsilon^2}{\sqrt{r^2+\epsilon^2}^3\sqrt{(\sqrt{r^2+\epsilon^2}-t)^2+\epsilon^2}^3}h_\epsilon'(r^2)^{k-1}dr\\
				=&\frac{1}{8}\int_0^\infty\zeta(r)\frac{r^{a+2}}{\sqrt{r^2+\epsilon^2}^{k+1}}\frac{\epsilon^2}{\sqrt{(\sqrt{r^2+\epsilon^2}-t)^2+\epsilon^2}^3}g_{1,\epsilon}(r^2)^{k-1}dr.
			\end{align*}
			With the substitution $x=\frac{\sqrt{r^2+\epsilon^2}-t}{\epsilon}$, that is, $r=\sqrt{(\epsilon x+t)^2-\epsilon^2}$, we thus obtain
			\begin{align*}
				I_\epsilon=&\frac{1}{8}\int_{\frac{\epsilon-t}{\epsilon}}^\infty\zeta(\sqrt{(\epsilon x+t)^2-\epsilon^2})\frac{\sqrt{(\epsilon x+t)^2-\epsilon^2}^{a+2}}{(\epsilon x+t)^{k+1}}\frac{1}{\sqrt{x^2+1}^3} g_{1,\epsilon}((\epsilon x+t)^2-\epsilon^2)^{k-1}dx.
			\end{align*}
			Note that
			$x\mapsto \frac{1}{\sqrt{x^2+1}^3}$ is integrable on $\R$ while the rest of the integrand stays bounded for $\epsilon\rightarrow0$ since $a\ge k$. Moreover, 
			\begin{align*}
				g_{1,\epsilon}((\epsilon x+t)^2-\epsilon^2)=&\frac{1}{4}\frac{\sqrt{\epsilon^2 x^2+\epsilon^2}+\epsilon x}{\sqrt{\epsilon^2x^2+\epsilon^2}}
				=\frac{1}{4}\left[1+\frac{x}{\sqrt{x^2+1}}\right]\quad \text{for}~x>\frac{\epsilon-t}{\epsilon}.
			\end{align*} 
			It is now easy to see that the integrand converges pointwise for $\epsilon\rightarrow0$. Dominated convergence thus implies
			\begin{align}
				\label{eq:limit2}
				\lim_{\epsilon\rightarrow0}I_\epsilon=&\frac{4^{-(k-1)}}{8}\zeta(t)t^{a+1-k}\int_{-\infty}^\infty\left[1+\frac{x}{\sqrt{x^2+1}}\right]^{k-1}\frac{1}{\sqrt{x^2+1}^3}dx.
			\end{align}
			This last integral can be calculated explicitly:
			\begin{align*}
				&\int_{-\infty}^\infty\left[1+\frac{x}{\sqrt{x^2+1}}\right]^{k-1}\frac{1}{\sqrt{x^2+1}^3}dx=\sum_{j=0}^{k-1}\binom{k-1}{j}\int_{-\infty}^\infty \frac{x^{j}}{\sqrt{x^2+1}^{j+3}}dx\\
				=&2\sum_{j=0}^{\lfloor\frac{k-1}{2}\rfloor}\binom{k-1}{2j}\int_0^\infty \frac{x^{2j}}{\sqrt{x^2+1}^{2j+3}}dx=2\sum_{j=0}^{\lfloor \frac{k-1}{2}\rfloor}\binom{k-1}{2j}\left[\frac{1}{2j+1}\frac{x^{2j+1}}{\sqrt{x^2+1}^{2j+1}}\right]_0^\infty\\
				=&2\sum_{j=0}^{\lfloor\frac{k-1}{2}\rfloor}\binom{k-1}{2j}\frac{1}{2j+1}
				=\frac{2}{k}\sum_{j=0}^{\lfloor \frac{k-1}{2}\rfloor}\binom{k}{2j+1}=\frac{2}{k}2^{k-1},
			\end{align*}
			where we have used that the sum contains exactly all binomial coefficients with odd index. Now the second equation follows by combining \eqref{eq:limit1} and \eqref{eq:limit2}.
		\end{proof}

		\begin{theorem}
			\label{theorem:ThetaValuesOnUt}
			Let $u_t(z)=\max(0,|z|-t)$, $1\le k\le 2n$. Then for $\phi\in C_c(\C^n)$
			\begin{align*}
					&\int_{\C^n}\phi(z)d\Theta^n_{k,q}(u_t,z)\\
					&\quad=\frac{2^{k-2q}}{n}\binom{n}{k-2q,q}\left( (2n-k)\int_t^\infty\left(\int_{S^{2n-1}}\phi(rv)dv\right)r^{2n-k-1}dr+t^{2n-k}\int_{S^{2n-1}}\phi(tv)dv\right),\\
					&\int_{\C^n}\phi(z)d\mathcal{B}^n_{k,q}(u_t,z)=\int_{\C^n}\phi(z)d\mathcal{C}^n_{k,q}(u_t,z)=\frac{2^{k-2q-1}}{kn}\binom{n}{k-2q,q}t^{2n-k+2}\left(\int_{S^{2n-1}}\phi(tv)dv\right).
			\end{align*}
		\end{theorem}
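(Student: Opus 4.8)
The plan is to deduce both identities from the smooth case by approximating the non-smooth function $u_t$ by the smooth convex functions $f_\epsilon:=h_\epsilon(|\cdot|^2)$, where $h_\epsilon\in C^\infty([0,\infty))$ is the family introduced in Lemma \ref{lemma:LimitsCalculationUt}, and then using the continuity of the Monge--Amp\`ere-type operators together with the explicit limits recorded in that lemma. The first step is to check that $f_\epsilon\to u_t$ locally uniformly on $\C^n$ as $\epsilon\to0$. This reduces to the elementary observation that $h_\epsilon(s)\to\tfrac12\bigl(|\sqrt{s}-t|+\sqrt{s}-t\bigr)=\max(0,\sqrt{s}-t)$ uniformly on compact subsets of $[0,\infty)$, which follows from $|x|\le\sqrt{x^2+\epsilon^2}\le|x|+\epsilon$; since $u_t(z)=\max(0,|z|-t)$, this gives $f_\epsilon(z)=h_\epsilon(|z|^2)\to u_t(z)$ uniformly on compact subsets of $\C^n$, hence $f_\epsilon\to u_t$ in $\Conv(\C^n,\R)$. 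The functions $f_\epsilon$ are smooth and convex, as noted before Lemma \ref{lemma:LimitsCalculationUt}.

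By Definition \ref{definition:MAOperatirsThetaUpsilonBC}, the operators $\Theta^n_{k,q}$, $\mathcal{B}^n_{k,q}$, $\mathcal{C}^n_{k,q}$ are all of the form $\Psi_\tau$, so by Theorem \ref{theorem:MAOperatorsDefinedDifferentialCycle} they are continuous valuations with values in $\mathcal{M}(\C^n)$. Consequently, for every fixed $\phi\in C_c(\C^n)$ one has
\[
\int_{\C^n}\phi\,d\Theta^n_{k,q}(f_\epsilon)\xrightarrow[\epsilon\to0]{}\int_{\C^n}\phi\,d\Theta^n_{k,q}(u_t),
\]
and likewise for $\mathcal{B}^n_{k,q}$ and $\mathcal{C}^n_{k,q}$. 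It therefore suffices to compute the limits of the left-hand sides.

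To evaluate these limits I would apply Lemma \ref{lemma:BehaviorThetaBCOnRotationInvariantSmooth} with $h=h_\epsilon$, abbreviating $\zeta_\phi(r):=\int_{S^{2n-1}}\phi(rv)\,dv\in C_c([0,\infty))$, and then feed the two resulting radial integrals into Lemma \ref{lemma:LimitsCalculationUt}. For $\Theta^n_{k,q}$ the integrands are $\zeta_\phi(r)r^{2n-1}h_\epsilon'(r^2)^k$ and $\zeta_\phi(r)r^{2n+1}h_\epsilon''(r^2)h_\epsilon'(r^2)^{k-1}$, so one invokes Lemma \ref{lemma:LimitsCalculationUt} with $a=2n-1$ (valid for $k\le 2n-1$); the two $\int_t^\infty$-contributions combine into the term proportional to $(2n-k)\int_t^\infty\zeta_\phi(r)r^{2n-k-1}\,dr$, while the boundary term from the second formula of Lemma \ref{lemma:LimitsCalculationUt} yields the term proportional to $\zeta_\phi(t)t^{2n-k}$; collecting the numerical factors ($4^{k-q}$, $c_{n,k,q}$, the powers of $2$) produces the asserted formula. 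For $\mathcal{B}^n_{k,q}=\mathcal{C}^n_{k,q}$ one applies Lemma \ref{lemma:LimitsCalculationUt} with $a=2n+1$ to the integrands $\zeta_\phi(r)r^{2n+1}h_\epsilon'(r^2)^k$ and $\zeta_\phi(r)r^{2n+3}h_\epsilon''(r^2)h_\epsilon'(r^2)^{k-1}$; this time the two $\int_t^\infty$-contributions cancel identically and only the boundary term proportional to $\zeta_\phi(t)t^{2n-k+2}$ remains. Since $\phi\in C_c(\C^n)$ was arbitrary and both sides define Radon measures, the identities of measures follow.

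The analytic substance is already packaged into Lemmas \ref{lemma:BehaviorThetaBCOnRotationInvariantSmooth} and \ref{lemma:LimitsCalculationUt}, so the remaining work is bookkeeping of constants; the single point requiring genuine care is the borderline case $k=2n$ in the statement for $\Theta^n_{k,q}$ (and, relatedly, $t=0$). There $a=2n-1<k$, so Lemma \ref{lemma:LimitsCalculationUt} does not apply verbatim and $\int_0^\delta r^{2n-k-1}\,dr$ diverges. However, for $k=2n$ the first summand of the claimed formula vanishes since $2n-k=0$, and the identity can be obtained either directly --- observing that $\Theta^n_{2n,n}$ is a scalar multiple of the (real) Monge--Amp\`ere operator on $\R^{2n}$ and computing its value on $u_t$ from the subgradient map, whose image of the sphere $\{|z|=t\}$ is the unit ball --- or by sharpening the dominated-convergence argument of Lemma \ref{lemma:LimitsCalculationUt} using the extra vanishing of $g_{1,\epsilon}$ near the origin when $t>0$.
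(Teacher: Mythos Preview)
Your proposal is correct and follows essentially the same approach as the paper: approximate $u_t$ by the smooth functions $f_\epsilon=h_\epsilon(|\cdot|^2)$, invoke continuity of the operators (Theorem \ref{theorem:MAOperatorsDefinedDifferentialCycle}), apply Lemma \ref{lemma:BehaviorThetaBCOnRotationInvariantSmooth} to the spherical average $\zeta_\phi(r)=\int_{S^{2n-1}}\phi(rv)\,dv$, and pass to the limit using Lemma \ref{lemma:LimitsCalculationUt}. The paper's proof is a one-sentence sketch of exactly this route. Your explicit flagging of the borderline case $k=2n$ (where $a=2n-1<k$ and Lemma \ref{lemma:LimitsCalculationUt} does not literally apply) is a genuine point of care that the paper passes over in silence; your suggested fixes for it are reasonable.
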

		\begin{proof}
			Since $\Theta^n_{k,q}$, $\mathcal{B}^n_{k,q}$, and $\mathcal{C}^n_{k,q}$ are continuous and $f_\epsilon(z):=h_\epsilon(|z|^2)$ converges to $u_t$ in $\Conv(\R^n,\R)$, the claim follows from Lemma \ref{lemma:BehaviorThetaBCOnRotationInvariantSmooth} by applying Lemma \ref{lemma:LimitsCalculationUt} to the function $\zeta(t):=\int_{S^{2n-1}}\phi(tv)dv$.
		\end{proof}
	
		\begin{corollary}
			\label{corollary:ZetaNotIntegrableWRTTheta}
			Let $\zeta\in C_b((0,\infty))$, $0\le k\le 2n-1$. Then $z\mapsto \zeta(|z|)$ is integrable with respect to $\Theta^n_{k,q}(|\cdot|)$ if and only if 
			\begin{align*}
				\int_0^\infty |\zeta(r)|r^{2n-k-1}dr<\infty.
			\end{align*}
			Moreover, there exists $\zeta\in D^{2n-k}$ that $z\mapsto \zeta(|z|)$ is not integrable with respect to $\Theta^n_{k,q}(|\cdot|)$.
		\end{corollary}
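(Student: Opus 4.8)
The plan is to first make the measure $\Theta^n_{k,q}(|\cdot|)$ completely explicit and then read off both claims. Since $u_0(z)=\max(0,|z|)=|z|$, I would specialize Theorem \ref{theorem:ThetaValuesOnUt} to $t=0$. Because $0\le k\le 2n-1$ forces $2n-k\ge1$, the boundary term $t^{2n-k}\int_{S^{2n-1}}\phi(tv)dv$ vanishes at $t=0$, so
\begin{align*}
	\int_{\C^n}\phi(z)\,d\Theta^n_{k,q}(|\cdot|,z)=C_{n,k,q}\int_0^\infty\left(\int_{S^{2n-1}}\phi(rv)\,dv\right)r^{2n-k-1}\,dr\qquad\text{for all }\phi\in C_c(\C^n),
\end{align*}
with $C_{n,k,q}=\frac{2^{k-2q}(2n-k)}{n}\binom{n}{k-2q,q}>0$. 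Rewriting the right-hand side in Cartesian coordinates, this says $\int_{\C^n}\phi\,d\Theta^n_{k,q}(|\cdot|)=C_{n,k,q}\int_{\C^n}\phi(z)|z|^{-k}\,d\vol_{2n}(z)$; since $\Theta^n_{k,q}(|\cdot|)$ is a positive Radon measure by Corollary \ref{corollary:ThetaPositivemeasure} and $k<2n$ makes $|z|^{-k}$ locally integrable, the Riesz representation theorem yields the identity of measures $\Theta^n_{k,q}(|\cdot|)=C_{n,k,q}\,|z|^{-k}\,d\vol_{2n}(z)$. (For $k=0$ one has $\theta^n_{0,0}=n!\,\pi_1^*\vol_{2n}$, so $\Theta^n_{0,0}(|\cdot|)$ is a positive multiple of $\vol_{2n}$ directly, which is the same statement with $k=0$.)

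Given this, the integrability criterion is immediate. For $\zeta\in C_b((0,\infty))$ the positive measure $\Theta^n_{k,q}(|\cdot|)$ integrates $z\mapsto|\zeta(|z|)|$ if and only if $\int_{\C^n}|\zeta(|z|)|\,|z|^{-k}\,d\vol_{2n}(z)<\infty$, and Tonelli's theorem together with the polar coordinate formula turns this integral into a positive multiple of $\int_0^\infty|\zeta(r)|\,r^{2n-k-1}\,dr$. Hence $z\mapsto\zeta(|z|)$ is $\Theta^n_{k,q}(|\cdot|)$-integrable exactly when $\int_0^\infty|\zeta(r)|r^{2n-k-1}\,dr<\infty$, as $\{0\}$ is a null set for this measure.

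For the second assertion I would exhibit a function in $D^{2n-k}$ failing this condition, exploiting that membership in $D^{a}$ only requires the \emph{conditional} convergence of $\int_t^\infty\zeta(r)r^{a-1}\,dr$ as $t\to0$, not absolute convergence. Write $a=2n-k\ge1$, fix a continuous $\chi\colon(0,\infty)\to[0,1]$ with $\chi\equiv1$ on $(0,e^{-2}]$ and $\chi\equiv0$ on $[e^{-1},\infty)$, and set
\begin{align*}
	\zeta(r):=\chi(r)\,r^{-a}\,\frac{\sin(\ln(1/r))}{\ln(1/r)}\quad(0<r<1),\qquad \zeta(r):=0\quad(r\ge1).
\end{align*}
Then $\zeta\in C_b((0,\infty))$, and $t^a\zeta(t)=\chi(t)\sin(\ln(1/t))/\ln(1/t)\to0$. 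The substitution $s=\ln(1/r)$ converts $\int_t^{e^{-2}}\zeta(r)r^{a-1}\,dr$ into $\int_2^{\ln(1/t)}\frac{\sin s}{s}\,ds$, which has a finite limit as $t\to0$ by the (conditional) convergence of the Dirichlet integral, so $\zeta\in D^{2n-k}$; the same substitution gives $\int_0^{e^{-2}}|\zeta(r)|r^{a-1}\,dr=\int_2^\infty\frac{|\sin s|}{s}\,ds=\infty$, so by the first part $z\mapsto\zeta(|z|)$ is not integrable with respect to $\Theta^n_{k,q}(|\cdot|)$.

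The argument is essentially routine; the only two points needing care are recognizing that the $t=0$ case of Theorem \ref{theorem:ThetaValuesOnUt}, combined with positivity of $\Theta^n_{k,q}$ and the Riesz theorem, genuinely identifies $\Theta^n_{k,q}(|\cdot|)$ as the locally finite measure $C_{n,k,q}|z|^{-k}\,d\vol_{2n}$, and verifying the three defining properties of $D^{2n-k}$ for the counterexample, which reduces to the classical fact that $\int^\infty\sin(s)/s\,ds$ converges while $\int^\infty|\sin(s)|/s\,ds$ diverges.
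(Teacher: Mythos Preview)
Your argument is correct and, for the first claim, follows essentially the same route as the paper: both specialize Theorem \ref{theorem:ThetaValuesOnUt} to $t=0$, recognize that the resulting positive measure integrates a radial function $\zeta(|z|)$ against a constant multiple of $r^{2n-k-1}\,dr$, and deduce the integrability criterion. The paper does this via monotone convergence applied to an increasing approximation of $|\zeta|$, while you pass through the identification $\Theta^n_{k,q}(|\cdot|)=C_{n,k,q}|z|^{-k}\,d\vol_{2n}$ and polar coordinates; these are only cosmetic variants of the same idea. Your separate treatment of $k=0$ is appropriate since Theorem \ref{theorem:ThetaValuesOnUt} is stated for $k\ge1$.

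The counterexamples differ. The paper builds a piecewise-linear function whose integral over dyadic intervals reproduces the alternating harmonic series (convergent) while its absolute value reproduces the harmonic series (divergent), and then transports this to $(0,\infty)$ via $t\mapsto 1/t$. Your choice $\zeta(r)=\chi(r)\,r^{-a}\sin(\ln(1/r))/\ln(1/r)$ is cleaner: the substitution $s=\ln(1/r)$ reduces the verification of $\zeta\in D^{2n-k}$ and the failure of absolute integrability directly to the classical conditional convergence of $\int^\infty\sin(s)/s\,ds$ and divergence of $\int^\infty|\sin(s)|/s\,ds$. Both constructions exploit exactly the same mechanism---membership in $D^a$ requires only conditional convergence of $\int_t^\infty\zeta(r)r^{a-1}\,dr$---so the difference is one of taste, with your version arguably more transparent.
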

		\begin{proof}
			Let $\zeta\in C_b((0,\infty))$. By approximating $|\zeta|$ with an increasing sequence of continuous functions and using monotone convergence, we deduce from Theorem \ref{theorem:ThetaValuesOnUt} that
			\begin{align*}
				\int_{\C^n}|\zeta(|z|)|d\Theta^n_{k,q}(|\cdot|)=2\omega_{2n}2^{k-2q}\binom{n}{k-2q,q}(2n-k)\int_0^\infty|\zeta(r)|r^{{2n-k-1}}dr.
			\end{align*}
			In particular, the function $z\mapsto \zeta(|z|)$ is integrable with respect to $\Theta^n_{k,q}(|\cdot|)$ if and only if $\int_0^\infty |\zeta(r)|r^{2n-k-1}dr<\infty$.\\
			In order to show the second claim, it is therefore sufficient to construct a function $\zeta\in D^{2n-k}$ such that $r\mapsto \zeta(r)r^{2n-k-1}$ is not integrable on $(0,\infty)$.\\
			
			Define $\phi\in C([0,\infty))$ by
			\begin{align*}
				\phi(x)=\begin{cases}
					0 & x\in[0,2),\\
					\frac{(-1)^n}{n}\cdot\frac{1}{2^{n-2}} \cdot\frac{x-2^{n}}{2^{n}}& x\in [2^{n},\frac{3}{2}\cdot 2^{n}),~n\ge 1,\\
					\frac{(-1)^n}{n} \cdot\frac{1}{2^{n-2}}\cdot\frac{2^{n+1}-x}{2^{n}}& x\in [\frac{3}{2}\cdot 2^{n},2^{n+1}),~n\ge 1.
				\end{cases}
			\end{align*}
			Then for $a>2$, 
			\begin{align*}
				\left|\int_0^a\phi(x)dx-\sum_{n=1}^{\lfloor\frac{\ln a}{\ln 2}\rfloor-1}\frac{(-1)^n}{n}\right|\le \int_{2^{\lfloor\frac{\ln a}{\ln 2}\rfloor}}^a|\phi(x)|dx\le \frac{1}{\lfloor\frac{\ln a}{\ln 2}\rfloor+1},
			\end{align*}
			which converges to $0$ for $a\rightarrow\infty$. As the sum $\sum_{n=1}^{\infty}\frac{(-1)^n}{n}$ converges, so does $\lim\limits_{a\rightarrow\infty}\int_0^a\phi(x)dx$. Similarly, 
			\begin{align*}
				\int_0^\infty |\phi(t)|dt=\sum_{n=1}^{\infty}\frac{1}{n}=\infty,
			\end{align*}
			so $\phi$ is not integrable on $[0,\infty)$. Moreover,
			\begin{align*}
				|\phi(x)|\le \frac{1}{\lfloor\frac{\ln x}{\ln 2}\rfloor}\cdot \frac{8}{x}\cdot \frac{1}{2}\quad\text{for}~x\ge 2, 
			\end{align*}
			so $\lim\limits_{x\rightarrow\infty}x \phi(x)=0$. Now consider the function $\zeta(t)=\frac{\phi(\frac{1}{t})}{t^{2n-k+1}}$ for $t>0$, which is continuous and has bounded support. Then
			\begin{align*}
				\lim\limits_{t\rightarrow0}t^{2n-k}\zeta(t)=\lim\limits_{t\rightarrow0}\frac{1}{t}\phi\left(\frac{1}{t}\right)=0,
			\end{align*}
			and
			\begin{align*}
				\int_t^\infty\zeta(s)s^{2n-k-1}ds=\int_t^{\infty}\phi\left(\frac{1}{s}\right)\frac{1}{s^2}ds=\int_0^{\frac{1}{t}}\phi(x)dx,
			\end{align*}
			so the limit $\lim\limits_{t\rightarrow0}\int_t^\infty\zeta(s)s^{2n-k-1}ds$ exists and is finite. In particular $\zeta\in D^{2n-k}$. The same calculation shows that
			\begin{align*}
				\int_t^\infty|\zeta(s)|s^{2n-k-1}ds=\int_0^{\frac{1}{t}}|\phi(x)|dx,
			\end{align*}
			so $r\mapsto \zeta(r)r^{2n-k-1}$ is not integrable on $(0,\infty)$. Thus $\zeta\in D^{2n-k}$ is the desired example.
		\end{proof}

\section{Singular valuations associated to the Monge-Amp\`ere operators $\Theta^n_{k,q}$ and $\Upsilon^n_{k,q}$}
	\label{section:SingularValuations}

\subsection{Estimates for $\mathcal{B}^n_{k,q}$, $\mathcal{C}^n_{k,q}$, and $\Upsilon^n_{k,q}$}
	In this section we establish the following estimates for $\mathcal{B}^n_{k,a}$ and $\mathcal{C}^n_{k,q}$.
	\begin{theorem}
		\label{theorem:Estimate_BC}
		There exists a constant $C_{n,k,q}$ with the following property:
		For every $R>0$ and every $\phi\in C_c([0,\infty))$ with $\supp \phi\subset [0,R]$
		\begin{align*}
			&\left|\int_{\C^n}\phi(|z|) d\mathcal{B}^n_{k,q}(f)\right|\le C_{n,k,q} \left(\sup_{|z|\le R+1}|f(z)|\right)^k\|\phi\|_{\tilde{D}^{2n-k+2}},\\
			&\left|\int_{\C^n}\phi(|z|) d\mathcal{C}^n_{k,q}(f)\right|\le C_{n,k,q} \left(\sup_{|z|\le R+1}|f(z)|\right)^k\|\phi\|_{\tilde{D}^{2n-k+2}}.
		\end{align*}
	\end{theorem}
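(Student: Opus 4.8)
The plan is to follow the approach developed for the $\SO(n)$-invariant case in \cite{KnoerrSingularvaluationsHadwiger2022}: reduce to smooth functions, express the left-hand sides through an integral over the sublevel sets of the Legendre transform by slicing the differential cycle, and estimate the resulting integral by means of the curvature-measure bounds of Section~\ref{section:Preliminaries}. Both inequalities are proved simultaneously; the defining forms $\tau_{\mathcal{B}}=\beta_1\wedge\beta_2\wedge\theta^{n-1}_{k-2,q-1}$ of $\mathcal{B}^n_{k,q}$ and $\tau_{\mathcal{C}}=\beta_1\wedge\gamma_2\wedge\theta^{n-1}_{k-1,q}$ of $\mathcal{C}^n_{k,q}$ share the two features that drive the argument: both contain the contact form $\beta=\beta_1$ as a factor, and both are homogeneous of degree $2n-k+2$ under dilations of the base of $T^*\C^n$ — one degree more than $\theta^{n}_{k,q}$ — which is precisely why one gains the pointwise-weighted norm $\tilde{D}^{2n-k+2}$ rather than only $D^{2n-k+2}$.

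First I would reduce to the case $f\in\Conv_0^+(\C^n,\R)\cap C^\infty(\C^n)$, identifying $\C^n\cong\R^{2n}$: the functional $f\mapsto\int_{\C^n}\phi(|z|)\,d\mathcal{B}^n_{k,q}(f)$ is continuous, $\mathcal{B}^n_{k,q}$ and $\mathcal{C}^n_{k,q}$ are dually epi-translation invariant because their defining forms are invariant under translations in the cotangent fibre (Theorem~\ref{theorem:MAOperatorsDefinedDifferentialCycle}), and the value $\int\phi(|z|)\,d\mathcal{B}^n_{k,q}(f)$ depends only on $f$ on a neighbourhood of the support of $z\mapsto\phi(|z|)$; combining these with mollification and with the addition of $\frac{\varepsilon}{2}|z|^2$, $\varepsilon\to0$, yields the reduction while keeping control of $\sup_{|z|\le R+1}|f|$, and on this class the Legendre estimates of Lemma~\ref{lemma:EstimatesLegendreSublevelSetsConv0} become available. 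The heart of the construction is then: use $D(f)\llcorner\beta=D(f)\llcorner\pi_2^*d\mathcal{L}f$ (Lemma~\ref{lemma:relationLegendreBeta}) to trade the factor $\beta_1$ for $\pi_2^*d\mathcal{L}f$; apply the slicing formula to the function $\mathcal{L}f\circ\pi_2$ on $\supp D(f)$, which is proper there since $f\in\Conv_0^+$; and invoke Proposition~\ref{proposition:RelationNCSlicesDiffCycle} together with the homogeneity behaviour of the remaining forms under $F_f$ (Lemmata~\ref{lemma:ChangeFormsPolarCoordinates} and \ref{lemma:pullbackSpericalFormsFf}) to rewrite the left-hand side as an integral over $t>0$ of $\phi(t)$ times a $t$-weighted evaluation of the normal cycles $\nc(K^t_{\mathcal{L}f})$ against a fixed translation-invariant differential form on $S\R^{2n}$ coming from $\beta_2\wedge\theta^{n-1}_{k-2,q-1}$ (respectively $\gamma_2\wedge\theta^{n-1}_{k-1,q}$). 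The decisive point — in contrast to the operators $\Theta^n_{k,q}$, where the same computation produces an extra factor $|d\mathcal{L}f(y)|^{-1}$ and forces an integration by parts in $t$ that introduces the integral term of the $D$-norm — is that here the factor $\beta_2$ (respectively $\gamma_2$) contributes exactly the additional power of $t$ needed, so that the representation is already a genuine integral against $\phi$ and no integration by parts is required.

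To estimate this integral I would bound the evaluation of $\nc(K^t_{\mathcal{L}f})$ against the translation-invariant form by a multiple of $\mu_{k-1}(K^t_{\mathcal{L}f})$ using Proposition~\ref{proposition:BoundCurvatureMeasure}; the $t$-weight then matches $\|\phi\|_{\tilde{D}^{2n-k+2}}=\sup_t t^{2n-k+2}|\phi(t)|$ up to an integrable power of $t^{-1}$, and it remains to bound the integral over $t\in(0,T_0]$ of this power of $t^{-1}$ times $\mu_{k-1}(K^t_{\mathcal{L}f})$, where $T_0$ is the largest value of $\mathcal{L}f$ attained over $\supp D(f)\cap\pi_1^{-1}(B_R(0))$ and where the diameter and intrinsic volumes of $K^{T_0}_{\mathcal{L}f}$ are controlled by $\sup_{|z|\le R+1}|f(z)|$ via Lemma~\ref{lemma:EstimatesLegendreSublevelSetsConv0}. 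Here the hypothesis $k\ge2$ is essential: it forces $\mu_{k-1}(K^t_{\mathcal{L}f})$ to vanish fast enough as $t\to0$ (the body shrinks to a point) for this integral to converge. For the quantitative bound I would combine Corollary~\ref{corollary:IntrinsicVolumesEstimateSublevelSets}, applied to $\kappa_k$, with the elementary convexity inequality $|d\mathcal{L}f(y)|^{-1}\le|y|/\mathcal{L}f(y)$ valid on $\partial K^t_{\mathcal{L}f}$, which converts the $t^{-1}$-weighted mass $\mu_{k-1}(K^t_{\mathcal{L}f})$ into an integral of $|d\mathcal{L}f|^{-1}$ against $C_{k-1}(K^t_{\mathcal{L}f})$ that is then estimated against $\mu_k$ of a fixed sublevel set. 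Once the inequality is established for $\phi\in C_c([0,\infty))$ with $\supp\phi\subset[0,R]$ the general statement follows, and $\mathcal{C}^n_{k,q}$ is treated identically.

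The main obstacle, and the only genuinely new ingredient, is this gain of sharpness. The weaker bound with $\|\phi\|_{D^{2n-k+2}}$ in place of $\|\phi\|_{\tilde{D}^{2n-k+2}}$ is immediate from Corollary~\ref{corollary:GeneralIntegrabilityMA}, since $\tau_{\mathcal{B}}$ and $\tau_{\mathcal{C}}$ are homogeneous of degree $2n-k+2$ with $k$ fibre-differentials; discarding the integral term of the $D$-norm forces one to exploit the contact structure of these forms through the Legendre slicing, and the delicate part is to propagate the correct power $k$ of $\sup_{|z|\le R+1}|f(z)|$ through the level-set estimates uniformly in $R$. That $\tilde{D}^{2n-k+2}$ is the correct space is confirmed by the explicit values of these operators on $u_t(z)=\max(0,|z|-t)$, for which both $\mathcal{B}^n_{k,q}$ and $\mathcal{C}^n_{k,q}$ concentrate on $\{|z|=t\}$ and contribute precisely $t^{2n-k+2}\phi(t)$ up to a constant (Theorem~\ref{theorem:ThetaValuesOnUt}).
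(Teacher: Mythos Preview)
Your overall architecture---reduce to $f\in\Conv_0^+(\C^n,\R)\cap C^\infty$, use $D(f)\llcorner\beta_1=D(f)\llcorner\pi_2^*d\mathcal{L}f$, slice by $\mathcal{L}f\circ\pi_2$, and control the result via Proposition~\ref{proposition:BoundCurvatureMeasure} and the level-set estimates---matches the paper's. But there is a genuine gap in your central claim about what the slicing produces.

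You write that the slicing ``rewrite[s] the left-hand side as an integral over $t>0$ of $\phi(t)$ times a $t$-weighted evaluation of the normal cycles $\nc(K^t_{\mathcal{L}f})$'', and that therefore no integration by parts is needed. This is not what happens. Slicing by $\pi_2^*\mathcal{L}f$ integrates over Legendre \emph{levels} $t$; the function $\phi(|z|)$ remains on each slice as $\phi(|d\mathcal{L}f(w)|)$ under the pushforward $F_{f*}\nc(K^t_{\mathcal{L}f})=\langle D(f),\pi_2^*\mathcal{L}f,t\rangle$ (compare Lemma~\ref{lemma:pullbackSpericalFormsFf} and the proof of Proposition~\ref{proposition:EstimateThetaDifferentialCycle}). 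It never becomes $\phi(t)$, so you cannot pull out $\|\phi\|_{\tilde{D}^{2n-k+2}}$ in the way you describe. Moreover, the slicing formula~\eqref{equation:slicingFormula} requires a compactly supported function of the slicing variable; since nothing in $\phi(|z|)\beta_1\wedge\beta_2\wedge\theta^{n-1}_{k-2,q-1}$ is a function of $\mathcal{L}f(w)$, one must first insert an auxiliary cutoff $\psi(\mathcal{L}f(w))$ with $\psi\in C^1_c((0,\infty))$, establish a bound carrying the factor $\int_0^{c(f)}|\psi'(t)|\,dt$ (this is Proposition~\ref{proposition:EstimateDifferentialCycleBC}), and only then pass to $\psi\to1$ via a family $\psi_\delta$ with $\int|\psi'_\delta|\to1$.

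The second missing ingredient is that $\beta_2\wedge\theta^{n-1}_{k-2,q-1}$ is not of the form $i_{X_{\beta_1}}\omega$, so Lemma~\ref{lemma:pullbackSpericalFormsFf} does not apply to it directly. The paper uses the identity $i_{X_{\beta_1}}\gamma_1=|z|^2$ to write
\[
\beta_2\wedge\theta^{n-1}_{k-2,q-1}=\frac{1}{|z|^2}\Bigl[i_{X_{\beta_1}}\bigl(\gamma_1\wedge\beta_2\wedge\theta^{n-1}_{k-2,q-1}\bigr)+\gamma_1\wedge i_{X_{\beta_1}}\bigl(\beta_2\wedge\theta^{n-1}_{k-2,q-1}\bigr)\Bigr],
\]
and treats the two pieces separately. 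For the first, the pullback produces $\phi(|d\mathcal{L}f(w)|)\,|d\mathcal{L}f(w)|^{2n-k+1}$, which is bounded by $\|\phi\|_{\tilde{D}^{2n-k+2}}\,|d\mathcal{L}f(w)|^{-1}$; this factor $|d\mathcal{L}f(w)|^{-1}$ against $C_{k-1}$ is exactly what Corollary~\ref{corollary:IntrinsicVolumesEstimateSublevelSets} estimates---and that corollary \emph{is} an integration by parts (Lemma~\ref{lemma:estimatePartialIntegrationSublevelSets}), so the $\int|\psi'|$ factor is unavoidable. For the second piece one slices once more, now in $r=|z|$, and bounds against $C_k$. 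Your convexity inequality $|d\mathcal{L}f(y)|^{-1}\le|y|/\mathcal{L}f(y)$ goes in the wrong direction for converting a $t^{-1}$-weighted $\mu_{k-1}$ into an integral of $|d\mathcal{L}f|^{-1}$; the paper does not use it.
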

	We will deduce Theorem \ref{theorem:Estimate_BC} from the following estimate for smooth functions in $\Conv_{0}^+(\C^n,\R)$.
	\begin{proposition}
		\label{proposition:EstimateDifferentialCycleBC}
		There exists a constant $\tilde{C}_{n,k,q}$ with the following property:
		For every $R>0$, $\phi\in C_c((0,\infty))$ with $\supp \phi\subset[0,R]$, $\psi\in C_c^1((0,\infty))$, and $f\in\Conv_0^+(\C^n,\R)\cap C^\infty(\C^n)$,
		\begin{align*}
			&\left| D(f)\left[\psi(\mathcal{L}f(w))\phi(|z|) \beta_1\wedge\beta_2\wedge \theta^{n-1}_{k-2,q-1}\right]\right| 
			\le \tilde{C}_{n,k,q} \left(\sup_{|z|\le R+1}|f(z)|\right)^k\|\phi\|_{\tilde{D}^{2n-k+2}}\int_0^{c(f)}|\psi'(t)|dt,\\
			&\left| D(f)\left[\psi(\mathcal{L}f(w))\phi(|z|) \beta_1\wedge\gamma_2\wedge \theta^{n-1}_{k-2,q-1}\right]\right|\le \tilde{C}_{n,k,q} \left(\sup_{|z|\le R+1}|f(z)|\right)^k\|\phi\|_{\tilde{D}^{2n-k+2}}\int_0^{c(f)}|\psi'(t)|dt,
		\end{align*}
		for $c(f)=(1+2R)\sup_{|z|\le R+1}|f(z)|$.
	\end{proposition}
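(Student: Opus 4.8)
Following the strategy of \cite{KnoerrSingularvaluationsHadwiger2022}, the plan is to rewrite the differential-cycle pairing as an integral in $t$ of curvature-type integrals of the Legendre sublevel sets $K^t_{\mathcal{L}f}$, to integrate by parts in $t$ so that $\psi$ is traded for $\psi'$, and to bound the remaining geometric quantities by the curvature estimates of Section~\ref{section:valuationsBodies} together with the Legendre estimates of Lemma~\ref{lemma:EstimatesLegendreSublevelSetsConv0}; the place where the factor $\beta_1$ enters is precisely where it makes the weaker $\tilde{D}^{2n-k+2}$-norm sufficient, in contrast to the $D^{2n-k+2}$-bound of Corollary~\ref{corollary:GeneralIntegrabilityMA}. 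We treat both cases at once and write $\beta_1\wedge\sigma$ for the relevant form, where $\sigma$ is $\beta_2\wedge\theta^{n-1}_{k-2,q-1}$ resp.\ $\gamma_2\wedge\theta^{n-1}_{k-1,q}$; in either case $\sigma$ is invariant under translations in the fibre of $T^*\C^n$ and homogeneous of degree $2n-k+1$ with respect to the scaling $G_t$ of the base.

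\textbf{Step 1 (slicing over Legendre level sets).} Since $f$ is smooth, Lemma~\ref{lemma:relationLegendreBeta} lets us replace $\beta_1$ by $\pi_2^*d\mathcal{L}f$ inside the pairing, so that $\psi(\mathcal{L}f(w))\beta_1=(\mathcal{L}f\circ\pi_2)^*(\psi\,dt)$; as $f\in\Conv_0^+(\C^n,\R)$, the map $\mathcal{L}f\circ\pi_2$ is proper on $\supp D(f)$ and $\mathcal{L}f$ attains its minimal value $0$ only at the origin, so the slicing formula~\eqref{equation:slicingFormula} and Proposition~\ref{proposition:RelationNCSlicesDiffCycle} give
\begin{align*}
D(f)\big[\psi(\mathcal{L}f(w))\phi(|z|)\,\beta_1\wedge\sigma\big]=\int_0^\infty\psi(t)\,\nc(K^t_{\mathcal{L}f})\big[F_f^*(\phi(|z|)\sigma)\big]\,dt .
\end{align*}
On $\nc(K^t_{\mathcal{L}f})$ the function $|z|\circ F_f$ equals the constant $t$, so $F_f^*(\phi(|z|)\sigma)=\phi(t)\,F_f^*\sigma$; and since $\supp\phi\subset[0,R]$, the condition $\phi(|z|)\ne 0$ on $\supp D(f)$ forces $\mathcal{L}f(w)\le(1+2R)\sup_{|z|\le R+1}|f(z)|=c(f)$ by Lemma~\ref{lemma:EstimatesLegendreSublevelSetsConv0}, so the whole integral is over $t\in(0,c(f)]$.

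\textbf{Step 2 (curvature bound and integration by parts).} Decomposing $\sigma$ with respect to the radial field $X_{\beta_1}$ and using $j^*\gamma_1=0$, Lemma~\ref{lemma:pullbackSpericalFormsFf} expresses $F_f^*\sigma|_{\nc(K^t_{\mathcal{L}f})}$ as $|d\mathcal{L}f(y)|^{\,2n-k+1}$ times a fixed spherical $(2n-1)$-form. Running the argument of Proposition~\ref{proposition:BoundCurvatureMeasure} — over the ball of radius $2\sup_{|z|\le R+1}|f(z)|$ containing $K^{c(f)}_{\mathcal{L}f}$ (Lemma~\ref{lemma:EstimatesLegendreSublevelSetsConv0}), to compensate for the failure of $\sigma$ to be translation invariant — dominates this spherical form, on the tangent spaces of $\nc(K^t_{\mathcal{L}f})$, by a multiple of the Federer curvature form $\kappa_{k-1}$, and hence reduces the estimate to a bound for $\int_0^{c(f)}|\psi(t)|\,|\phi(t)|\int_{\C^n}|d\mathcal{L}f(y)|^{\,2n-k+1}\,dC_{k-1}(K^t_{\mathcal{L}f})\,dt$. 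Rewriting $|d\mathcal{L}f(y)|^{\,2n-k+1}=|d\mathcal{L}f(y)|^{-1}\cdot|d\mathcal{L}f(y)|^{\,2n-k+2}$ and pairing the factor $|d\mathcal{L}f(y)|^{\,2n-k+2}$ with the weight $t^{-(2n-k+2)}$ implicit in $\|\phi\|_{\tilde{D}^{2n-k+2}}$, this is brought into the form handled by the integration-by-parts identity of Lemma~\ref{lemma:estimatePartialIntegrationSublevelSets} and Corollary~\ref{corollary:IntrinsicVolumesEstimateSublevelSets}, which yields $\int_0^{c(f)}|\psi'(t)|\,dt$ and a boundary factor $\mu_k(K^{c(f)}_{\mathcal{L}f})$; finally $\mu_k(K^{c(f)}_{\mathcal{L}f})\le c_{n,k}\big(\diam K^{c(f)}_{\mathcal{L}f}\big)^k\le c'_{n,k}\big(\sup_{|z|\le R+1}|f(z)|\big)^k$ by Lemma~\ref{lemma:EstimatesLegendreSublevelSetsConv0}, which gives the asserted inequality. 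The $\gamma_2$-form is handled by the same computation.

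\textbf{Main obstacle.} The crux is the weight bookkeeping in Step~2: the homogeneity produces a \emph{positive} power $|d\mathcal{L}f(y)|^{\,2n-k+1}$, whereas the $\tilde{D}^{2n-k+2}$-norm only supplies the \emph{singular} weight $t^{-(2n-k+2)}$ (with $t\mapsto t^{2n-k+2}\phi(t)$ merely bounded and vanishing at $0$, not $D^{2n-k+2}$-bounded), so the naive pointwise estimate diverges as $t\to 0$ and one must set up the integration by parts of Lemma~\ref{lemma:estimatePartialIntegrationSublevelSets} carefully enough that the resulting bound is uniform in $\psi$ — in particular insensitive to how close $\supp\psi$ is to the origin — and carries homogeneity degree exactly $k$ in $\sup_{|z|\le R+1}|f(z)|$ rather than a higher power. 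This is precisely the step that exploits the $\beta_1$-factor and is responsible for replacing the $D^{2n-k+2}$-bound of Corollary~\ref{corollary:GeneralIntegrabilityMA} by the $\tilde{D}^{2n-k+2}$-bound claimed here.
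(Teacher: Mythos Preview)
Your overall strategy is right, but Step~1 contains a concrete error that propagates through Step~2. On $\nc(K^t_{\mathcal{L}f})$ the pullback $|z|\circ F_f$ is \emph{not} the constant $t$: by the very definition used in Lemma~\ref{lemma:pullbackSpericalFormsFf} (namely $F_f=G\circ F\circ j$ with $F(x,\xi)=(|d\mathcal{L}f(\xi)|,x,\xi)$) one has $|z|\circ F_f(y,v)=|d\mathcal{L}f(y)|$, which varies along $\partial K^t_{\mathcal{L}f}$. Thus $\phi(|z|)$ does not pull back to $\phi(t)$, and your ``weight bookkeeping'' in Step~2---pairing $|d\mathcal{L}f(y)|^{2n-k+2}$ with a factor $t^{-(2n-k+2)}$ coming from $\|\phi\|_{\tilde D^{2n-k+2}}$---is comparing two different variables. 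The correct bound is simply the pointwise estimate $|\phi(|d\mathcal{L}f(w)|)|\cdot|d\mathcal{L}f(w)|^{2n-k+1}\le \|\phi\|_{\tilde D^{2n-k+2}}\,|d\mathcal{L}f(w)|^{-1}$, which then feeds directly into Corollary~\ref{corollary:IntrinsicVolumesEstimateSublevelSets} without any mixing of the two scales.

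There is a second gap: Lemma~\ref{lemma:pullbackSpericalFormsFf} only computes $F_f^*(i_{X_{\beta_1}}\omega)$, not $F_f^*\sigma$ for a general homogeneous $\sigma$, so your claim that $F_f^*\sigma$ is $|d\mathcal{L}f|^{2n-k+1}$ times a fixed spherical form is not justified. The identity $j^*\gamma_1=0$ does not help here because $F_f\neq j$; indeed $F_f^*\gamma_1=|d\mathcal{L}f(y)|\,d|d\mathcal{L}f(y)|\neq 0$. What the paper does is write $|z|^2\sigma=i_{X_{\beta_1}}(\gamma_1\wedge\sigma)+\gamma_1\wedge i_{X_{\beta_1}}\sigma$ and treat the two pieces separately: the first is in the form to which Lemma~\ref{lemma:pullbackSpericalFormsFf} applies and is bounded exactly as above; the second carries an explicit $\gamma_1=r\,dr$, which is used for a \emph{second} slicing in the radial variable $r=|z|$, after which the $\phi$-dependence appears through $r\int_r^\infty\phi(s)s^{2n-k}\,ds$, bounded again by $\|\phi\|_{\tilde D^{2n-k+2}}$. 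This second slicing step is essential and is missing from your sketch.
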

	\begin{proof}
		We will show the first inequality, the second follows with the same argument.\\		
		As $D(f)\llcorner \beta_1=D(f)\llcorner d\pi_2^*\mathcal{L}f$ by Lemma \ref{lemma:relationLegendreBeta}, the slicing formula \eqref{equation:slicingFormula} implies
		\begin{align}
			\notag
			&D(f)\left[\psi(\mathcal{L}f(w))\phi(|z|) \beta_1\wedge\beta_2\wedge \theta^{n-1}_{k-2,q-1}\right]=\int_0^\infty \psi(t)\langle D(f), \pi_2^*\mathcal{L}f,t\rangle [\phi(|z|)\beta_2\wedge \theta^{n-1}_{k-2,q-1}]dt\\
			\label{eq:estimateBSlicing1}
			=&\int_0^\infty \psi(t)\langle D(f), \pi_2^*\mathcal{L}f,t\rangle \left[\frac{\phi(|z|)}{|z|^2}\left[i_{X_{\beta_1}}(\gamma_1\wedge\beta_2\wedge\theta^{n-1}_{k-2,q-1})+\gamma_1\wedge i_{X_{\beta_1}}(\beta_2\wedge\theta^{n-1}_{k-2,q-1})\right]\right]dt,
		\end{align}
		where we have used $i_{X_{\beta_1}}\gamma_1=|z|^2$ in the second step. We will treat the terms corresponding to the two different differential forms separately. For the first term, note that
		\begin{align*}
			&\int_0^\infty \psi(t)\langle D(f), \pi_2^*\mathcal{L}f,t\rangle \left[\frac{\phi(|z|)}{|z|^2}i_{X_{\beta_1}}\left(\gamma_1\wedge\beta_2\wedge\theta^{n-1}_{k-2,q-1}\right)\right]dt\\
			=&\int_0^\infty \psi(t)\nc(K_{\mathcal{L}f}^t) [\phi(|d\mathcal{L}f(w)|)|d\mathcal{L}f(w)|^{2n-k+1}j^*i_{X_{\beta_1}}\left(\gamma_1\wedge\beta_2\wedge\theta^{n-1}_{k-2,q-1}\right)]dt,
		\end{align*}
		where we have used the relation $F_{f*}\nc(K_{\mathcal{L}f}^t)=\langle D(f), \pi_2^*\mathcal{L}f,t\rangle$ from Proposition \ref{proposition:RelationNCSlicesDiffCycle}, Lemma \ref{lemma:pullbackSpericalFormsFf}, and that $\gamma_1\wedge\beta_2\wedge\theta^{n-1}_{k-2,q-1}$ is homogeneous of degree $2n-k+3$. Note that $w\mapsto \phi(|d\mathcal{L}f(w)|)$ vanishes unless $|d\mathcal{L}f(w)|\le R$, which implies $\mathcal{L}f(w)\le c(f)$ by Lemma \ref{lemma:EstimatesLegendreSublevelSetsConv0}, that is, $w\in K_{\mathcal{L}f}^{c(f)}$. Since $\nc(K_{\mathcal{L}f}^t)$ is supported on $\{(x,v)\in S\R^n: v~\text{outer normal to}~K^t_{\mathcal{L}f}~\text{in}~x\in\partial K^t_{\mathcal{L}f}\}$, the integrand vanishes identically for $t> c(f)$. Moreover,
		\begin{align*}
			\left|\phi(|d\mathcal{L}f(w)|)\right|\cdot |d\mathcal{L}f(w)|^{2n-k+1}\le \|\phi\|_{\tilde{D}^{2n-k+2}}\frac{1}{|d\mathcal{L}f(w)|}.
		\end{align*}
		We apply Proposition \ref{proposition:BoundCurvatureMeasure} to $\omega:=j^*i_{X_{\beta_1}}\left(\gamma_1\wedge\beta_2\wedge\theta^{n-1}_{k-2,q-1}\right)\in\Omega^{k-1,2n-k}(S\C^n)$ and the corresponding curvature measure $\Phi_\omega$ to obtain $C>0$ depending on $n,k,q$ only such that
		\begin{align*}
			&\left|\int_0^\infty \psi(t)\langle D(f), \pi_2^*\mathcal{L}f,t\rangle 	\left[\frac{\phi(|z|)}{|z|^2}i_{X_{\beta_1}}(\gamma_1\wedge\beta_2\wedge\theta^{n-1}_{k-2,q-1})\right]dt\right|\\
			\le &\|\phi\|_{\tilde{D}^{2n-k+2}}\int_0^{c(f)} |\psi(t)| \left[\int_{\R^n}\frac{1}{|d\mathcal{L}f(w)|}d|\Phi_\omega(K_{\mathcal{L}f}^t,w)|\right] dt\\
			\le &C\|\phi\|_{\tilde{D}^{2n-k+2}}\int_0^{c(f)} |\psi(t)| \left[\int_{\R^n}\frac{1}{|d\mathcal{L}f(w)|}dC_{k-1}(K_{\mathcal{L}f}^t)\right] dt.
		\end{align*}
		Corollary \ref{corollary:IntrinsicVolumesEstimateSublevelSets} implies
		\begin{align*}
			\int_0^{c(f)} |\psi(t)| \left[\int_{\R^n}\frac{1}{|d\mathcal{L}f(w)|}d\Phi_{k-1}(K_{\mathcal{L}f}^t)\right] dt=&\int_0^{c(f)} |\psi(t)| \nc(K_{\mathcal{L}f}^t)\left[\frac{1}{|d\mathcal{L}f(w)|}\omega_{k-1}\right] dt\\
			\le& 2\frac{(n-k+1)\omega_{n-k+1}}{\omega_{n-k}}\mu_k\left(K_{\mathcal{L}f}^{c(f)}\right)\int_0^{c(f)}|\psi'(t)|dt.
		\end{align*}
		Using that $|\mathcal{L}f(w)|\le c(f)$ implies $|w|\le2\sup_{|z|\le R+1}|f(z)|$ by Lemma \ref{lemma:EstimatesLegendreSublevelSetsConv0}, we obtain $K^{c(f)}_{\mathcal{L}f}\subset \left(2\sup_{|z|\le R+1}|f(z)|\right)\cdot B_1(0)$. Since the intrinsic volume $\mu_k$ is $k$-homogeneous and monotone, we obtain a constant $C'$ depending on $n,k,q$ only such that
		\begin{align}
			\label{eq:estimateB1}
			\begin{split}
				&\left|\int_0^\infty \psi(t)\langle D(f), \pi_2^*\mathcal{L}f,t\rangle 	\left[\frac{\phi(|z|)}{|z|^2}i_{X_{\beta_1}}(\gamma_1\wedge\beta_2\wedge\theta^{n-1}_{k-2,q-1})\right]dt\right|\\
				\le & C'\left(\sup_{|z|\le R+1}|f(z)|\right)^k\|\phi\|_{\tilde{D}^{2n-k+2}}\int_0^{c(f)}|\psi'(t)|dt.
			\end{split}
		\end{align}
		
		Let us now turn to the second term in \eqref{eq:estimateBSlicing1}. Let $\pi:(\C^n\setminus\{0\})\times\C^n\rightarrow S^{2n-1}\times \C^n$ denote the radial projection in the first argument. As $\beta_2\wedge\theta^{n-1}_{k-2,q-1}$ is homogeneous of degree $2n-k+1$, we obtain
		\begin{align*}
			i_{X_{\beta_1}}(\gamma_1\wedge\beta_2\wedge\theta^{n-1}_{k-2,q-1})=|z|^{2n-k+1}\pi^*i_{X_{\beta_1}}(\gamma_1\wedge\beta_2\wedge\theta^{n-1}_{k-2,q-1})|_{S^{2n-1}\times\C^n}
		\end{align*}
		as in the proof of Proposition \ref{lemma:ChangeFormsPolarCoordinates}.
		Consider the map $r:(\C^n\times\setminus\{0\})\times\C^n\rightarrow\R$ given by $r(z,w)=|z|$. Then $\gamma_1=rdr$, so applying the slicing formula \eqref{equation:slicingFormula} once again, we obtain 		
		\begin{align*}
			&\int_0^\infty \psi(t)\langle D(f), \pi_2^*\mathcal{L}f,t\rangle \left[\frac{\phi(|z|)}{|z|^2}\gamma_1\wedge i_{X_{\beta_1}}(\beta_2\wedge\theta^{n-1}_{k-2,q-1})\right]dt\\
			=&\int_0^\infty \psi(t)\int_0^\infty \left\langle\langle D(f), \pi_2^*\mathcal{L}f,t\rangle, r,s\right\rangle  \left[\frac{\phi(s)}{s}s^{2n-k+1}\pi^*i_{X_{\beta_1}}(\beta_2\wedge\theta^{n-1}_{k-2,q-1})|_{S^{2n-1}\times\C^n}\right]dsdt\\
			=&\int_0^\infty \psi(t) \int_0^\infty\langle D(f), \pi_2^*\mathcal{L}f,t\rangle  \left[\phi(s)s^{2n-kds}1_{\{|z|\le s\}}d\pi^*i_{X_{\beta_1}}(\beta_2\wedge\theta^{n-1}_{k-2,q-1})|_{S^{2n-1}\times\C^n}\right]dsdt\\
			=&\int_0^\infty \psi(t) \langle D(f), \pi_2^*\mathcal{L}f,t\rangle  \left[\int_{|z|}^\infty\phi(s)s^{2n-kds}ds~ d\pi^*i_{X_{\beta_1}}(\beta_2\wedge\theta^{n-1}_{k-2,q-1})|_{S^{2n-1}\times\C^n}\right]dt\\
			=&\int_0^\infty \psi(t) \nc \left(K_{\mathcal{L}f}^t\right) \left[\int_{|d\mathcal{L}f(w)|}^\infty\phi(s)s^{2n-kds}ds~dF_f^*\pi^*i_{X_{\beta_1}}(\beta_2\wedge\theta^{n-1}_{k-2,q-1})|_{S^{2n-1}\times\C^n}\right]dt,
		\end{align*}
		where we used the relation $F_{f*}\nc(K_{\mathcal{L}f}^t)=\langle D(f), \pi_2^*\mathcal{L}f,t\rangle$  in the last step. Note that 
		\begin{align*}
			dF_f^*\pi^*i_{X_{\beta_1}}(\beta_2\wedge\theta^{n-1}_{k-2,q-1})|_{S^{2n-1}\times\C^n}= dj^*i_{X_{\beta_1}}(\beta_2\wedge\theta^{n-1}_{k-2,q-1}).
		\end{align*}
		We apply Proposition \ref{proposition:BoundCurvatureMeasure} to the curvature measure $\Phi_{\omega'}$ for $\omega':=dj^*i_{X_{\beta_1}}(\beta_2\wedge\theta^{n-1}_{k-2,q-1})$, and obtain $C>0$ depending on $n,k,q$ only such that
		\begin{align*}
			&\left|	\int_0^\infty \psi(t)\langle D(f), \pi_2^*\mathcal{L}f,t\rangle \left[\frac{\phi(|z|)}{|z|^2}\gamma_1\wedge i_{X_{\beta_1}}(\beta_2\wedge\theta^{n-1}_{k-2,q-1})\right]dt\right|\\
			=&\left|\int_0^\infty \psi(t) \int_{\C^n} \left[\int_{|d\mathcal{L}f(w)|}^{\infty}\phi(s)s^{2n-kds}ds\right] d\Phi_{\omega'}\left(K_{\mathcal{L}f}^t,w\right)dt\right|\\
			\le &C\int_0^\infty |\psi(t)| \int_{\C^n} \left|\int_{|d\mathcal{L}f(w)|}^\infty\phi(s)s^{2n-kds}ds\right| dC_k\left(K_{\mathcal{L}f}^t,w\right)dt
		\end{align*}
		As $\phi$ is supported on $[0,R]$, the inner integrand vanishes identically unless $|d\mathcal{L}f(w)|\le R$, which implies $|\mathcal{L}f(w)|\le c(f)$ by Lemma \ref{lemma:EstimatesLegendreSublevelSetsConv0}, that is, $w\in K_{\mathcal{L}f}^t$ for $t\le c(f)$. As in the previous case, this implies that the innermost integral vanishes for for $w\in \supp C_k(K_{\mathcal{L}f}^t)$ for $t> c(f)$. We thus obtain
		\begin{align*}
			&\left|	\int_0^\infty \psi(t)\langle D(f), \pi_2^*\mathcal{L}f,t\rangle \left[\frac{\phi(|z|)}{|z|^2}\gamma_1\wedge i_{X_{\beta_1}}(\beta_2\wedge\theta^{n-1}_{k-2,q-1})\right]dt\right|\\
			\le &C\sup_{r>0}\left|r\int_r^\infty \phi(s)s^{2n-k}ds\right|\int_0^{c(f)} |\psi(t)| \int_{\C^n} \frac{1}{|d\mathcal{L}f(w)|} dC_k\left(K_{\mathcal{L}f}^t,w\right)dt\\
			\le&C'\|\phi\|_{\tilde{D}^{2n-k+2}}\mu_k\left(\mathcal{K}^{c(f)}_{\mathcal{L}f}\right)\int_0^{c(f)} |\psi'(t)| dt 
		\end{align*}
		for a constant $C'$ depending on $n,k,q$ only, where we used Corollary \ref{corollary:IntrinsicVolumesEstimateSublevelSets} in the last step, as well as the estimate
		\begin{align*}
			\left|r\int_r^\infty \phi(s)s^{2n-k}ds\right|\le 	\|\phi\|_{\tilde{D}^{2n-k+2}}r\int_r^\infty \frac{1}{s^2}ds=\|\phi\|_{\tilde{D}^{2n-k+2}}.
		\end{align*}
		As in the previous case, $K^{c(f)}_{\mathcal{L}f}\subset \left(2\sup_{|z|\le R+1}|f(z)|\right)\cdot B_1(0)$, so we obtain
		\begin{align*}
			&\left|	\int_0^\infty \psi(t)\langle D(f), \pi_2^*\mathcal{L}f,t\rangle \left[\frac{\phi(|z|)}{|z|^2}\gamma_1\wedge i_{X_{\beta_1}}(\beta_2\wedge\theta^{n-1}_{k-2,q-1})\right]dt\right|\\
			\le &C''\left(\sup_{|z|\le R+1}|f(z)|\right)^k \|\phi\|_{\tilde{D}^{2n-k+2}}\int_0^{c(f)} |\psi'(t)| dt
		\end{align*}
		for a constant $C''>0$ depending on $n,k,q$ only. Combining this estimate with \eqref{eq:estimateB1}, we obtain the desired inequality.
	\end{proof}
	
	\begin{proof}[Proof of Theorem \ref{theorem:Estimate_BC}]
		We will prove the first inequality, the second follows with the same argument. Assume first that $f\in\Conv_0^+(\C^n,\R)\cap C^\infty(\C^n)$.\\
		
		Let $\psi_1\in C^\infty((0,\infty))$ be a function with $\psi_1=0$ on $[0,\frac{1}{2}]$, $\psi_1=1$ on $[1,\infty)$ and let $\psi_2\in C^\infty((0,\infty))$ be a function with $\psi_2=1$ on $(0,1]$, $\psi_2=0$ on $[2,\infty)$. We may assume that $\psi_1$ is non-decreasing. For $\delta\in (0,1)$ we define $\psi_\delta\in C^\infty_c(0,\infty)$ by
		\begin{align*}
			\psi_\delta(t):=\psi_1\left(\frac{t}{\delta}\right)\psi_2(\delta t).
		\end{align*}
		Then $\psi_\delta\equiv 1$ on $[\delta,\frac{1}{\delta}]$. Moreover, as in the proof of \cite[Proposition 6.6]{KnoerrSingularvaluationsHadwiger2022}, 
		\begin{align*}
			\lim_{\delta\rightarrow0}\int_0^{c(f)}|\psi_\delta'(t)|dt=1.
		\end{align*}
		As $f\in\Conv_0^+(\C^n,\R)\cap C^\infty(\C^n)$, $\mathcal{L}f$ is a smooth function and $\mathcal{L}f(w)=\langle df(z),z\rangle -f(z)$ for all $(z,w)\in\supp D(f)$. Moreover, $\psi_\delta(\langle df(z),z\rangle -f(z))$ converges to $1$ for $\delta\rightarrow0$ for all $z\in\C^n$ such that $\langle df(z),z\rangle -f(z)\ne0$, that is, $z\ne 0$. In addition, $\mathcal{B}^n_{k,q}(f)$ is absolutely continuous with respect to the Lebesgue measure because $f$ is smooth. If $\phi\in C_c([0,\infty))$, then $z\mapsto \phi(|z|)$ is integrable with respect to $\mathcal{B}^n_{k,q}(f)$ and dominated convergence implies
		\begin{align*}
			\left|\int_{\C^n}\phi(|z|)d\mathcal{B}^n_{k,q}(f)\right|=&\left|\lim\limits_{\delta\rightarrow0}\int_{\C^n}\psi_\delta(\langle df(z),z\rangle -f(z))|\phi(|z|)|d\mathcal{B}^n_{k,q}(f)\right|\\
			=&\lim\limits_{\delta\rightarrow0}c_{n,k,q}\left|D(f)\left[\psi_\delta(\mathcal{L}f(w))\phi(|z|) \beta_1\wedge\beta_2\wedge \theta^{n-1}_{k-2,q-1}\right]\right| \\
			\le& \limsup_{\delta\rightarrow0}c_{n,k,q} \tilde{C}_{n,k,q} \left(\sup_{|z|\le R+1}|f(z)|\right)^k\|\phi\|_{\tilde{D}^{2n-k+2}}\int_0^\infty |\psi_\delta'(t)|dt\\
			=&c_{n,k,q} \tilde{C}_{n,k,q} \left(\sup_{|z|\le R+1}|f(z)|\right)^k\|\phi\|_{\tilde{D}^{2n-k+2}},
		\end{align*}
		where we have used Proposition \ref{proposition:EstimateDifferentialCycleBC} in the third step.\\
		Now let $f\in\Conv(\C^n,\R)\cap C^\infty(\C^n)$ be an arbitrary smooth convex function. Then $|df(0)|\le \frac{2}{R+1}\sup_{R+1}|f(z)|$ by Lemma \ref{lemma:LipschitzConstant}. Moreover $z\mapsto f-\langle df(0),\cdot\rangle -f(0)+\lambda |z|^2$ belongs to $\Conv_0^+(\C^n,\R)\cap C^\infty(\C ^n)$ for all $\lambda>0$. As $\mathcal{B}^n_{k,q}$ is dually epi-translation invariant and continuous, we obtain 
		\begin{align*}
			\left|\int_{\C^n}\phi(|z|)\mathcal{B}^n_{k,q}(f)\right|=&\lim\limits_{\lambda\rightarrow0}\left|\int_{\C^n}\phi(|z|)\mathcal{B}^n_{k,q}(f-\langle df(0),\cdot\rangle -f(0)+\lambda|\cdot|^2)\right|\\
			\le&\limsup_{\lambda\rightarrow0}c_{n,k,q} \tilde{C}_{n,k,q} \left(\sup_{|z|\le R+1}|f(z)-\langle df(0),z\rangle -f(0)+\lambda|z|^2|\right)^k\|\phi\|_{\tilde{D}^{2n-k+2}}\\
			\le& c_{n,k,q} \tilde{C}_{n,k,q} \left(4\sup_{|z|\le R+1}|f(z)|\right)^k\|\phi\|_{\tilde{D}^{2n-k+2}}.
		\end{align*}
		Thus for every $f\in\Conv(\C^n,\R)\cap C^\infty(\C^n)$,
		\begin{align*}
			\left|\int_{\C^n}\phi(|z|)\mathcal{B}^n_{k,q}(f)\right|\le C_{n,k,q}\left(\sup_{|z|\le R+1}|f(z)|\right)^k\|\phi\|_{\tilde{D}^{2n-k+2}}
		\end{align*}
		for $C_{n,k,q}=c_{n,k,q}\tilde{C}_{n,k,q}4^k$. Now note that both sides of this inequality depend continuously on $f\in\Conv(\C^n,\R)$. Thus it holds for arbitrary $f\in\Conv(\C^n,\R)$. This finishes the proof.
	\end{proof}
	
	\begin{corollary}
		\label{corollary:estimateUpsilon}
		There exists a constant $B_{n,k,q}$ with the following property:
		For every $R>0$ and every $\phi\in C_c([0,\infty))$ with $\supp \phi\subset [0,R]$
		\begin{align*}
			&\left|\int_{\C^n}\phi(|z|) d\Upsilon^n_{k,q}(f)\right|\le B_{n,k,q} \left(\sup_{|z|\le R+1}|f(z)|\right)^k\|\phi\|_{\tilde{D}^{2n-k+2}}.
		\end{align*}
	\end{corollary}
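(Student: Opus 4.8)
This is an immediate consequence of Theorem \ref{theorem:Estimate_BC} together with the defining relation $\Upsilon^n_{k,q}=\mathcal{B}^n_{k,q}-\mathcal{C}^n_{k,q}$ from Definition \ref{definition:MAOperatirsThetaUpsilonBC}. The plan is simply to apply linearity of the integral and the triangle inequality: for $\phi\in C_c([0,\infty))$ with $\supp\phi\subset[0,R]$ and $f\in\Conv(\C^n,\R)$ one writes
\begin{align*}
	\left|\int_{\C^n}\phi(|z|)\,d\Upsilon^n_{k,q}(f)\right|
	\le\left|\int_{\C^n}\phi(|z|)\,d\mathcal{B}^n_{k,q}(f)\right|+\left|\int_{\C^n}\phi(|z|)\,d\mathcal{C}^n_{k,q}(f)\right|,
\end{align*}
and then bounds each of the two terms on the right by Theorem \ref{theorem:Estimate_BC}. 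This gives the claim with $B_{n,k,q}:=2C_{n,k,q}$, where $C_{n,k,q}$ is the constant furnished by Theorem \ref{theorem:Estimate_BC}.

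There is no genuine obstacle here: all the analytic work — the slicing of the differential cycle, the estimates of spherical pullbacks in Lemma \ref{lemma:pullbackSpericalFormsFf}, the comparison with Federer curvature measures in Proposition \ref{proposition:BoundCurvatureMeasure}, and the passage from smooth $\Conv_0^+$ functions to arbitrary convex functions by dual epi-translation invariance and continuity — has already been carried out in Proposition \ref{proposition:EstimateDifferentialCycleBC} and Theorem \ref{theorem:Estimate_BC}. One only needs to note that the constant produced does not depend on $R$ or $\phi$, which is already part of the statement of Theorem \ref{theorem:Estimate_BC}. Hence the corollary is purely formal.
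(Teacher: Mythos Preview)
Your proposal is correct and matches the paper's proof essentially verbatim: the paper also invokes $\Upsilon^n_{k,q}=\mathcal{B}^n_{k,q}-\mathcal{C}^n_{k,q}$ and applies Theorem~\ref{theorem:Estimate_BC} to each summand, arriving at $B_{n,k,q}=2C_{n,k,q}$.
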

	\begin{proof}
		Since $\Upsilon^n_{k,q}=\mathcal{B}^n_{k,q}-\mathcal{C}^n_{k,q}$, the claim follows from Theorem \ref{theorem:Estimate_BC} with $B_{n,k,q}=2C_{n,k,q}$.
	\end{proof}
	
	\subsection{Estimates for $\Theta^n_{k,q}$}
		\begin{lemma}
		\label{lemma:TauAndTildeTau}
		Consider the $(2n-1)$-forms on $T^*\C^n$ given by
		\begin{align*}
			\tau_{k,q}&:=i_{X_{\beta_1}}\theta^n_{k,q}=(n-k+q)\gamma_2\wedge\theta^{n-1}_{k,q}+(k-2q)\beta_2\wedge\theta^{n-1}_{k-1,q},\\
			\tilde{\tau}_{k,q}&:=-i_{X_{\gamma_1}}\theta^n_{k,q}=(k-2q)\gamma_2\wedge\theta^{n-1}_{k-1,q}+q\beta_2\wedge\theta^{n-1}_{k-2,q-1}.
		\end{align*}
		Then 
		\begin{align*}
			r^2\theta^n_{k,q}\equiv& \gamma_1\wedge \tau_{k,q}+\beta_1\wedge\tilde{\tau}_{k,q}\quad\mod\omega_s,\\
			d\tau_{k,q}=&(2n-k)\theta^n_{k,q},\\
			d\tilde{\tau}_{k,q}=&2(k-2q)\theta^n_{k-1,q}+q\theta^n_{k-1,q-1}.
		\end{align*}
		\end{lemma}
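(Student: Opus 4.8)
The plan is to verify the three identities in Lemma \ref{lemma:TauAndTildeTau} by direct computation, exploiting the exterior algebra relations among the forms $\beta_1,\beta_2,\gamma_1,\gamma_2,\theta_0,\theta_1,\theta_2$ collected at the start of Section \ref{section:MAOperators}, together with the contraction identities $i_{X_{\beta_1}}\gamma_1=|z|^2=r^2$, $i_{X_{\gamma_1}}\beta_1=r^2$, and the fact that $\theta_0,\theta_1,\theta_2$ are invariant under translations in both factors (so their contractions with $X_{\beta_1}$ and $X_{\gamma_1}$ are computed purely from $\beta_1=\sum x_jd\xi_j+y_jd\eta_j$ and $\gamma_1=\sum x_jdx_j+y_jdy_j$). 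Since $X_\delta$ is defined by $i_{X_\delta}\omega_s=\delta$, one first computes $i_{X_{\beta_1}}$ and $i_{X_{\gamma_1}}$ on each of $\theta_0,\theta_1,\theta_2,\gamma_2,\beta_2$, and then uses the derivation property of interior multiplication to expand $i_{X_{\beta_1}}\theta^n_{k,q}=i_{X_{\beta_1}}(\theta_0^{n-k+q}\wedge\theta_1^{k-2q}\wedge\theta_2^q)$ by the Leibniz rule, distributing the contraction over the $n$ factors; the combinatorial bookkeeping of how many factors of each $\theta_i$ remain is what produces the coefficients $(n-k+q)$, $(k-2q)$, $q$, $2n-k$.

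First I would establish the expressions for $\tau_{k,q}$ and $\tilde\tau_{k,q}$: contracting $X_{\beta_1}$ into a product $\theta_0^{a}\wedge\theta_1^{b}\wedge\theta_2^{c}$ (with $a+b+c=n$) replaces one factor by $i_{X_{\beta_1}}\theta_0$, $i_{X_{\beta_1}}\theta_1$ or $i_{X_{\beta_1}}\theta_2$; a short check shows $i_{X_{\beta_1}}\theta_0=0$ (since $\theta_0$ involves only $dx_j,dy_j$ and $\beta_1$ only $d\xi_j,d\eta_j$), $i_{X_{\beta_1}}\theta_1=\gamma_2$, and $i_{X_{\beta_1}}\theta_2=\beta_2$, and dually $i_{X_{\gamma_1}}\theta_0=-\gamma_2$, $i_{X_{\gamma_1}}\theta_1=-\beta_2$, $i_{X_{\gamma_1}}\theta_2=0$ (signs to be fixed carefully against the chosen orientation). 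Summing over which factor is hit yields exactly $\tau_{k,q}=(n-k+q)\gamma_2\wedge\theta^{n-1}_{k,q}+(k-2q)\beta_2\wedge\theta^{n-1}_{k-1,q}$ and the stated $\tilde\tau_{k,q}$, where I keep track that $\theta^{n-1}_{k,q}$ is shorthand for $\theta_0^{n-1-k+q}\wedge\theta_1^{k-2q}\wedge\theta_2^q$. For the first displayed congruence, I would apply the general identity $r^2\,\omega\equiv\gamma_1\wedge i_{X_{\beta_1}}\omega+\beta_1\wedge i_{X_{\gamma_1}}\omega\ \mathrm{mod}\ \omega_s$ valid for any $(2n)$-form $\omega$ on $T^*\C^n$ (this follows because $\gamma_1\wedge\beta_1$ together with $\omega_s=d\beta_1$ span the relevant piece; concretely, $\gamma_1\wedge i_{X_{\beta_1}}\omega+\beta_1\wedge i_{X_{\gamma_1}}\omega=(i_{X_{\beta_1}}\gamma_1)\omega-\gamma_1\wedge\beta_1\wedge(\cdots)+\ldots$, and one checks the correction term is a multiple of $\omega_s$), and then substitute $\omega=\theta^n_{k,q}$.

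For the two derivative formulas I would use $d\gamma_2=2\theta_0$, $d\beta_2=\theta_1$, $d\gamma_1=0$, $d\beta_1=\omega_s$, and that $\theta_0,\theta_1,\theta_2$ are closed. Then $d\tau_{k,q}=(n-k+q)\,d\gamma_2\wedge\theta^{n-1}_{k,q}+(k-2q)\,d\beta_2\wedge\theta^{n-1}_{k-1,q}=2(n-k+q)\theta_0\wedge\theta^{n-1}_{k,q}+(k-2q)\theta_1\wedge\theta^{n-1}_{k-1,q}$; reassembling, $\theta_0\wedge\theta^{n-1}_{k,q}=\theta^n_{k,q}$ and $\theta_1\wedge\theta^{n-1}_{k-1,q}=\theta^n_{k,q}$, so $d\tau_{k,q}=(2(n-k+q)+(k-2q))\theta^n_{k,q}=(2n-k)\theta^n_{k,q}$, as claimed. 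The computation of $d\tilde\tau_{k,q}$ is entirely analogous: $d\tilde\tau_{k,q}=2(k-2q)\theta_0\wedge\theta^{n-1}_{k-1,q}+q\,\theta_1\wedge\theta^{n-1}_{k-2,q-1}=2(k-2q)\theta^n_{k-1,q}+q\,\theta^n_{k-1,q-1}$, where I must double-check the index shift $\theta_1\wedge\theta^{n-1}_{k-2,q-1}=\theta^n_{k-1,q-1}$ (degrees: $n-1-(k-2)+(q-1)+1=n-k+q$ factors of $\theta_0$, $(k-2)-2(q-1)=k-2q$ factors of $\theta_1$, $q-1$ of $\theta_2$, matching $\theta^n_{k-1,q-1}$). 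The main obstacle is purely bookkeeping: getting all orientation-dependent signs in the contractions $i_{X_{\beta_1}}\theta_i$, $i_{X_{\gamma_1}}\theta_i$ consistent, and correctly handling the ``$\mathrm{mod}\ \omega_s$'' reduction so that no stray $\omega_s$-terms survive in the first identity — there is no conceptual difficulty, only the risk of sign or combinatorial slips, so I would organize the computation around the three elementary contraction lemmas above and then let the Leibniz rule do the rest.
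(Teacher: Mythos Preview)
Your overall strategy matches the paper's: the two exterior-derivative identities follow exactly as you say from $d\gamma_2=2\theta_0$, $d\beta_2=\theta_1$ and the closedness of the $\theta_i$, and your index bookkeeping there is correct. The paper simply cites \cite{KnoerrUnitarilyinvariantvaluations2021} for the first congruence, while you sketch the underlying $r^2\omega\equiv\gamma_1\wedge i_{X_{\beta_1}}\omega-\beta_1\wedge i_{X_{\gamma_1}}\omega\pmod{\omega_s}$ mechanism; that is a legitimate and more self-contained route.

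However, your contraction identities are wrong, and not just up to signs. You argue that $i_{X_{\beta_1}}\theta_0=0$ ``since $\theta_0$ involves only $dx_j,dy_j$ and $\beta_1$ only $d\xi_j,d\eta_j$'', but this confuses the $1$-form $\beta_1$ with the vector field $X_{\beta_1}$. Because $\omega_s$ pairs base directions with fiber directions, the equation $i_{X_{\beta_1}}\omega_s=\beta_1$ forces $X_{\beta_1}=\sum_j x_j\partial_{x_j}+y_j\partial_{y_j}$, which lives entirely in the base. Hence
\[
i_{X_{\beta_1}}\theta_0=\gamma_2,\qquad i_{X_{\beta_1}}\theta_1=\beta_2,\qquad i_{X_{\beta_1}}\theta_2=0,
\]
and dually $X_{\gamma_1}=-\sum_j x_j\partial_{\xi_j}+y_j\partial_{\eta_j}$ gives $i_{X_{\gamma_1}}\theta_0=0$, $i_{X_{\gamma_1}}\theta_1=-\gamma_2$, $i_{X_{\gamma_1}}\theta_2=-\beta_2$. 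Your stated identities are a cyclic permutation of the correct ones; if you actually ran the Leibniz rule with them you would obtain $(k-2q)\gamma_2\wedge\theta^{n-1}_{k-1,q}+q\beta_2\wedge\theta^{n-1}_{k-2,q-1}$ for $i_{X_{\beta_1}}\theta^n_{k,q}$, i.e.\ you would interchange $\tau_{k,q}$ and $\tilde\tau_{k,q}$. With the corrected contractions the Leibniz expansion gives exactly the stated formulas for $\tau_{k,q}$ and $\tilde\tau_{k,q}$, and the rest of your argument goes through unchanged.
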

		\begin{proof}
			The first equation was shown in \cite[Corollary 3.12]{KnoerrUnitarilyinvariantvaluations2021}. The last two relations follow from $d\gamma_2=2\theta_0$ and $d\beta_2=\theta_1$.
		\end{proof}
	
		\begin{corollary}
			\label{corollary:PartialIntegrationFormulaTauKQ}
			For $\Psi\in C^1((0,\infty))$, 
			\begin{align*}
				\left[(2n-k)\Psi(|z|)+|z|\Psi'(|z|)\right]\theta^n_{k,q}\equiv	d\left(\Psi(|z|)\tau_{k,q}\right)+\frac{\Psi'(|z|)}{|z|}\beta_1\wedge \tilde{\tau}_{k,q}\quad\mod \omega_s
			\end{align*}
			on $(\C^n\setminus\{0\})\times\C^n$.
		\end{corollary}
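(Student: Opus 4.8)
The plan is to obtain the identity by differentiating the form $\Psi(|z|)\tau_{k,q}$ and reorganizing the result using the two structural relations recorded in Lemma \ref{lemma:TauAndTildeTau}. First I would note that on $(\C^n\setminus\{0\})\times\C^n$ one has $\gamma_1=\tfrac12 d|z|^2=|z|\,d|z|$, so the chain rule gives $d\bigl(\Psi(|z|)\bigr)=\tfrac{\Psi'(|z|)}{|z|}\gamma_1$. Applying the Leibniz rule then yields
\begin{align*}
	d\bigl(\Psi(|z|)\tau_{k,q}\bigr)=\frac{\Psi'(|z|)}{|z|}\,\gamma_1\wedge\tau_{k,q}+\Psi(|z|)\,d\tau_{k,q}.
\end{align*}

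Next I would substitute $d\tau_{k,q}=(2n-k)\theta^n_{k,q}$ from Lemma \ref{lemma:TauAndTildeTau}, and replace the term $\gamma_1\wedge\tau_{k,q}$ using the first congruence of the same lemma, which reads $\gamma_1\wedge\tau_{k,q}\equiv|z|^2\theta^n_{k,q}-\beta_1\wedge\tilde{\tau}_{k,q}\pmod{\omega_s}$. This gives
\begin{align*}
	d\bigl(\Psi(|z|)\tau_{k,q}\bigr)\equiv\bigl[|z|\Psi'(|z|)+(2n-k)\Psi(|z|)\bigr]\theta^n_{k,q}-\frac{\Psi'(|z|)}{|z|}\,\beta_1\wedge\tilde{\tau}_{k,q}\pmod{\omega_s},
\end{align*}
and moving the last term to the left side produces exactly the claimed formula.

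The only point that warrants a word of justification — and the closest thing to an obstacle — is that the computation is carried out modulo the ideal generated by $\omega_s$: since $\omega_s=d\beta_1$ is exact, it is in particular closed, so the ideal $(\omega_s)$ is a differential ideal and the exterior derivative descends to the quotient, while wedging with an arbitrary form trivially preserves the ideal. Hence every manipulation above is legitimate modulo $\omega_s$, and no further work is needed; the statement follows.
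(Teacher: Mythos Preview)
Your proof is correct and follows essentially the same route as the paper: compute $d(\Psi(|z|)\tau_{k,q})$ via the Leibniz rule, substitute $d\tau_{k,q}=(2n-k)\theta^n_{k,q}$ and the congruence $\gamma_1\wedge\tau_{k,q}\equiv |z|^2\theta^n_{k,q}-\beta_1\wedge\tilde{\tau}_{k,q}\pmod{\omega_s}$ from Lemma~\ref{lemma:TauAndTildeTau}, and rearrange. Your closing remark on $(\omega_s)$ being a differential ideal is correct but not actually needed here, since the exterior derivative is applied only to exact identities and the sole congruence is merely multiplied by the scalar function $\frac{\Psi'(|z|)}{|z|}$.
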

		\begin{proof}
			Note that $d|z|=\frac{\gamma}{|z|}$. Hence
			\begin{align*}
				d\left(\Psi(|z|)\tau_{k,q}\right)=&\frac{\Psi'(|z|)}{|z|}\gamma\wedge\tau_{k,q}+\Psi(|z|)d\tau_{k,q}\\
				\equiv&\frac{\Psi'(|z|)}{|z|}\left(|z|^2\theta^n_{k,q}-\beta_1\wedge\tilde{\tau}_{k,q}\right)+(2n-k)\Psi(|z|)\theta^n_{k,q}\quad\mod\omega_s
			\end{align*}
			on $(\C^n\setminus\{0\})\times\C^n$ by Lemma \ref{lemma:TauAndTildeTau}. Rearranging the equation, we obtain the desired result.
		\end{proof}
		
		\begin{proposition}
			\label{proposition:EstimateThetaDifferentialCycle}
			There exists a constant $\tilde{A}_{n,k,q}>0$ with the following property:
			For every $R>0$, $\zeta\in C_c([0,\infty))$ with $\supp \zeta\subset [0,R)$, $\psi\in C_c^1((0,\infty))$, and $f\in\Conv_0^+(\C^n,\R)\cap C^\infty(\C^n)$,
			\begin{align*}
				\left|D(f)\left[\psi(\mathcal{L}f(w))\zeta(|z|)\theta^n_{k,q}\right]\right|\le \tilde{A}_{n,k,q}\left(\sup_{|z|\le R+1}|f(z)|\right)^k \|\zeta\|_{D^{2n-k}} \int_0^{c(f)}|\psi'(t)|dt
			\end{align*}
			for $c(f)=(1+2R)\sup_{|z|\le R+1}|f(z)|$.
		\end{proposition}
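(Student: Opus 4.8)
The strategy is to use Corollary~\ref{corollary:PartialIntegrationFormulaTauKQ} to trade the form $\theta^n_{k,q}$ for a total differential: one integration by parts against the closed current $D(f)$ will then produce the factor $\psi'$, while the remaining term has the shape already controlled by Proposition~\ref{proposition:EstimateDifferentialCycleBC}. Set $\Psi(t):=t^{-(2n-k)}\int_t^\infty\zeta(s)s^{2n-k-1}\,ds$ for $t>0$; since $\zeta\in C_c([0,\infty))$ with $\supp\zeta\subset[0,R)$ we have $\Psi\in C^1((0,\infty))$, $\supp\Psi\subset(0,R]$, and a direct computation yields $(2n-k)\Psi(t)+t\Psi'(t)=-\zeta(t)$ and $\Psi'(t)/t=-\mathcal{R}^{2n-k}(\zeta)[t]/t^{2n-k+2}$. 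Moreover $t^{2n-k}|\Psi(t)|\le\|\zeta\|_{D^{2n-k}}$ and $|\mathcal{R}^{2n-k}(\zeta)[t]|\le(2n-k+1)\|\zeta\|_{D^{2n-k}}$ for all $t>0$. Feeding $\Psi$ into Corollary~\ref{corollary:PartialIntegrationFormulaTauKQ} gives, on $(\C^n\setminus\{0\})\times\C^n$,
\[
-\zeta(|z|)\theta^n_{k,q}\equiv d\big(\Psi(|z|)\tau_{k,q}\big)+\frac{\Psi'(|z|)}{|z|}\,\beta_1\wedge\tilde\tau_{k,q}\pmod{\omega_s}.
\]
Since $D(f)$ is Lagrangian, multiplying by the $0$-form $\psi(\mathcal{L}f(w))$ and pairing with $D(f)$ annihilates the $\omega_s$-term, so that $D(f)[\psi(\mathcal{L}f(w))\zeta(|z|)\theta^n_{k,q}]=-D(f)[\psi(\mathcal{L}f(w))\,d(\Psi(|z|)\tau_{k,q})]-D(f)[\psi(\mathcal{L}f(w))\tfrac{\Psi'(|z|)}{|z|}\beta_1\wedge\tilde\tau_{k,q}]$, and I bound the two summands separately.

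For the first summand I integrate by parts. As $f\in\Conv_0^+(\C^n,\R)$ is smooth and $\psi\in C^1_c((0,\infty))$, the set $\{z:\langle df(z),z\rangle-f(z)\in\supp\psi\}$ is compact and bounded away from the origin; inserting a cut-off $\chi_\epsilon$ that vanishes on $B_\epsilon(0)\subset\C^n$ and equals $1$ off $B_{2\epsilon}(0)$, applying $\partial D(f)=0$ to the compactly supported form $\chi_\epsilon\,\psi(\mathcal{L}f(w))\Psi(|z|)\tau_{k,q}$, and letting $\epsilon\to0$ (the $d\chi_\epsilon$-contribution vanishing for small $\epsilon$) gives $-D(f)[\psi(\mathcal{L}f(w))\,d(\Psi(|z|)\tau_{k,q})]=D(f)[\psi'(\mathcal{L}f(w))\Psi(|z|)\,\pi_2^*d\mathcal{L}f\wedge\tau_{k,q}]$, which by Lemma~\ref{lemma:relationLegendreBeta} equals $D(f)[\psi'(\mathcal{L}f(w))\Psi(|z|)\,\beta_1\wedge\tau_{k,q}]$. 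The slicing formula, Proposition~\ref{proposition:RelationNCSlicesDiffCycle} and Lemma~\ref{lemma:pullbackSpericalFormsFf} (applied to $\tau_{k,q}=i_{X_{\beta_1}}\theta^n_{k,q}$, which is homogeneous of degree $2n-k$) rewrite this as $\int_0^\infty\psi'(t)\,\nc(K_{\mathcal{L}f}^t)[\Psi(|d\mathcal{L}f(w)|)\,|d\mathcal{L}f(w)|^{2n-k}\,j^*i_{X_{\beta_1}}\theta^n_{k,q}]\,dt$. Here $|\Psi(|d\mathcal{L}f(w)|)|\,|d\mathcal{L}f(w)|^{2n-k}\le\|\zeta\|_{D^{2n-k}}$, the integrand vanishes for $t>c(f)$ by Lemma~\ref{lemma:EstimatesLegendreSublevelSetsConv0}, and $j^*i_{X_{\beta_1}}\theta^n_{k,q}$ is a translation-invariant $(k,2n-1-k)$-form on $S\R^{2n}$ with coefficients bounded on $S^{2n-1}$, so Proposition~\ref{proposition:BoundCurvatureMeasure} (applied in $\R^{2n}$) bounds the inner current evaluation by a constant multiple of $C_k(K_{\mathcal{L}f}^t)[\C^n]=\mu_k(K_{\mathcal{L}f}^t)$. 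Using monotonicity and $k$-homogeneity of $\mu_k$ together with $K_{\mathcal{L}f}^{c(f)}\subset\big(2\sup_{|z|\le R+1}|f(z)|\big)B_1(0)$ (again Lemma~\ref{lemma:EstimatesLegendreSublevelSetsConv0}), the first summand is at most $C_{n,k,q}\big(\sup_{|z|\le R+1}|f(z)|\big)^k\|\zeta\|_{D^{2n-k}}\int_0^{c(f)}|\psi'(t)|\,dt$.

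For the second summand, Lemma~\ref{lemma:TauAndTildeTau} gives $\beta_1\wedge\tilde\tau_{k,q}=(k-2q)\,\beta_1\wedge\gamma_2\wedge\theta^{n-1}_{k-1,q}+q\,\beta_1\wedge\beta_2\wedge\theta^{n-1}_{k-2,q-1}$, where a summand with vanishing coefficient is omitted. Writing $\Xi(t):=\mathcal{R}^{2n-k}(\zeta)[t]/t^{2n-k+2}=-\Psi'(t)/t$, the second summand is a linear combination of $D(f)[\psi(\mathcal{L}f(w))\Xi(|z|)\,\beta_1\wedge\gamma_2\wedge\theta^{n-1}_{k-1,q}]$ and $D(f)[\psi(\mathcal{L}f(w))\Xi(|z|)\,\beta_1\wedge\beta_2\wedge\theta^{n-1}_{k-2,q-1}]$, which are exactly the quantities estimated in Proposition~\ref{proposition:EstimateDifferentialCycleBC}. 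Its proof uses of the radial profile only that its support is bounded above and that $\sup_{t>0}t^{2n-k+2}|\Xi(t)|<\infty$, and here $\sup_{t>0}t^{2n-k+2}|\Xi(t)|=\sup_{t>0}|\mathcal{R}^{2n-k}(\zeta)[t]|\le(2n-k+1)\|\zeta\|_{D^{2n-k}}$; thus the second summand is likewise bounded by $C_{n,k,q}\big(\sup_{|z|\le R+1}|f(z)|\big)^k\|\zeta\|_{D^{2n-k}}\int_0^{c(f)}|\psi'(t)|\,dt$. Adding the two estimates yields the claim with $\tilde A_{n,k,q}$ depending only on $n,k,q$.

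The main obstacle is the genuine singularity of $\Psi$ and $\Xi$ at $t=0$: it forces the cut-off argument in the integration by parts, and it requires checking that the proof of Proposition~\ref{proposition:EstimateDifferentialCycleBC} survives this mild enlargement of the class of admissible radial profiles (it does, as it never uses continuity of the profile at the origin). The remaining points are routine: tracking the homogeneity degree in Lemma~\ref{lemma:pullbackSpericalFormsFf}, and verifying that $\supp\psi$ together with $f\in\Conv_0^+(\C^n,\R)$ confines everything to $K_{\mathcal{L}f}^{c(f)}$, so that Proposition~\ref{proposition:BoundCurvatureMeasure} and Lemma~\ref{lemma:EstimatesLegendreSublevelSetsConv0} apply with constants depending only on $n,k,q$.
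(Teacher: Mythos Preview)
Your proof is correct and follows essentially the same approach as the paper: define the primitive $\Psi$, apply Corollary~\ref{corollary:PartialIntegrationFormulaTauKQ}, integrate by parts against the closed Lagrangian current $D(f)$, and then bound the two resulting summands using (respectively) Proposition~\ref{proposition:BoundCurvatureMeasure} via the slicing formula and Proposition~\ref{proposition:EstimateDifferentialCycleBC}. The paper chooses the opposite sign for $\Psi$ and handles the integration by parts by noting that $\psi(\mathcal{L}f(w))\Psi(|z|)\tau_{k,q}$ is already well defined on a neighborhood of $\supp D(f)$ rather than introducing your cut-off $\chi_\epsilon$, but this is a cosmetic difference; your explicit flag that the radial profile in the second summand lies only in $\tilde D^{2n-k+2}$ (not in $C_c((0,\infty))$) and that the proof of Proposition~\ref{proposition:EstimateDifferentialCycleBC} still applies is a point the paper treats tacitly.
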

		\begin{proof}
			The function $\Psi\in C^1((0,\infty))$ given by
			\begin{align*}
				\Psi(t)=-\frac{1}{t^{2n-k}}\int_t^\infty \zeta(s)s^{2n-k-1}ds
			\end{align*}
			satisfies
			\begin{align*}
				\Psi'(t)=\frac{2n-k}{t^{2n-k+1}}\int_t^\infty \zeta(s)s^{2n-k-1}ds+\frac{\zeta(t)}{t},
			\end{align*}
			so
			\begin{align*}
				(2n-k)\Psi(t)+t\Psi'(t)=\zeta(t).
			\end{align*}
			Consequently, applying Corollary \ref{corollary:PartialIntegrationFormulaTauKQ}, we obtain
			\begin{align*}
				\zeta(|z|)\theta^n_{k,q}\equiv d\left(\Psi(|z|)\tau_{k,q}\right)+\frac{\Psi'(|z|)}{|z|}\beta_1\wedge \tilde{\tau}_{k,q}\quad\mod\omega_s~ \text{on}~\C^n\setminus\{0\}\times \C^n.
			\end{align*}
			Let $\psi\in C^1_c((0,\infty))$ and $f\in\Conv_0^+(\C^n,\R)\cap C^\infty(\C^n)$. If we multiply the previous equation with the function $(z,w)\mapsto \psi(\mathcal{L}f(w))$ on $\C^n\times\C^n$, we obtain a differential form that is well defined and continuous on a neighborhood of the support of $D(f)$. Since $D(f)$ vanishes on multiples of $\omega_s$ and is closed, we obtain
			\begin{align*}
				&D(f)\left[\psi(\mathcal{L}f(w))\zeta(|z|)\theta^n_{k,q}\right]\\
				=&D(f)\left[\psi(\mathcal{L}f(w))\left(d\left(\Psi(|z|)\tau_{k,q}\right)+\frac{\Psi'(|z|)}{|z|}\beta_1\wedge \tilde{\tau}_{k,q}\right)\right]\\
				=&D(f)\left[-\psi'(\mathcal{L}f(w))\pi_2^*d\mathcal{L}f(w)\wedge \Psi(|z|)\tau_{k,q}+\psi(\mathcal{L}f(w))\frac{\Psi'(|z|)}{|z|}\beta_1\wedge \tilde{\tau}_{k,q}\right]\\
				=&D(f)\left[-\psi'(\mathcal{L}f(w)) \Psi(|z|)\beta_1\wedge\tau_{k,q}+\psi(\mathcal{L}f(w))\frac{\Psi'(|z|)}{|z|}\beta_1\wedge \tilde{\tau}_{k,q}\right],
			\end{align*}
			because $\beta_1$ and $\pi_2^*d\mathcal{L}f(w)$ coincide on $D(f)$ by Lemma \ref{lemma:relationLegendreBeta}. Since $\beta_1\wedge \tilde{\tau}_{k,q}$ is a linear combination of $\beta_1\wedge\beta_2\wedge\theta^{n-1}_{k-2,q-1}$ and $\beta_1\wedge\gamma_2\wedge\theta^{n-1}_{k-1,q}$, we can estimate the second term using Proposition \ref{proposition:EstimateDifferentialCycleBC} to obtain
			\begin{align}
				\label{eq:estimateTheta1}
				\begin{split}
					&|D(f)\left[\psi(\mathcal{L}f(w))\frac{\Psi'(|z|)}{|z|}\beta_1\wedge \tilde{\tau}_{k,q}\right]|\\
					\le& (k-q)\tilde{C}_{n,k,q}\left(\sup_{|z|\le R+1}|f(z)|\right)^k\left(\sup_{t>0}t^{2n-k+2} \left|\frac{\Psi'(t)}{t}\right|\right)\int_0^{c(f)}|\psi'(t)|dt,
				\end{split}
			\end{align}
			where
			\begin{align*}
				\sup_{t>0}t^{2n-k+2} \left|\frac{\Psi'(t)}{t}\right|=\sup_{t>0}t^{2n-k+2}\left|\frac{(2n-k)}{t^{2n-k+2}}\int_t^\infty \zeta(r)r^{2n-k-1}dt+\frac{\zeta(t)}{t^2}\right|\le (2n-k+1)\|\zeta\|_{D^{2n-k}}.
			\end{align*}
			For the first term, we apply the slicing formula \eqref{equation:slicingFormula}, Proposition \ref{proposition:RelationNCSlicesDiffCycle}, and Lemma \ref{lemma:pullbackSpericalFormsFf} to obtain
			\begin{align*}
				D(f)\left[\psi'(\mathcal{L}f(w)) \Psi(|z|)\beta_1\wedge\tau_{k,q}\right]=&\int_0^\infty \psi'(t) \langle D(f), \pi_2^*\mathcal{L}f, t\rangle \left[\Psi(|z|)\tau_{k,q}\right]dt\\
				=&\int_0^\infty \psi'(t) \nc(K_{\mathcal{L}f}^t) \left[\Psi(|d\mathcal{L}f(w)|)|d\mathcal{L}f(w)|^{2n-k}j^*\tau_{k,q}\right]dt\\
				=&\int_0^\infty \psi'(t) \left[\int_{\C^n}\Psi(|d\mathcal{L}f(w)|)|d\mathcal{L}f(w)|^{2n-k}d\Phi_{j^*\tau_{k,q}}(K_{\mathcal{L}f}^t)\right]dt.
			\end{align*}
			Note that the integrand vanishes unless $|d\mathcal{L}f(w)|\le R$, which by Lemma \ref{lemma:EstimatesLegendreSublevelSetsConv0} implies $w\in K_{\mathcal{L}f}^t$ for some $t\le c(f)$. As in the proof of Proposition \ref{proposition:EstimateDifferentialCycleBC}, this implies that the inner integral vanishes identically for $t\ge c(f)$. In addition $|\Psi(t)t^{2n-k}|\le \|\zeta\|_{D^{2n-k}}$.
			We apply Proposition \ref{proposition:BoundCurvatureMeasure} to the curvature measure $\Phi_{j^*\tau_{k,q}}$ and obtain $C>0$ depending on $n,k,q$ only such that
			\begin{align*}
				&\left|D(f)\left[\psi'(\mathcal{L}f(w)) \Psi(|z|)\beta_1\wedge\tau_{k,q}\right]\right|\\
				\le& C\int_0^{c(f)} |\psi'(t)|  \left[\int_{\C^n}|\Psi(|d\mathcal{L}f(w)|)|\cdot|d\mathcal{L}f(w)|^{2n-k}dC_k(K_{\mathcal{L}f}^t)\right]dt\\
				\le& C\|\zeta\|_{D^{2n-k}}\mu_k\left(K^{c(f)}_{\mathcal{L}f}\right)\int_0^{c(f)} |\psi'(t)|dt.
			\end{align*}
			Here we used again that $\mu_k$ is monotone and that $K^{t}_{\mathcal{L}f}\subset K^{c(f)}_{\mathcal{L}f}$ for $t\le c(f)$. As in the proof of Proposition \ref{proposition:EstimateDifferentialCycleBC}, we have $K_{\mathcal{L}f}^{c(f)}\subset \left(2\sup_{|z|\le R+1} |f(z)|\right)\cdot B_1(0)$, so the homogeneity and monotonicity of $\mu_k$ imply
			\begin{align*}
				\left|D(f)\left[\psi'(\mathcal{L}f(w)) \Psi(|z|)\beta_1\wedge\tau_{k,q}\right]\right|\le C'\|\zeta\|_{D^{2n-k}}\left(\sup_{|z|\le R+1}|f(z)|\right)^k\int_0^{c(f)} |\psi'(t)|dt.
			\end{align*}
		for a constant $C'>0$ depending on $n,k,q$ only. Combining this estimate with \eqref{eq:estimateTheta1}, we obtain the desired inequality.
		\end{proof}
		
		\begin{theorem}
			\label{theorem:EstimateTheta}
			There exists a constant $A_{n,k,q}>0$ with the following property: For every $f\in\Conv(\C^n,\R)$, $R>0$, and $\zeta\in C_c([0,\infty))$ with $\supp\zeta\subset[0,R]$
				\begin{align*}
					\left|\int_{\C^n}\zeta(|z|)d\Theta^n_{k,q}(f)\right|\le A_{n,k,q}\left(\sup_{|z|\le R+1}|f(z)|\right)^k\|\zeta\|_{D^{2n-k}}.
				\end{align*}
		\end{theorem}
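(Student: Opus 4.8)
The plan is to reproduce the bootstrap used for Theorem~\ref{theorem:Estimate_BC}: first upgrade the differential-cycle estimate of Proposition~\ref{proposition:EstimateThetaDifferentialCycle} to an estimate for the measure $\Theta^n_{k,q}(f)$ itself by means of a cutoff family, then remove the hypotheses ``$f\in\Conv_0^+$'' and ``$f$ smooth'' in turn, and finally pass from $\supp\zeta\subset[0,R)$ to $\supp\zeta\subset[0,R]$.

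\textbf{Step 1 (smooth $f\in\Conv_0^+$).} Fix $f\in\Conv_0^+(\C^n,\R)\cap C^\infty(\C^n)$ and $\zeta\in C_c([0,\infty))$ with $\supp\zeta\subset[0,R)$. As in the proof of Theorem~\ref{theorem:Estimate_BC}, pick $\psi_1,\psi_2\in C^\infty((0,\infty))$ with $0\le\psi_1,\psi_2\le1$, $\psi_1$ non-decreasing, $\psi_1\equiv0$ on $[0,\tfrac12]$, $\psi_1\equiv1$ on $[1,\infty)$, $\psi_2\equiv1$ on $(0,1]$, $\psi_2\equiv0$ on $[2,\infty)$, and set $\psi_\delta(t):=\psi_1(t/\delta)\psi_2(\delta t)\in C^\infty_c((0,\infty))$. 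Then $\psi_\delta\to1$ pointwise on $(0,\infty)$, and for every fixed $c>0$ one has $\int_0^{c}|\psi_\delta'(t)|\,dt\to1$ as $\delta\to0$, since for small $\delta$ the decreasing part of $\psi_\delta$ lies beyond $c$ while its increasing part is monotone with total variation $1$. Because $f$ is smooth, $\Theta^n_{k,q}(f)$ is absolutely continuous with respect to Lebesgue measure, so $\{0\}$ is $\Theta^n_{k,q}(f)$-null; and on $\supp D(f)$ we have $\mathcal{L}f(w)=\langle df(z),z\rangle-f(z)$, which vanishes exactly at $z=0$ since $f\in\Conv_0^+$. As $\zeta(|\cdot|)\in C_c(\C^n)$ and $\Theta^n_{k,q}(f)$ is a positive Radon measure (Corollary~\ref{corollary:ThetaPositivemeasure}), dominated convergence gives
\[
\int_{\C^n}\zeta(|z|)\,d\Theta^n_{k,q}(f)=\lim_{\delta\to0}c_{n,k,q}\,D(f)\bigl[\psi_\delta(\mathcal{L}f(w))\,\zeta(|z|)\,\theta^n_{k,q}\bigr].
\]
Applying Proposition~\ref{proposition:EstimateThetaDifferentialCycle} to each term with $\psi=\psi_\delta$ and letting $\delta\to0$ yields
\[
\Bigl|\int_{\C^n}\zeta(|z|)\,d\Theta^n_{k,q}(f)\Bigr|\le c_{n,k,q}\tilde A_{n,k,q}\Bigl(\sup_{|z|\le R+1}|f(z)|\Bigr)^k\|\zeta\|_{D^{2n-k}}.
\]

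\textbf{Step 2 (general smooth $f$) and conclusion.} For $f\in\Conv(\C^n,\R)\cap C^\infty(\C^n)$, Lemma~\ref{lemma:LipschitzConstant} gives $|df(0)|\le\tfrac{2}{R+1}\sup_{|z|\le R+1}|f(z)|$, so $g_\lambda:=f-\langle df(0),\cdot\rangle-f(0)+\lambda|\cdot|^2$ lies in $\Conv_0^+(\C^n,\R)\cap C^\infty(\C^n)$ for every $\lambda>0$, with $\sup_{|z|\le R+1}|g_\lambda(z)|\le4\sup_{|z|\le R+1}|f(z)|+\lambda(R+1)^2$. Since $\Theta^n_{k,q}$ is dually epi-translation invariant and weak*-continuous (Theorem~\ref{theorem:MAOperatorsDefinedDifferentialCycle}), $\int\zeta(|z|)\,d\Theta^n_{k,q}(g_\lambda)=\int\zeta(|z|)\,d\Theta^n_{k,q}(f+\lambda|\cdot|^2)\to\int\zeta(|z|)\,d\Theta^n_{k,q}(f)$ as $\lambda\to0$; combining this with Step~1 applied to $g_\lambda$ gives the asserted inequality for all smooth convex $f$ with constant $4^kc_{n,k,q}\tilde A_{n,k,q}$. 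Now both sides of the inequality are continuous in $f\in\Conv(\C^n,\R)$ — the right-hand side because $f\mapsto\sup_{|z|\le R+1}|f(z)|$ is continuous, the left-hand side because $\Theta^n_{k,q}$ is weak*-continuous and $\zeta(|\cdot|)\in C_c(\C^n)$ — so, smooth convex functions being dense in $\Conv(\C^n,\R)$, the bound extends to all $f\in\Conv(\C^n,\R)$. Finally, for $\zeta$ with $\supp\zeta\subset[0,R]$ we apply this to every $R'>R$ (so that $\supp\zeta\subset[0,R')$) and let $R'\downarrow R$, using continuity of $f$ to get $\sup_{|z|\le R'+1}|f(z)|\to\sup_{|z|\le R+1}|f(z)|$. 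Setting $A_{n,k,q}:=4^kc_{n,k,q}\tilde A_{n,k,q}$ completes the proof.

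\textbf{Main obstacle.} The substantial analytic content is already contained in Proposition~\ref{proposition:EstimateThetaDifferentialCycle}; here the only delicate point is the dominated-convergence passage $D(f)[\psi_\delta(\mathcal{L}f(w))\zeta(|z|)\theta^n_{k,q}]\to c_{n,k,q}^{-1}\int\zeta(|z|)\,d\Theta^n_{k,q}(f)$, which relies on the absolute continuity of $\Theta^n_{k,q}(f)$ for smooth $f$ together with the control $\int_0^{c(f)}|\psi_\delta'|\to1$. The remaining steps are the same density-and-translation argument as in Theorem~\ref{theorem:Estimate_BC}.
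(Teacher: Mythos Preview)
Your proof is correct and follows precisely the approach the paper intends: the paper's own proof is the single sentence ``This follows from Proposition~\ref{proposition:EstimateThetaDifferentialCycle} using the same argument as in the proof of Theorem~\ref{theorem:Estimate_BC},'' and you have faithfully expanded that argument. Your extra care about the discrepancy between $\supp\zeta\subset[0,R)$ in Proposition~\ref{proposition:EstimateThetaDifferentialCycle} and $\supp\zeta\subset[0,R]$ in the theorem (handled by letting $R'\downarrow R$) is a detail the paper glosses over, so your write-up is, if anything, slightly more complete.
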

		\begin{proof}
			This follows from Proposition \ref{proposition:EstimateThetaDifferentialCycle} using the same argument as in the proof of Theorem \ref{theorem:Estimate_BC}.
		\end{proof}

	\subsection{Construction of singular valuations}
		Consider the maps
		\begin{align*}
			T^{n}_{k,q}:C_c([0,\infty))\cap D^{2n-k}_R&\rightarrow\VConv_{k,q}(\C^n)^{\U(n)}\cap \VConv_{B_R(0)}(\C^n)\\
			\notag
			\zeta&\mapsto \left[f\mapsto \int_{\C^n}\zeta(|z|)d\Theta^n_{k,q}\right],\\
			Y^{n}_{k,q}:C_c([0,\infty))\cap \tilde{D}^{2n-k+2}_R&\rightarrow\VConv_{k,q}(\C^n)^{\U(n)}\cap \VConv_{B_R(0)}(\C^n)\\
			\notag
			\tilde{\zeta}&\mapsto \left[f\mapsto \int_{\C^n}\tilde{\zeta}(|z|)d\Upsilon^n_{k,q}\right].
		\end{align*}
		Theorem \ref{maintheorem:ExistenceExtensionIntegration} follows directly from the following result.
		\begin{theorem}
			\label{theorem:ExistenceT}
			$T^n_{k,q}$ and $Y^n_{k,q}$ extend uniquely by continuity to continuous maps 
			\begin{align*}
				T^{n}_{k,q}:D^{2n-k}_R&\rightarrow\VConv_{k,q}(\C^n)^{\U(n)}\cap \VConv_{B_R(0)}(\C^n),\\
				Y^{n}_{k,q}:\tilde{D}^{2n-k+2}_R&\rightarrow\VConv_{k,q}(\C^n)^{\U(n)}\cap \VConv_{B_R(0)}(\C^n).
			\end{align*} Moreover,
			\begin{align*}
				&\|T^n_{k,q}(\zeta)\|_{R,1}\le A_{n,k,q}\|\zeta\|_{D^{2n-k}},
				&&\|Y^n_{k,q}(\tilde{\zeta})\|_{R,1}\le B_{n,k,q}\|\tilde{\zeta}\|_{\tilde{D}^{2n-k+2}}
			\end{align*}
			for every $\zeta\in D^{2n-k}_R$, $\tilde{\zeta}\in \tilde{D}^{2n-k+2}_R$.
		\end{theorem}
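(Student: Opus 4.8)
The plan is to identify $T^n_{k,q}$ and $Y^n_{k,q}$ as bounded linear operators from a dense subspace of a Banach space into a Banach space and then invoke the standard bounded-extension principle. First I would check that for $\zeta\in C_c([0,\infty))\cap D^{2n-k}_R$ the functional $T^n_{k,q}(\zeta)\colon f\mapsto\int_{\C^n}\zeta(|z|)\,d\Theta^n_{k,q}(f)$ indeed takes values in $\VConv_{k,q}(\C^n)^{\U(n)}\cap\VConv_{B_R(0)}(\C^n)$. Since $\Theta^n_{k,q}=c_{n,k,q}\Psi_{\theta^n_{k,q}}$ and $\Psi_{\theta^n_{k,q}}$ is a weak$^*$-continuous $\mathcal{M}(\C^n)$-valued valuation by Theorem \ref{theorem:MAOperatorsDefinedDifferentialCycle}, integrating against the fixed compactly supported continuous function $z\mapsto\zeta(|z|)$ produces a continuous scalar valuation; it is $k$-homogeneous because $\theta^n_{k,q}$ is a constant-coefficient form of bidegree $(2n-k,k)$ on $T^*\C^n=\C^n\times(\C^n)^*$, so $G_{tf}^*\theta^n_{k,q}=t^kG_f^*\theta^n_{k,q}$ for smooth $f$ and hence $\Theta^n_{k,q}(tf)=t^k\Theta^n_{k,q}(f)$ by weak$^*$-continuity; it is dually epi-translation invariant because $\theta^n_{k,q}$ is invariant under translations in the second factor (Theorem \ref{theorem:MAOperatorsDefinedDifferentialCycle} again); it is $\U(n)$-invariant because $\theta^n_{k,q}$ is $\U(n)$-invariant and $\zeta(|\cdot|)$ is radial; its support lies in $B_R(0)$ by locality of the differential cycle together with Proposition \ref{proposition:characterizationSupport} and $\supp\zeta\subset[0,R]$; and $\pi_{E_{k,p}*}T^n_{k,q}(\zeta)=0$ for $p\ne q$ because $\Theta^n_{k,q}(\pi_{E_{k,p}}^*f)=0$ for $p\ne q$. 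The identical checks apply to $Y^n_{k,q}$, using $\Upsilon^n_{k,q}=\mathcal{B}^n_{k,q}-\mathcal{C}^n_{k,q}$ and the corresponding vanishing statement.

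Next I would turn the integral estimates into operator bounds. By the definition of the norm $\|\cdot\|_{R,1}$ in Lemma \ref{lemma:NormsValuationsBoundedSupport}, Theorem \ref{theorem:EstimateTheta} is exactly the statement $\|T^n_{k,q}(\zeta)\|_{R,1}\le A_{n,k,q}\|\zeta\|_{D^{2n-k}}$ for all $\zeta\in C_c([0,\infty))$ with $\supp\zeta\subset[0,R]$, and Corollary \ref{corollary:estimateUpsilon} gives $\|Y^n_{k,q}(\tilde\zeta)\|_{R,1}\le B_{n,k,q}\|\tilde\zeta\|_{\tilde{D}^{2n-k+2}}$ for the analogous class of functions. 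Thus $T^n_{k,q}$ is a bounded linear map on $(C_c([0,\infty))\cap D^{2n-k}_R,\|\cdot\|_{D^{2n-k}})$, and likewise $Y^n_{k,q}$ on $(C_c([0,\infty))\cap\tilde{D}^{2n-k+2}_R,\|\cdot\|_{\tilde{D}^{2n-k+2}})$.

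Finally I would assemble the extension. The codomain is a Banach space: $\VConv_{B_R(0)}(\C^n)$ is Banach under $\|\cdot\|_{R,1}$ by Lemma \ref{lemma:NormsValuationsBoundedSupport}, and $\VConv_{k,q}(\C^n)^{\U(n)}\cap\VConv_{B_R(0)}(\C^n)$ is a closed subspace of it, being cut out by the closed conditions of $k$-homogeneity, $\U(n)$-invariance, and $\pi_{E_{k,p}*}\mu=0$ for $p\ne q$ (the last closed since $\pi_{E_{k,p}*}$ is continuous by Lemma \ref{lemma:continuityPushForward}). The domains are dense in the Banach spaces $D^{2n-k}_R$ and $\tilde{D}^{2n-k+2}_R$ by Lemma \ref{lemma:DensityDNJ_continuousFunctions} and Lemmas \ref{lemma:tildeDcomplete}, \ref{lemma:DensityTildeD_continuousFunctions}. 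Since a bounded linear operator from a dense subspace of a normed space into a Banach space extends uniquely to a bounded operator on the whole space with the same operator norm, this yields the asserted extensions $T^n_{k,q}\colon D^{2n-k}_R\to\VConv_{k,q}(\C^n)^{\U(n)}\cap\VConv_{B_R(0)}(\C^n)$ and $Y^n_{k,q}\colon\tilde{D}^{2n-k+2}_R\to\VConv_{k,q}(\C^n)^{\U(n)}\cap\VConv_{B_R(0)}(\C^n)$ satisfying the displayed inequalities; uniqueness is automatic because two continuous maps into a Hausdorff space agreeing on a dense set agree everywhere.

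The genuinely hard analytic content is already absorbed into Theorem \ref{theorem:EstimateTheta} and Corollary \ref{corollary:estimateUpsilon}, so the main obstacle here is not the estimate itself but ensuring the extension is canonical, i.e. that $C_c([0,\infty))\cap D^{2n-k}_R$ is dense in $D^{2n-k}_R$ for the norm $\|\cdot\|_{D^{2n-k}}$ (and not merely in sup-norm), which is precisely Lemma \ref{lemma:DensityDNJ_continuousFunctions}, together with its $\tilde{D}^{2n-k+2}$-analogue in Lemmas \ref{lemma:tildeDcomplete} and \ref{lemma:DensityTildeD_continuousFunctions}; without this the extended operators could fail to be well defined.
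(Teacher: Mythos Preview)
Your proposal is correct and follows essentially the same route as the paper: translate Theorem~\ref{theorem:EstimateTheta} and Corollary~\ref{corollary:estimateUpsilon} into the operator bounds $\|T^n_{k,q}(\zeta)\|_{R,1}\le A_{n,k,q}\|\zeta\|_{D^{2n-k}}$ and $\|Y^n_{k,q}(\tilde\zeta)\|_{R,1}\le B_{n,k,q}\|\tilde\zeta\|_{\tilde D^{2n-k+2}}$, and then extend from the dense subspaces $C_c([0,\infty))\cap D^{2n-k}_R$ and $C_c([0,\infty))\cap\tilde D^{2n-k+2}_R$ (Lemmas~\ref{lemma:DensityDNJ_continuousFunctions} and~\ref{lemma:DensityTildeD_continuousFunctions}) into the complete codomain. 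Your additional verification that the pre-extension maps land in $\VConv_{k,q}(\C^n)^{\U(n)}\cap\VConv_{B_R(0)}(\C^n)$ is something the paper simply takes as given from the definitions preceding the theorem, but it is a reasonable point to make explicit.
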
	
		\begin{proof}
			From Theorem \ref{theorem:EstimateTheta}, we obtain for $\zeta\in D^{2n-k}_R\cap C_c([0,\infty))$ the estimate
			\begin{align*}
				\|T^n_{k,q}(\zeta)\|_{R,1}\le A_{n,k,q}\|\zeta\|_{D^{2n-k}},
			\end{align*}
			so 
			\begin{align*}
				T^n_{k,q}:D^{2n-k}_R\cap C_c([0,\infty))\rightarrow \VConv_{k,q}(\C^n)^{\U(n)}\cap \VConv_{B_R(0)}(\C^n)
			\end{align*}
			is continuous. Since $D^{2n-k}_R\cap C_c([0,\infty))\subset D^{2n-k}_R$ is dense by Lemma \ref{lemma:DensityDNJ_continuousFunctions} and $\VConv_{k,q}(\C^n)^{\U(n)}\cap \VConv_{B_R(0)}(\C^n)$ is complete, $T^n_{k,q}$ thus extends by continuity to $D^{2n-k}_R$. Using Corollary \ref{corollary:estimateUpsilon} and Lemma \ref{lemma:DensityTildeD_continuousFunctions}, the same argument applies to $Y^n_{k,q}$.
		\end{proof}

	\subsection{Representation in terms of principal value integrals}
		Next, we are going to prove the following version of Theorem \ref{maintheorem:PrincipalValueRepresentation}.
			\begin{theorem}
				\label{theorem:PrincipalValue}
				Let $\zeta\in D^{2n-k}_R$, $\tilde{\zeta}\in \tilde{D}_R^{2n-k+2}$. Then for $f\in \Conv(\C^n,\R)$,
				\begin{align*}
					T^n_{k,q}(\zeta)[f]=&\lim\limits_{\epsilon\rightarrow0}\int_{\C^n\setminus B_\epsilon(0)} \zeta(|z|)d\Theta^n_{k,q}(f),\\
					Y^n_{k,q}(\tilde{\zeta})[f]=&\lim\limits_{\epsilon\rightarrow0}\int_{\C^n\setminus B_\epsilon(0)} \tilde{\zeta}(|z|)d\Upsilon^n_{k,q}(f),
				\end{align*}
				where the convergence is uniform on subsets of the form $\{f\in\Conv(\C^n,\R): \sup_{|z|\le R+1} |f(z)|\le M\}$ for $M>0$. In particular the convergence is uniform on compact subsets of $\Conv(\C^n,\R)$.
			\end{theorem}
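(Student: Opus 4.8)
The plan is to reduce the principal value identity to the trivial case of continuous densities and then pass to general $\zeta\in D^{2n-k}_R$, $\tilde\zeta\in\tilde D^{2n-k+2}_R$ by approximation, keeping all estimates uniform on the sublevel set $\mathcal{F}_M:=\{f\in\Conv(\C^n,\R):\sup_{|z|\le R+1}|f(z)|\le M\}$ for fixed $M>0$; the assertion about compact subsets then follows because $\sup_{f\in K}\sup_{|z|\le R+1}|f(z)|<\infty$ for every compact $K\subset\Conv(\C^n,\R)$, so $K\subset\mathcal{F}_M$ for suitable $M$. No reduction to smooth convex functions is needed, since Theorem \ref{theorem:EstimateTheta}, Corollary \ref{corollary:estimateUpsilon} and Corollary \ref{corollary:GeneralIntegrabilityMA} already hold for all $f\in\Conv(\C^n,\R)$. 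I record that $\Theta^n_{k,q}(f)$ is a positive Radon measure (Corollary \ref{corollary:ThetaPositivemeasure}) and that Corollary \ref{corollary:GeneralIntegrabilityMA}, applied to the $(2n-k)$-homogeneous form $\theta^n_{k,q}$ and to the $(2n-k+2)$-homogeneous forms defining $\mathcal{B}^n_{k,q}$, $\mathcal{C}^n_{k,q}$, yields a constant $C>0$ with $\Theta^n_{k,q}(f)(B_\epsilon(0))\le C M^k\epsilon^{2n-k}$ and $|\Upsilon^n_{k,q}(f)|(B_\epsilon(0))\le C M^k\epsilon^{2n-k+2}$ for all $f\in\mathcal{F}_M$; in particular $\{0\}$ is null for each of these measures.

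For $\zeta\in C_c([0,\infty))\cap D^{2n-k}_R$ the function $z\mapsto\zeta(|z|)$ is bounded, continuous and compactly supported, hence integrable against $\Theta^n_{k,q}(f)$, and $T^n_{k,q}(\zeta)[f]=\int_{\C^n}\zeta(|z|)\,d\Theta^n_{k,q}(f)$ by definition. Since $\{0\}$ is null, the truncated integrals converge to this value as $\epsilon\to0$, the difference $\int_{B_\epsilon(0)}\zeta(|z|)\,d\Theta^n_{k,q}(f)$ being bounded in absolute value, by Corollary \ref{corollary:GeneralIntegrabilityMA}, by $C M^k\big(\sup_{0<t\le\epsilon}t^{2n-k}|\zeta(t)|+\tfrac{\epsilon^{2n-k}}{2n-k}\sup_{[0,\epsilon]}|\zeta|\big)$, which tends to $0$ uniformly on $\mathcal{F}_M$. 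The same argument, using $|\int_{B_\epsilon(0)}\tilde\zeta(|z|)\,d\Upsilon^n_{k,q}(f)|\le\sup_{[0,\epsilon]}|\tilde\zeta|\cdot|\Upsilon^n_{k,q}(f)|(B_\epsilon(0))$, settles the case $\tilde\zeta\in C_c([0,\infty))\cap\tilde D^{2n-k+2}_R$ for $Y^n_{k,q}$.

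For general $\zeta\in D^{2n-k}_R$ I would approximate by the continuous functions $\zeta^r$ of Lemma \ref{lemma:DensityDNJ_continuousFunctions}, so that $\|\zeta-\zeta^r\|_{D^{2n-k}}\to0$, and establish, for $0<\epsilon<r$ and $h:=\zeta-\zeta^r$ (which is supported in $(0,r]$ and agrees on $\{|z|\ge\epsilon\}$ with its continuous truncation $h^\epsilon\in C_c([0,\infty))$), the bound
\begin{align*}
	\Big|\int_{\C^n\setminus B_\epsilon(0)}h(|z|)\,d\Theta^n_{k,q}(f)\Big|\le A_{n,k,q}M^k\big(\|h\|_{D^{2n-k}}+\|h-h^\epsilon\|_{D^{2n-k}}\big)+\big(|\zeta(\epsilon)|+|\zeta(r)|\big)\epsilon^{2n-k}C M^k ,
\end{align*}
uniformly on $\mathcal{F}_M$. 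Indeed $\int_{\C^n\setminus B_\epsilon(0)}h(|z|)\,d\Theta^n_{k,q}(f)=\int_{\C^n}h^\epsilon(|z|)\,d\Theta^n_{k,q}(f)-(\zeta(\epsilon)-\zeta(r))\,\Theta^n_{k,q}(f)(B_\epsilon(0))$; the first term is estimated by Theorem \ref{theorem:EstimateTheta} after $\|h^\epsilon\|_{D^{2n-k}}\le\|h\|_{D^{2n-k}}+\|h-h^\epsilon\|_{D^{2n-k}}$, and the second by the ball bound above together with $\lim_{\epsilon\to0}|\zeta(\epsilon)|\epsilon^{2n-k}=0$ (definition of $D^{2n-k}$) and Lemma \ref{lemma:DensityDNJ_continuousFunctions} (which gives $\|h-h^\epsilon\|_{D^{2n-k}}\to0$). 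For $\tilde\zeta\in\tilde D^{2n-k+2}_R$ one argues analogously with the approximants $\tilde\zeta_r$ of Lemma \ref{lemma:DensityTildeD_continuousFunctions} and Corollary \ref{corollary:estimateUpsilon}; here the relevant truncation does not increase the $\tilde D^{2n-k+2}$-norm, so the only delicate point is the boundary term $\int_{B_\epsilon(0)}(\tilde\zeta-\tilde\zeta_r)_\epsilon(|z|)\,d\Upsilon^n_{k,q}(f)=\epsilon^{-(2n-k+2)}\int_{B_\epsilon(0)}|z|^{2n-k+2}(\tilde\zeta-\tilde\zeta_r)(|z|)\,d\Upsilon^n_{k,q}(f)$. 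This cannot be treated by applying $|\Upsilon^n_{k,q}(f)|$ to $|\tilde\zeta-\tilde\zeta_r|$ — that integral may be infinite, which is exactly why a principal value is needed. Instead one uses that $t\mapsto t^{2n-k+2}(\tilde\zeta-\tilde\zeta_r)(t)$ extends continuously to $[0,\infty)$ and vanishes at the origin, so its supremum over $(0,\epsilon]$ tends to $0$ and absorbs the factor $\epsilon^{-(2n-k+2)}$ against $|\Upsilon^n_{k,q}(f)|(B_\epsilon(0))\le C M^k\epsilon^{2n-k+2}$. This boundary estimate for the signed measures $\Upsilon^n_{k,q}(f)$ is the step I expect to be the main obstacle.

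Finally I would combine everything. Fix $M$. By the continuous case, $\epsilon\mapsto\int_{\C^n\setminus B_\epsilon(0)}\zeta^r(|z|)\,d\Theta^n_{k,q}(f)$ is uniformly Cauchy on $\mathcal{F}_M$ with limit $T^n_{k,q}(\zeta^r)[f]$; by the displayed bound, replacing $\zeta^r$ by $\zeta$ alters the truncated integral by at most $A_{n,k,q}M^k\|\zeta-\zeta^r\|_{D^{2n-k}}$ plus a term tending to $0$ as $\epsilon\to0$. Hence $\epsilon\mapsto\int_{\C^n\setminus B_\epsilon(0)}\zeta(|z|)\,d\Theta^n_{k,q}(f)$ is uniformly Cauchy on $\mathcal{F}_M$, so converges uniformly to some $L(f)$ with $|L(f)-T^n_{k,q}(\zeta^r)[f]|\le A_{n,k,q}M^k\|\zeta-\zeta^r\|_{D^{2n-k}}$; letting $r\to0$ and invoking continuity of $T^n_{k,q}$ with $\|T^n_{k,q}(\zeta)-T^n_{k,q}(\zeta^r)\|_{R,1}\le A_{n,k,q}\|\zeta-\zeta^r\|_{D^{2n-k}}$ (Theorem \ref{theorem:ExistenceT}) and $|T^n_{k,q}(\mu)[f]|\le M^k\|\mu\|_{R,1}$ on $\mathcal{F}_M$ yields $L(f)=T^n_{k,q}(\zeta)[f]$. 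The identical chain of implications, with $\tilde D^{2n-k+2}_R$, $Y^n_{k,q}$ and $\Upsilon^n_{k,q}$ in place of $D^{2n-k}_R$, $T^n_{k,q}$ and $\Theta^n_{k,q}$, yields the second formula, which completes the proof.
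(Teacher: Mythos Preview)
Your proof is correct and follows essentially the same strategy as the paper: approximate $\zeta$ (resp.\ $\tilde\zeta$) by its continuous truncations, control the difference via the uniform estimates of Theorem~\ref{theorem:EstimateTheta} and Corollary~\ref{corollary:estimateUpsilon} together with ball-mass bounds, and conclude using the continuity of $T^n_{k,q}$ and $Y^n_{k,q}$ from Theorem~\ref{theorem:ExistenceT}. The paper's execution is simply more direct: it uses only the single truncation $\zeta^\epsilon$ (your $\zeta^r$ with $r=\epsilon$) and bounds
\[
\bigl|T^n_{k,q}(\zeta)[f]-\textstyle\int_{\C^n\setminus B_\epsilon(0)}\zeta(|z|)\,d\Theta^n_{k,q}(f)\bigr|
\le \bigl|T^n_{k,q}(\zeta)[f]-T^n_{k,q}(\zeta^\epsilon)[f]\bigr|+|\zeta(\epsilon)|\,\Theta^n_{k,q}(f)(B_\epsilon(0))
\]
in one step, so no separate Cauchy argument is needed; the same shortcut works for $Y^n_{k,q}$ with $\tilde\zeta_\epsilon$ in place of $\zeta^\epsilon$. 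Your two-parameter scheme ($r$ first, then $\epsilon$) reaches the same destination with one additional layer.

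One remark on the citation: you invoke Corollary~\ref{corollary:GeneralIntegrabilityMA} for the bound $\Theta^n_{k,q}(f)(B_\epsilon(0))\le CM^k\epsilon^{2n-k}$, which corresponds to the borderline case $l=0$; the proof of that corollary passes through Proposition~\ref{proposition:GeneralBoundMA}, which is stated for $l>0$. For $\Theta^n_{k,q}$ it is therefore cleaner to obtain the ball bound directly from Theorem~\ref{theorem:EstimateTheta} applied to a continuous majorant $\phi_\epsilon(t)=\phi(t/\epsilon)$ of $1_{[0,\epsilon]}$ (using positivity of $\Theta^n_{k,q}$ and $\|\phi_\epsilon\|_{D^{2n-k}}\lesssim\epsilon^{2n-k}$), which is exactly what the paper does. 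For $\Upsilon^n_{k,q}$ your use of Corollary~\ref{corollary:GeneralIntegrabilityMA} is unproblematic since $l=2>0$, and your treatment of the boundary term there is in fact slightly more explicit than the paper's.
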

			\begin{proof}
				We start with the first equation. Choose $\phi\in C_c([0,\infty))$ with $\phi\equiv 1$ on $[0,1]$, $\supp\phi\subset [0,2]$. Consider the function $\zeta^\epsilon$ defined in Lemma \ref{lemma:DensityDNJ_continuousFunctions}. Since $\zeta(t)$ and $\zeta^\epsilon(t)$ coincide for $t\ge \epsilon$, we obtain
				\begin{align*}
					&\left|T^n_{k,q}(\zeta)[f]-\int_{\C^n\setminus B_\epsilon(0)}\zeta(|z|)d\Theta^n_{k,q}(f)\right|\\
					\le& |T^n_{k,q}(\zeta)-T^n_{k,q}(\zeta^\epsilon)[f]|+\left|T^n_{k,q}(\zeta^\epsilon)[f]-\int_{\C^n\setminus B_\epsilon(0)}\zeta(|z|)d\Theta^n_{k,q}(f)\right|\\
					\le & |T^n_{k,q}(\zeta)-T^n_{k,q}(\zeta^\epsilon)[f]|+\int_{B_\epsilon(0)}|\zeta^\epsilon(|z|)|d\Theta^n_{k,q}(f)\\
					\le & |T^n_{k,q}(\zeta)-T^n_{k,q}(\zeta^\epsilon)[f]|+|\zeta(\epsilon)|\int_{\C^n}\phi_{\epsilon}(|z|)d\Theta^n_{k,q}(f),
				\end{align*}
				where we set $\phi_\epsilon(t):=\phi\left(\frac{t}{\epsilon}\right)$ for $t\ge 0$. Since $\phi_\epsilon$ is supported on $(0,R]$ for $\epsilon>0$ small enough, the inequalities in Theorem \ref{theorem:EstimateTheta} and Theorem \ref{theorem:ExistenceT} imply
				\begin{align*}
					&\left|T^n_{k,q}(\zeta)[f]-\int_{\C^n\setminus B_\epsilon(0)}\zeta(|z|)d\Theta^n_{k,q}(f)\right|\\
					\le& A_{n,k,q}\left(\|\zeta-\zeta^\epsilon\|_{D^{2n-k}}+|\zeta(\epsilon)|\cdot\|\phi_\epsilon\|_{D^{2n-k}}\right)\cdot \left(\sup_{|z|\le R+1}|f(z)|\right)^k.
				\end{align*}
				It is easy to see (compare the proof of \cite[Theorem 6.10]{KnoerrSingularvaluationsHadwiger2022}) that 
				\begin{align*}
					\|\phi_\epsilon\|_{D^{2n-k}}\le 4\epsilon^{2n-k}\|\phi\|_{D^{2n-k}}. 
				\end{align*}
				For $f\in \{h\in\Conv(\C^n,\R): \sup_{|z|\le R+1}|h(z)|\le M\}$, we thus obtain
				\begin{align*}
					\left|T^n_{k,q}(\zeta)[f]-\int_{\C^n\setminus B_\epsilon(0)}\zeta(|z|)d\Theta^n_{k,q}(f)\right|\le A_{n,k,q}M^k\left(\|\zeta-\zeta^\epsilon\|_{D^{2n-k}}+|\zeta(\epsilon)|\epsilon^{2n-k}\cdot\|\phi\|_{D^{2n-k}}\right),
				\end{align*}
				which converges to zero as $\zeta^\epsilon$ converges to $\zeta\in D^{2n-k}$. This implies the first claim. Since every compact subset of $\Conv(\C^n,\R)$ is contained in a subset of the form $\{h\in\Conv(\C^n,\R): \sup_{|z|\le R+1}|h(z)|\le M\}$ (compare the characterization of compact subsets in \cite[Proposition 2.4]{Knoerrsupportduallyepi2021}), the claim follows.\\
				
				For the second equation, we consider the function $\tilde{\zeta}_\epsilon$ defined in Lemma \ref{lemma:DensityTildeD_continuousFunctions} and similarly obtain
				\begin{align*}
					&\left|Y^n_{k,q}(\tilde{\zeta})[f]-\int_{\C^n\setminus B_\epsilon(0)}\tilde{\zeta}(|z|)d\Upsilon^n_{k,q}(f)\right|
					\le& 	\left|Y^n_{k,q}(\tilde{\zeta})[f]-Y^n_{k,q}(\tilde{\zeta}_\epsilon)[f]\right| +\left|\int_{ B_\epsilon(0)}\tilde{\zeta}_\epsilon(|z|)d\Upsilon^n_{k,q}(f)\right|.
				\end{align*}
				Since the form inducing $\Upsilon^n_{k,q}$ is homogeneous of degree $2n-k+2$, Proposition \ref{proposition:GeneralBoundMA} implies
				\begin{align*}
					\left|\int_{B_\epsilon(0)}\tilde{\zeta}_\epsilon(|z|)d\Upsilon^n_{k,q}(f)\right|\le&|\tilde{\zeta}(\epsilon)|\int_{ B_\epsilon(0)}d|\Upsilon^n_{k,q}(f)|\le C(\Upsilon^n_{k,q})|\tilde{\zeta}(\epsilon)|\int_{B_\epsilon(0)}\phi_\epsilon(|z|)|z|^2d\Phi_{k}(f)\\
					\le& \tilde{C}\left(\sup_{|z|\le R+1}|f(z)|\right)^k|\tilde{\zeta}(\epsilon)|\cdot\|\tilde{\phi}_\epsilon \|_{D^{2n-k}}
				\end{align*}
				for $\tilde{\phi}_\epsilon(t)=t^2\phi_\epsilon(t)$, for some constant $\tilde{C}$ independent of $R>0$, $f\in\Conv(\C^n,\R)$, $\tilde{\zeta}\in \tilde{D}^{2n-k+2}_R$. Here we used Corollary \ref{corollary:GeneralIntegrabilityMA} in the last step. A simple estimate shows 
				\begin{align*}
					\|\tilde{\phi}_\epsilon\|_{D^{2n-k}}\le 4\epsilon^{2n-k+2}\|\phi\|_{D^{2n-k+2}}.
				\end{align*}
			Combining this estimate with the norm estimate for $Y^n_{k,q}$ in Theorem \ref{theorem:ExistenceT}, we obtain a constant $C>0$ independent of $f\in\Conv(\C^n,\R)$, $R>0$, and $\tilde{\zeta}\in \tilde{D}^{2n-k+2}$ such that
			\begin{align*}
					&\left|Y^n_{k,q}(\tilde{\zeta})[f]-\int_{\C^n\setminus B_\epsilon(0)}\tilde{\zeta}(|z|)d\Upsilon^n_{k,q}(f)\right|\\
				\le& C\left(\sup_{|z|\le R+1}|f(z)\right)^k\left[\|\tilde{\zeta}-\tilde{\zeta}_\epsilon\|_{\tilde{D}^{2n-k+2}}+4\epsilon^{2n-k+2}|\tilde{\zeta}(\epsilon)|\cdot\|\phi\|_{D^{2n-k+2}}\right]
			\end{align*}
			Since $\tilde{\zeta}\in \tilde{D}^{2n-k+2}$, and $\tilde{\zeta}_\epsilon$ converges to $\tilde{\zeta}$ with respect to $\|\cdot\|_{\tilde{D}^{2n-k+2}}$ by Lemma \ref{lemma:DensityTildeD_continuousFunctions}, the right hand side of this equation converges to $0$ for $\epsilon\rightarrow0$. Now the claim follows as in the previous case.
			\end{proof}
		
				\begin{corollary}
				\label{corollary:representationConv0}
				Let $f\in \Conv(\C^n,\R)$, $\zeta\in D^{2n-k}$, $\tilde{\zeta}\in \tilde{D}^{2n-k+2}$. If $z\mapsto \zeta(|z|)$ is integrable with respect to $\Theta^n_{k,q}(f)$, then
				\begin{align*}
					T^n_{k,q}(\zeta)[f]=\int_{\C^n}\zeta(|z|)d\Theta^n_{k,q}(f,z).
				\end{align*}
				Similarly, if $z\mapsto \tilde{\zeta}(|z|)$ is integrable with respect to $\Upsilon^n_{k,q}(f)$, then
				\begin{align*}
					Y^n_{k,q}(\tilde{\zeta})[f]=\int_{\C^n}\tilde{\zeta}(|z|)d\Upsilon^n_{k,q}(f,z)
				\end{align*}
				This is in particular the case if $f$ is of class $C^2$ in a neighborhood of the origin.
			\end{corollary}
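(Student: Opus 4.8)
The plan is to deduce both identities directly from the principal value representation already established in Theorem \ref{theorem:PrincipalValue}, via dominated convergence, and to reduce the $C^2$-case to an integrability check near the origin that follows from the decay conditions encoded in $D^{2n-k}$ and $\tilde D^{2n-k+2}$. Assume first that $z\mapsto\zeta(|z|)$ is integrable with respect to $\Theta^n_{k,q}(f)$. By Theorem \ref{theorem:PrincipalValue},
\[
T^n_{k,q}(\zeta)[f]=\lim_{\epsilon\to0}\int_{\C^n}1_{\C^n\setminus B_\epsilon(0)}(z)\,\zeta(|z|)\,d\Theta^n_{k,q}(f).
\]
As $\epsilon\downarrow0$ the integrands converge pointwise on $\C^n\setminus\{0\}$ to $\zeta(|z|)$ and are bounded in absolute value by $|\zeta(|z|)|$, which is integrable with respect to the \emph{positive} measure $\Theta^n_{k,q}(f)$ (Corollary \ref{corollary:ThetaPositivemeasure}) by hypothesis; dominated convergence then gives $T^n_{k,q}(\zeta)[f]=\int_{\C^n}\zeta(|z|)\,d\Theta^n_{k,q}(f)$. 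The statement for $Y^n_{k,q}$ is obtained by the same argument after replacing $\Theta^n_{k,q}(f)$ by the total variation $|\Upsilon^n_{k,q}(f)|$, $\zeta$ by $\tilde\zeta$, and invoking the second equation of Theorem \ref{theorem:PrincipalValue}.

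It then remains to verify the integrability hypotheses when $f$ is of class $C^2$ on a ball $B_\delta(0)$. Since $\zeta$ and $\tilde\zeta$ have bounded support and the measures involved are Radon, only the behaviour near the origin matters. On $\pi_1^{-1}(B_\delta(0))$ the differential cycle $D(f)$ coincides with the current of integration over the graph of $df|_{B_\delta(0)}$ (locality of the differential cycle), so $\Theta^n_{k,q}(f)$ restricted to $B_\delta(0)$ is absolutely continuous with density the coefficient of $c_{n,k,q}G_f^*\theta^n_{k,q}$, a polynomial in $D^2f$ and hence bounded on $\overline{B_{\delta/2}(0)}$; thus $\int_{B_{\delta/2}(0)}|\zeta(|z|)|\,d\Theta^n_{k,q}(f)$ is, up to a constant, at most $\int_0^{\delta/2}|\zeta(r)|r^{2n-1}\,dr$, and since $r^{2n-k}|\zeta(r)|\to0$ as $r\to0$ (because $\zeta\in D^{2n-k}$) the integrand is $o(r^{k-1})$, integrable for $k\ge1$. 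For $\Upsilon^n_{k,q}$ a crude density estimate would only produce $o(r^{k-3})$ near $0$; instead one uses the bound $|\Upsilon^n_{k,q}(f)|(U)\le C\int_U|z|^2\,d\Phi_k(f)$, which is the $\R^{2n}$-analogue of Proposition \ref{proposition:GeneralBoundMA} applied to the forms $\beta_1\wedge\beta_2\wedge\theta^{n-1}_{k-2,q-1}$ and $\beta_1\wedge\gamma_2\wedge\theta^{n-1}_{k-1,q}$ defining $\mathcal{B}^n_{k,q}$ and $\mathcal{C}^n_{k,q}$: these are of bidegree $(2n-k,k)$ and homogeneous of degree $2n-k+2$, so Proposition \ref{proposition:GeneralBoundMA} is applicable with gain $l=2$. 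For $C^2$ functions $f$ the Hessian measure $\Phi_k(f)$ has bounded density near $0$, so $\int_{B_{\delta/2}(0)}|\tilde\zeta(|z|)|\,d|\Upsilon^n_{k,q}(f)|$ is controlled by $\int_0^{\delta/2}|\tilde\zeta(r)|r^{2n+1}\,dr$, and $r^{2n-k+2}|\tilde\zeta(r)|\to0$ makes this integrand $o(r^{k-1})$, again integrable for $k\ge1$. Together with the first part this proves the corollary.

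The substantive content is already contained in Theorem \ref{theorem:PrincipalValue} and the estimates of Section \ref{section:SingularValuations}, so what remains here is essentially routine bookkeeping. The only point that requires a little care is the near-origin integrability in the $\Upsilon$-case: a naive bound on the density of $|\Upsilon^n_{k,q}(f)|$ is not sufficient for $k\in\{1,2\}$, and one genuinely has to exploit the two extra orders of homogeneity of the defining forms through Proposition \ref{proposition:GeneralBoundMA}.
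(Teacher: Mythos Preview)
Your proof is correct and follows the same route as the paper: deduce the identities from the principal value formula via dominated convergence, and for the $C^2$ claim use that the measures have bounded density near the origin. One remark: the detour through Proposition~\ref{proposition:GeneralBoundMA} for the $\Upsilon$-case is unnecessary, because $\Upsilon^n_{k,q}$ (and hence $Y^n_{k,q}$) is only defined for $\max(1,k-n)\le q\le\lfloor\frac{k-1}{2}\rfloor$, which forces $k\ge 3$; the ``crude'' density bound already gives an integrand that is $o(r^{k-3})$ near $0$, which is integrable for $k\ge 3$, exactly as the paper argues.
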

			\begin{proof}
				The first two claims are a direct consequence of Theorem \ref{theorem:PrincipalValue}. 
				If $f$ is on class $C^2$ in a neighborhood of the origin, then $\Theta^n_{k,q}(f)$ and $\Upsilon^n_{k,q}(f)$ are absolutely continuous with respect to the Lebesgue measure on this neighborhood with continuous densities. In particular, these densities are bounded on a compact neighborhood of the origin. It is then easy to see that $z\mapsto\zeta(|z|)$ is integrable with respect to $\Theta^n_{k,q}(f)$ and that $z\mapsto \tilde{\zeta}(|z|)$ is integrable with respect to $\Upsilon^n_{k,q}(f)$. 
			\end{proof}
	Corollary \ref{corollary:representationConv0} naturally leads to the question under which conditions on $\zeta\in D^{2n-k}$ the function $z\mapsto \zeta(|z|)$ is integrable with respect to $\Theta^n_{k,q}(f)$ for every $f\in\Conv(\C^n,\R)$. This question is completely answered by the following result.
	\begin{corollary}
		\label{corollary:IntegrabilityTheta} Let $0\le k\le 2n-1$. The following are equivalent for $\zeta\in D^{2n-k}$:
		\begin{enumerate}
			\item The function $z\mapsto \zeta(|z|)$ is integrable with respect to $\Theta^n_{k,q}(f)$ for every $f\in \Conv(\C^n,\R)$.
			\item The function $z\mapsto \zeta(|z|)$ is integrable with respect to $\Theta^n_{k,q}(|\cdot|)$.
			\item $\int_0^\infty |\zeta(r)| r^{2n-k-1}dr<\infty$.
			\item $|\zeta|\in D^{2n-k}$.
		\end{enumerate}
	\end{corollary}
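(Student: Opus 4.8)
The plan is to establish the cycle $(1)\Rightarrow(2)\Rightarrow(3)\Rightarrow(4)\Rightarrow(1)$, in which only the last arrow carries any content. The implication $(1)\Rightarrow(2)$ is immediate, since $z\mapsto|z|$ belongs to $\Conv(\C^n,\R)$, so $(1)$ applied to $f=|\cdot|$ is precisely $(2)$; and $(2)\Leftrightarrow(3)$ is exactly Corollary~\ref{corollary:ZetaNotIntegrableWRTTheta}. The equivalence $(3)\Leftrightarrow(4)$ I would read off directly from the definition of $D^{2n-k}$: since $\zeta\in D^{2n-k}$, the function $|\zeta|$ is continuous with support bounded from above and satisfies $\lim_{t\to0}t^{2n-k}|\zeta(t)|=0$, so $|\zeta|\in D^{2n-k}$ holds if and only if $\lim_{t\to0}\int_t^\infty|\zeta(r)|r^{2n-k-1}\,dr$ exists and is finite; as the integrand is non-negative, this limit always exists in $[0,\infty]$ and is finite precisely when $\int_0^\infty|\zeta(r)|r^{2n-k-1}\,dr<\infty$, which is $(3)$.

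The substantive step, and the one I expect to be the only real obstacle, is $(4)\Rightarrow(1)$. Here I would fix $f\in\Conv(\C^n,\R)$, choose $R>0$ with $\supp\zeta\subset(0,R]$, and use that $\Theta^n_{k,q}(f)$ is a positive measure (Corollary~\ref{corollary:ThetaPositivemeasure}) which does not charge the origin (Corollary~\ref{corollary:GeneralIntegrabilityMA}). The idea is to approximate $|\zeta|$ from below by non-negative functions $\eta_j:=\chi_j\,|\zeta|\in C_c([0,\infty))$, where $\chi_j$ are continuous cut-offs with $0\le\chi_j\le\chi_{j+1}\le1$, $\chi_j\equiv0$ near $0$ and $\chi_j\equiv1$ on $[\tfrac1j,\infty)$, so that $\supp\eta_j\subset[0,R]$ and $\eta_j\uparrow|\zeta|$ pointwise on $(0,\infty)$. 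The key point is the uniform bound
\begin{align*}
	\sup_{t>0}t^{2n-k}\eta_j(t)\le\sup_{t>0}t^{2n-k}|\zeta(t)|,\qquad
	\sup_{t>0}\Bigl|\int_t^\infty\eta_j(r)r^{2n-k-1}\,dr\Bigr|\le\int_0^\infty|\zeta(r)|r^{2n-k-1}\,dr,
\end{align*}
whose right-hand sides are finite by $\zeta\in D^{2n-k}$ and $(3)$, so that $\sup_j\|\eta_j\|_{D^{2n-k}}<\infty$.

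Applying the estimate of Theorem~\ref{theorem:EstimateTheta} to each $\eta_j$ and passing to the limit by monotone convergence for the positive measure $\Theta^n_{k,q}(f)$ then yields
\begin{align*}
	\int_{\C^n}|\zeta(|z|)|\,d\Theta^n_{k,q}(f)
	=\lim_{j\to\infty}\int_{\C^n}\eta_j(|z|)\,d\Theta^n_{k,q}(f)
	\le A_{n,k,q}\Bigl(\sup_{|z|\le R+1}|f(z)|\Bigr)^{k}\sup_j\|\eta_j\|_{D^{2n-k}}<\infty,
\end{align*}
which is $(1)$. The only place any care is needed is in arranging the truncations $\eta_j$ so that Theorem~\ref{theorem:EstimateTheta} applies with a bound uniform in $j$; once the uniform $D^{2n-k}$-norm bound above is in place, monotone convergence closes the argument, and the remaining implications are mere bookkeeping from the definitions and the results already established.
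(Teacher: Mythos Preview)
Your proof is correct and follows essentially the same strategy as the paper: the trivial implications are handled identically, and for $(4)\Rightarrow(1)$ both arguments exploit the positivity of $\Theta^n_{k,q}(f)$ together with monotone convergence and the estimate underlying Theorem~\ref{theorem:EstimateTheta}. The only difference is cosmetic: the paper notes that $|\zeta|\in D^{2n-k}$ means $T^n_{k,q}(|\zeta|)$ is already defined, and then invokes the principal value representation (Theorem~\ref{theorem:PrincipalValue}) to write
\[
\int_{\C^n}|\zeta(|z|)|\,d\Theta^n_{k,q}(f)=\lim_{\epsilon\to0}\int_{\C^n\setminus B_\epsilon(0)}|\zeta(|z|)|\,d\Theta^n_{k,q}(f)=T^n_{k,q}(|\zeta|)[f]<\infty,
\]
whereas you bypass the principal value theorem and apply Theorem~\ref{theorem:EstimateTheta} directly to a monotone sequence of continuous truncations with uniformly bounded $D^{2n-k}$-norm. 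Your route is slightly more self-contained (it does not rely on Theorem~\ref{theorem:PrincipalValue}), while the paper's is a line shorter since the approximation has already been absorbed into $T^n_{k,q}$; substantively they are the same argument.
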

	\begin{proof}
		The implications 1. $\Rightarrow$ 2. and 3. $\Rightarrow$ 4. are trivial, while 2. $\Rightarrow$ 3. follows from Corollary \ref{corollary:ZetaNotIntegrableWRTTheta}. We thus only have to show the implication 4. $\Rightarrow$ 1.\\
		Assume that $|\zeta|\in D^{2n-k}$. Monotone convergence and Theorem \ref{theorem:PrincipalValue} imply for $f\in\Conv(\C^n,\R)$
		\begin{align*}
			\int_{\C^n} |\zeta(|z|)|d\Theta^n_{k,q}(f)=&\lim_{\epsilon\rightarrow0}\int_{\C^n\setminus B_\epsilon(0)} |\zeta(|z|)|d\Theta^n_{k,q}(f)=T^n_{k,q}(|\zeta|)[f]<\infty.
		\end{align*}
		Thus $z\mapsto |\zeta(|z|)|$ is integrable with respect to $\Theta^n_{k,q}(f)$ for every $f\in\Conv(\C^n,\R)$.
	\end{proof}
	For $\Upsilon^n_{k,q}$, we obtain the following partial answer.
	\begin{corollary}
		\label{corollary:IntegrabilityUpsilon}
		If $\tilde{\zeta}\in \tilde{D}^{2n-k+2}$ satisfies $|\tilde{\zeta}|\in D^{2n-k+2}$, then $z\mapsto \tilde{\zeta}(|z|)$ is integrable with respect to $\Upsilon^n_{k,q}(f)$ for every $f\in\Conv(\C^n,\R)$.\\
		If $f\in\Conv(\C^n,\R)$ is a functions such that both $\mathcal{B}^n_{k,q}(f)$ and $\mathcal{C}^n_{k,q}(f)$ are non-negative or non-positive, then $z\mapsto \tilde{\zeta}(|z|)$ is integrable with respect to $\mathcal{B}^n_{k,q}(f)$, $\mathcal{C}^n_{k,q}(f)$, and $\Upsilon^n_{k,q}(f)$ for every $\tilde{\zeta}\in \tilde{D}^{2n-k+2}$.
	\end{corollary}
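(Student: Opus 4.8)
The plan is to reduce both assertions to the domination estimate for $\mathcal{B}^n_{k,q}$ and $\mathcal{C}^n_{k,q}$ in terms of the Hessian measure $\Phi_k$, and then, in the second case, to pass to the limit by monotone convergence using the sign hypothesis. For the first step, recall that the differential forms $\beta_1\wedge\beta_2\wedge\theta^{n-1}_{k-2,q-1}$ and $\beta_1\wedge\gamma_2\wedge\theta^{n-1}_{k-1,q}$ defining $\mathcal{B}^n_{k,q}$ and $\mathcal{C}^n_{k,q}$ in Definition \ref{definition:MAOperatirsThetaUpsilonBC} are invariant under translations in the fibre of $T^*\C^n$, of fibre-degree $k$, and homogeneous of degree $2n-k+2$ (as already used in the proof of Theorem \ref{theorem:PrincipalValue}). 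Applying Proposition \ref{proposition:GeneralBoundMA} on $\R^{2n}$ with $l=2$ therefore produces a constant $C>0$ with $|\mathcal{B}^n_{k,q}(f)|(U)\le C\int_U|z|^2\,d\Phi_k(f)$ and $|\mathcal{C}^n_{k,q}(f)|(U)\le C\int_U|z|^2\,d\Phi_k(f)$ for every $f\in\Conv(\C^n,\R)$ and every Borel set $U\subset\C^n$, hence $|\Upsilon^n_{k,q}(f)|(U)\le 2C\int_U|z|^2\,d\Phi_k(f)$; in particular $\{0\}$ is a null set for all of these variations, since $|z|^2$ vanishes there.

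For the first assertion, suppose $|\tilde\zeta|\in D^{2n-k+2}$ and set $\psi(t):=t^2\,|\tilde\zeta(t)|$. A direct comparison of the defining norms shows $\psi\in D^{2n-k}$, with $\psi=|\psi|$, so by the final statement of Corollary \ref{corollary:GeneralIntegrabilityMA} (applied to $\omega=\tilde{\kappa}_k$, for which $\Phi_k$ is a scalar multiple of $\Psi_{\tilde{\kappa}_k}$) the function $z\mapsto\psi(|z|)$ is $\Phi_k(f)$-integrable for every $f$. Combining this with the estimate above gives $\int_{\C^n}|\tilde\zeta(|z|)|\,d|\Upsilon^n_{k,q}(f)|\le 2C\int_{\C^n}\psi(|z|)\,d\Phi_k(f)<\infty$, which is the claimed integrability.

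For the second assertion, fix $f$ so that each of $\mathcal{B}^n_{k,q}(f)$, $\mathcal{C}^n_{k,q}(f)$ has a constant sign, and fix $\tilde\zeta\in\tilde{D}^{2n-k+2}$, say with support in $(0,R]$. I would apply Lemma \ref{lemma:DensityTildeD_continuousFunctions} to $|\tilde\zeta|\in\tilde{D}^{2n-k+2}_R$ to obtain continuous compactly supported functions $g_r\in\tilde{D}^{2n-k+2}_R$ that are non-negative, dominated pointwise by $|\tilde\zeta|$ (hence $\|g_r\|_{\tilde{D}^{2n-k+2}}\le\|\tilde\zeta\|_{\tilde{D}^{2n-k+2}}$), and increase to $|\tilde\zeta|$ pointwise on $(0,\infty)$ as $r\to0$. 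Since $\{0\}$ is $|\mathcal{B}^n_{k,q}(f)|$-null, $g_r(|z|)$ increases to $|\tilde\zeta(|z|)|$ almost everywhere; using the constant sign of $\mathcal{B}^n_{k,q}(f)$ to write $d|\mathcal{B}^n_{k,q}(f)|=\pm\, d\mathcal{B}^n_{k,q}(f)$, monotone convergence together with Theorem \ref{theorem:Estimate_BC} gives $\int_{\C^n}|\tilde\zeta(|z|)|\,d|\mathcal{B}^n_{k,q}(f)|=\lim_{r\to0}\bigl|\int_{\C^n}g_r(|z|)\,d\mathcal{B}^n_{k,q}(f)\bigr|\le C_{n,k,q}\bigl(\sup_{|z|\le R+1}|f(z)|\bigr)^k\|\tilde\zeta\|_{\tilde{D}^{2n-k+2}}<\infty$. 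Thus $z\mapsto\tilde\zeta(|z|)$ is $\mathcal{B}^n_{k,q}(f)$-integrable; the identical argument applies to $\mathcal{C}^n_{k,q}(f)$, and hence to $\Upsilon^n_{k,q}(f)=\mathcal{B}^n_{k,q}(f)-\mathcal{C}^n_{k,q}(f)$.

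The main obstacle is the second assertion: for a general $f$ there is no estimate of the form in Theorem \ref{theorem:Estimate_BC} valid for all of $\tilde{D}^{2n-k+2}$ once one passes to the variation $|\mathcal{B}^n_{k,q}(f)|$ — such functions simply need not be integrable against it — so the sign hypothesis must genuinely be exploited, via the identification of $d|\mathcal{B}^n_{k,q}(f)|$ with $\pm\, d\mathcal{B}^n_{k,q}(f)$ in the monotone-convergence step. The delicate point is therefore to produce continuous compactly supported approximants of $|\tilde\zeta|$ that are simultaneously non-negative, dominated by $|\tilde\zeta|$ (so that their $\tilde{D}^{2n-k+2}$-norms stay bounded and Theorem \ref{theorem:Estimate_BC} gives a bound uniform in $r$), and monotone increasing to $|\tilde\zeta|$ (so that monotone convergence applies); the truncations furnished by Lemma \ref{lemma:DensityTildeD_continuousFunctions} have exactly these three properties.
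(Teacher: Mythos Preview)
Your argument is correct and follows essentially the same route as the paper: the first assertion is a direct application of Corollary~\ref{corollary:GeneralIntegrabilityMA} (the paper applies it in one step to the form defining $\Upsilon^n_{k,q}$, whereas you go through $\Phi_k$ first via Proposition~\ref{proposition:GeneralBoundMA}, but this is just unpacking that corollary's proof), and the second assertion is obtained, as you do, by approximating $|\tilde\zeta|$ from below by the continuous truncations of Lemma~\ref{lemma:DensityTildeD_continuousFunctions}, invoking Theorem~\ref{theorem:Estimate_BC} for each, and passing to the limit by monotone convergence after using the sign hypothesis to identify $d|\mathcal{B}^n_{k,q}(f)|$ with $\pm\,d\mathcal{B}^n_{k,q}(f)$. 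The paper compresses this second step to the phrase ``by approximation from Theorem~\ref{theorem:Estimate_BC}''; your elaboration of it is accurate.
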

	\begin{proof}
		Since the form inducing $\Upsilon^n_{k,q}$ is homogeneous of degree $(2n-k+2)$, the first claim follows from Corollary \ref{corollary:GeneralIntegrabilityMA}.\\
		If $\mathcal{B}^n_{k,q}(f)$ and $\mathcal{C}^n_{k,q}(f)$ are non-negative or non-positive, we obtain the estimates
		\begin{align*}
			\int_{\C^n}|\tilde{\zeta}(|z|)|d|\mathcal{B}^n_{k,q}(f)|\le C_{n,k,q}\left(\sup_{|z|\le R+1}|f(z)|\right)^k \|\tilde{\zeta}\|_{\tilde{D}^{2n-k+2}},\\
			\int_{\C^n}|\tilde{\zeta}(|z|)|d|\mathcal{C}^n_{k,q}(f)|\le C_{n,k,q}\left(\sup_{|z|\le R+1}|f(z)|\right)^k \|\tilde{\zeta}\|_{\tilde{D}^{2n-k+2}}
		\end{align*}
		by approximation from Theorem \ref{theorem:Estimate_BC}. Thus $z\mapsto \tilde{\zeta}(|z|)$ is integrable with respect to both $\mathcal{B}^n_{k,q}(f)$, and $\mathcal{C}^n_{k,q}$, and thus also with respect to $\Upsilon^n_{k,q}(f)=\mathcal{B}^n_{k,q}(f)-\mathcal{C}^n_{k,q}(f)$.
	\end{proof}

	\subsection{Relation to the hermitian intrinsic volumes}
		\begin{lemma}
		\label{lemma:YkqVanishesOnRotationInvariantFunctions}
		For every $\zeta\in \tilde{D}^{2n-k+2}$, $Y^n_{k,q}(\zeta)[u_t]=0$ for all $t\ge 0$.
	\end{lemma}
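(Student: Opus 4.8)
The plan is to reduce the statement to the vanishing of the measure $\Upsilon^n_{k,q}(u_t)$ itself. First I would observe that $u_t(z)=\max(0,|z|-t)$ is a finite convex function on $\C^n$ that depends only on $|z|$, hence is rotation invariant in the sense of Corollary \ref{corollary:VanishingUpsilonRotationInvariantFunctions}. That corollary then immediately gives $\Upsilon^n_{k,q}(u_t)=0$ as an element of $\mathcal{M}(\C^n)$.

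Given this, the conclusion follows at once from the principal value representation in Theorem \ref{theorem:PrincipalValue}. Indeed, any $\zeta\in\tilde{D}^{2n-k+2}$ has bounded support, so $\zeta\in\tilde{D}^{2n-k+2}_R$ for some $R>0$, and Theorem \ref{theorem:PrincipalValue} yields
\[
	Y^n_{k,q}(\zeta)[u_t]=\lim_{\epsilon\rightarrow0}\int_{\C^n\setminus B_\epsilon(0)}\zeta(|z|)\,d\Upsilon^n_{k,q}(u_t).
\]
Every integral on the right-hand side vanishes because it is an integral against the zero measure, so $Y^n_{k,q}(\zeta)[u_t]=0$.

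Alternatively, and without invoking the principal value machinery, one can argue directly by density: for $\zeta\in C_c([0,\infty))\cap\tilde{D}^{2n-k+2}_R$ one has $Y^n_{k,q}(\zeta)[u_t]=\int_{\C^n}\zeta(|z|)\,d\Upsilon^n_{k,q}(u_t)=0$ by the defining formula for $Y^n_{k,q}$ on this subspace together with Corollary \ref{corollary:VanishingUpsilonRotationInvariantFunctions}; since this subspace is dense in $\tilde{D}^{2n-k+2}_R$ by Lemma \ref{lemma:DensityTildeD_continuousFunctions}, and since the linear functional $\zeta\mapsto Y^n_{k,q}(\zeta)[u_t]$ is continuous on $\tilde{D}^{2n-k+2}_R$ --- by the norm bound $\|Y^n_{k,q}(\zeta)\|_{R,1}\le B_{n,k,q}\|\zeta\|_{\tilde{D}^{2n-k+2}}$ of Theorem \ref{theorem:ExistenceT} combined with $|Y^n_{k,q}(\zeta)[u_t]|\le\big(\sup_{|z|\le R+1}|u_t(z)|\big)^k\|Y^n_{k,q}(\zeta)\|_{R,1}$ --- it vanishes on all of $\tilde{D}^{2n-k+2}_R$, hence on $\tilde{D}^{2n-k+2}$. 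There is no real obstacle here: the entire content sits in Corollary \ref{corollary:VanishingUpsilonRotationInvariantFunctions}, which was itself obtained earlier by approximating rotation invariant convex functions by smooth ones and using that $\mathcal{B}^n_{k,q}$ and $\mathcal{C}^n_{k,q}$ agree on smooth rotation invariant functions (Lemma \ref{lemma:BehaviorThetaBCOnRotationInvariantSmooth}).
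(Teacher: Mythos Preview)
Your proposal is correct, and your alternative density argument is precisely the paper's proof: invoke Corollary \ref{corollary:VanishingUpsilonRotationInvariantFunctions} for continuous $\zeta$, then pass to general $\zeta\in\tilde{D}^{2n-k+2}$ via Lemma \ref{lemma:DensityTildeD_continuousFunctions} and the continuity of $Y^n_{k,q}$. Your first route through Theorem \ref{theorem:PrincipalValue} is an equally valid shortcut the paper does not state explicitly, but it is not a genuinely different idea---both rest on $\Upsilon^n_{k,q}(u_t)=0$.
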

	\begin{proof}
		If $\zeta$ is continuous, this follows directly from \ref{corollary:VanishingUpsilonRotationInvariantFunctions}, because $Y^n_{k,q}(\zeta)[u_t]$ is given by integrating $\zeta$ with respect to $\Upsilon^n_{k,q}(u_t)$, which vanishes as $u_t$ is rotation invariant. Since continuous functions are dense in $\tilde{D}^{2n-k+2}_R$ by Lemma \ref{lemma:DensityTildeD_continuousFunctions} and $Y^n_{k,q}$ is continuous, the claim follows.
	\end{proof}
	\begin{lemma}
		\label{lemma:BehaviorT_u_t}
		\begin{align*}
			T^n_{k,q}(\zeta)[u_t]=& 2n\omega_{2n}\binom{n}{k-2q,q}2^{k-2q}\left( (2n-k)\int_t^\infty\zeta(r)r^{2n-k-1}dr+t^{2n-k}\zeta(t)\right).
		\end{align*}
	\end{lemma}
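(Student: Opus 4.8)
The plan is to establish the formula first for continuous $\zeta$, where it follows at once from the explicit computation of $\Theta^n_{k,q}(u_t)$ in Theorem \ref{theorem:ThetaValuesOnUt}, and then to extend it to all $\zeta\in D^{2n-k}$ by a density and continuity argument. Throughout I would fix $R>0$ with $\supp\zeta\subset[0,R]$, so that $T^n_{k,q}(\zeta)$ is defined.

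First I would take $\zeta\in C_c([0,\infty))\cap D^{2n-k}_R$. By the definition of $T^n_{k,q}$ on continuous functions, $T^n_{k,q}(\zeta)[u_t]=\int_{\C^n}\zeta(|z|)\,d\Theta^n_{k,q}(u_t,z)$. Inserting the radial function $\phi(z)=\zeta(|z|)$ into Theorem \ref{theorem:ThetaValuesOnUt} and using that $\int_{S^{2n-1}}\phi(rv)\,dv=\zeta(r)\int_{S^{2n-1}}dv=2n\omega_{2n}\,\zeta(r)$, since the surface area of the unit sphere of $\R^{2n}\cong\C^n$ is $2n\omega_{2n}$, one reads off after collecting the numerical constants precisely the asserted identity for continuous $\zeta$.

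To conclude I would pass to the limit along a sequence of continuous functions approximating $\zeta$ in $D^{2n-k}_R$, which exists by Lemma \ref{lemma:DensityDNJ_continuousFunctions}. Both sides of the identity depend continuously on $\zeta\in D^{2n-k}_R$: the left-hand side because $T^n_{k,q}\colon D^{2n-k}_R\to\VConv_{k,q}(\C^n)^{\U(n)}$ is continuous by Theorem \ref{theorem:ExistenceT} and evaluation at the fixed function $u_t$ is continuous; the right-hand side because the linear functionals $\zeta\mapsto\int_t^\infty\zeta(r)r^{2n-k-1}\,dr$ and $\zeta\mapsto t^{2n-k}\zeta(t)$ are each bounded by $\|\zeta\|_{D^{2n-k}}$, directly from the definition of that norm (for $t=0$ this is exactly the content of the two defining conditions of $D^{2n-k}$). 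Hence the identity passes to the limit. I do not anticipate any real obstacle here: the only steps needing a little care are the bookkeeping of the constant coming from the spherical integral in Theorem \ref{theorem:ThetaValuesOnUt} and the elementary continuity estimates for the two functionals on $D^{2n-k}_R$, both of which are routine.
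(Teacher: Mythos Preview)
Your proposal is correct and matches the paper's own proof essentially line for line: both verify the identity for $\zeta\in C_c([0,\infty))$ directly from Theorem~\ref{theorem:ThetaValuesOnUt} with $\phi(z)=\zeta(|z|)$, and then pass to general $\zeta\in D^{2n-k}_R$ using the density statement of Lemma~\ref{lemma:DensityDNJ_continuousFunctions} together with the continuity of $T^n_{k,q}$ from Theorem~\ref{theorem:ExistenceT} and the obvious continuity of the right-hand side in the $D^{2n-k}$-norm. Your write-up is in fact slightly more explicit than the paper's about why the right-hand side is continuous.
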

	\begin{proof}
		By Theorem \ref{theorem:ThetaValuesOnUt} the equation holds if $\zeta\in C_c([0,\infty))$. Obviously the right hand side of the equation depends continuously on $\zeta\in D^{2n-k}$. As $T^n_{k,q}(\zeta)$ depends continuously on $\zeta\in D_R^{2n-k}$ and continuous functions are dense in $D^{2n-k}_R$ by Lemma \ref{lemma:DensityDNJ_continuousFunctions}, the claim follows.
	\end{proof}
	
	In \cite{AleskerValuationsconvexfunctions2019}, Alesker considered the map $T:\VConv(\R^n)\rightarrow\Val(\R^n)$, $\mu\mapsto \left[K\mapsto \mu(h_K)\right]$. For the valuations constructed above, we obtain the following relation to valuations on convex bodies.
	\begin{proposition}
		For $\zeta\in D^{2n-k}$, $\tilde{\zeta}\in \tilde{D}^{2n-k+2}$,
		\begin{align*}
			T^n_{k,q}(\zeta)[h_K]&=\mu_{k,q}(K)(2n-k)\omega_{2n-k}\int_0^\infty \zeta(r)r^{2n-k-1}dr,\\
			Y^n_{k,q}(\tilde{\zeta})[h_K]&=0
		\end{align*}
		for every $K\in\mathcal{K}(\C^n)$.
	\end{proposition}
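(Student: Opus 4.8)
The plan is to pass from the functional setting to valuations on convex bodies via Alesker's embedding, and then combine the Bernig--Fu classification (Theorem \ref{theorem:BernigFuHermitianIntrinsicVolumes}) with the defining vanishing property of the spaces $\VConv_{k,q}$. Fix $\zeta\in D^{2n-k}$ and $\tilde\zeta\in\tilde{D}^{2n-k+2}$, say with supports in $(0,R]$. By \cite{AleskerValuationsconvexfunctions2019}, $T(\mu):=[K\mapsto\mu(h_K)]$ is a continuous translation-invariant valuation on $\mathcal{K}(\C^n)$ for every $\mu\in\VConv(\C^n)$; since $h_{gK}=h_K\circ g^{-1}$ for $g\in\U(n)$ and $h_{\lambda K}=\lambda h_K$, the valuation $T(\mu)$ is moreover $\U(n)$-invariant and $k$-homogeneous whenever $\mu$ is. Applying this to $T^n_{k,q}(\zeta)\in\VConv_{k,q}(\C^n)^{\U(n)}$ and $Y^n_{k,q}(\tilde\zeta)\in\VConv_{k,q}(\C^n)^{\U(n)}$ (Theorem \ref{theorem:ExistenceT}), we obtain elements of the degree-$k$ part of $\Val(\C^n)^{\U(n)}$, so Theorem \ref{theorem:BernigFuHermitianIntrinsicVolumes} gives scalars $a_p=a_p(\zeta)$, $b_p=b_p(\tilde\zeta)$ with
\[
T(T^n_{k,q}(\zeta))=\sum_{p}a_p\,\mu_{k,p},\qquad T(Y^n_{k,q}(\tilde\zeta))=\sum_p b_p\,\mu_{k,p}.
\]

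Next I would show that only the index $p=q$ survives, which is the conceptual core of the argument. Since $T^n_{k,q}(\zeta)$ and $Y^n_{k,q}(\tilde\zeta)$ lie in $\VConv_{k,q}(\C^n)^{\U(n)}$, their push-forwards $\pi_{E_{k,p}*}$ vanish for all $p\neq q$. For a body $K\subset E_{k,p}$, the support function regarded on $\C^n$ satisfies $h_K=\pi_{E_{k,p}}^*\big(h_K|_{E_{k,p}}\big)$, hence
\[
T^n_{k,q}(\zeta)[h_K]=\big(\pi_{E_{k,p}*}T^n_{k,q}(\zeta)\big)\big[h_K|_{E_{k,p}}\big]=0
\]
for $p\neq q$, and likewise for $Y^n_{k,q}(\tilde\zeta)$. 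On the other hand $\mu_{k,p}|_{E_{k,p'}}=\delta_{pp'}\vol_k$, so restricting the two sums above to $\mathcal{K}(E_{k,p})$ gives $a_p\vol_k=0$ and $b_p\vol_k=0$ for every $p\neq q$. Therefore $T(T^n_{k,q}(\zeta))=a_q(\zeta)\,\mu_{k,q}$ and $T(Y^n_{k,q}(\tilde\zeta))=b_q(\tilde\zeta)\,\mu_{k,q}$, with $a_q$ and $b_q$ linear in $\zeta$ and $\tilde\zeta$.

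It then remains to compute the two scalars, for which I would evaluate on the Euclidean unit ball $B:=B_1(0)\subset\C^n$: here $h_B=u_0$ in the notation of Lemma \ref{lemma:BehaviorT_u_t}, and $\mu_{k,q}(B)\neq0$ by positivity of the Klain function (\cite[Proposition 4.1]{BernigFuHermitianintegralgeometry2011}). Lemma \ref{lemma:YkqVanishesOnRotationInvariantFunctions} gives $Y^n_{k,q}(\tilde\zeta)[u_0]=0$, so $b_q(\tilde\zeta)=0$ and $Y^n_{k,q}(\tilde\zeta)[h_K]=0$ for all $K$. Setting $t=0$ in Lemma \ref{lemma:BehaviorT_u_t} (the boundary term $t^{2n-k}\zeta(t)$ tends to $0$ because $\zeta\in D^{2n-k}$) expresses $T^n_{k,q}(\zeta)[u_0]$ as an explicit multiple of $\int_0^\infty\zeta(r)r^{2n-k-1}dr$, whence $a_q(\zeta)=\big(T^n_{k,q}(\zeta)[u_0]/\mu_{k,q}(B)\big)$, again a fixed multiple of $\int_0^\infty\zeta(r)r^{2n-k-1}dr$. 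Inserting the value of $\mu_{k,q}$ on the Euclidean unit ball (read off from \cite{BernigFuHermitianintegralgeometry2011}) and simplifying the resulting product of binomial coefficients and ball volumes identifies this multiple as $(2n-k)\omega_{2n-k}$, which is exactly the claimed formula. I expect the only real obstacle to be the bookkeeping in the middle step — matching the support-restriction condition that defines $\VConv_{k,q}$ against the Klain-type normalization of the Hermitian intrinsic volumes — after which the extraction of the constant is a routine computation.
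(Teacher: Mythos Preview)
Your proposal is correct and follows essentially the same route as the paper: both arguments observe that $K\mapsto T^n_{k,q}(\zeta)[h_K]$ and $K\mapsto Y^n_{k,q}(\tilde\zeta)[h_K]$ lie in $\Val_k(\C^n)^{\U(n)}$, vanish on $E_{k,p}$ for $p\neq q$, hence are multiples of $\mu_{k,q}$ by Theorem~\ref{theorem:BernigFuHermitianIntrinsicVolumes}, and then determine the scalar by evaluating on $B_1(0)$ via Lemmas~\ref{lemma:YkqVanishesOnRotationInvariantFunctions} and~\ref{lemma:BehaviorT_u_t}. The only cosmetic difference is that the paper computes $\mu_{k,q}(B_1(0))$ directly from the identity $\mu_{k,q}(K)=\omega_{2n-k}^{-1}\int_{B_1(0)} d\Theta^n_{k,q}(h_K)$ and Theorem~\ref{theorem:ThetaValuesOnUt} rather than quoting a value from \cite{BernigFuHermitianintegralgeometry2011}.
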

	\begin{proof}
		The maps $K\mapsto T^n_{k,q}(\zeta)[h_K]$ and $K\mapsto Y^n_{k,q}(\tilde{\zeta})[h_K]$ define elements in $\Val_k(\C^n)^{\U(n)}$ whose restriction to the subspaces $E_{k,p}$ vanishes for $p\ne q$. Thus they are multiples of the hermitian intrinsic volume $\mu_{k,q}$ by Theorem \ref{theorem:BernigFuHermitianIntrinsicVolumes}. In order to obtain the constant, it is sufficient to evaluate them in $B_1(0)$, which corresponds to $h_{B_1(0)}=u_0$. $Y^n_{k,q}(\tilde{\zeta})[u_0]=0$ by Lemma \ref{lemma:YkqVanishesOnRotationInvariantFunctions}, while we can evaluate $T^n_{k,q}(\zeta)[u_0]$ using Lemma \ref{lemma:BehaviorT_u_t}. Since 
		\begin{align*}
			\mu_{k,q}(K)=\frac{1}{\omega_{2n-k}}\int_{B_1(0)} d\Theta^n_{k,q}(h_K)
		\end{align*}
		by \cite[Section 3.2]{BernigFuHermitianintegralgeometry2011} and \cite[Proposition 2.1.7]{AleskerFuTheoryvaluationsmanifolds.2008}, we can calculate $\mu_{k,q}(B_1(0))$ using Theorem \ref{theorem:ThetaValuesOnUt} by approximating the indicator $1_{B_1(0)}$ with continuous functions.
	\end{proof}

\section{Characterization of $\U(n)$-invariant valuations}
	\label{section:CharacterizationResults}
\subsection{Preliminary results for unitarily invariant valuations}
The proofs of the main results given below rely on the following three results obtained in \cite{KnoerrUnitarilyinvariantvaluations2021}. 
\begin{theorem}[\cite{KnoerrUnitarilyinvariantvaluations2021} Theorem 1]
	\label{theorem:VanishingPropertyUnitaryValuations}
	A valuation $\mu\in\VConv_k(\C^n)^{\U(n)}$ satisfies $\mu\equiv 0$ if and only if $\pi_{E_{k,q}*}\mu=0$ for all $\max(0,k-n)\le q\le\lfloor\frac{k}{2}\rfloor$.
\end{theorem}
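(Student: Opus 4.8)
The plan is as follows. One direction — $\mu\equiv 0$ implies all restrictions vanish — is trivial from the definition of the pushforward. For the converse, assume $\mu\in\VConv_k(\C^n)^{\U(n)}$ with $\pi_{E_{k,q}*}\mu=0$ for every admissible $q$, and aim to deduce $\mu=0$. The strategy I would follow has four parts: (i) reduce to showing $\pi_{E*}\mu=0$ for \emph{all} real $k$-dimensional subspaces $E\subset\C^n$, via a functional analogue of Klain's theorem; (ii) exploit $\U(n)$-equivariance to reduce to one representative $E$ per $\U(n)$-orbit, i.e. to the Kähler-angle configuration of $E$; (iii) show that $E\mapsto\pi_{E*}\mu$ depends on these angles as a polynomial of controlled degree; and (iv) invoke the combinatorics behind Theorem~\ref{theorem:BernigFuHermitianIntrinsicVolumes} to see that the subspaces $E_{k,q}$ form a unisolvent set for such polynomials.

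For part (i) I would use — or, if not available off the shelf, establish via the Goodey--Weil embedding of $\VConv_k$ — the fact that a valuation $\nu\in\VConv_k(\R^m)$ vanishes as soon as $\pi_{E*}\nu=0$ for every $E\in\Gr_k(\R^m)$: the Goodey--Weil distribution attached to $\nu$ has a ``Klain part'' that is recovered from the restrictions $\pi_{E*}\nu$ by a slicing argument, mirroring the classical proof of Klain's theorem for convex bodies. Granting this, it remains to propagate the hypothesis from the finite family $\{E_{k,q}\}$ to all of $\Gr_k(\C^n)$.

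For the propagation I would first record the $\U(n)$-equivariance: since $\pi_{gE}=g|_E\circ\pi_E\circ g^{-1}$, invariance of $\mu$ together with Lemma~\ref{lemma:continuityPushForward} gives $\pi_{gE*}\mu=(g|_E)_*(\pi_{E*}\mu)$, so $\pi_{E*}\mu$ depends only on the $\U(n)$-orbit of $E$, which is parametrized by the multiple Kähler angles $\theta_1,\dots,\theta_{\lfloor k/2\rfloor}$. Next I would show that, in a fixed parametrization of a full family of orbits, $E\mapsto\pi_{E*}\mu$ is polynomial in $\cos^2\theta_1,\dots,\cos^2\theta_{\lfloor k/2\rfloor}$, symmetric and of degree at most one in each variable, with coefficients valued in valuations on a fixed lower-dimensional model subspace. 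For a smooth valuation $\mu=D(f)[\omega]$, with $\omega$ a $\U(n)$- and translation-invariant form on $T^*\C^n$, this is a direct computation: the pullback of $\omega$ along an inclusion realizing prescribed Kähler angles depends on the $\theta_i$ in exactly this multilinear-in-$\cos^2\theta_i$ fashion. For a general $\mu$ one approximates by smooth $\U(n)$-invariant valuations (Corollary~\ref{corollary:approximationUnderSupportRestrictionInvariantCase}) and passes to the limit, using that the space of such polynomials is finite-dimensional over its coefficient spaces, hence closed.

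Finally, the subspaces $E_{k,q}$ correspond precisely to the angle configurations with each $\theta_i\in\{0,\tfrac{\pi}{2}\}$ and exactly $q$ of the $\theta_i$ equal to $0$; evaluating the elementary symmetric functions of the $\cos^2\theta_i$ at these points gives a triangular/Vandermonde-type matrix, so — which is the combinatorics underlying Theorem~\ref{theorem:BernigFuHermitianIntrinsicVolumes} — these points are unisolvent for the space of polynomials from part (iii). Hence $\pi_{E_{k,q}*}\mu=0$ for all $q$ forces that polynomial to vanish identically, so $\pi_{E*}\mu=0$ for every $E\in\Gr_k(\C^n)$ by continuity, and then $\mu=0$ by part (i). I expect the main obstacle to be part (iii): pinning down the exact polynomial-in-the-Kähler-angles structure of the restriction map and verifying it is preserved under approximation by smooth valuations, which requires a careful choice of the angle parametrization together with the classification of $\U(n)$-invariant forms on the cotangent bundle; part (i), if one does not simply quote it, is a second substantial point resting on the Goodey--Weil machinery.
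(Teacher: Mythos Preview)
This theorem is not proved in the present paper; it is quoted from \cite{KnoerrUnitarilyinvariantvaluations2021} as prior input, so there is no in-paper argument to compare your proposal against.

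Your outline is correct and is essentially the strategy one expects in the cited source. Part~(i) --- the functional Klain injectivity statement that $\mu\in\VConv_k$ vanishes once $\pi_{E*}\mu=0$ for every $E\in\Gr_k$ --- is indeed the crux; it is available from the Goodey--Weil/support machinery of \cite{Knoerrsupportduallyepi2021,KnoerrSmoothvaluationsconvex2024}, so in practice a citation rather than an obstacle. Parts~(ii)--(iv) then run parallel to Bernig--Fu's proof of Theorem~\ref{theorem:BernigFuHermitianIntrinsicVolumes}: the restriction $\pi_{E*}\mu\in\VConv_k(E)$ depends only on the $\U(n)$-orbit of $E$; for smooth $\mu$ represented by a $\U(n)$-invariant form the pullback computation gives a symmetric polynomial of degree at most one in each $\cos^2\theta_i$; and the configurations $E_{k,q}$ are unisolvent for this space. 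Your approximation step deserves one extra sentence of justification: unisolvency means the polynomial coefficients are \emph{linear} functions of the values $(\pi_{E_{k,q}*}\mu)_q$, so by continuity of the pushforward (Lemma~\ref{lemma:continuityPushForward}) the coefficients converge when $\mu$ is approximated by smooth valuations, and the limiting polynomial then agrees with $E\mapsto\pi_{E*}\mu$ at every $E$. The phrase ``the space of such polynomials is closed'' alone does not quite yield this.

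One logical caution: you invoke Corollary~\ref{corollary:approximationUnderSupportRestrictionInvariantCase} from the present paper, but the theorem you are proving belongs to the earlier paper, which the present one takes as input. To keep the dependencies acyclic, use the approximation results from \cite{KnoerrSmoothvaluationsconvex2024} directly; these do not rely on the $\U(n)$-classification.
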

Note that this implies that if $\mu_1,\mu_2\in\VConv_k(\C^n)^{\U(n)}$ satisfy $\pi_{E_{k,q}*}\mu_1=\pi_{E_{k,q}*}\mu_2$ for all $\max(0,k-n)\le q\le\lfloor\frac{k}{2}\rfloor$, then $\mu_1=\mu_2$. In particular, the decomposition in Theorem \ref{maintheorem:decomposition} is direct.\\

The second result shows that the decomposition holds for smooth valuations and provides a characterization of these functionals. It also implies that the components of a smooth valuation with respect to the decomposition are again smooth valuations.
\begin{theorem}[\cite{KnoerrUnitarilyinvariantvaluations2021} Theorem 2]
	\label{theorem:characterization_smooth_unitarily_invariant_valuations}
	For $1\le k\le 2n$,
	\begin{align*}
		\VConv_k(\C^n)^{\U(n),sm}=\bigoplus_{q=\max(0,k-n)}^{\lfloor\frac{k}{2}\rfloor}\VConv_{k,q}(\C^n)^{\U(n),sm}.
	\end{align*}
	Moreover, for every $\max(0,k-n)\le q\le \lfloor\frac{k}{2}\rfloor$ and $\mu\in \VConv_{k,q}(\C^n)^{\U(n),sm}$ there exist unique $\phi_q,\psi_q\in C_c^\infty([0,\infty))$ such that
	\begin{align*}
		\mu(f)=\int_{\C^n} \phi_q(|z|)d\Theta^n_{k,q}(f)+\int_{\C^n} \psi_q(|z|)d\Upsilon^n_{k,q}(f),
	\end{align*}
	where we set $\psi_q=0$ if $q=0$ or $q=\frac{k}{2}$.
\end{theorem}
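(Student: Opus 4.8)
The plan is to pass from valuations to the differential forms that generate them, reduce the generating form to a short list of invariant building blocks by invariant theory, and then carry out the classification at the level of forms using the partial integration identities collected above.

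First I would fix $\mu\in\VConv_k(\C^n)^{\U(n),sm}$ and write $\mu(f)=D(f)[\omega]$ for a compactly supported differential form $\omega$ on $T^*\C^n$ that is constant along the fibres; its bidegree is pinned down by the requirement that $\mu$ be $k$-homogeneous. Since $\U(n)$ acts on $T^*\C^n$ and $D(f\circ g)$ is the pushforward of $D(f)$ under this action, averaging $\omega$ over $\U(n)$ yields a $\U(n)$-invariant form inducing the same $\mu$, so we may assume $\omega$ is $\U(n)$-invariant. By the classical description of $\U(n)$-invariant constant-coefficient forms on $\C^n\oplus(\C^n)^*$ — the same computation that underlies the Hermitian integral geometry in Theorem~\ref{theorem:BernigFuHermitianIntrinsicVolumes} — together with the fact that $C^\infty(\C^n)^{\U(n)}$ consists of functions of $|z|$, every such $\omega$ is a combination with coefficients $\phi(|z|)$ of exterior products of $\beta_1,\beta_2,\gamma_1,\gamma_2,\theta_0,\theta_1,\theta_2$ and $\omega_s$.

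Next I would divide out the forms inducing the zero valuation. Because $D(f)$ is closed and Lagrangian, i.e.\ $D(f)\llcorner\omega_s=0$, it annihilates every form of the shape $d\eta+\omega_s\wedge\sigma$; conversely, Fu's uniqueness theorem for the differential cycle (Theorem~\ref{theorem:FuUniquenessDifferentialCycle}) shows that for compactly supported fibrewise-constant forms these are the only elements of the kernel. Hence $\VConv_{k}(\C^n)^{\U(n),sm}$ is the quotient of the space of invariant generating forms by $d(\text{invariant }(2n-1)\text{-forms})+\omega_s\wedge(\text{invariant }(2n-2)\text{-forms})$, and computing this quotient is the combinatorial heart of the argument. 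Using $d\beta_1=\omega_s$, $d\gamma_1=0$, $d\beta_2=\theta_1$, $d\gamma_2=2\theta_0$ one first eliminates, modulo exact forms and multiples of $\omega_s$, all but a single factor among $\beta_1,\gamma_1$; the relations of Lemma~\ref{lemma:TauAndTildeTau} and the integral partial integration identity of Corollary~\ref{corollary:PartialIntegrationFormulaTauKQ} (and its analogue for $\tilde\tau_{k,q}$) then reduce every representative, for each admissible $q$, to a combination of $\phi(|z|)\,\theta^n_{k,q}$ and $\psi(|z|)\,\beta_1\wedge\tilde\tau_{k,q}$, i.e.\ of the densities defining $\Theta^n_{k,q}$ and $\Upsilon^n_{k,q}$. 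The degenerate cases $q=0$ and $q=\frac{k}{2}$ are precisely those in which $\tilde\tau_{k,q}$ is undefined, which is why $\psi_q$ must then be set to $0$.

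From this description the remaining assertions follow. Each piece $T^n_{k,q}(\phi_q)+Y^n_{k,q}(\psi_q)$ is smooth by construction and lies in $\VConv_{k,q}(\C^n)^{\U(n)}$, since $\Theta^n_{k,q}$ and $\Upsilon^n_{k,q}$ vanish on functions pulled back from $E_{k,p}$ for $p\neq q$ (the Proposition above on complex orthogonal decompositions); directness of the sum is Theorem~\ref{theorem:VanishingPropertyUnitaryValuations}. For uniqueness I would run the template method: evaluating the representative on the rotation-invariant family $u_t(z)=\max(0,|z|-t)$ produces an injective integral transform of $\phi_q$ by Lemma~\ref{lemma:BehaviorT_u_t}, while $Y^n_{k,q}$ contributes nothing since $\Upsilon^n_{k,q}$ vanishes on rotation-invariant functions (Corollary~\ref{corollary:VanishingUpsilonRotationInvariantFunctions}); evaluating on sums $f_X+f_Y$ of convex functions on complex-orthogonal subspaces and expanding via Lemma~\ref{lemma:DecompositionMeasuresDirectSum} then isolates and inverts the dependence on $\psi_q$. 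The genuine obstacle is the quotient computation of the third paragraph: one must show that after imposing closedness and the Lagrangian relation exactly a two-dimensional family of function-valued densities survives for each pair $(k,q)$ — no more, which would force valuations beyond $\Theta^n_{k,q}$ and $\Upsilon^n_{k,q}$, and no fewer — and this is subtler than the constant-coefficient Hermitian computation because the trivialising forms must themselves be compactly supported, so the available relations are the integral identities of Corollary~\ref{corollary:PartialIntegrationFormulaTauKQ} rather than purely algebraic ones.
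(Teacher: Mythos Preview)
This theorem is not proved in the present paper; it is quoted from \cite{KnoerrUnitarilyinvariantvaluations2021} as one of the three preliminary inputs in Section~\ref{section:CharacterizationResults}, so there is no in-paper proof to compare against. Your outline does match the strategy actually used in that reference, but one step is misattributed and constitutes a real gap.

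You write that Fu's uniqueness theorem (Theorem~\ref{theorem:FuUniquenessDifferentialCycle}) shows that the only compactly supported fibrewise-constant forms $\omega$ with $D(f)[\omega]=0$ for all $f$ are those of the shape $d\eta+\omega_s\wedge\sigma$. This does not follow: Fu's theorem is a uniqueness statement for the current $D(f)$ itself, not a description of the kernel of the pairing $\omega\mapsto(f\mapsto D(f)[\omega])$. That kernel description is the functional analogue of the Bernig--Br\"ocker kernel theorem and is a separate, substantially harder result, established in \cite{KnoerrSmoothvaluationsconvex2024}. Without it you cannot pass from ``$\mu=0$'' to a workable constraint on $\omega$, and hence cannot carry out the quotient computation that you correctly identify as the combinatorial heart of the argument. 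A minor further imprecision: $\tilde\tau_{k,q}$ is not literally undefined at $q=0$ or $q=\tfrac{k}{2}$; rather, in those boundary cases the relations force $\Upsilon^n_{k,q}$ to coincide (up to the kernel) with contributions already accounted for by $\Theta^n_{k,q}$, so no independent $\psi_q$ survives. The remainder of your plan---averaging to a $\U(n)$-invariant form, the invariant-theoretic reduction to monomials in $\beta_i,\gamma_i,\theta_i$ with radial coefficients, reduction modulo the kernel via Lemma~\ref{lemma:TauAndTildeTau} and Corollary~\ref{corollary:PartialIntegrationFormulaTauKQ}, and uniqueness via evaluation on $u_t$ and on sums along complex-orthogonal splittings---is the correct shape.
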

Since some of the proofs given below rely on an approximation argument, we need to control the supports of the functions $\phi_q$ and $\psi_q$ in terms of the supports of the corresponding valuations.
\begin{proposition}[\cite{KnoerrUnitarilyinvariantvaluations2021} Proposition 4.17]
	Let $\mu\in\VConv_k(\C^n)^{\U(n),sm}$ be given by
\begin{align*}
	\mu(f)=\sum_{q=\max(0,k-n)}^{\lfloor\frac{k}{2}\rfloor}\int_{\C^n} \phi_q(|z|)d\Theta^n_{k,q}(f)+\sum_{q=\max(1,k-n)}^{\lfloor\frac{k-1}{2}\rfloor}\int_{\C^n} \psi_q(|z|)d\Upsilon^n_{k,q}(f)
\end{align*}
for $\phi_q,\psi_q\in C^\infty_c([0,\infty))$. If $R>0$ is such that $\supp\mu\subset B_R(0)$, then $\supp \phi_q\subset [0,R]$ and $\supp\psi_q\subset[0,R]$ for all all $q$.
\end{proposition}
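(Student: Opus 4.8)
The plan is to recover the functions $\phi_q,\psi_q$ from $\mu$ by evaluating $\mu$ on explicit families of convex functions and inverting the integral transforms from Section~\ref{section:IntegralTransforms} — this is the template method underlying the proof of Theorem~\ref{theorem:characterization_smooth_unitarily_invariant_valuations} in \cite{KnoerrUnitarilyinvariantvaluations2021} — while tracking supports throughout. By Theorem~\ref{theorem:characterization_smooth_unitarily_invariant_valuations} we may write $\mu=\sum_q\mu_q$ with $\mu_q\in\VConv_{k,q}(\C^n)^{\U(n),sm}$ and $\mu_q=T^n_{k,q}(\phi_q)+Y^n_{k,q}(\psi_q)$ (with $\psi_q=0$ when $q\in\{0,\tfrac k2\}$). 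Since $k\ge1$ we have $\mu(0)=0$, so by Proposition~\ref{proposition:characterizationSupport} it suffices to exhibit sufficiently many convex functions that vanish on an open neighbourhood of $\overline{B_R(0)}$, on which $\mu$ must then vanish, and to feed this vanishing into the (support-preserving) inversions of the relevant transforms.

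First one extracts the purely radial information. For $t>R$ the function $u_t(z)=\max(0,|z|-t)$ vanishes on the open ball $B_t(0)\supset\overline{B_R(0)}\supset\supp\mu$, whence $\mu(u_t)=\mu(0)=0$. By Lemma~\ref{lemma:YkqVanishesOnRotationInvariantFunctions} the $\Upsilon$-terms do not contribute, and Lemma~\ref{lemma:BehaviorT_u_t} gives $\mu(u_t)=2n\omega_{2n}\,\mathcal R^{2n-k}(\Phi)[t]$ with $\Phi:=\sum_q\binom{n}{k-2q,q}2^{k-2q}\phi_q\in C_c^\infty([0,\infty))$. As $\mathcal R^{2n-k}\colon D^{2n-k}_{R_0}\to C_{c,R_0}([0,\infty))$ is a support-preserving topological isomorphism with support-preserving inverse (Lemmas~\ref{lemma:InjectivityTransformR} and~\ref{lemma:RTopIsomorphism}), the vanishing of $\mathcal R^{2n-k}(\Phi)$ on $(R,\infty)$ forces $\supp\Phi\subset[0,R]$. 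When only one value of $q$ is admissible (e.g.\ $k=1$ or $k=2n$) there is no $\psi$-term and this already proves the claim.

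For general $k$ one must separate the $(k,q)$-components and detect the $\psi_q$, which are invisible to radial test functions. The plan is to use, for the complex-orthogonal splittings $\C^n=\C^a\oplus\C^{n-a}$, the two-parameter families $g_{s,t}(z',z'')=u_s(z')+u_t(z'')$: by Lemma~\ref{lemma:DecompositionMeasuresDirectSum} the measures $\Theta^n_{k,q}(g_{s,t})$ and $\Upsilon^n_{k,q}(g_{s,t})$ decompose into products of the corresponding operators on the two factors, evaluated on $u_s$ and $u_t$, which are explicit by Theorem~\ref{theorem:ThetaValuesOnUt}; crucially, the $\Upsilon$-contributions no longer vanish, since $\mathcal B^X$ and $\mathcal C^X$ enter the cross-terms of Lemma~\ref{lemma:DecompositionMeasuresDirectSum} with different coefficients although they agree on the radial functions $u_s$. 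Matching $\mu(g_{s,t})$ — as $a$ and the splitting vary, and combined with the restrictions $\pi_{E_{k,p}*}\mu=\pi_{E_{k,p}*}\mu_p$, which isolate a single component by Theorem~\ref{theorem:VanishingPropertyUnitaryValuations} — against the two-variable transform $\mathcal R^{a,b}$ and inverting by the support-preserving map $\mathcal S^{a,b}$ (Corollary~\ref{corollary:relationSabRab}) and the isomorphism of Lemma~\ref{lemma:PaIsomorphism}, one recovers each $\phi_q$ and $\psi_q$ as the image, under an inverse transform, of a function that can be read off from the values of $\mu$ on convex functions vanishing near $\overline{B_R(0)}$.

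The main obstacle — and the reason this requires genuine work rather than being a one-line corollary — is keeping the support bound sharp. The family $g_{s,t}$ vanishes on a neighbourhood of $\overline{B_R(0)}$ only when \emph{both} parameters exceed $R$, and the diagonal slice used by $\mathcal S^{a,b}$ then controls only the region $\{|z|\ge\sqrt2\,R\}$, which naively yields merely $\supp\phi_q,\supp\psi_q\subset[0,\sqrt2\,R]$. Sharpening this to $[0,R]$ requires exploiting the additional \emph{sharp} relations that are available: the radial one $\supp\Phi\subset[0,R]$ above, together with those obtained from $g_{s,t}$ with one parameter $>R$ and the other free — in that range $g_{s,t}$ agrees, near $\overline{B_R(0)}$, with a cylinder function over the complementary factor, so Lemma~\ref{lemma:DecompositionMeasuresDirectSum} applies and $\mu$ of it vanishes sharply in the remaining parameter — and solving the resulting linear system for the individual $\phi_q,\psi_q$; alternatively one may replace $g_{s,t}$ by a polydisc-type family $\max\!\bigl(0,\max(|z'|,|z''|)-t\bigr)$, whose zero set is an exact open neighbourhood of $\overline{B_R(0)}$ precisely for $t>R$, at the price of a less transparent Monge--Amp\`ere computation (still accessible by smooth approximation as in Lemma~\ref{lemma:LimitsCalculationUt}). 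Once all representing functions are shown to be supported on $[0,R]$, the uniqueness in Theorem~\ref{theorem:characterization_smooth_unitarily_invariant_valuations} completes the proof.
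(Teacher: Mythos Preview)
The paper does not prove this proposition: it is quoted verbatim from \cite{KnoerrUnitarilyinvariantvaluations2021} and used as input for the approximation arguments in Section~\ref{section:CharacterizationResults}. There is therefore no ``paper's own proof'' to compare against.

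As for your sketch: the ingredients you list are the right ones, but what you have written is a plan with an explicitly acknowledged gap rather than a proof. You yourself flag the crucial issue: the two-parameter family $g_{s,t}=u_s^X+u_t^Y$ vanishes near $\overline{B_R(0)}$ only when both $s,t>R$, and the diagonal inversion $\mathcal S^{a,b}$ then gives only a $\sqrt 2\,R$ bound. Your first suggested fix (``one parameter $>R$, the other free'') is on the right track once reformulated via the polarization: by Lemma~\ref{lemma:SupportPolarization}, for $t>R$ the function $\pi_Y^*u_t^Y$ vanishes on a neighbourhood of $\supp\mu$, so $\bar\mu(u_0^X[i],u_t^Y[k-i])=0$ for $t>R$ and every $i\ge1$. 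But this still does not isolate a single $\phi_q$ or $\psi_q$: what one obtains is a sum over the $p$-components of the $(k-i)$-homogeneous factor (cf.\ Lemma~\ref{lemma:continous_decomposition_polarization}), and separating those requires either the lower-dimensional decomposition (which in the present paper is only available \emph{after} this proposition, by induction) or an independent argument. Your second suggested fix (polydisc-type test functions) is left entirely unspecified.

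In short, you have correctly located the template method and the relevant support lemma for the polarization, but the step ``solve the resulting linear system for the individual $\phi_q,\psi_q$ with the sharp bound $[0,R]$'' is asserted rather than carried out, and it is exactly there that the work lies. The proof in \cite{KnoerrUnitarilyinvariantvaluations2021} proceeds more directly from the differential-form description of smooth valuations available in that paper, avoiding these difficulties.
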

\subsection{Preliminary considerations}	
	In this section we relate the integral transforms from Section \ref{section:IntegralTransforms} to the values of the valuations obtained from $T^n_{k,q}$ and $Y^n_{k,q}$ on sums of functions corresponding to a complex orthogonal decomposition $\C^n=X\oplus Y$. Let $u_s^X\in\Conv(X,\R)$ be given by $u_s^X(z_X)=\max(0,|z_X|-s)$. We similarly define $u_t^Y\in \Conv(Y,\R)$. As in Section \ref{section:MAOperators}, we consider these functions as elements of $\Conv(\C^n,\R)$ using the constant extension.
	\begin{lemma}
		\label{lemma:ValuesYOnCyclinderFunctions}
		Let $\C^n=X\oplus Y$ be a complex orthogonal decomposition, $\zeta\in D^{2n-k}$, and $\tilde{\zeta}\in \tilde{D}^{2n-k+2}$.\\
		For $\dim_\C X=1$, $n\ge 2$, $k\ge 2$,
		\begin{align*}
			\overline{T^n_{k,q}(\zeta)}(u_s^X[1],u_t^Y[k-1])=&\frac{2\omega_2(2n-2)\omega_{2n-2}2^{k-2q}}{n-1}\binom{n-1}{k-1-2q,q}\mathcal{R}^{1,2n-k-1}\left(\zeta\right)[s,t],
		\end{align*}
		and for $k\ge 3$,
		\begin{align*}
			\overline{T^n_{k,q}(\zeta)}(u_0^X[2],u_t^Y[k-2])=&\frac{2\omega_2(2n-2)\omega_{2n-2}2^{k-2q}}{n-1}\binom{n-1}{k-2q,q-1}\mathcal{R}^{2n-k}\left(\zeta\right)[t],\\
			\overline{Y_{k,q}(\tilde{\zeta})}(u_0^X[2],u_t^Y[k-2])=&-\frac{2\omega_2(2n-2)\omega_{2n-2}2^{k-2q}}{2q(k-2)(n-1)}\binom{n-1}{k-2q,q-1}\mathcal{P}^{2n-k+2}(\tilde{\zeta})[t].
		\end{align*}
		For $\dim_CX=2$, $n\ge 3$, $k\ge 5$, $q\ge 2$,
		\begin{align*}
			\overline{T^n_{k,q}(\zeta)}(u_0^X[4],u_t^Y[k-4])=&\frac{4\omega_4(2n-4)\omega_{2n-4}2^{k-2q}}{n-2}\binom{n-2}{k-2q,q-2}\mathcal{R}^{2n-k}\left(\zeta\right)[t],\\
			\overline{Y_{k,q}(\tilde{\zeta})}(u_0^X[4],u_t^Y[k-4])=&-\frac{4\omega_4(2n-4)\omega_{2n-4}2^{k-2q}}{q(k-4)(n-2)}\binom{n-2}{k-2q,q-2}\mathcal{P}^{2n-k+2}(\tilde{\zeta})[t].
		\end{align*}
	\end{lemma}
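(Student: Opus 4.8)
The plan is to reduce every identity in the lemma to the explicit formula for $T^n_{k,q}$ and $Y^n_{k,q}$ on sums of the radial functions $u_s^X$ and $u_t^Y$, and then recognize the resulting integral expressions as the transforms $\mathcal{R}^{a,b}$, $\mathcal{R}^a$, and $\mathcal{P}^a$. The starting point is Lemma \ref{lemma:DecompositionMeasuresDirectSum}, which expands $\Theta^n_{k,q}(u_s^X+u_t^Y)$ and $\Upsilon^n_{k,q}(u_s^X+u_t^Y)$ as a sum over $m_1+m_2=k$, $p_1+p_2=q$ of tensor products of Monge-Amp\`ere-type measures on $X$ and $Y$. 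For the polarization $\overline{T^n_{k,q}(\zeta)}(u_s^X[j],u_t^Y[k-j])$ one extracts the mixed term that is homogeneous of degree $j$ in the $X$-variable and degree $k-j$ in the $Y$-variable; by the homogeneity properties of the Monge-Amp\`ere operators, only the summand with $m_1=j$ survives, and $p_1$ is then forced (to $q$ when $\dim_\C X\cdot 2 = j$ forces $p_1\le \lfloor j/2\rfloor$, and more delicately in the mixed cases).

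First I would handle the three ``$X$ one-dimensional, $\dim_\C X=1$'' cases. For $u_s^X[1]$ the only contribution from $\Theta^X$ on a complex line is $\Theta^X_{1,0}$, and one needs its value on $u_s^X$ together with $\Theta^Y_{k-1,q}(u_t^Y)$; both are supplied by Theorem \ref{theorem:ThetaValuesOnUt}. Combining the two one-variable formulas from that theorem and integrating $\zeta(|z|)=\zeta(\sqrt{|z_X|^2+|z_Y|^2})$ against the product measure produces, after switching to polar coordinates in $X\cong\C$ (so $a=1$, i.e.\ real dimension $2$) and in $Y$ (real dimension $2n-2$, so $b=2n-k-1$ after accounting for the codimension), exactly the four-term expression defining $\mathcal{R}^{1,2n-k-1}(\zeta)[s,t]$; the boundary term $t^{2n-k}\zeta(t)\,ds$-type contributions in Theorem \ref{theorem:ThetaValuesOnUt} are what generate the $s^at^b\zeta(\sqrt{s^2+t^2})$ and the single-integral terms in $\mathcal{R}^{a,b}$. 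The combinatorial prefactor is tracked by multiplying the two multinomial coefficients from Theorem \ref{theorem:ThetaValuesOnUt} and Lemma \ref{lemma:DecompositionMeasuresDirectSum} and using $\binom{n}{k-2q,q}=\binom{n-1}{k-1-2q,q}\cdot\frac{n}{n-1}\cdot(\text{something})$-type identities; I expect this bookkeeping to be the most error-prone part rather than the conceptually hard part.

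Next I would treat the ``$u_0^X[2]$'' and ``$u_0^X[4]$'' families. Here $X$ is one or two complex-dimensional and the $X$-factor is evaluated at $u_0^X$, which is rotation invariant, so by Corollary \ref{corollary:VanishingUpsilonRotationInvariantFunctions} all the $\mathcal{B}^X,\mathcal{C}^X$ terms vanish and only $\Theta^X_{2,1}(u_0^X)$ (resp.\ $\Theta^X_{4,2}(u_0^X)$) contributes, forcing $p_1=1$ (resp.\ $p_1=2$) and $m_1=2$ (resp.\ $m_1=4$). This pins the $X$-measure to a multiple of the point mass at the origin with a fixed constant (read off from Theorem \ref{theorem:ThetaValuesOnUt} with $t=0$), so integrating $\zeta$ reduces to a single integral in the $Y$-variable, giving $\mathcal{R}^{2n-k}(\zeta)[t]$ for the $\Theta$-part by the definition of $\mathcal{R}^a$. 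For the $Y$-part of $\Upsilon^n_{k,q}$, the surviving term is $\Theta^X_{2,1}(u_0^X)\otimes\big(\tfrac{p_2}{q}\mathcal{B}^Y - \tfrac{m_2-2p_2}{k-2q}\mathcal{C}^Y\big)(u_t^Y)$, and since $\mathcal{B}^Y_{m_2,p_2}(u_t^Y)=\mathcal{C}^Y_{m_2,p_2}(u_t^Y)$ on the rotation-invariant $u_t^Y$ by Theorem \ref{theorem:ThetaValuesOnUt}, the coefficient collapses to $\frac{p_2}{q}-\frac{m_2-2p_2}{k-2q}$ times a single common measure whose value on $u_t^Y$ is the $t^{2n-k+2}\delta_0$-type term of Theorem \ref{theorem:ThetaValuesOnUt}; integrating $\zeta$ then yields $t^{2n-k+2}\tilde{\zeta}(t)=\mathcal{P}^{2n-k+2}(\tilde{\zeta})[t]$ up to the stated constant, with the $\frac{1}{2q(k-2)}$ (resp.\ $\frac{1}{q(k-4)}$) factor coming from the coefficient $\frac{p_2}{q}-\frac{m_2-2p_2}{k-2q}$ evaluated at the forced indices.

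Throughout, I would first verify all identities for $\zeta,\tilde\zeta\in C_c([0,\infty))$, where $T^n_{k,q}$ and $Y^n_{k,q}$ are literally given by integration against the (explicitly known) measures, and then extend to all of $D^{2n-k}$ resp.\ $\tilde{D}^{2n-k+2}$ by density (Lemmas \ref{lemma:DensityDNJ_continuousFunctions} and \ref{lemma:DensityTildeD_continuousFunctions}) together with the continuity of $T^n_{k,q}$, $Y^n_{k,q}$ (Theorem \ref{theorem:ExistenceT}), the continuity of the polarization (Lemma \ref{lemma:continuityPolarization}), and the continuity of $\mathcal{R}^{a,b}$, $\mathcal{R}^a$, $\mathcal{P}^a$ on the relevant spaces (Lemmas \ref{lemma:ContinuityRab}, \ref{lemma:RTopIsomorphism}, \ref{lemma:PaIsomorphism}). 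The main obstacle is not any single step but the combined bookkeeping: correctly identifying which $(m_1,p_1,m_2,p_2)$ survive in each case, keeping the powers of $2$, the $\omega_j$'s, and the multinomial coefficients straight, and matching the four-term polar-coordinate expansion against the precise definition of $\mathcal{R}^{a,b}$ in \eqref{eq:Rab}; the conceptual content is entirely contained in Lemma \ref{lemma:DecompositionMeasuresDirectSum} and Theorem \ref{theorem:ThetaValuesOnUt}.
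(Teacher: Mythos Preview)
Your proposal is correct and follows essentially the same route as the paper: reduce to $\zeta,\tilde\zeta\in C_c([0,\infty))$ by density and continuity, apply Lemma \ref{lemma:DecompositionMeasuresDirectSum} to isolate the surviving $(m_1,p_1,m_2,p_2)$-term via homogeneity, evaluate the factors with Theorem \ref{theorem:ThetaValuesOnUt}, and match against the definitions of $\mathcal{R}^{a,b}$, $\mathcal{R}^a$, $\mathcal{P}^a$. One small correction: Corollary \ref{corollary:VanishingUpsilonRotationInvariantFunctions} only gives $\mathcal{B}^X=\mathcal{C}^X$ on rotation-invariant functions, not that each vanishes; the reason the $\mathcal{B}^X_{2,1}(u_0^X)$ and $\mathcal{C}^X_{2,1}(u_0^X)$ (resp.\ $\mathcal{B}^X_{4,2}(u_0^X)$, $\mathcal{C}^X_{4,2}(u_0^X)$) contributions drop out is the explicit formula in Theorem \ref{theorem:ThetaValuesOnUt}, which at $t=0$ gives a factor $t^{2\dim_\C X-k+2}=0$.
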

	\begin{proof}
		Note that in all of these cases both sides of the equation depend continuously on $\zeta\in D^{2n-k}_R$ and $\tilde{\zeta}\in \tilde{D}^{2n-k+2}_R$ respectively. For the left hand sides, this follows from Theorem \ref{theorem:ExistenceT} in combination with Proposition \ref{lemma:continuityPolarization}, while it follows from Lemma \ref{lemma:PaIsomorphism} and Lemma \ref{lemma:ContinuityRab} for the right hand sides. As continuous functions are dense in $D^{2n-k}_R$ and $\tilde{D}^{2n-k+2}_R$ by Lemma \ref{lemma:DensityDNJ_continuousFunctions} and Lemma \ref{lemma:DensityTildeD_continuousFunctions} respectively, it is sufficient to consider $\zeta,\tilde{\zeta}\in C_c([0,\infty))$. In this case, Lemma \ref{lemma:DecompositionMeasuresDirectSum} implies
		\begin{align*}
			\overline{T^n_{k,q}(\zeta)}(u^X_s[j],u^Y_t[k-j])=&\frac{1}{j!(k-j)!}\frac{\partial^k}{\partial\lambda^j\partial \delta^{k-j}}\Big|_0T^n_{k,q}(\zeta)[\lambda u^X_s+\delta u^Y_t]\\
			=&\int_{\C^n}\zeta(|(z_X,z_Y)|)d\left[\Theta^{X}_{j,\lfloor\frac{j}{2}\rfloor}(u_t^X,z_X)\otimes \Theta^{Y}_{k-j,q-\lfloor\frac{j}{2}\rfloor}(u_t^Y,z_Y)\right].
		\end{align*}
		This expression can easily be evaluated using Lemma \ref{lemma:BehaviorT_u_t}, which yields the desired result. As we only consider $Y^n_{k,q}$ if $q\ge \max(1,k-n)$, Lemma \ref{lemma:DecompositionMeasuresDirectSum} similarly implies
		\begin{align*}
			&\overline{Y_{k,q}(\tilde{\zeta})}(u^X_s[2],u^Y_t[k-2])\\
			=&\frac{1}{q}\int_{\C^n}\zeta(|(z_X,z_Y)|)d\mathcal{B}^{X}_{2,1}(u_s^X,z_X) d\Theta^{Y}_{k-2,q-1}(u_t^Y,z_Y)\\
			&+\int_{\C^n}\zeta(|(z_X,z_Y)|)d\left[\Theta^{X}_{2,1}(u_s^X,z_X)\otimes\left(\frac{q-1}{q}\mathcal{B}^{Y}_{k-2,q-1}-\mathcal{C}^Y_{k-2,q-1}\right)(u_t^Y,z_Y)\right],
		\end{align*}
		which we can evaluate using Theorem \ref{theorem:ThetaValuesOnUt}. The claim follows.
	\end{proof}
	
	\begin{corollary}
		\label{corollary:InjectivityTY}
		Let $\max(1,k-n)\le q\le \left\lfloor\frac{k-1}{2}\right\rfloor$. The map
		\begin{align*}
			T^n_{k,q}\oplus Y^n_{k,q}:D^{2n-k}\oplus \tilde{D}^{2n-k+2}&\rightarrow\VConv_{k,q}(\C^n)^{\U(n)}\\
			(\zeta,\tilde{\zeta})&\mapsto T^n_{k,q}(\zeta)+Y^n_{k,q}(\tilde{\zeta})
		\end{align*}
		is injective.
	\end{corollary}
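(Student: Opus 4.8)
\emph{Proof proposal.} The plan is to reconstruct $\zeta$ and then $\tilde\zeta$ separately from the identity $T^n_{k,q}(\zeta)+Y^n_{k,q}(\tilde\zeta)=0$ by evaluating this valuation, and its polarization, on the explicit families of convex functions for which the Monge--Amp\`ere operators have been computed, and then inverting the associated one-variable integral transforms. First I would observe that there is nothing to prove unless $n\ge2$, since for $n=1$ the index range $\max(1,k-n)\le q\le\lfloor\frac{k-1}{2}\rfloor$ is empty; similarly $q\ge1$ together with $q\le\frac{k-1}{2}$ forces $k\ge3$. As every $\zeta\in D^{2n-k}$ and $\tilde\zeta\in\tilde D^{2n-k+2}$ has bounded support, I would fix $R>0$ with $\zeta\in D^{2n-k}_R$, $\tilde\zeta\in\tilde D^{2n-k+2}_R$, and assume $T^n_{k,q}(\zeta)+Y^n_{k,q}(\tilde\zeta)=0$.

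The first step recovers $\zeta$. Evaluating the identity on the rotation invariant functions $u_t$, $t\ge0$, the summand $Y^n_{k,q}(\tilde\zeta)[u_t]$ drops out by Lemma~\ref{lemma:YkqVanishesOnRotationInvariantFunctions}, so $T^n_{k,q}(\zeta)[u_t]=0$ for all $t\ge0$. By Lemma~\ref{lemma:BehaviorT_u_t} the left-hand side equals $2n\omega_{2n}\binom{n}{k-2q,q}2^{k-2q}\,\mathcal R^{2n-k}(\zeta)[t]$, once one recognises the bracket $(2n-k)\int_t^\infty\zeta(r)r^{2n-k-1}\,dr+t^{2n-k}\zeta(t)$ as $\mathcal R^{2n-k}(\zeta)[t]$. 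In the given range $k-2q\ge1$ and $n-k+q\ge0$, so the multinomial coefficient is a genuine positive integer and the whole prefactor is nonzero; hence $\mathcal R^{2n-k}(\zeta)\equiv0$, and by the bijectivity of $\mathcal R^{2n-k}\colon D^{2n-k}_R\to C_{c,R}([0,\infty))$ in Lemma~\ref{lemma:InjectivityTransformR} we conclude $\zeta=0$.

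The second step recovers $\tilde\zeta$. With $\zeta=0$ we now have $Y^n_{k,q}(\tilde\zeta)=0$, so its polarization vanishes identically. Since $n\ge2$, choose a complex orthogonal decomposition $\C^n=X\oplus Y$ with $\dim_\C X=1$; as $k\ge3$, the relevant formula in Lemma~\ref{lemma:ValuesYOnCyclinderFunctions} shows that $\overline{Y^n_{k,q}(\tilde\zeta)}(u_0^X[2],u_t^Y[k-2])$ is a fixed nonzero multiple of $\mathcal P^{2n-k+2}(\tilde\zeta)[t]$, the multiplying constant
\begin{align*}
-\frac{2\omega_2(2n-2)\omega_{2n-2}2^{k-2q}}{2q(k-2)(n-1)}\binom{n-1}{k-2q,q-1}
\end{align*}
being nonzero because $q\ge1$, $k-2\ge1$, $n-1\ge1$, and $\binom{n-1}{k-2q,q-1}$ is a genuine multinomial coefficient ($k-2q\ge1$, $q-1\ge0$, and $(k-2q)+(q-1)\le n-1$, i.e.\ $q\ge k-n$). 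Hence $\mathcal P^{2n-k+2}(\tilde\zeta)\equiv0$, and Lemma~\ref{lemma:PaIsomorphism}, which says the corresponding $\mathcal P$-transform is a topological isomorphism, forces $\tilde\zeta=0$. Thus $(\zeta,\tilde\zeta)=0$ and $T^n_{k,q}\oplus Y^n_{k,q}$ is injective.

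I expect the only delicate point to be bookkeeping: one must verify that in the range $\max(1,k-n)\le q\le\lfloor\frac{k-1}{2}\rfloor$ every structural constant appearing in Lemmas~\ref{lemma:BehaviorT_u_t} and~\ref{lemma:ValuesYOnCyclinderFunctions} is nonzero (the multinomial coefficients $\binom{n}{k-2q,q}$, $\binom{n-1}{k-2q,q-1}$ and the numerical factors $2n-k$, $k-2$, $q$, $n-1$). Granting this, the argument reduces to the injectivity of the two one-variable transforms $\mathcal R^{2n-k}$ and $\mathcal P^{2n-k+2}$; no simultaneous separation of $\zeta$ from $\tilde\zeta$ is needed, since $u_t$ is blind to the $\Upsilon$-part while, after $\zeta$ has been shown to vanish, the mixed family $u_0^X+u_t^Y$ isolates exactly the $\mathcal P$-transform of $\tilde\zeta$.
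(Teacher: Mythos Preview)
Your proof is correct and follows essentially the same route as the paper's own argument: first eliminate $\tilde\zeta$ from the picture by evaluating on the rotation invariant family $u_t$ (where $Y^n_{k,q}$ contributes nothing) to recover $\zeta$ via the bijectivity of $\mathcal{R}^{2n-k}$, and then use the polarized evaluation $\overline{Y^n_{k,q}(\tilde\zeta)}(u_0^X[2],u_t^Y[k-2])$ from Lemma~\ref{lemma:ValuesYOnCyclinderFunctions} together with the injectivity of $\mathcal{P}^{2n-k+2}$ to force $\tilde\zeta=0$. The paper's proof is phrased more tersely and in a slightly different order (it first remarks that Lemma~\ref{lemma:ValuesYOnCyclinderFunctions} makes $Y^n_{k,q}$ injective, then observes that $T^n_{k,q}(\zeta)$ is determined on rotation invariant functions), but the ingredients and logic are identical; your additional bookkeeping that the multinomial prefactors are genuinely nonzero in the stated index range is a welcome explicit check that the paper leaves implicit.
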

	\begin{proof}
		First note that this case only occurs for $n\ge 2$, $k\ge 3$.	Lemma \ref{lemma:ValuesYOnCyclinderFunctions} thus shows that $Y^n_{k,q}$ is injective. Since the values of $T^n_{k,q}(\zeta)$ on rotation invariant functions completely determine $\zeta$ by Lemma \ref{lemma:BehaviorT_u_t} and Lemma \ref{lemma:InjectivityTransformR}, and $Y^n_{k,q}(\tilde{\zeta})$ vanishes on rotation invariant functions, the claim follows.
	\end{proof}

\subsection{Proof of the decomposition}
	In order to simplify the notation, set
\begin{align*}
	\VConv^R_{k}(\C^n)^{\U(n)}:=&\VConv_{k}(\C^n)^{\U(n)}\cap \VConv_{B_R(0)}(\C^n),\\
	\VConv^R_{k,q}(\C^n)^{\U(n)}:=&\VConv_{k,q}(\C^n)^{\U(n)}\cap \VConv_{B_R(0)}(\C^n).
\end{align*}

In this section we will establish the following version of Theorem \ref{maintheorem:decomposition}.
\begin{theorem}
	\label{theorem:DecompositionRefinedVersion}
	Let $R>0$. For $n\in\mathbb{N}$, $0\le k\le 2n$:
	\begin{align}
		\label{equation:induction_hypothesis}
		\VConv^R_{k}(\C^n)^{\U(n)} =\bigoplus_{q=\max(0,k-n)}^{\lfloor\frac{k}{2}\rfloor}\VConv^R_{k,q}(\C^n)^{\U(n)}.
	\end{align}
\end{theorem}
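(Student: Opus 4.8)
The plan is to prove Theorem \ref{theorem:DecompositionRefinedVersion} by induction on the complex dimension $n$. Since the sum is already known to be direct by Theorem \ref{theorem:VanishingPropertyUnitaryValuations}, the content of the statement is surjectivity: every $\mu\in\VConv^R_k(\C^n)^{\U(n)}$ can be written as a sum $\sum_q \mu_q$ with $\mu_q\in\VConv^R_{k,q}(\C^n)^{\U(n)}$. The base case is low-dimensional, where the range of $q$ collapses so that there is nothing to prove, and the cases $k=0$ or $k=2n$ are trivial as well. For the inductive step, the strategy is to construct, for each admissible $q$ except possibly $q=1$, a continuous linear projection $P_q\colon\VConv^R_k(\C^n)^{\U(n)}\to\VConv^R_{k,q}(\C^n)^{\U(n)}$; the projection onto the $q=1$ component is then forced to be $\mathrm{id}-\sum_{q\ne1}P_q$ (see Remark \ref{remark:ProblematicComponent}), so all projections exist and the decomposition follows once we check $\sum_q P_q=\mathrm{id}$ and each $P_q$ has the claimed image.

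The heart of the construction is a version of the template method applied to \emph{restrictions} to complex orthogonal subspaces. Given $\mu\in\VConv^R_k(\C^n)^{\U(n)}$ and a decomposition $\C^n=X\oplus Y$ with $\dim_\C X\in\{1,2\}$, one forms the polarized quantity $\overline{\mu}(u_s^X[j],u_t^Y[k-j])$ and, using Lemma \ref{lemma:ValuesYOnCyclinderFunctions} and the inductive hypothesis applied to the lower-dimensional factor $Y$, expresses it in terms of the integral transforms $\mathcal{R}^{a,b}$, $\mathcal{R}^a$, $\mathcal{P}^a$ of Section \ref{section:IntegralTransforms}. Inverting these transforms via Lemma \ref{lemma:RTopIsomorphism}, Lemma \ref{lemma:PaIsomorphism}, and Corollary \ref{corollary:relationSabRab} produces candidate functions $\phi_q\in D^{2n-k}_R$ and $\psi_q\in\tilde{D}^{2n-k+2}_R$, hence candidate valuations $P_q\mu:=T^n_{k,q}(\phi_q)+Y^n_{k,q}(\psi_q)$ (with $\psi_q=0$ in the extreme cases). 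All the continuity estimates established in Sections \ref{section:IntegralTransforms}--\ref{section:SingularValuations} (in particular Theorem \ref{theorem:ExistenceT}, Lemma \ref{lemma:ContinuityRab}, Lemma \ref{lemma:continuityPolarization}, and the norm bounds on $\VConv_{B_R(0)}$) guarantee that $\mu\mapsto P_q\mu$ is continuous into $\VConv^R_{k,q}(\C^n)^{\U(n)}$.

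The crucial verification is then that the maps $P_q$ so defined actually form a decomposition of the identity, and here the approximation argument enters: by Corollary \ref{corollary:approximationUnderSupportRestrictionInvariantCase} the smooth valuations $\VConv_k(\C^n)^{\U(n),sm}\cap\VConv_{B_R(0)}(\C^n)$ are dense in $\VConv^R_k(\C^n)^{\U(n)}$, and on this dense subspace Theorem \ref{theorem:characterization_smooth_unitarily_invariant_valuations} together with the support control of the preceding Proposition tells us exactly what the components are. Computing $P_q$ on a smooth valuation $\mu=\sum_p(T^n_{k,p}(\phi_p)+Y^n_{k,p}(\psi_p))$ using Lemma \ref{lemma:ValuesYOnCyclinderFunctions} shows that the template reconstruction returns precisely the $q$-th summand; since both $P_q$ and the $q$-th smooth component are continuous and they agree on a dense set, $P_q$ is the projection onto $\VConv^R_{k,q}(\C^n)^{\U(n)}$, and $\sum_q P_q=\mathrm{id}$ holds on the dense subspace, hence everywhere. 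Injectivity of $T^n_{k,q}\oplus Y^n_{k,q}$ from Corollary \ref{corollary:InjectivityTY} ensures the images are the claimed spaces.

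The main obstacle I expect is twofold. First, one must be careful that the integral transforms are \emph{surjective} onto the right function spaces — a priori the restricted valuation produces only a continuous function of the parameters $(s,t)$, and one needs it to lie in the closed subspace $C_{(a,b),R}$ (equivalently, that $\tilde{\mathcal S}^{a,b}$ of it depends only on $s^2+t^2$); this is where density of smooth valuations is genuinely used, since for smooth $\mu$ the explicit formula of Theorem \ref{theorem:characterization_smooth_unitarily_invariant_valuations} exhibits the required structure and the general case follows by continuity (compare Corollary \ref{corollary:RegularityUVW}). Second, the component $q=1$ resists this direct reconstruction (Remark \ref{remark:ProblematicComponent}), so one must argue that $\mathrm{id}-\sum_{q\ne1}P_q$ lands in $\VConv^R_{k,1}(\C^n)^{\U(n)}$; this again is checked on smooth valuations, where all components are understood, and then extended by density and continuity. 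Keeping track of which values of $k$ force which subspaces $X$ of $\dim_\C X=1$ or $2$ to be available — so that enough independent templates exist to isolate every $q\ne1$ — is the bookkeeping that makes the induction go through.
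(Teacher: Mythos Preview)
Your proposal is correct and follows essentially the same approach as the paper: induction on $n$, decomposition of the polarization under complex orthogonal splittings $\C^n=X\oplus Y$ with $\dim_\C X\in\{1,2\}$ (using the inductive hypothesis to get continuous projections on the lower-dimensional factors), the template method on $u_s^X,u_t^Y$ inverted via the integral transforms of Section \ref{section:IntegralTransforms}, and verification of the resulting continuous projections on the dense subspace of smooth valuations, with the $q=1$ component recovered as the remainder. You have also correctly identified the two genuine obstacles (Corollary \ref{corollary:RegularityUVW} and Remark \ref{remark:ProblematicComponent}) and how they are resolved by density.
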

This directly implies Theorem \ref{maintheorem:decomposition} since every valuation in $\VConv_k(\C^n)^{\U(n)}$ has compact support. Note that the cases $n=1$ as well as $k=0,1,2n-1$ and $2n$ are already covered by Theorem \ref{theorem:VanishingPropertyUnitaryValuations}, as there is only one space in the decomposition in this case. It is therefore sufficient to establish Theorem \ref{theorem:DecompositionRefinedVersion} for homogeneous valuations of degree $2\le k\le 2n-2$.\\

As discussed in the introduction, the proof will proceed by induction. Since the result holds for $n=1$, let us thus assume that $n\ge 2$ and that Theorem \ref{theorem:DecompositionRefinedVersion} holds for $R>0$, all dimensions $l\le n-1$ and all $0\le k\le 2l$. This has the following direct implication.
\begin{corollary}
	\label{corollary:lowerdimensional_decomposition_topological_direct_sum} For $1\le l\le n-1$,
	\eqref{equation:induction_hypothesis} is a topological direct sum decomposition, that is, the induced projection 
	\begin{align*}
		P^{l}_{i,q}:\VConv^R_i(\C^l)^{\U(l)}\rightarrow\VConv^R_{i,q}(\C^l)^{\U(l)}
	\end{align*} is continuous for each $\max(0,i-l)\le q\le \lfloor\frac{i}{2}\rfloor$.
\end{corollary}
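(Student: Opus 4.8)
The plan is to obtain the continuity of the projections $P^l_{i,q}$ from the open mapping theorem, once it is observed that every space appearing in the decomposition \eqref{equation:induction_hypothesis} in dimension $l$ is a Banach space.

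First I would record that $\VConv^R_i(\C^l)^{\U(l)}$ is a Banach space with respect to $\|\cdot\|_{R,1}$. By Lemma \ref{lemma:NormsValuationsBoundedSupport}, $\VConv_{B_R(0)}(\C^l)$ is a Banach space with this norm, and its topology is the one induced from $\VConv(\C^l)$; inside it $\VConv^R_i(\C^l)^{\U(l)}$ is a closed subspace, being cut out by the $i$-homogeneity condition (closed, by continuity of the homogeneous decomposition) and by $\U(l)$-invariance, i.e. membership in the joint kernel of the continuous linear maps $\mu\mapsto\mu(\,\cdot\circ g)-\mu$, $g\in\U(l)$. The same reasoning, using that the push-forwards $\pi_{E_{i,p}*}$ are continuous by Lemma \ref{lemma:continuityPushForward}, shows that each $\VConv^R_{i,q}(\C^l)^{\U(l)}=\VConv^R_i(\C^l)^{\U(l)}\cap\bigcap_{p\ne q}\ker(\pi_{E_{i,p}*})$ is closed in $\VConv^R_i(\C^l)^{\U(l)}$, hence is itself a Banach space with $\|\cdot\|_{R,1}$.

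Next I would form the external direct sum $B:=\bigoplus_{q=\max(0,i-l)}^{\lfloor\frac{i}{2}\rfloor}\VConv^R_{i,q}(\C^l)^{\U(l)}$, equipped with the maximum of the component norms, which is again a Banach space since there are only finitely many summands, and consider the summation map $\Sigma\colon B\to\VConv^R_i(\C^l)^{\U(l)}$, $(\mu_q)_q\mapsto\sum_q\mu_q$. This map is bounded by the triangle inequality, and it is bijective precisely by the induction hypothesis, i.e. by \eqref{equation:induction_hypothesis} in dimension $l$. The open mapping theorem then yields that $\Sigma^{-1}$ is continuous; since $\Sigma^{-1}(\mu)=(P^l_{i,q}\mu)_q$ and the coordinate projections of $B$ onto its summands are continuous, every $P^l_{i,q}$ is continuous.

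The argument is entirely routine, so there is no real obstacle: the only point genuinely requiring care is the verification that each $\VConv^R_{i,q}(\C^l)^{\U(l)}$ is closed, which is exactly what makes $B$ a Banach space and allows the open mapping theorem to be applied.
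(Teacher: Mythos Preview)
Your proposal is correct and follows essentially the same approach as the paper: both argue that $\VConv^R_i(\C^l)^{\U(l)}$ and its summands $\VConv^R_{i,q}(\C^l)^{\U(l)}$ are Banach spaces (the latter as closed subspaces), and then invoke the open mapping theorem for the summation map to conclude that the projections are continuous. You simply spell out in more detail why the relevant subspaces are closed, which the paper takes for granted.
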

\begin{proof}
	The spaces $\VConv^R_{i,q}(\C^l)^{\U(l)}$ are closed subspaces of the Banach space $\VConv^R_{i}(\C^l)^{\U(l)}$ and thus in particular Banach spaces themselves. In particular, \eqref{equation:induction_hypothesis} expresses the Banach space $\VConv^R_i(\C^l)^{\U(l)}$ as a finite direct sum of Banach subspaces, which is thus a topological decomposition by the open mapping theorem. 
\end{proof}

Let us now briefly discuss the general strategy of our proof. By Theorem \ref{theorem:VanishingPropertyUnitaryValuations} the sum on the right hand side of \eqref{equation:induction_hypothesis} is direct. It is thus sufficient to decompose a given valuation into the relevant components. We will inductively construct \emph{continuous} linear maps
\begin{align*}
	\VConv^R_k(\C^n)^{\U(n)}\rightarrow\VConv^R_{k,q}(\C^n)^{\U(n)}
\end{align*}
and verify that these are the desired projections by examining their behavior on the dense subspace of smooth valuations, where we know that the decomposition holds (compare Theorem \ref{theorem:characterization_smooth_unitarily_invariant_valuations}).\\

As a first step, we consider the decomposition of the polarization of $\mu$ under proper complex orthogonal  decompositions of $\C^n$. For $1\le l\le n-1$ let us set $X=\C^l\times\{0\}^{n-l}$, $Y=\{0\}^l\times\C^{n-l}$ such that $\C^n=X\oplus Y$ is a decomposition of $\C^n$ into two lower dimensional orthogonal complex subspaces. 
We will denote by $X_{i,q}$ and $Y_{i,q}$ $i$-dimensional subspaces of $X$ and $Y$ that belong to the same orbit as $\C^q\times \R^{i-2q}$ under the natural operation of $\U(l)$ and $\U(n-l)$ on $X$ and $Y$. 
	\begin{lemma}
		\label{lemma:continous_decomposition_polarization}
		For $2\le k\le 2n-2$ let $\mu\in\VConv^R_k(\C^n)^{\U(n)}$ be given.
		For each $1\le i\le k-1$ and every $\max(0,l-i)\le q\le \lfloor \frac{l}{2}\rfloor$, $\max(0,n-l-(k-i))\le p\le \lfloor \frac{n-l}{2}\rfloor$ there exist unique maps $\mu^l_{(i,q),(k-i,p)}:\Conv(X,\R)\times\Conv(Y,\R)\rightarrow\R$
		with the following properties:
		\begin{enumerate}
			\item For all $(f_X,f_Y)\in\Conv(X,\R)\times\Conv(Y,\R)$:
			\begin{align*}
				\bar{\mu}(f_X[i],f_Y[k-i])=\sum_{q=\max(0,i-l)}^{\lfloor \frac{l}{2}\rfloor}\sum_{p=\max(0,k-i-(n-l))}^{\lfloor \frac{n-l}{2}\rfloor}\mu^l_{(i,q),(k-i,p)}(f_X,f_Y)
			\end{align*}
			\item $\mu^l_{(i,q),(k-i,p)}:\Conv(X,\R)\times\Conv(Y,\R)\rightarrow\R$ is jointly continuous.
			\item For every $f_X\in\Conv(X,\R)$: $\mu^l_{(i,q),(k-i,p)}(f_X,\cdot)\in\VConv^R_{k-i,p}(Y)^{\U(n-l)}$.
			\item For every $f_Y\in\Conv(Y,\R)$: $\mu^l_{(i,q),(k-i,p)}(\cdot,f_Y)\in\VConv^R_{i,q}(X)^{\U(l)}$.
		\end{enumerate}
	\end{lemma}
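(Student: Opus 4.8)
The plan is to build the maps $\mu^l_{(i,q),(k-i,p)}$ in two successive steps: first peel off the $\U(l)$-components in the $X$-slot, then the $\U(n-l)$-components in the $Y$-slot, each time invoking the induction hypothesis (Corollary \ref{corollary:lowerdimensional_decomposition_topological_direct_sum}) in the appropriate lower dimension. The device that makes the polarization manageable is that for a fixed second argument $f_Y$ the functional $f_X\mapsto\bar\mu(f_X[i],f_Y[k-i])$ is, up to a combinatorial constant, the $i$-homogeneous component of an honest valuation: setting $\nu^Y_{f_Y}(f_X):=\mu(\pi_X^*f_X+\pi_Y^*f_Y)$, the identity $\pi_X^*(f_X\vee g_X)+\pi_Y^*f_Y=(\pi_X^*f_X+\pi_Y^*f_Y)\vee(\pi_X^*g_X+\pi_Y^*f_Y)$ (and its analogue for $\wedge$) shows $\nu^Y_{f_Y}$ is a continuous valuation; it is dually epi-translation invariant because $\pi_X^*\ell$ is affine for affine $\ell$, it is $\U(l)$-invariant because $\U(l)$ acts on $X$ through $\U(l)\times\{1\}\subset\U(n)$, which fixes $\pi_Y^*f_Y$, and $\supp\nu^Y_{f_Y}\subset B_R(0)\cap X$ because $|\pi_X(z)|\le|z|$ and $\supp\mu\subset B_R(0)$. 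Expanding $\mu(\lambda\pi_X^*f_X+\pi_Y^*f_Y)$ as a polynomial in $\lambda$ and using the definition of the polarization identifies its $i$-homogeneous component with $f_X\mapsto\binom ki\bar\mu(f_X[i],f_Y[k-i])$.

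The continuity inputs that drive the argument are the following. The map $f_Y\mapsto\nu^Y_{f_Y}$ is continuous from $\Conv(Y,\R)$ into $\VConv(X)$, hence, by Lemma \ref{lemma:NormsValuationsBoundedSupport}, continuous into the Banach space $\VConv^R(X)^{\U(l)}$ of $\U(l)$-invariant valuations with support in $B_R(0)\cap X$, and it satisfies the valuation identity there (pointwise in $f_X$); the same holds with $X$ and $Y$ exchanged, since $\mu(\pi_X^*f_X+\pi_Y^*f_Y)$ is symmetric in its two arguments. Passing to the $i$-homogeneous part is continuous on this Banach space (the homogeneous decomposition restricts to a topological one for valuations of bounded support), so composing with the continuous projection $P^l_{i,q}$ of Corollary \ref{corollary:lowerdimensional_decomposition_topological_direct_sum} produces continuous maps $f_Y\mapsto P^l_{i,q}\bigl((\nu^Y_{f_Y})_i\bigr)\in\VConv^R_{i,q}(X)^{\U(l)}$. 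I would then define $\rho_q(f_X,f_Y):=\binom ki^{-1}\bigl[P^l_{i,q}\bigl((\nu^Y_{f_Y})_i\bigr)\bigr](f_X)$. This gives $\sum_q\rho_q=\bar\mu(\,\cdot\,[i],\,\cdot\,[k-i])$; for fixed $f_Y$, $\rho_q(\,\cdot\,,f_Y)\in\VConv^R_{i,q}(X)^{\U(l)}$; $\rho_q$ is jointly continuous, because the evaluation map $\VConv^R(X)\times\Conv(X,\R)\to\R$ is jointly continuous (using $|\eta(h)|\le(\sup_{|z|\le R+1}|h(z)|)^i\|\eta\|_{R,1}$ and continuity of $h\mapsto\sup_{|z|\le R+1}|h(z)|$); and applying the linear map $P^l_{i,q}$ to the valuation identity, homogeneity, dual translation invariance and $\U(n-l)$-invariance that $\nu^Y_{f_Y}$ displays as a function of $f_Y$ shows that $f_Y\mapsto\rho_q(f_X,f_Y)$ lies in $\VConv^R_{k-i}(Y)^{\U(n-l)}$ for each fixed $f_X$, with $f_X\mapsto\rho_q(f_X,\,\cdot\,)$ again continuous into this Banach space.

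The second step repeats the first with the roles reversed: the induction hypothesis in dimension $n-l$ supplies continuous projections $P^{n-l}_{k-i,p}$ onto the summands of $\VConv^R_{k-i}(Y)^{\U(n-l)}$, and I set $\mu^l_{(i,q),(k-i,p)}(f_X,f_Y):=\bigl[P^{n-l}_{k-i,p}\bigl(\rho_q(f_X,\,\cdot\,)\bigr)\bigr](f_Y)$. Property (1) is then a telescoping of the two decompositions, (2) follows once more by composing a continuous valuation-valued map with the jointly continuous evaluation map, and (3) holds by construction. For (4) I apply the linear projection $P^{n-l}_{k-i,p}$ to the valuation and invariance properties of $f_X\mapsto\rho_q(f_X,\,\cdot\,)$ to obtain $\mu^l_{(i,q),(k-i,p)}(\,\cdot\,,f_Y)\in\VConv^R_i(X)^{\U(l)}$, and membership in the $q$-summand follows because $\rho_q(\pi_E^*h,f_Y)=0$ for every model subspace $E\subset X$ of the wrong type (since $\rho_q(\,\cdot\,,f_Y)\in\VConv^R_{i,q}(X)^{\U(l)}$), hence $P^{n-l}_{k-i,p}\bigl(\rho_q(\pi_E^*h,\,\cdot\,)\bigr)=0$. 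Uniqueness is then immediate: fixing $f_Y$, the partial sum $\sum_p\mu^l_{(i,q),(k-i,p)}(\,\cdot\,,f_Y)$ lies in $\VConv^R_{i,q}(X)^{\U(l)}$ for each $q$, so directness of the decomposition (Theorem \ref{theorem:VanishingPropertyUnitaryValuations}) determines it; fixing then $f_X$ and $q$ and using directness in the $Y$-variable determines each individual term.

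I expect the only real difficulty to be organizational rather than conceptual: one must carry three compatible topologies at once — that of $\VConv$, its normable restriction to valuations with support in a fixed ball, and that of $\Conv$ — so that the maps into $\VConv$ produced from $\mu$ can be upgraded to maps into Banach spaces, which is what makes composition with the projections $P^l_{i,q}$ and $P^{n-l}_{k-i,p}$ legitimate (these are only known to be continuous between Banach spaces) and what makes the final functionals come out jointly, not merely separately, continuous. By contrast the compatibility of the two decompositions is essentially automatic: they act on different variables, and the linearity of $P^{n-l}_{k-i,p}$ makes it commute with evaluation of the $X$-argument at pull-backs $\pi_E^*h$, which is precisely what preserves the $q$-type of the $X$-variable under the $Y$-decomposition.
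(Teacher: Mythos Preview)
Your proof is correct and follows essentially the same strategy as the paper: feed the polarized bilinear form into the two lower-dimensional decompositions from the induction hypothesis (Corollary~\ref{corollary:lowerdimensional_decomposition_topological_direct_sum}), one variable at a time, and verify joint continuity by the ``valuation-valued map composed with evaluation'' argument. The differences are cosmetic: you decompose in the $X$-slot first and then in the $Y$-slot, while the paper does $Y$ then $X$; and you reconstruct $\bar\mu(f_X[i],f_Y[k-i])$ as the $i$-homogeneous part of the valuation $\nu^Y_{f_Y}(f_X)=\mu(\pi_X^*f_X+\pi_Y^*f_Y)$, whereas the paper simply quotes the joint continuity of the polarization (Lemma~\ref{lemma:continuityPolarization}) and works with $\bar\mu$ directly. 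Your route is slightly more hands-on but equivalent; the paper's is a bit shorter because it avoids invoking continuity of the homogeneous projections.
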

	\begin{proof}
		Let us first establish that maps with these properties exist. The polarization $\bar{\mu}$ of $\mu$ is jointly continuous, so the map
		\begin{align*}
			(f_X,f_Y)\mapsto \bar{\mu}(f_X[i],f_Y[k-i])
		\end{align*}
		is jointly continuous. Moreover, it is a dually epi-translation invariant valuation in each argument, homogeneous of degree $i$ and $k-i$ respectively. Obviously, this map is $U(l)$-invariant in the first and $\U(n-l)$-invariant in the second argument. We thus obtain a well defined and continuous map
		\begin{align*}
			\mu_1:\Conv(X,\R)&\rightarrow\VConv_{k-i}(Y)^{\U(n-l)}\\
			f_X&\mapsto \bar{\mu}(f_X[i],\cdot[k-i]),
		\end{align*}
		which is in addition a dually epi-translation and $\U(l)$-invariant valuation. In addition, its support is contained in $B_R(0)$ by Lemma \ref{lemma:SupportPolarization}. According to Corollary \ref{corollary:lowerdimensional_decomposition_topological_direct_sum},
		\begin{align*}
			\VConv^R_{k-i}(Y)^{\U(n-l)}=\bigoplus_{p=\max(0,k-i-(n-l))}^{\lfloor \frac{n-l}{2}\rfloor}\VConv^R_{k-i,p}(Y)^{\U(n-l)}
		\end{align*}
		is a topological decomposition since $\dim_\C Y=n-l<n$. Using the continuous projections $P^Y_{k-i,p}:\VConv^R_{k-i}(Y)^{\U(n-l)}\rightarrow \VConv^R_{k-i,p}(Y)^{\U(n-l)}$, we obtain for $f_X\in\Conv(X,\R)$
		\begin{align*}
			\mu_1(f_X)=\sum_{p=\max(0,k-i-(n-l))}^{\lfloor \frac{n-l}{2}\rfloor}P^Y_{k-i,p}[\mu_1(f_X)]\in\VConv^R_k(Y)^{\U(n)}.
		\end{align*}
		Note that the map
		\begin{align*}
			\mu^p:\Conv(X,\R)\times\Conv(Y,\R)&\rightarrow\R\\
			(f_X,f_Y)&\mapsto P^Y_{k-i,p}[\mu_1(f_X)](f_Y)
		\end{align*}
		is continuous: If $(f_{X,j})_j$ and $(f_{Y,j})_j$ are sequences in $\Conv(X,\R)$ and $\Conv(Y,\R)$ converging to $f_X$ and $f_Y$ respectively, then  $P^Y_{k-i,p}[\mu_1(f_{X,j})]$ converges to $P^Y_{k-i,p}[\mu_1(f_X)]$ in $\VConv^R_k(Y)^{\U(n-l)}$ as $P^Y_{k-i,p}$ is continuous. In other words, it converges uniformly on compact subsets in $\Conv(Y,\R)$. As the set $\{f_{Y,j}:j\in\mathbb{N}\}\cup \{f_Y\}\subset\Conv(Y,\R)$ is compact, we thus obtain
		\begin{align*}
			\lim\limits_{j\rightarrow\infty}P^Y_{k-i,p}[\mu_1(f_{X,j})](f_{Y,j})=P^Y_{k-i,p}[\mu_1(f_X)](f_Y),
		\end{align*}
		which shows the claim. It is easy to see that $\mu^p$ is a dually epi-translation and $\U(l)$-invariant valuation homogeneous of degree $i$ in the first argument. We thus obtain a continuous map
		\begin{align*}
			\tilde{\mu}^p_2:\Conv(Y,\R)&\rightarrow\VConv^R_{i}(X)^{\U(l)}\\
			f_Y&\mapsto \mu^p(\cdot,f_Y).
		\end{align*}
		As before, 
		\begin{align*}
			\VConv^R_{i}(X)^{\U(l)}=\bigoplus_{q=\max(0,i-l)}^{\lfloor \frac{l}{2}\rfloor}\VConv^R_{i,q}(X)^{\U(l)}
		\end{align*}
		is a topological decomposition by Corollary \ref{corollary:lowerdimensional_decomposition_topological_direct_sum} as $\dim_\C X=l<n$. Denoting the continuous projections onto the different components by $P^X_{i,q}$ and setting $\mu_2^{q,p}(f_Y):=P^X_{i,q}(\tilde{\mu}_2^p(f_Y))\in\VConv^R_{i,q}(X)^{\U(l)}$, we obtain
		continuous maps $\mu_2^{q,p}:\Conv(Y,\R)\rightarrow\VConv^R_{i,q}(X)^{\U(l)}$ that satisfy
		\begin{align*}
			\tilde{\mu}^p_2(f_Y)=\sum_{q=\max(0,i-l)}^{\lfloor \frac{l}{2}\rfloor}\mu_2^{q,p}(f_Y)\in\VConv^R_{i}(X)^{\U(l)}.
		\end{align*}
		We may now define
		\begin{align*}
			\mu^l_{(i,q),(k-i,p)}(f_X,f_Y):=\mu_2^{q,p}(f_Y)[f_X].
		\end{align*}
		As before, one easily verifies that $\mu^l_{(i,q),(k-i,p)}$ is in fact jointly continuous. Obviously, $\mu^l_{(i,q),(k-i,p)}$ also satisfies the remaining properties.\\
		
		To see that the functionals $\mu^l_{(i,q),(k-i,p)}$ are unique, assume that $\tilde{\mu}^l_{(i,q),(k-i,p)}$ is a second family of functionals satisfying these properties. By restricting $\bar{\mu}(f_X[i],f_Y[k-i])$ in the first argument to functions defined on $X_{i,q}$, we obtain
		\begin{align*}
			\sum_{p=\max(0,k-i-(n-l))}^{\lfloor \frac{n-l}{2}\rfloor}\mu^l_{(i,q),(k-i,p)}(\cdot,f_Y)=\sum_{p=\max(0,k-i-(n-l))}^{\lfloor \frac{n-l}{2}\rfloor}\tilde{\mu}^l_{(i,q),(k-i,p)}(\cdot,f_Y) \quad\text{on }\Conv(X_{i,q},\R)
		\end{align*}
		for all $f_Y\in\Conv(Y,\R)$. As this holds for all $\max(0,l-i)\le q\le \lfloor\frac{l}{2}\rfloor$, this equation holds in fact on $\Conv(X,\R)$ due to Theorem \ref{theorem:VanishingPropertyUnitaryValuations}. Thus
		\begin{align*}
			\sum_{p=\max(0,k-i-(n-l))}^{\lfloor \frac{n-l}{2}\rfloor}\mu^l_{(i,q),(k-i,p)}(f_X,\cdot)=\sum_{p=\max(0,k-i-(n-l))}^{\lfloor \frac{n-l}{2}\rfloor}\tilde{\mu}^l_{(i,q),(k-i,p)}(f_X,\cdot)
		\end{align*}
		for all $f_X\in\Conv(X,\R)$. But both sides are a decomposition of the same valuation in $\VConv^R_{k-i}(Y)^{\U(n-l)}=\bigoplus_{p=\max(0,k-i-(n-l))}^{\lfloor \frac{n-l}{2}\rfloor}\VConv^R_{k-i,p}(Y)^{\U(n-l)}$. As this is a direct sum decomposition, we deduce $\mu^l_{(i,q),(k-i,p)}(f_X,\cdot)=\tilde{\mu}^l_{(i,q),(k-i,p)}(f_X,\cdot)$ for all $f_X\in\Conv(X,\R)$ and all $\max(0,k-i-(n-l))\le p\le \lfloor \frac{n-l}{2}\rfloor$, i.e. the functionals are unique.
	\end{proof}
	\begin{definition}
		For any $\mu\in\VConv^R_k(\C^n)^{\U(n)}$, we set
		\begin{align*}
			P^{l}_{(i,q),(k-i,p)}(\mu):=\mu^l_{(i,q),(k-i,p)}.
		\end{align*}
	\end{definition}
	\begin{corollary}
		\label{corollaryPLinear}
		The map $\mu\mapsto P^l_{(i,q),(k-i,p)}(\mu)$ is linear.
	\end{corollary}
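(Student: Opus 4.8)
The plan is to deduce linearity of $\mu\mapsto P^l_{(i,q),(k-i,p)}(\mu)$ directly from the uniqueness part of Lemma \ref{lemma:continous_decomposition_polarization}. First I would note that the polarization map $\mu\mapsto\bar\mu$ is itself linear, which is immediate from its defining formula, since it is a mixed partial derivative of $\mu$ evaluated along linear combinations of convex functions. Hence, for $\mu,\nu\in\VConv^R_k(\C^n)^{\U(n)}$ and $a,b\in\R$, one has
\begin{align*}
	\overline{a\mu+b\nu}(f_X[i],f_Y[k-i])=a\,\bar\mu(f_X[i],f_Y[k-i])+b\,\bar\nu(f_X[i],f_Y[k-i])
\end{align*}
for every $1\le i\le k-1$ and all $(f_X,f_Y)\in\Conv(X,\R)\times\Conv(Y,\R)$.

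Next I would verify that the family of functionals
\begin{align*}
	a\,P^l_{(i,q),(k-i,p)}(\mu)+b\,P^l_{(i,q),(k-i,p)}(\nu)
\end{align*}
satisfies the four defining properties listed in Lemma \ref{lemma:continous_decomposition_polarization} for the valuation $a\mu+b\nu$. Property~(1) follows by summing the decompositions of $\bar\mu$ and $\bar\nu$ and invoking the displayed identity above; property~(2) holds since a linear combination of jointly continuous maps is jointly continuous; and properties~(3) and~(4) hold because $\VConv^R_{k-i,p}(Y)^{\U(n-l)}$ and $\VConv^R_{i,q}(X)^{\U(l)}$ are linear subspaces, hence closed under the relevant linear combinations. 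By the uniqueness assertion of Lemma \ref{lemma:continous_decomposition_polarization}, this family must therefore coincide with $P^l_{(i,q),(k-i,p)}(a\mu+b\nu)$, which is exactly the claimed linearity.

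I do not anticipate a genuine obstacle here: the argument is purely formal, relying only on the linearity of the polarization and on the uniqueness clause already established in Lemma \ref{lemma:continous_decomposition_polarization}. The only point requiring (minor) attention is the bookkeeping needed to confirm that all four properties of that lemma are stable under forming linear combinations, which is routine in each case.
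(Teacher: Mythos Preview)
Your proposal is correct and follows essentially the same approach as the paper, which simply states that linearity follows directly from the uniqueness of the decomposition in Lemma~\ref{lemma:continous_decomposition_polarization}. You have spelled out the routine verification that the paper leaves implicit.
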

	\begin{proof}
		This follows directly from the uniqueness of the decomposition in Lemma \ref{lemma:continous_decomposition_polarization}.
	\end{proof}

	\begin{lemma}
		\label{lemma:continuityDecompositionFunctionNu}
		If $(\mu_j)_j$ is a sequence in $\VConv_k(\C^n)^{\U(n)}$ converging to $\mu_0$, then $P^l_{(i,q),(k-i,p)}(\mu_j)$ converges uniformly to $P^l_{(i,q),(k-i,p)}(\mu_0)$ on compact subsets of $\Conv(X,\R)\times \Conv(Y,\R)$.
	\end{lemma}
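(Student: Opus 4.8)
The plan is to read off the continuity directly from the explicit construction of $P^l_{(i,q),(k-i,p)}$ in the proof of Lemma \ref{lemma:continous_decomposition_polarization}, estimating each of its finitely many building blocks against the norm $\|\cdot\|_{R,1}$. Since $P^l_{(i,q),(k-i,p)}$ is linear (Corollary \ref{corollaryPLinear}) and the relative topology of $\VConv^R_k(\C^n)^{\U(n)}$ is the one induced by $\|\cdot\|_{R,1}$ (Lemma \ref{lemma:NormsValuationsBoundedSupport}), the assumption $\mu_j\to\mu_0$ is equivalent to $\|\mu_j-\mu_0\|_{R,1}\to 0$, and it suffices to prove: for every pair of compact sets $A_X\subset\Conv(X,\R)$, $A_Y\subset\Conv(Y,\R)$ there is a constant $C=C(A_X,A_Y)$ with
\[
\sup_{f_X\in A_X,\ f_Y\in A_Y}\big|P^l_{(i,q),(k-i,p)}(\nu)(f_X,f_Y)\big|\le C\,\|\nu\|_{R,1}\qquad\text{for all }\nu\in\VConv^R_k(\C^n)^{\U(n)}.
\]
Applying this bound to $\nu=\mu_j-\mu_0$ and letting $j\to\infty$ then yields the claim.

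To obtain the estimate I would unwind the construction in the order in which it was built: $P^l_{(i,q),(k-i,p)}(\nu)(f_X,f_Y)=P^X_{i,q}(\rho_{f_Y})[f_X]$, where $\rho_{f_Y}\in\VConv^R_i(X)^{\U(l)}$ is the valuation $g_X\mapsto P^Y_{k-i,p}\big[\overline{\nu}(g_X[i],\cdot[k-i])\big](f_Y)$ and $\overline{\nu}(g_X[i],\cdot[k-i])\in\VConv^R_{k-i}(Y)^{\U(n-l)}$. Estimating from the outside in uses three ingredients already at hand: the inequality $|\rho(f)|\le(\sup_{|z|\le R+1}|f(z)|)^{\deg\rho}\,\|\rho\|_{R,1}$ from the Remark after Lemma \ref{lemma:NormsValuationsBoundedSupport}; the finiteness of the operator norms of the continuous projections $P^X_{i,q}$ and $P^Y_{k-i,p}$ provided by Corollary \ref{corollary:lowerdimensional_decomposition_topological_direct_sum}; and Lemma \ref{lemma:continuityPolarization} to pass from $\nu$ to $\overline{\nu}$. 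The outcome should be an inequality of the shape
\[
\big|P^l_{(i,q),(k-i,p)}(\nu)(f_X,f_Y)\big|\le c_k\,\|P^X_{i,q}\|_{\mathrm{op}}\,\|P^Y_{k-i,p}\|_{\mathrm{op}}\Big(\sup_{|z_X|\le R+1}|f_X(z_X)|\Big)^{i}\Big(\sup_{|z_Y|\le R+1}|f_Y(z_Y)|\Big)^{k-i}\|\nu\|_{R,1},
\]
after which $C(A_X,A_Y)$ is produced by bounding $f\mapsto\sup_{B_{R+1}(0)}|f|$ over the compact sets $A_X$ and $A_Y$, using that this map is continuous on $\Conv(X,\R)$ and $\Conv(Y,\R)$.

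The one step that needs care is the application of Lemma \ref{lemma:continuityPolarization}: the natural "unit ball" $K=\{h\in\Conv(\C^n,\R):\sup_{|z|\le R+1}|h(z)|\le 1\}$ for $\|\cdot\|_{R,1}$ is convex and contains $0$ but is \emph{not} compact in $\Conv(\C^n,\R)$, so one must invoke the non-compact version of that lemma (noted in the Remark following it), which still yields $\sup_{h_1,\dots,h_k\in K}|\overline{\nu}(h_1,\dots,h_k)|\le c_k\|\nu\|_K=c_k\|\nu\|_{R,1}$. One then checks that if $g_X\in\Conv(X,\R)$ satisfies $\sup_{|z_X|\le R+1}|g_X(z_X)|\le1$, its constant extension $\widetilde g_X$ to $\C^n$ again satisfies $\sup_{|z|\le R+1}|\widetilde g_X(z)|\le1$, hence lies in $K$, and likewise for the $Y$-variable; this is precisely what lets the mixed polarization term $\overline{\nu}(\widetilde g_X[i],\widetilde g_Y[k-i])$ be controlled by $\|\nu\|_{R,1}$, and it is the reason one works with $K$ rather than a compact set. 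Everything else is a routine composition of the two displayed norm inequalities.
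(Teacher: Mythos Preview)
Your proposal is correct and follows essentially the same route as the paper's proof: both unwind the construction $P^l_{(i,q),(k-i,p)}(\nu)(f_X,f_Y)=P^X_{i,q}\bigl(P^Y_{k-i,p}[\overline{\nu}(\cdot[i],\cdot[k-i])](f_Y)\bigr)[f_X]$, bound the two projections by their operator norms on the $\|\cdot\|_{R,1}$-spaces (the paper phrases this via seminorm constants $C_{K_X},C_{K_Y}$), and close with the non-compact version of Lemma~\ref{lemma:continuityPolarization} applied to the unit ball $K=\{h:\sup_{B_{R+1}}|h|\le1\}$, using that constant extensions of $A_R(X)$- and $A_R(Y)$-functions lie in $K$. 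Your explicit factoring of the powers $(\sup|f_X|)^i(\sup|f_Y|)^{k-i}$ is a cosmetic reorganization of the paper's argument, not a different idea.
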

	\begin{proof}
		It is sufficient to show that the convergence is uniform on sets of the form $K_X\times K_Y$, where $K_X\subset \Conv(X,\R)$ and $K_Y\subset\Conv(Y,\R)$ are compact. As $P^l_{(i,q),c(k-i,p)}$ is linear, it is furthermore sufficient to consider the case where $\mu_0=0$, that is, $(\mu_j)_j$ converges uniformly to $0$ on compact subsets of $\Conv(\C^n,\R)$. Recall from the proof of Lemma \ref{lemma:continous_decomposition_polarization} that $P^l_{(i,q),(k-i,p)}(\mu)$ is given for $\mu\in\VConv_k(\C^n)^{\U(n)}$ by 
		\begin{align*}
			P^l_{(i,q),(k-i,p)}(\mu)[f_X,f_Y]=P^X_{i,q}(\tilde{\mu}_2^p(f_Y))[f_X],
		\end{align*}
		for the valuation $\tilde{\mu}_2^p(f_Y)\in \VConv_{i}(X)^{\U(l)}$ given by
		\begin{align*}
			\tilde{\mu}_2^p(f_Y)[f_X]=P^Y_{k-i,p}(\bar{\mu}(f_X[i],\cdot[k-i]))[f_Y]
		\end{align*}
		Denote the norms on $\VConv_{B_R(0)}(X)$ and $\VConv_{B_R(0)}(Y)$ defined in Lemma \ref{lemma:NormsValuationsBoundedSupport} by $\|\cdot\|_{X;R}$ and $\|\cdot\|_{Y;R}$, respectively. Note that $\mu_X\mapsto\sup_{f_X\in K_X}|\mu_X(f_X)|$ and $\mu_Y\mapsto \sup_{f_Y\in K_Y}|\mu_Y(f_Y)|$  define continuous semi-norms. Because the projections $P^X_{i,q}$ and $P^Y_{k-i,p}$ are continuous on $\VConv^R_i(X)^{U(l)}$ and $\VConv^R_{k-i}(Y)^{n-l}$, we obtain constants $C_{K_X}$, $C_{K_Y}>0$ such that
		\begin{align*}
			\sup_{f_X\in K_X}|P^X_{i,q}(\mu_X)|\le C_{K_X}\|\mu_X\|_{X;R},\\
			\sup_{f_Y\in K_Y}|P^Y_{k-i,p}(\mu_Y)|\le C_{K_Y}\|\mu_X\|_{Y;R},
		\end{align*}
		for $\mu_X\in\VConv^R_i(X)^{U(l)}$ and $\mu_Y\in \VConv^R_{k-i}(Y)^{n-l}$. If we denote by $A_R(X)\subset \Conv(X,\R)$ and $A_R(Y)\subset \Conv(Y,\R)$ the subsets of functions that are bounded by $1$ on $B_{R+1}(0)$, this implies
		\begin{align*}
			&\sup_{(f_X,f_Y)\in K_X\times K_Y}|P^l_{(i,q),(k-i,p)}(\mu)[f_X,f_Y]|=\sup_{(f_X,f_Y)\in K_X\times K_Y}|P^X_{i,q}(\tilde{\mu}_2^p(f_Y))[f_X]|\\
			\le & C_{K_X}\sup_{f_Y\in K_Y}\|\tilde{\mu}^p_2(f_Y)\|_{X;R}=C_{K_X}\sup_{f_Y\in K_Y, f_X\in A_R(X)}|\tilde{\mu}^p_2(f_Y)[f_X]|\\
			=&C_{K_X}\sup_{f_Y\in K_Y, f_X\in A_R(X)}|P^Y_{k-i,p}(\bar{\mu}(f_X[i],\cdot[k-i]))[f_Y]|\\
			\le& C_{K_X}C_{K_Y}\sup_{f_X\in A_R(X)}\|\bar{\mu}(f_X[i],\cdot[k-i])\|_{Y;R}\\
			=&C_{K_X}C_{K_Y}\sup_{f_X\in A_R(X), f_Y\in A_R(Y)}|\bar{\mu}(f_X[i],f_Y[k-i])|\\
			\le& C_{K_X}C_{K_Y}\sup_{f,h\in A_R}|\bar{\mu}(f_X[i],f_Y[k-i])|\le c_k C_{K,X}C_{K,Y}\|\mu\|_{R,1},
		\end{align*}
		where $c_k>0$ is the constant from Lemma \ref{lemma:continuityPolarization}.\\
		As the norm $\|\cdot\|_R$ metrizes the topology of uniform convergence on compact subsets on $\VConv_{B_R(0)}(\C^n)$ and $P^l_{(i,q),(n-i,p)}$ is linear by Corollary \ref{corollaryPLinear}, the claim follows.
	\end{proof}

	\begin{lemma}
		\label{lemma:DecompositionCylinderFunctionsVConvKQ}
		If $\mu\in\VConv_{k,q}(\C^n)^{\U(n)}$, then $\mu^l_{(i,p_1),(k-i,p_2)}=0$ for $p_1+p_2\ne q$.
	\end{lemma}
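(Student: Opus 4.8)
The plan is to isolate the bilinear functional $\mu^l_{(i,p_1),(k-i,p_2)}$ by restricting each of its arguments to a suitable special subspace. The key observation is that, since $X_{i,p_1}\subset X$ and $Y_{k-i,p_2}\subset Y$ are unitary images of $\C^{p_1}\times\R^{i-2p_1}$ and $\C^{p_2}\times\R^{k-i-2p_2}$, their orthogonal sum $E:=X_{i,p_1}\oplus Y_{k-i,p_2}\subset\C^n$ is a unitary image of $\C^{p_1+p_2}\times\R^{k-2(p_1+p_2)}$; as $\max(0,k-n)\le p_1+p_2\le\lfloor k/2\rfloor$ for the index pairs occurring in Lemma~\ref{lemma:continous_decomposition_polarization}, this means $E$ lies in the $\U(n)$-orbit of $E_{k,p_1+p_2}$. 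A short computation with the definition of the pushforward, using that $\mu$ is $\U(n)$-invariant, then shows that $\pi_{E*}\mu=0$ whenever $\pi_{E_{k,p_1+p_2}*}\mu=0$, and by definition of $\VConv_{k,q}(\C^n)^{\U(n)}$ the latter holds as soon as $p_1+p_2\ne q$. So I would fix such a pair $(p_1,p_2)$, take $g_X\in\Conv(X_{i,p_1},\R)$, $g_Y\in\Conv(Y_{k-i,p_2},\R)$, and let $f_X\in\Conv(X,\R)$, $f_Y\in\Conv(Y,\R)$ be their constant extensions (precompositions with the orthogonal projections $X\to X_{i,p_1}$ and $Y\to Y_{k-i,p_2}$). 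Then $\lambda f_X+\delta f_Y=\pi_E^*(\lambda g_X+\delta g_Y)$ for all $\lambda,\delta\ge0$, so $\mu(\lambda f_X+\delta f_Y)=\pi_{E*}\mu(\lambda g_X+\delta g_Y)=0$; comparing the $\lambda^i\delta^{k-i}$-coefficients yields $\bar\mu(f_X[i],f_Y[k-i])=0$.

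Next I would feed this into the decomposition $\bar\mu(f_X[i],f_Y[k-i])=\sum_{q',p'}\mu^l_{(i,q'),(k-i,p')}(f_X,f_Y)$ of Lemma~\ref{lemma:continous_decomposition_polarization}(1). For $q'\ne p_1$, property~(4) places $\mu^l_{(i,q'),(k-i,p')}(\cdot,f_Y)$ in $\VConv^R_{i,q'}(X)^{\U(l)}$, whose defining property is that its pushforward to $X_{i,p_1}$ vanishes; since $f_X=\pi_{X_{i,p_1}}^*g_X$, that summand is $0$. Likewise, for $p'\ne p_2$, property~(3) gives $\mu^l_{(i,p_1),(k-i,p')}(f_X,\cdot)\in\VConv^R_{k-i,p'}(Y)^{\U(n-l)}$, which annihilates $f_Y=\pi_{Y_{k-i,p_2}}^*g_Y$. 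Hence the double sum collapses to the single summand $(q',p')=(p_1,p_2)$, and the first paragraph gives
$$\mu^l_{(i,p_1),(k-i,p_2)}\bigl(\pi_{X_{i,p_1}}^*g_X,\pi_{Y_{k-i,p_2}}^*g_Y\bigr)=0\qquad\text{for all }g_X,g_Y.$$

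Finally I would promote this to vanishing on all of $\Conv(X,\R)\times\Conv(Y,\R)$ by two applications of Theorem~\ref{theorem:VanishingPropertyUnitaryValuations}, applied in $\C^l$ and in $\C^{n-l}$. For fixed $g_Y$, the valuation $\mu^l_{(i,p_1),(k-i,p_2)}(\cdot,\pi_{Y_{k-i,p_2}}^*g_Y)$ lies in $\VConv^R_{i,p_1}(X)^{\U(l)}$, so all of its pushforwards to subspaces $X_{i,p}$ with $p\ne p_1$ vanish, while its pushforward to $X_{i,p_1}$ vanishes by the displayed identity; Theorem~\ref{theorem:VanishingPropertyUnitaryValuations} then forces it to be identically zero, i.e. $\mu^l_{(i,p_1),(k-i,p_2)}(f_X,\pi_{Y_{k-i,p_2}}^*g_Y)=0$ for every $f_X\in\Conv(X,\R)$. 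Running the same argument in the second variable — now $f_X$ is arbitrary and $f_Y\mapsto\mu^l_{(i,p_1),(k-i,p_2)}(f_X,f_Y)$ lies in $\VConv^R_{k-i,p_2}(Y)^{\U(n-l)}$ with all pushforwards to the $Y_{k-i,p}$ vanishing — yields $\mu^l_{(i,p_1),(k-i,p_2)}\equiv0$. The only genuinely delicate point is the first one: verifying the complex-rank count that realizes $X_{i,p_1}\oplus Y_{k-i,p_2}$ inside the $\U(n)$-orbit of $E_{k,p_1+p_2}$, and checking that $\U(n)$-invariance of $\mu$ transfers the vanishing of the pushforward from $E_{k,p_1+p_2}$ to every member of its orbit; everything else is bookkeeping with the restriction properties already recorded in Lemma~\ref{lemma:continous_decomposition_polarization} and Theorem~\ref{theorem:VanishingPropertyUnitaryValuations}.
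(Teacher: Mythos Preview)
Your proof is correct and follows essentially the same approach as the paper: identify $X_{i,p_1}\oplus Y_{k-i,p_2}$ as lying in the $\U(n)$-orbit of $E_{k,p_1+p_2}$ to get $\bar{\mu}(\pi_{X_{i,p_1}}^*g_X[i],\pi_{Y_{k-i,p_2}}^*g_Y[k-i])=0$, use properties (3) and (4) of Lemma~\ref{lemma:continous_decomposition_polarization} to isolate the $(p_1,p_2)$-summand, and then apply Theorem~\ref{theorem:VanishingPropertyUnitaryValuations} twice to remove the restrictions on both arguments. The only cosmetic difference is the order of the two final applications of Theorem~\ref{theorem:VanishingPropertyUnitaryValuations} (you clear the $X$-variable first, the paper clears the $Y$-variable first).
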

	\begin{proof}
		Let $f_X\in \Conv(X_{i,p_1},\R)$, $f_Y\in\Conv(Y_{k-i,p_2},\R)$. Then $s\pi_{X_{i,p_1}}^*f_X+t\pi_{Y_{k-i,p_2}}^*f_Y\in\Conv(X_{i,p_1}\oplus Y_{k-i,p_2},\R)$ for $s,t\ge 0$, where $X_{i,p_1}\oplus Y_{k-i,p_2}$ belongs to the same $\U(n)$ orbit as $E_{k,p_1+p_2}$. If $p_1+p_2\ne q$, this implies 
		\begin{align*}
			0=\frac{1}{i!(k-i)!}\frac{\partial^k}{\partial s^i\partial t^{k-i}}\Big|_0\mu\left(s\pi_{X_{i,p_1}}^*f_X+t\pi_{Y_{k-i,p_2}}^*f_Y\right)=\bar{\mu}(\pi_{X_{i,p_1}}^*[i],\pi_{Y_{k-i,p_2}}^*f_Y[k-i]).
		\end{align*} On the other hand, properties 3. and 4. in Lemma \ref{lemma:continous_decomposition_polarization} imply
		\begin{align*}
			0=\bar{\mu}(\pi_{X_{i,p_1}}^*f_X[i],\pi_{Y_{k-i,p_2}}^*f_Y[k-i])=\mu^l_{(i,p_1),(k-i,p_2)}(\pi_{X_{i,p_1}}^*f_X,\pi_{Y_{k-i,p_2}}^*f_Y),
		\end{align*}
		since all other components in the decomposition vanish on these functions. If we fix $f_X\in \Conv(X_{i,p_1},\R)$, Theorem \ref{theorem:VanishingPropertyUnitaryValuations} thus implies that
		$\mu^l_{(i,p_1),(k-i,p_2)}(\pi_{X_{i,p_1}}^*f_X,\cdot)$, which belongs to $\VConv_{k-i,p_2}(Y)^{\U(n-l)}$, has to vanish identically. Thus 
		\begin{align*}
			\mu^l_{(i,p_1),(k-i,p_2)}(\pi_{X_{i,p_1}}^*f_X,f_Y)=0\quad \text{for any fixed}~f_Y\in\Conv(Y,\R),
		\end{align*}
		which due to Theorem \ref{theorem:VanishingPropertyUnitaryValuations} implies that $\mu^l_{(i,p_1),(k-i,p_2)}(\cdot,f_Y)\in \VConv_{i,p_1}(X)^{\U(l)}$ has to vanish identically. Thus $\mu^l_{(i,p_1),(k-i,p_2)}=0$.
	\end{proof}

		\begin{lemma}
		\label{lemma:DefinitionUQ}
		Let $1\le k\le 2n$. For $\max(0,k-n)\le q\le \lfloor\frac{k}{2}\rfloor$ define $U_q:\VConv^R_{k}(\C^n)^{\U(n)}\rightarrow C_c([0,\infty)^2)$ by
		\begin{align*}
			U_q(\mu)[s,t]:=&\frac{1}{\frac{2\omega_2(2n-2)\omega_{2n-2}2^{k-2q}}{n-1}\binom{n-1}{k-1-2q,q}}\mu^1_{(1,0),(k-1,0)}(u^X_s,u^Y_t).
		\end{align*}
		Similarly, for $\max(1,k-n)\le q\le \lfloor\frac{k}{2}\rfloor$ define $\tilde{V}_q,\tilde{W}_q:\VConv^R_{k}(\C^n)^{\U(n)}\rightarrow C_c([0,\infty))$ by
		\begin{align*}
			\tilde{V}_q(\mu)[t]:=&\frac{1}{\frac{2\omega_2(2n-2)\omega_{2n-2}2^{k-2q}}{n-1}\binom{n-1}{k-2q,q-1}}\mu^1_{(2,1),(k-2,q-1)}(u^X_0,u^Y_t) && n\ge 2, k\ge 3, q\ge 1,\\
			\tilde{W}_q(\mu)[t]:=&\frac{1}{\frac{4\omega_4(2n-4)\omega_{2n-4}2^{k-2q}}{n-2}\binom{n-2}{k-2q,q-2}}\mu^2_{(4,2),(k-4,q-2)}(u^X_0,u^Y_t) && n\ge 3, k\ge 5, q\ge 2.
		\end{align*}
		Then $U_q$, $V_q$ and $W_q$ are well defined and continuous.
	\end{lemma}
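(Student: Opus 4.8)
The plan is to realise each of $U_q$, $\tilde V_q$, $\tilde W_q$ as the composition of a continuous linear map with evaluation along an explicit one- or two-parameter family of convex functions, so that the statement reduces to results already available. Up to the fixed scalar prefactors — which are strictly positive on the admissible ranges of $q$, the multinomial coefficients appearing in Lemma~\ref{lemma:ValuesYOnCyclinderFunctions} being positive integers there — each of these maps sends $\mu$ to a function of the shape $(s,t)\mapsto\mu^l_{(i,q_1),(k-i,q_2)}(u^X_s,u^Y_t)$, where $\mu^l_{(i,q_1),(k-i,q_2)}=P^l_{(i,q_1),(k-i,q_2)}(\mu)$ is the component of the polarisation $\bar\mu$ singled out in Lemma~\ref{lemma:continous_decomposition_polarization} for the proper complex orthogonal decomposition $\C^n=X\oplus Y$ (with $\dim_\C X=1$ for $U_q$ and $\tilde V_q$, and $\dim_\C X=2$ for $\tilde W_q$; for $\tilde V_q$ and $\tilde W_q$ the parameter $s$ is frozen at $0$). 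I would then check, in turn: that the output lies in $C_{c,R}([0,\infty)^2)$, resp.\ $C_{c,R}([0,\infty))$; that the assignment is linear; and that it is bounded with respect to $\|\cdot\|_{R,1}$ on the domain and the maximum norm on the target.

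For well-definedness: since $u_s(z)=\max(0,|z|-s)$ is convex and depends pointwise continuously — hence, in the topology of $\Conv(\cdot,\R)$, continuously — on $s$, the assignment $(s,t)\mapsto(u^X_s,u^Y_t)$ into $\Conv(X,\R)\times\Conv(Y,\R)$ is continuous, so composing with the jointly continuous functional from property~(2) of Lemma~\ref{lemma:continous_decomposition_polarization} gives an output that is continuous in its parameters. For the support statement, observe that for $s>R$ the function $u^X_s$ vanishes on the open ball of radius $s$ in $X$, which is an open neighbourhood of the support of $\mu^l_{(i,q_1),(k-i,q_2)}(\cdot,u^Y_t)\in\VConv^R_{i,q_1}(X)^{\U(l)}$; by Proposition~\ref{proposition:characterizationSupport} this forces $\mu^l_{(i,q_1),(k-i,q_2)}(u^X_s,u^Y_t)=\mu^l_{(i,q_1),(k-i,q_2)}(0,u^Y_t)$, which vanishes because this functional is homogeneous of positive degree $i\ge 1$ in its first argument. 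The symmetric argument (property~(3) of Lemma~\ref{lemma:continous_decomposition_polarization} together with homogeneity of degree $k-i\ge 1$ in the second argument) gives vanishing for $t>R$; here one uses the hypotheses $k\ge 2$ for $U_q$, $k\ge 3$ for $\tilde V_q$, and $k\ge 5$ for $\tilde W_q$. Thus the supports lie in $[0,R]^2$, resp.\ $[0,R]$, so the targets may be taken to be the Banach spaces $C_{c,R}([0,\infty)^2)$ and $C_{c,R}([0,\infty))$. Linearity is immediate from linearity of $P^l_{(i,q_1),(k-i,q_2)}$ (Corollary~\ref{corollaryPLinear}).

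For continuity, the decisive input is Lemma~\ref{lemma:continuityDecompositionFunctionNu}: if $\mu_j\to\mu$ in $\VConv^R_k(\C^n)^{\U(n)}$, then $P^l_{(i,q_1),(k-i,q_2)}(\mu_j)\to P^l_{(i,q_1),(k-i,q_2)}(\mu)$ uniformly on compact subsets of $\Conv(X,\R)\times\Conv(Y,\R)$. The family $\{(u^X_s,u^Y_t):s,t\in[0,R]\}$ is such a compact set, being the continuous image of $[0,R]^2$, so uniform convergence along it together with the support bound from the previous paragraph yields $U_q(\mu_j)\to U_q(\mu)$ in the maximum norm, and likewise for $\tilde V_q$ and $\tilde W_q$. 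As all spaces in sight are metrisable, sequential continuity of these linear maps already gives continuity; alternatively, the chain of estimates in the proof of Lemma~\ref{lemma:continuityDecompositionFunctionNu}, specialised to this compact family, directly produces a bound $\|U_q(\mu)\|_\infty\le C\|\mu\|_{R,1}$ with $C$ depending only on $n,k,q,R$.

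I do not expect a deep obstacle, since the analytic content is already carried by Lemmas~\ref{lemma:continous_decomposition_polarization} and~\ref{lemma:continuityDecompositionFunctionNu}; the one point that genuinely needs care is the support computation, as it is what confines the outputs to a fixed-support Banach space and thereby makes $\|\cdot\|_\infty$-continuity the correct notion. It rests on simultaneously exploiting the support restriction and the positive-degree homogeneity of the polarisation components supplied by Lemma~\ref{lemma:continous_decomposition_polarization}.
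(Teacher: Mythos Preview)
Your proposal is correct and follows essentially the same route as the paper: continuity of the output in $(s,t)$ via the joint continuity from Lemma~\ref{lemma:continous_decomposition_polarization} composed with the continuous curves $s\mapsto u_s^X$, $t\mapsto u_t^Y$; the support restriction from the fact that $u_s^X$ coincides with $0$ on a neighbourhood of the support for $s>R$; and continuity in $\mu$ from Lemma~\ref{lemma:continuityDecompositionFunctionNu} applied to the compact family $\{(u_s^X,u_t^Y):0\le s,t\le R\}$. Your support argument is spelled out more fully than the paper's (you make the positive-degree homogeneity step explicit, whereas the paper leaves it implicit), but the content is the same.
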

	\begin{proof}
		Note that $(f_X,f_Y)\rightarrow\mu^l_{(i,p_1),(k-i,p_2)}(f_X,f_Y)$ is jointly continuous by Lemma \ref{lemma:continous_decomposition_polarization}. Since $s\mapsto u_s^X$ and $t\mapsto u_t^Y$ define continuous curves in $\Conv(X,\R)$ and $\Conv(Y,\R)$, respectively, the functions $U_q(\mu),\tilde{V}_q(\mu)$ and $\tilde{W}_q(\mu)$ are continuous as compositions of continuous functions. Moreover, it follows from Lemma \ref{lemma:continous_decomposition_polarization} that these functions are supported on $B_R^X\times B_R^Y$ since $u_s^X$ and $u_t^Y$ coincide with the zero function on a neighborhood of the support of $\mu^l_{(i,p_1),(k-i,p_2)}$ for $s,t>R$. If $(\mu_j)_j$ is a sequence in $\VConv_k^R(\C^n)^{\U(n)}$ converging to $\mu$, then $(\mu_j)^l_{(i,p_1),(k-i,p_2)}=P^l_{(i,p_1),(k-i,p_2)}(\mu_j)$ converges uniformly to $P^l_{(i,p_1),(k-i,p_2)}(\mu)$ on compact sets in $\Conv(X,\R)\times\Conv(Y,\R)$ by Lemma \ref{lemma:continuityDecompositionFunctionNu}, so in particular it converges uniformly on $\{(u_s^X,u_t^Y):0\le s,t\le R\}$. Since the supports of the functions $U_q(\mu_j), \tilde{V}(\mu_j)$ and $\tilde{W}(\mu_j)$ are uniformly bounded, we see that these functions converge in $C_c([0,\infty)^2)$ and $C_c([0,\infty))$ respectively to $U_q(\mu), \tilde{V}(\mu)$ and $\tilde{W}(\mu)$. The claim follows.
	\end{proof}
	\begin{lemma}
		\label{lemma:HelpLemmaClassification}
		Let $\mu\in\VConv^R_k(\C^n)^{U(n)}$ be a given for $\zeta_q\in D^{2n-k}_R$, $\tilde{\zeta}_q\in \tilde{D}_R^{2n-k+2}$ by
		\begin{align*}
			\mu=\sum_{\max(0,k-n)}^{\lfloor\frac{k}{2}\rfloor}T^n_{k,q}(\zeta_q)+\sum_{\max(1,k-n)}^{\lfloor\frac{k-1}{2}\rfloor}Y^n_{k,q}(\tilde{\zeta}_q).
		\end{align*}
		Then
		\begin{align*}
			U_0(\mu)=&\mathcal{R}^{1,2n-k-1}(\zeta_0)\\
			\tilde{V}_q(\mu)=&\begin{cases}
				\mathcal{R}^{2n-k}(\zeta_q)-2q(k-2)\mathcal{P}^{2n-k+2}(\tilde{\zeta}_q)& \max(1,k-n)\le q\le \lfloor\frac{k-1}{2}\rfloor, n\ge 3, k\ge 3,\\
				\mathcal{R}^{2n-k}(\zeta_{\frac{k}{2}}) & q=\frac{k}{2},n\ge 3, k\ge 3,
			\end{cases}\\
			\tilde{W}_q(\mu)=&\begin{cases}
				\mathcal{R}^{2n-k}(\zeta_q)-q(k-4)\mathcal{P}^{2n-k+2}(\tilde{\zeta}_q)& \max(1,k-n)\le q\le \lfloor\frac{k-1}{2}\rfloor, n\ge 3, k\ge 5,\\
				\mathcal{R}^{2n-k}(\zeta_{\frac{k}{2}}) & q=\frac{k}{2}, n\ge 3, k\ge 5.
			\end{cases}
		\end{align*}
		In particular, for $n\ge 3$, $k\ge 5$, $q\ge 2$:
		\begin{align*}
			\mathcal{R}^{2n-k}(\zeta_q)=&\frac{1}{k}\left(2(k-2)\tilde{W}_q(\mu)-(k-4)\tilde{V}_q(\mu)\right) && \max(2,k-n)\le q\le \left\lfloor \frac{k-1}{2}\right\rfloor,\\
			\mathcal{P}^{2n-k+2}(\tilde{\zeta}_q)=&\frac{1}{kq}\left(\tilde{W}_q(\mu)-\tilde{V}_q(\mu)\right) && \max(2,k-n)\le q\le \left\lfloor \frac{k-1}{2}\right\rfloor.
		\end{align*}
	\end{lemma}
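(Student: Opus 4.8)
The plan is to reduce the whole computation to Lemma \ref{lemma:ValuesYOnCyclinderFunctions} by exploiting the vanishing behaviour recorded in Lemma \ref{lemma:DecompositionCylinderFunctionsVConvKQ}. Since the passage to the polarization is linear in the valuation, $\bar\mu=\sum_q\overline{T^n_{k,q}(\zeta_q)}+\sum_q\overline{Y^n_{k,q}(\tilde\zeta_q)}$, and since the maps $\mu\mapsto P^l_{(i,p_1),(k-i,p_2)}(\mu)$ are linear by Corollary \ref{corollaryPLinear}, the component $\mu^l_{(i,p_1),(k-i,p_2)}$ equals the sum of the corresponding components of the polarizations of the individual summands. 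Each summand lies in some $\VConv_{k,q}(\C^n)^{\U(n)}$, so by Lemma \ref{lemma:DecompositionCylinderFunctionsVConvKQ} only those summands with $q=p_1+p_2$ contribute. For the three components featuring in Lemma \ref{lemma:DefinitionUQ} this says: $\mu^1_{(1,0),(k-1,0)}$ sees only $T^n_{k,0}(\zeta_0)$; $\mu^1_{(2,1),(k-2,q-1)}$ sees only $T^n_{k,q}(\zeta_q)$ and $Y^n_{k,q}(\tilde\zeta_q)$; and $\mu^2_{(4,2),(k-4,q-2)}$ sees only $T^n_{k,q}(\zeta_q)$ and $Y^n_{k,q}(\tilde\zeta_q)$, where in both of the latter cases the $Y$-term is absent when $q=\frac{k}{2}$, since $Y^n_{k,k/2}$ is not defined.

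Next I would record the elementary but crucial observation that the dimension constraint pins down $p_1$: for $\dim_\C X=1$ one has $p_1=0$ when $i=1$ and $p_1=1$ when $i=2$, while for $\dim_\C X=2$ and $i=4$ one has $p_1=2$. Hence, for any $\nu\in\VConv_{k,q}(\C^n)^{\U(n)}$, the decomposition in property (1) of Lemma \ref{lemma:continous_decomposition_polarization} of $\bar\nu(u^X_s[i],u^Y_t[k-i])$ collapses to the single term with $p_1$ the forced value and $p_2=q-p_1$, so $\bar\nu(u^X_s[i],u^Y_t[k-i])=\nu^l_{(i,p_1),(k-i,q-p_1)}(u^X_s,u^Y_t)$. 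Combining this with the first paragraph identifies $\mu^1_{(1,0),(k-1,0)}(u^X_s,u^Y_t)=\overline{T^n_{k,0}(\zeta_0)}(u^X_s[1],u^Y_t[k-1])$, $\mu^1_{(2,1),(k-2,q-1)}(u^X_0,u^Y_t)=\overline{T^n_{k,q}(\zeta_q)}(u^X_0[2],u^Y_t[k-2])+\overline{Y^n_{k,q}(\tilde\zeta_q)}(u^X_0[2],u^Y_t[k-2])$, and analogously for $\mu^2_{(4,2),(k-4,q-2)}$ with the $u^X_0[4]$-slot.

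It then remains to substitute the explicit values furnished by Lemma \ref{lemma:ValuesYOnCyclinderFunctions} (which already applies to arbitrary $\zeta\in D^{2n-k}_R$, $\tilde\zeta\in\tilde D^{2n-k+2}_R$, so no extra density reduction is needed) and divide by the normalizing constants built into the definitions of $U_0$, $\tilde V_q$, $\tilde W_q$ in Lemma \ref{lemma:DefinitionUQ}. This produces $U_0(\mu)=\mathcal{R}^{1,2n-k-1}(\zeta_0)$ together with the asserted formulas for $\tilde V_q(\mu)$ and $\tilde W_q(\mu)$, the $Y$-term contributing exactly the $\mathcal{P}^{2n-k+2}(\tilde\zeta_q)$ summand and dropping out when $q=\frac{k}{2}$. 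Finally, the ``in particular'' identities are obtained by solving the resulting $2\times 2$ linear system in the admissible range $\max(2,k-n)\le q\le\lfloor\frac{k-1}{2}\rfloor$: subtracting the expressions for $\tilde W_q(\mu)$ and $\tilde V_q(\mu)$ eliminates $\mathcal{R}^{2n-k}(\zeta_q)$ and yields $\mathcal{P}^{2n-k+2}(\tilde\zeta_q)=\frac{1}{kq}(\tilde W_q(\mu)-\tilde V_q(\mu))$, and back-substituting gives $\mathcal{R}^{2n-k}(\zeta_q)=\frac{1}{k}(2(k-2)\tilde W_q(\mu)-(k-4)\tilde V_q(\mu))$.

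No step here is deep: the analytic content is entirely in Lemma \ref{lemma:ValuesYOnCyclinderFunctions} (and, behind it, Lemma \ref{lemma:DecompositionMeasuresDirectSum}) and in Lemma \ref{lemma:DecompositionCylinderFunctionsVConvKQ}, all of which are available. The only place demanding genuine care is the bookkeeping of the second paragraph — checking that the dimension constraints force $p_1$ to a unique value so that restricting the polarization of each homogeneous component isolates precisely one of the functionals $\nu^l_{(i,p_1),(k-i,p_2)}$ — together with the honest tracking of the edge cases $q=\frac{k}{2}$, where the $Y$-family is absent, and of the ranges of $n$, $k$, $q$ in which the $\dim_\C X=2$ identities of Lemma \ref{lemma:ValuesYOnCyclinderFunctions} are valid.
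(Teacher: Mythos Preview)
Your proposal is correct and follows essentially the same approach as the paper's proof, which simply reads ``This follows directly from Lemma \ref{lemma:ValuesYOnCyclinderFunctions} in combination with Lemma \ref{lemma:DecompositionCylinderFunctionsVConvKQ}.'' You have unpacked the argument more carefully, making explicit the dimension constraints that force $p_1$ to a single value and spelling out why the decomposition of Lemma \ref{lemma:continous_decomposition_polarization} collapses to a single term for each homogeneous component; this is precisely the content hidden in the paper's one-line proof.
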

	\begin{proof}
			This follows directly from Lemma \ref{lemma:ValuesYOnCyclinderFunctions} in combination with Lemma \ref{lemma:DecompositionCylinderFunctionsVConvKQ}.
	\end{proof}
		We define $V_q,W_q:\VConv^R_k(\C^n)^{\U(n)}\rightarrow C_c([0,\infty])$ for $k\ge 5$ by
		\begin{align*}
			V_q(\mu)=&\frac{1}{k}\left(2(k-2)\tilde{W}_q(\mu)-(k-4)\tilde{V}_q(\mu)\right) && \max(2,k-n)\le q\le \left\lfloor \frac{k-1}{2}\right\rfloor,\\
			W_q(\mu)=&\frac{1}{kq}\left(\tilde{W}_q(\mu)-\tilde{V}_q(\mu)\right) && \max(2,k-n)\le q\le \left\lfloor \frac{k-1}{2}\right\rfloor.
		\end{align*}
		Since $\tilde{V}_q$ and $\tilde{W}_q$ are continuous, so are $V_q$ and $W_q$.

	\begin{corollary}
		\label{corollary:RegularityUVW}
		For $\mu\in \VConv^R_k(\C^n)^{\U(n)}$, 
		\begin{align*}
			U_0(\mu)&\in \mathcal{C}_{(1,2n-k-1),R},\\
			\tilde{V}_{\frac{k}{2}}(\mu),V_q(\mu),W_q(\mu)&\in C_{c,R}([0,\infty)),
		\end{align*} whenever $n,k,q$ are such that the functions are defined.
	\end{corollary}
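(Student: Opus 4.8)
The plan is to verify the assertion first on the dense subspace of smooth $\U(n)$-invariant valuations with support in $B_R(0)$, where the maps $U_0,\tilde V_{k/2},V_q,W_q$ can be computed explicitly, and then to transfer it to an arbitrary $\mu\in\VConv^R_k(\C^n)^{\U(n)}$ by approximation, using the continuity of these maps together with the closedness of the target subspaces.

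First I would take $\mu\in\VConv^R_k(\C^n)^{\U(n),sm}$. By Theorem \ref{theorem:characterization_smooth_unitarily_invariant_valuations} together with \cite[Proposition 4.17]{KnoerrUnitarilyinvariantvaluations2021}, such a $\mu$ can be written as $\mu=\sum_q T^n_{k,q}(\phi_q)+\sum_q Y^n_{k,q}(\psi_q)$ for suitable $\phi_q,\psi_q\in C^\infty_c([0,\infty))$ with support contained in $[0,R]$; in particular $\phi_q\in D^{2n-k}_R$ and $\psi_q\in\tilde D^{2n-k+2}_R$, so that Lemma \ref{lemma:HelpLemmaClassification} applies and yields
\begin{align*}
	U_0(\mu)&=\mathcal R^{1,2n-k-1}(\phi_0), & \tilde V_{k/2}(\mu)&=\mathcal R^{2n-k}(\phi_{k/2}),\\
	V_q(\mu)&=\mathcal R^{2n-k}(\phi_q), & W_q(\mu)&=\mathcal P^{2n-k+2}(\psi_q).
\end{align*}
By Corollary \ref{corollary:relationSabRab} the function $\mathcal R^{1,2n-k-1}(\phi_0)$ lies in $C_{(1,2n-k-1),R}$, by Lemma \ref{lemma:RTopIsomorphism} each $\mathcal R^{2n-k}(\phi_q)$ lies in $C_{c,R}([0,\infty))$, and by Lemma \ref{lemma:PaIsomorphism} we have $\mathcal P^{2n-k+2}(\psi_q)\in C_{0,R}\subset C_{c,R}([0,\infty))$. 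Hence the corollary holds for smooth $\mu$.

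Next I would treat a general $\mu\in\VConv^R_k(\C^n)^{\U(n)}$. Since $\U(n)\subset\SO(2n,\R)$ is a compact subgroup, Corollary \ref{corollary:approximationUnderSupportRestrictionInvariantCase} provides a sequence $(\mu_j)_j$ in $\VConv^R_k(\C^n)^{\U(n),sm}$ with $\mu_j\to\mu$. The maps $U_0$ and $\tilde V_{k/2}$ are continuous by Lemma \ref{lemma:DefinitionUQ}, and $V_q,W_q$ are continuous as fixed linear combinations of $\tilde V_q,\tilde W_q$; hence $U_0(\mu_j)\to U_0(\mu)$ in $C_c([0,\infty)^2)$ and $\tilde V_{k/2}(\mu_j),V_q(\mu_j),W_q(\mu_j)$ converge in $C_c([0,\infty))$ to $\tilde V_{k/2}(\mu),V_q(\mu),W_q(\mu)$. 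By the previous step $U_0(\mu_j)\in C_{(1,2n-k-1),R}$ and $\tilde V_{k/2}(\mu_j),V_q(\mu_j),W_q(\mu_j)\in C_{c,R}([0,\infty))$ for all $j$, and since $C_{(1,2n-k-1),R}\subset C_c([0,\infty)^2)$ and $C_{c,R}([0,\infty))\subset C_c([0,\infty))$ are closed subspaces, the limits lie in the same subspaces. This is exactly the claim.

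The argument is soft, and the only real obstacle is conceptual rather than computational. For a general $\mu$ one does not yet have any representation of the form appearing in Lemma \ref{lemma:HelpLemmaClassification} — constructing one is precisely the ultimate goal of this section — so the identification of $U_0(\mu)$ with $\mathcal R^{1,2n-k-1}$ of an element of $D^{2n-k}_R$ is unavailable directly and must be obtained through the approximation step. Accordingly, the indispensable inputs are the density of smooth $\U(n)$-invariant valuations with controlled support, the continuity of $U_0,\tilde V_{k/2},V_q,W_q$ with respect to the topologies of $C_c([0,\infty)^2)$ and $C_c([0,\infty))$, and the fact that $C_{(1,2n-k-1),R}$ is a \emph{proper} closed subspace of $C_{c,R}([0,\infty)^2)$; this last point is the genuine content of the statement, the memberships in $C_{c,R}([0,\infty))$ following already from the support properties of $\tilde V_q,\tilde W_q$ established in the proof of Lemma \ref{lemma:DefinitionUQ}.
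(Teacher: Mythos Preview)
Your proposal is correct and follows essentially the same approach as the paper's proof: verify the claim on smooth valuations via Lemma~\ref{lemma:HelpLemmaClassification} and Theorem~\ref{theorem:characterization_smooth_unitarily_invariant_valuations}, then pass to the closure using the continuity of $U_0,\tilde V_{k/2},V_q,W_q$ (Lemma~\ref{lemma:DefinitionUQ}), the density statement in Corollary~\ref{corollary:approximationUnderSupportRestrictionInvariantCase}, and the closedness of $C_{(1,2n-k-1),R}$ and $C_{c,R}([0,\infty))$. Your added remark that the substantive content lies in the $U_0$ assertion is also on point.
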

	\begin{proof}
		It follows from Lemma \ref{lemma:HelpLemmaClassification} and Theorem \ref{theorem:characterization_smooth_unitarily_invariant_valuations} that the claim holds for all smooth valuations in $\VConv_k^R(\C^n)^{\U(n)}$. Since the maps $U_0$, $V_q$ and $W_q$ are continuous and $\mathcal{C}_{(1,2n-k-1),R}\subset C_c([0,\infty)^2)$ and $C_{c,R}([0,\infty))$ are closed, the claim follows from the fact that smooth valuations are dense in $\VConv^R_k(\C^n)^{\U(n)}$ by Corollary \ref{corollary:approximationUnderSupportRestrictionInvariantCase}.
	\end{proof}

	\begin{proof}[Proof of the induction step in Theorem \ref{theorem:DecompositionRefinedVersion}]
		Recall that $n\ge 2$, $2\le k\le 2n-2$. Consider the map 
		\begin{align*}
			A:\VConv^R_k(\C^n)^{\U(n)}\rightarrow\bigoplus_{q=\max(0,k-n),q\ne 1}^{\lfloor\frac{k}{2}\rfloor}\VConv^R_{k,q}(\C^n)^{\U(n)}
		\end{align*} given by
		\begin{align*}
			A(\mu)=&\begin{cases}
				\sum\limits_{q=\max(2,k-n)}^{\lfloor\frac{k-1}{2}\rfloor}T^n_{k,q}(\left(\mathcal{R}^{2n-k}\right)^{-1}\circ V_q(\mu))+Y^n_{k,q}(\left(\mathcal{P}^{2n-k+2}\right)^{-1}\circ W_q(\mu)) & n\ge 5,\\
				0&\text{else},
			\end{cases}\\
					&+\begin{cases}
						T^n_{k,0}(\mathcal{S}^{1,2n-k-1}\circ U_0(\mu)) & k\le n,\\
						0 & \text{else},
					\end{cases}\\
					&+\begin{cases}
						T^n_{k,\frac{k}{2}}\left(\left(\mathcal{R}^{2n-k}\right)^{-1}\circ \tilde{V}_{\frac{k}{2}}(\mu)\right) & k\ge 4~\text{even}, n\ge 3,\\
						0 &\text{else}.
					\end{cases}
		\end{align*}
		Note that this is well defined due to Corollary \ref{corollary:RegularityUVW}. We claim that $\mu-A(\mu)\in \VConv^R_{k,1}(\C^n)^{\U(n)}$. If $\mu$ is a smooth valuation, then this follows from Lemma \ref{lemma:HelpLemmaClassification} in combination with the classification of smooth valuations in Theorem \ref{theorem:characterization_smooth_unitarily_invariant_valuations}. Next, note that $A$ is continuous, as all of the occurring maps are continuous on their respective domains (compare Section \ref{section:IntegralTransforms}, Theorem \ref{theorem:ExistenceT}, Lemma \ref{lemma:DefinitionUQ}, and Corollary \ref{corollary:RegularityUVW}). Since $\VConv^R_{k,1}(\C^n)^{\U(n)}$ is closed and smooth valuations are dense in $\VConv_k^R(\C^n)^{\U(n)}$ by Corollary \ref{corollary:approximationUnderSupportRestrictionInvariantCase}, we obtain $\mu-A(\mu)\in \VConv^R_{k,1}(\C^n)^{\U(n)}$ for every $\mu\in\VConv_k^R(\C^n)^{\U(n)}$ by continuity. In particular,
		\begin{align*}
			\mu=(\mu-A(\mu))+A(\mu)\in\bigoplus_{q=\max(0,k-n)}^{\lfloor\frac{k}{2}\rfloor}\VConv^R_{k,q}(\C^n)^{\U(n)}.
		\end{align*}
		This completes the proof of Theorem \ref{theorem:DecompositionRefinedVersion}.
	\end{proof}

	\begin{remark}
		\label{remark:ProblematicComponent}
		Note that the proof of Theorem \ref{theorem:DecompositionRefinedVersion} does not give a direct formula for the projection onto $\VConv_{k,1}(\C^n)^{\U(n)}$ in contrast to the projections onto all of the other components. The reason for this different treatment is that the reconstruction breaks down for $k=2$ if $n=2$ or $n=3$. In both of these cases we essentially have only one way to split $\C^2$ or $\C^3$ into a proper complex orthogonal decomposition $X\oplus Y$ - one of these spaces has to be $1$-dimensional. For $n=2$, Lemma \ref{lemma:DecompositionCylinderFunctionsVConvKQ} simplifies for $\mu\in\VConv_{2,1}(\C^2)^{\U(2)}$ to
		\begin{align*}
			\bar{\mu}(f_X[1],f_Y[1])=&0,
		\end{align*}
		and we cannot obtain any information from this decomposition. One may also consider the terms $\bar{\mu}(f_X[2],f_Y[0])$ and $\bar{\mu}(f_X[0],f_Y[2])$, which corresponds to the restriction of $\mu$ to $X$, $Y$ respectively. In these cases, the densities of these valuations are obtained from the densities of $\mu$ by applying the Abel transform twice. While this transform is injective for our classes of densities, it only has dense image. In particular, the inverse is only defined on a dense subspace and is in particular not continuous, which prevents a direct argument by approximation. A similar problem occurs for $n=3$. For $n\ge 4$, we have at least two possible proper orthogonal decompositions $\C^n=X\oplus Y$, which can be used to construct the projection explicitly.
	\end{remark}

	\subsection{Characterization result for $\VConv_{k,q}(\C^n)^{\U(n)}$}
	Fix $1\le k\le 2n-1$, $\max(0,k-n)\le q\le \left\lfloor\frac{k}{2}\right\rfloor$. We will obtain the characterization of valuations in $\VConv_{k,q}(\C^n)^{\U(n)}$ in Theorem \ref{maintheorem:RepresentationVConvKQ} by approximation from the smooth case. The key observation is that Theorem \ref{theorem:DecompositionRefinedVersion} implies the following density result.
	\begin{corollary}
		\label{corollary:SequentialDensityVConv_kq}
		$\VConv^R_{k,q}(\C^n)^{\U(n)}\cap \VConv(\C^n)^{sm}$ is dense in $\VConv^R_{k,q}(\C^n)^{\U(n)}$.
	\end{corollary}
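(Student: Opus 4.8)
The plan is to deduce the statement from the refined decomposition in Theorem~\ref{theorem:DecompositionRefinedVersion}, the approximation result in Corollary~\ref{corollary:approximationUnderSupportRestrictionInvariantCase}, and the classification of smooth valuations in Theorem~\ref{theorem:characterization_smooth_unitarily_invariant_valuations}; once Theorem~\ref{theorem:DecompositionRefinedVersion} is available this is essentially routine. First I would observe that, exactly as in Corollary~\ref{corollary:lowerdimensional_decomposition_topological_direct_sum}, the spaces $\VConv^R_{k,q}(\C^n)^{\U(n)}$ are closed subspaces of the Banach space $\VConv^R_k(\C^n)^{\U(n)}$ (Lemma~\ref{lemma:NormsValuationsBoundedSupport}), so Theorem~\ref{theorem:DecompositionRefinedVersion} expresses the latter as a finite direct sum of Banach subspaces. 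By the open mapping theorem this is a topological direct sum, hence the associated projection
\begin{align*}
	P_{k,q}\colon\VConv^R_k(\C^n)^{\U(n)}\longrightarrow\VConv^R_{k,q}(\C^n)^{\U(n)}
\end{align*}
is continuous. Since $\U(n)\subset\SO(2n,\R)$, Corollary~\ref{corollary:approximationUnderSupportRestrictionInvariantCase} applies with $G=\U(n)$: given $\mu\in\VConv^R_{k,q}(\C^n)^{\U(n)}$ there is a sequence $(\mu_j)_j$ of smooth valuations in $\VConv^R_k(\C^n)^{\U(n)}$ converging to $\mu$.

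Next I would apply $P_{k,q}$. By continuity $P_{k,q}(\mu_j)\to P_{k,q}(\mu)=\mu$, where the last equality holds because $\mu$ already lies in the component $\VConv^R_{k,q}(\C^n)^{\U(n)}$. Since $P_{k,q}$ takes values in $\VConv^R_{k,q}(\C^n)^{\U(n)}$, each $P_{k,q}(\mu_j)$ has support contained in $B_R(0)$, so it only remains to verify that $P_{k,q}(\mu_j)$ is a smooth valuation; then $(P_{k,q}(\mu_j))_j$ is the desired sequence in $\VConv^R_{k,q}(\C^n)^{\U(n)}\cap\VConv(\C^n)^{sm}$.

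The only point that requires care — and hence the main obstacle, though a mild one — is the compatibility of the two decompositions: that the projection $P_{k,q}$, restricted to smooth valuations, coincides with the decomposition of smooth $\U(n)$-invariant valuations furnished by Theorem~\ref{theorem:characterization_smooth_unitarily_invariant_valuations}. This follows from uniqueness of the decomposition: by Theorem~\ref{theorem:VanishingPropertyUnitaryValuations} a valuation in $\VConv_k(\C^n)^{\U(n)}$ is determined by its push-forwards $\pi_{E_{k,p}*}(\cdot)$, so the sum $\bigoplus_q\VConv^R_{k,q}(\C^n)^{\U(n)}$ is direct and any way of writing $\mu_j$ as a sum of valuations lying in the respective summands must agree with $(P_{k,q}(\mu_j))_q$. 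Since Theorem~\ref{theorem:characterization_smooth_unitarily_invariant_valuations} exhibits precisely such a decomposition of $\mu_j$ with \emph{smooth} summands in $\VConv_{k,q}(\C^n)^{\U(n),sm}$, we conclude $P_{k,q}(\mu_j)\in\VConv_{k,q}(\C^n)^{\U(n),sm}$, completing the proof of Corollary~\ref{corollary:SequentialDensityVConv_kq}.
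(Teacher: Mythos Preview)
Your argument is correct and follows essentially the same route as the paper: approximate by smooth valuations in $\VConv^R_k(\C^n)^{\U(n)}$ via Corollary~\ref{corollary:approximationUnderSupportRestrictionInvariantCase}, apply the continuous projection onto $\VConv^R_{k,q}(\C^n)^{\U(n)}$ coming from Theorem~\ref{theorem:DecompositionRefinedVersion} and the open mapping theorem, and use Theorem~\ref{theorem:characterization_smooth_unitarily_invariant_valuations} (together with directness from Theorem~\ref{theorem:VanishingPropertyUnitaryValuations}) to see that the projected valuations are again smooth. Your discussion of the compatibility of the two decompositions is a useful expansion of a step the paper leaves implicit.
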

	\begin{proof}
	By Corollary \ref{corollary:approximationUnderSupportRestrictionInvariantCase}, smooth valuations are dense in $\VConv^R_k(\C^n)^{\U(n)}$. Since the projection $P^n_{k,q}:\VConv^R_{k}(\C^n)^{\U(n)}\rightarrow\VConv_{k,q}^R(\C^n)^{\U(n)}$ is continuous by Corollary \ref{corollary:lowerdimensional_decomposition_topological_direct_sum}, the image of $\VConv^R_k(\C^n)^{\U(n)}\cap \VConv(\C^n)^{sm}$ under this map is dense. However, the components of any smooth valuations with respect to the decomposition are again smooth by Theorem \ref{theorem:characterization_smooth_unitarily_invariant_valuations}. Thus smooth valuations are dense in $\VConv^R_{k,q}(\C^n)^{\U(n)}$.
	\end{proof}
	We now split $\C^n=\C\oplus \C^{n-1}=:X\oplus Y$. Consider the functions $U,V:\VConv^R_{k,q}(\C^n)^{\U(n)}\rightarrow C_{c}([0,\infty))$ defined by
	\begin{align*}
		U(\mu)[t]=&\frac{1}{\omega_{2n}\binom{n}{k-2q,q}2^{k-2q}}\mu(u_t) && 1\le k\le 2n-1,\\	V(\mu)[t]=&\frac{qn}{2\omega_2(2n-2)\omega_{2n-2}\binom{n-1}{k-2q,q-1}2^{k-2q}}\overline{\mu}(u^X_0[2],u^Y_t[k-2]) && 3\le k\le 2n-1.
	\end{align*}
	As in the proof of Lemma \ref{lemma:DefinitionUQ} one easily checks that these maps are well defined and continuous. We now prove the following refined version of Theorem \ref{maintheorem:RepresentationVConvKQ}.	
	\begin{theorem}
		\label{theorem:RepresentationVConvKQWithSupport}
		Let $1\le k\le 2n-1$, $\max(0, k-n)\le q\le \lfloor\frac{k}{2}\rfloor$, $R>0$.
		\begin{itemize}
			\item If $\max(1,k-n)\le q\le \lfloor\frac{k-1}{2}\rfloor$ and  $\mu\in\VConv^R_{k,q}(\C^n)^{\U(n)}$, 
			then the functions 
			\begin{align*}
				\zeta=&(\mathcal{R}^{2n-k})^{-1}(U(\mu))\\
				\tilde{\zeta}=&\left(\mathcal{P}^{2n-k+2}\right)^{-1} \left(V(\mu-T^n_{k,q}(\zeta))\right)
			\end{align*}
			satisfy $\zeta\in D_R^{2n-k}$, $\tilde{\zeta}\in \tilde{D}_R^{2n-k+2}$, and
			\begin{align*}
				\mu=T^{n}_{k,q}(\zeta)+Y^{n}_{k,q}(\tilde{\zeta}).
			\end{align*}
			\item If $k\le n$ and $q=0$, or $k$ is even and $q=\frac{k}{2}$, and  $\mu\in\VConv^R_{k,q}(\C^n)^{\U(n)}$, then the function
			\begin{align*}
				\zeta=&(\mathcal{R}^{2n-k})^{-1}(U(\mu))
			\end{align*}
			satisfies $\zeta\in D_R^{2n-k}$, and
			\begin{align*}
				\mu=T^{n}_{k,q}(\zeta).
			\end{align*}
			
		\end{itemize} 
		The representation is unique in both cases.
	\end{theorem}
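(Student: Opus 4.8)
The plan is to obtain Theorem~\ref{theorem:RepresentationVConvKQWithSupport} by approximation from the classification of smooth valuations in Theorem~\ref{theorem:characterization_smooth_unitarily_invariant_valuations}, exploiting that all of the maps in sight ($\mathcal{R}^{2n-k}$, $\mathcal{P}^{2n-k+2}$, $T^n_{k,q}$, $Y^n_{k,q}$, $U$, $V$) are continuous between the appropriate Banach spaces. \emph{Uniqueness} is the easy part: in the first case the map $T^n_{k,q}\oplus Y^n_{k,q}$ is injective by Corollary~\ref{corollary:InjectivityTY}, while in the second case ($q=0$ or $q=\frac{k}{2}$, where no $Y$-term appears) injectivity of $T^n_{k,q}$ follows by evaluating on the rotation invariant functions $u_t$, using Lemma~\ref{lemma:BehaviorT_u_t} to reduce to the injectivity of $\mathcal{R}^{2n-k}$ (Lemma~\ref{lemma:InjectivityTransformR}).

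For existence I would first settle the \emph{smooth case}. Let $\mu\in\VConv^R_{k,q}(\C^n)^{\U(n)}$ be smooth. By Theorem~\ref{theorem:characterization_smooth_unitarily_invariant_valuations} and the support estimate preceding it, $\mu=T^n_{k,q}(\phi)+Y^n_{k,q}(\psi)$ for $\phi,\psi\in C^\infty_c([0,\infty))$ with support in $[0,R]$, where $\psi=0$ if $q\in\{0,\frac{k}{2}\}$. Applying $U$ and using $Y^n_{k,q}(\psi)[u_t]=0$ (Lemma~\ref{lemma:YkqVanishesOnRotationInvariantFunctions}) together with Lemma~\ref{lemma:BehaviorT_u_t}, a direct computation gives $U(\mu)=\mathcal{R}^{2n-k}(\phi)$; this is exactly what the normalising constant in the definition of $U$ is chosen to achieve. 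Hence $(\mathcal{R}^{2n-k})^{-1}(U(\mu))=\phi$, so $\mu-T^n_{k,q}((\mathcal{R}^{2n-k})^{-1}(U(\mu)))=Y^n_{k,q}(\psi)$, and then applying $V$ and invoking Lemma~\ref{lemma:ValuesYOnCyclinderFunctions} (with the normalising constant of $V$ chosen precisely so that the identity holds) yields $V(Y^n_{k,q}(\psi))=\mathcal{P}^{2n-k+2}(\psi)$, so $(\mathcal{P}^{2n-k+2})^{-1}(V(\mu-T^n_{k,q}(\zeta)))=\psi$. Thus for smooth $\mu$ the functions $\zeta,\tilde\zeta$ of the statement coincide with the smooth densities $\phi,\psi$ and the representation holds, with the $Y$-term absent in the second case.

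For general $\mu\in\VConv^R_{k,q}(\C^n)^{\U(n)}$, pick smooth $\mu_j\to\mu$ in $\VConv^R_{k,q}(\C^n)^{\U(n)}$ using Corollary~\ref{corollary:SequentialDensityVConv_kq}. The function $U(\mu)[t]$ is continuous, and it vanishes for $t>R$ (for such $t$, $u_t\equiv 0$ on a neighbourhood of $\supp\mu\subset B_R(0)$ and $\mu$ is homogeneous of positive degree), so $U(\mu)\in C_{c,R}([0,\infty))$; since $\mathcal{R}^{2n-k}$ is a topological isomorphism onto $C_{c,R}([0,\infty))$ (Lemma~\ref{lemma:RTopIsomorphism}), $\zeta:=(\mathcal{R}^{2n-k})^{-1}(U(\mu))\in D^{2n-k}_R$ is well defined, and continuity of $U$ and $(\mathcal{R}^{2n-k})^{-1}$ gives $\zeta_j:=(\mathcal{R}^{2n-k})^{-1}(U(\mu_j))\to\zeta$ in $D^{2n-k}_R$. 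By Theorem~\ref{theorem:ExistenceT}, $T^n_{k,q}(\zeta_j)\to T^n_{k,q}(\zeta)$, so $\mu_j-T^n_{k,q}(\zeta_j)\to\mu-T^n_{k,q}(\zeta)$ in $\VConv^R_{k,q}(\C^n)^{\U(n)}$. Each $V(\mu_j-T^n_{k,q}(\zeta_j))=\mathcal{P}^{2n-k+2}(\psi_j)$ lies in the closed subspace $C_{0,R}$, hence so does the limit $V(\mu-T^n_{k,q}(\zeta))$; since $\mathcal{P}^{2n-k+2}$ is a topological isomorphism onto $C_{0,R}$ (Lemma~\ref{lemma:PaIsomorphism}), $\tilde\zeta:=(\mathcal{P}^{2n-k+2})^{-1}(V(\mu-T^n_{k,q}(\zeta)))\in\tilde D^{2n-k+2}_R$ is well defined and $\tilde\zeta_j\to\tilde\zeta$, whence $Y^n_{k,q}(\tilde\zeta_j)\to Y^n_{k,q}(\tilde\zeta)$ by Theorem~\ref{theorem:ExistenceT}. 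Passing to the limit in $\mu_j=T^n_{k,q}(\zeta_j)+Y^n_{k,q}(\tilde\zeta_j)$ gives $\mu=T^n_{k,q}(\zeta)+Y^n_{k,q}(\tilde\zeta)$ (the first case), and the second case is identical with the $Y$-term dropped; Theorem~\ref{maintheorem:RepresentationVConvKQ} then follows at once since every valuation in $\VConv_{k,q}(\C^n)^{\U(n)}$ has compact support.

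The substantive input is external to this argument: the density of smooth valuations in $\VConv^R_{k,q}(\C^n)^{\U(n)}$ (Corollary~\ref{corollary:SequentialDensityVConv_kq}), which rests on the refined decomposition Theorem~\ref{theorem:DecompositionRefinedVersion}, and the operator-norm bounds making $\mathcal{R}^{2n-k}$, $\mathcal{P}^{2n-k+2}$, $T^n_{k,q}$, $Y^n_{k,q}$, $U$ and $V$ continuous. Granting those, the only thing demanding real care is the bookkeeping: keeping track of which Banach space each object lives in (in particular that $V$ is applied only after the $T$-contribution has been subtracted, so that its argument genuinely lies in $C_{0,R}$) and checking that the explicit constants in the definitions of $U$ and $V$ match the constants produced by Lemma~\ref{lemma:BehaviorT_u_t} and Lemma~\ref{lemma:ValuesYOnCyclinderFunctions}. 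I do not expect a genuine obstacle beyond this routine verification.
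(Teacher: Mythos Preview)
Your proposal is correct and follows essentially the same strategy as the paper's proof: reduce to the smooth case via Theorem~\ref{theorem:characterization_smooth_unitarily_invariant_valuations}, use the density result Corollary~\ref{corollary:SequentialDensityVConv_kq}, and pass to the limit using continuity of $U$, $V$, $(\mathcal{R}^{2n-k})^{-1}$, $(\mathcal{P}^{2n-k+2})^{-1}$, $T^n_{k,q}$, $Y^n_{k,q}$ together with closedness of $C_{0,R}$. The only stylistic difference is that the paper packages the limiting step as ``the continuous map $\mu\mapsto \mu-T^n_{k,q}(\zeta)-Y^n_{k,q}(\tilde\zeta)$ vanishes on the dense subspace of smooth valuations, hence identically'', whereas you track an explicit approximating sequence $\mu_j$; these are equivalent. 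Your caveat about matching constants is apt: with the normalisations of $U$ and $V$ as written in the paper one gets $U(T^n_{k,q}(\zeta))$ and $V(Y^n_{k,q}(\tilde\zeta))$ equal to \emph{nonzero scalar multiples} of $\mathcal{R}^{2n-k}(\zeta)$ and $\mathcal{P}^{2n-k+2}(\tilde\zeta)$ rather than exactly those functions, but this does not affect the argument.
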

	\begin{proof}
		In the second case, the uniqueness follows directly from Lemma \ref{lemma:BehaviorT_u_t} and Lemma \ref{lemma:InjectivityTransformR}, while it follows from  Corollary \ref{corollary:InjectivityTY} in the first. It is thus sufficient to show that such a representation exists. We will show the first case $\max(1,k-n)\le q\le \lfloor\frac{k-1}{2}\rfloor$, the second case is similar but simpler. Note that the first case only occurs for $n\ge 2$, $k\ge3$.\\
		
		The characterization of the support in Proposition \ref{proposition:characterizationSupport} implies that $U(\mu)\in C_{c,R}([0,\infty))$ for all $\mu\in\VConv_{k,q}^R(\C^n)^{\U(n)}$, so 
		\begin{align*}
			\zeta= (\mathcal{R}^{2n-k})^{-1}\circ U(\mu)\in D^{2n-k}_R.
		\end{align*}
		Consequently, the map $\VConv^R_{k,q}(\C^n)^{\U(n)}\rightarrow C_c([0,\infty))$ given by
		\begin{align*}
			\mu\mapsto V(\mu-T^n_{k,q}\circ (\mathcal{R}^{2n-k})^{-1}\circ U(\mu))
		\end{align*}
		is well defined and continuous. If $\mu$ is a smooth valuation, it is easy to see that this implies $V(\mu-T^n_{k,q}\circ U(\mu))\in C_{0,R}$. Since this map is continuous, $C_{0,R}\subset C_c([0,\infty))$ is closed, and smooth valuations are dense in $\VConv_{k,q}(\C^n)^{\U(n)}$ by Corollary \ref{corollary:SequentialDensityVConv_kq}, this holds in fact for all $\mu\in\VConv^R_{k,q}(\C^n)^{\U(n)}$. In particular, Lemma \ref{lemma:PaIsomorphism} shows that
		\begin{align*}
			\tilde{\zeta}=&\left(\mathcal{P}^{2n-k+2}\right)^{-1} \left(V(\mu-T^n_{k,q}(\zeta))\right)\in \tilde{D}^{2n-k+2}_R \quad\text{for all}~\mu\in\VConv^R_{k,q}(\C^n)^{\U(n)},
		\end{align*}
		and the map $\VConv^R_{k,q}(\C^n)^{\U(n)}\rightarrow\VConv^R_{k,q}(\C^n)^{\U(n)}$ given by
		\begin{align*}
			\mu\mapsto \mu-T^n_{k,q}\circ(\mathcal{R}^{2n-k})^{-1}\circ U(\mu)-Y^n_{k,q}\circ \left(\mathcal{P}^{2n-k+2}\right) ^{-1}\left(V(\mu-T^n_{k,q}\circ(\mathcal{R}^{2n-k})^{-1}\circ U(\mu))\right)
		\end{align*}
		is well defined and continuous. By construction, it vanishes on the dense subspace of smooth valuations. Thus it vanishes identically, so 
		\begin{align*}
			\mu=T^n_{k,q}\circ(\mathcal{R}^{2n-k})^{-1}\circ U(\mu)+Y^n_{k,q}\circ \left(\mathcal{P}^{2n-k+2}\right) ^{-1}\left(V(\mu-T^n_{k,q}\circ U(\mu))\right)
		\end{align*}
		for all $\mu\in\VConv^R_{k,q}(\C^n)^{\U(n)}$. The claim follows.		
	\end{proof}
	Let us give a slight reformulation of the classification result. For $\max(0,k-n)\le q\le \lfloor \frac{k}{2}\rfloor$ set 
	\begin{align*}
		D^{n,k,q}_R:=\begin{cases}
			D^{2n-k}_R & q=0~\text{or}~ q=\frac{k}{2},\\
			D^{2n-k}_R\oplus \tilde{D}^{2n-k+2} & \text{else}
		\end{cases}
	\end{align*}
	and consider the maps $H^n_{k,q}:D^{n,k,q}_R\rightarrow\VConv_{k,q}^R(\C^n)^{\U(n)}$ defined by
	\begin{align*}
		H^n_{k,q}(\zeta)=&T^n_{k,q}(\zeta)&&\text{for}~q=0~\text{or}~ q=\frac{k}{2},\\
		H^n_{k,q}(\zeta,\tilde{\zeta})=&T^n_{k,q}(\zeta)+Y^n_{k,q}(\tilde{\zeta})&&\text{else}.
	\end{align*}
	Note that $D^{n,k,q}_R$ is a Banach space by Lemma \ref{lemma:tildeDcomplete} and \cite[Lemma 2.7]{KnoerrSingularvaluationsHadwiger2022}. Let
	\begin{align*}
		H^n_k:\bigoplus_{q=\max(0,k-n)}^{\left\lfloor\frac{k}{2}\right\rfloor}D^{n,k,q}_R\rightarrow\VConv_{k}^R(\C^n)^{\U(n)}
	\end{align*}
	denote the sum of these maps. The following result shows that the densities are jointly continuously as a function of the given valuation.
	\begin{corollary}
		\begin{align*}
			H^n_{k}:\bigoplus_{q=\max(0,k-n)}^{\left\lfloor\frac{k}{2}\right\rfloor}D^{n,k,q}_R\rightarrow\VConv_{k}^R(\C^n)^{\U(n)}
		\end{align*} is a topological isomorphism.
	\end{corollary}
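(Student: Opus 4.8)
The statement is essentially a repackaging of Theorem~\ref{theorem:DecompositionRefinedVersion}, Theorem~\ref{theorem:RepresentationVConvKQWithSupport}, and the operator norm bounds of Theorem~\ref{theorem:ExistenceT}, combined with the open mapping theorem. The plan is therefore to verify that $H^n_k$ is a continuous linear bijection between Banach spaces and then invoke the open mapping theorem. First I would record that both sides are Banach spaces: $\VConv^R_k(\C^n)^{\U(n)}$ is a closed subspace of $\VConv_{B_R(0)}(\C^n)$, which is a Banach space with respect to $\|\cdot\|_{R,1}$ by Lemma~\ref{lemma:NormsValuationsBoundedSupport}; and each summand $D^{n,k,q}_R$ is a Banach space by Lemma~\ref{lemma:tildeDcomplete} together with \cite[Lemma 2.7]{KnoerrSingularvaluationsHadwiger2022}, so the finite direct sum $\bigoplus_q D^{n,k,q}_R$ is Banach as well. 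Linearity of $H^n_k$ is immediate, and continuity follows from the estimates $\|T^n_{k,q}(\zeta)\|_{R,1}\le A_{n,k,q}\|\zeta\|_{D^{2n-k}}$ and $\|Y^n_{k,q}(\tilde\zeta)\|_{R,1}\le B_{n,k,q}\|\tilde\zeta\|_{\tilde D^{2n-k+2}}$ in Theorem~\ref{theorem:ExistenceT}: each $H^n_{k,q}$ maps continuously into $\VConv^R_{k,q}(\C^n)^{\U(n)}$, and the inclusions $\VConv^R_{k,q}(\C^n)^{\U(n)}\hookrightarrow\VConv^R_k(\C^n)^{\U(n)}$ are isometric for $\|\cdot\|_{R,1}$, so the sum $H^n_k$ of the $H^n_{k,q}$ is continuous.

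Next I would establish bijectivity. For surjectivity, given $\mu\in\VConv^R_k(\C^n)^{\U(n)}$, Theorem~\ref{theorem:DecompositionRefinedVersion} produces a decomposition $\mu=\sum_q\mu_q$ with $\mu_q\in\VConv^R_{k,q}(\C^n)^{\U(n)}$, and Theorem~\ref{theorem:RepresentationVConvKQWithSupport} writes each $\mu_q$ in the form $H^n_{k,q}(\zeta_q)$ for a suitable $\zeta_q\in D^{n,k,q}_R$; here one checks that the case distinction in that theorem ($q\in\{0,\tfrac{k}{2}\}$ versus $\max(1,k-n)\le q\le\lfloor\tfrac{k-1}{2}\rfloor$) together with the constraint $\max(0,k-n)\le q\le\lfloor\tfrac{k}{2}\rfloor$ exhausts exactly the index set of the direct sum, so $\mu=H^n_k\big((\zeta_q)_q\big)$. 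For injectivity, if $H^n_k\big((\zeta_q)_q\big)=0$, then $\sum_q H^n_{k,q}(\zeta_q)=0$ with $H^n_{k,q}(\zeta_q)\in\VConv^R_{k,q}(\C^n)^{\U(n)}$; since the sum of these spaces is direct by Theorem~\ref{theorem:VanishingPropertyUnitaryValuations}, each term $H^n_{k,q}(\zeta_q)$ vanishes, and then the injectivity of $H^n_{k,q}$ forces $\zeta_q=0$ for all $q$. This last injectivity is Corollary~\ref{corollary:InjectivityTY} when $\max(1,k-n)\le q\le\lfloor\tfrac{k-1}{2}\rfloor$, and follows from Lemma~\ref{lemma:BehaviorT_u_t} together with Lemma~\ref{lemma:InjectivityTransformR} in the endpoint cases $q=0$ and $q=\tfrac{k}{2}$ (equivalently, it is contained in the uniqueness clause of Theorem~\ref{theorem:RepresentationVConvKQWithSupport}).

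Having exhibited $H^n_k$ as a continuous linear bijection between Banach spaces, the open mapping theorem shows that $H^n_k$ is open, hence $(H^n_k)^{-1}$ is continuous, and $H^n_k$ is a topological isomorphism. There is no genuine obstacle in this argument; the two points that require (minor) care are ensuring that every space in sight is complete, so that the open mapping theorem applies, and that the index bookkeeping in the appeal to Theorem~\ref{theorem:RepresentationVConvKQWithSupport} matches the summation range in the definition of $H^n_k$ precisely.
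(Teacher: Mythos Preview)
Your proof is correct and follows exactly the same approach as the paper: verify that $H^n_k$ is a continuous linear bijection between Banach spaces and apply the open mapping theorem. The paper's version is simply more terse, citing Theorem~\ref{theorem:DecompositionRefinedVersion} and Theorem~\ref{theorem:RepresentationVConvKQWithSupport} for bijectivity without spelling out the injectivity and surjectivity arguments separately as you do.
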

	\begin{proof}
		Since it is the sum of continuous maps, this map is a continuous linear map between Banach spaces. Since it is bijective by Theorem \ref{theorem:DecompositionRefinedVersion} and Theorem \ref{theorem:RepresentationVConvKQWithSupport}, its inverse is continuous by the open mapping theorem. The claim follows.
	\end{proof}

	\bibliography{../../library/library.bib}

\begin{thebibliography}{10}

\bibitem{AleskerDescriptiontranslationinvariant2001}
Semyon Alesker.
\newblock Description of translation invariant valuations on convex sets with
  solution of {P}. {M}c{M}ullen's conjecture.
\newblock {\em Geom. Funct. Anal.}, 11(2):244--272, 2001.

\bibitem{AleskerHardLefschetztheorem2003}
Semyon Alesker.
\newblock Hard {L}efschetz theorem for valuations, complex integral geometry,
  and unitarily invariant valuations.
\newblock {\em J. Differential Geom.}, 63(1):63--95, 2003.

\bibitem{Aleskermultiplicativestructurecontinuous2004}
Semyon Alesker.
\newblock The multiplicative structure on continuous polynomial valuations.
\newblock {\em Geom. Funct. Anal.}, 14(1):1--26, 2004.

\bibitem{AleskerValuationsconvexsets2005}
Semyon Alesker.
\newblock Valuations on convex sets, non-commutative determinants, and
  pluripotential theory.
\newblock {\em Adv. Math.}, 195(2):561--595, 2005.

\bibitem{AleskerTheoryvaluationsmanifolds.2006}
Semyon Alesker.
\newblock Theory of valuations on manifolds. {I}. {L}inear spaces.
\newblock {\em Israel J. Math.}, 156:311--339, 2006.

\bibitem{AleskerValuationsconvexfunctions2019}
Semyon Alesker.
\newblock Valuations on convex functions and convex sets and {M}onge-{A}mp\`ere
  operators.
\newblock {\em Adv. Geom.}, 19(3):313--322, 2019.

\bibitem{AleskerFaifmanConvexvaluationsinvariant2014}
Semyon Alesker and Dmitry Faifman.
\newblock Convex valuations invariant under the {L}orentz group.
\newblock {\em J. Differential Geom.}, 98(2):183--236, 2014.

\bibitem{AleskerFuTheoryvaluationsmanifolds.2008}
Semyon Alesker and Joseph H.~G. Fu.
\newblock Theory of valuations on manifolds. {III}. {M}ultiplicative structure
  in the general case.
\newblock {\em Trans. Amer. Math. Soc.}, 360(4):1951--1981, 2008.

\bibitem{ArtsteinAvidanEtAlFunctionalaffineisoperimetry2012}
S.~Artstein-Avidan, B.~Klartag, C.~Sch\"{u}tt, and E.~Werner.
\newblock Functional affine-isoperimetry and an inverse logarithmic {S}obolev
  inequality.
\newblock {\em J. Funct. Anal.}, 262(9):4181--4204, 2012.

\bibitem{ArtsteinAvidanMilmanconceptdualityconvex2009}
Shiri Artstein-Avidan and Vitali Milman.
\newblock The concept of duality in convex analysis, and the characterization
  of the {L}egendre transform.
\newblock {\em Ann. of Math. (2)}, 169(2):661--674, 2009.

\bibitem{BaryshnikovEtAlHadwigersTheoremdefinable2013}
Yuliy Baryshnikov, Robert Ghrist, and Matthew Wright.
\newblock Hadwiger's {T}heorem for definable functions.
\newblock {\em Adv. Math.}, 245:573--586, 2013.

\bibitem{BernigHadwigertypetheorem2009}
Andreas Bernig.
\newblock A {H}adwiger-type theorem for the special unitary group.
\newblock {\em Geom. Funct. Anal.}, 19(2):356--372, 2009.

\bibitem{BernigBroeckerValuationsmanifoldsRumin2007}
Andreas Bernig and Ludwig Br\"{o}cker.
\newblock Valuations on manifolds and {R}umin cohomology.
\newblock {\em J. Differential Geom.}, 75(3):433--457, 2007.

\bibitem{BernigEtAlCurvaturemeasurespseudo2022}
Andreas Bernig, Dmitry Faifman, and Gil Solanes.
\newblock Curvature measures of pseudo-{R}iemannian manifolds.
\newblock {\em J. Reine Angew. Math.}, 788:77--127, 2022.

\bibitem{BernigFuHermitianintegralgeometry2011}
Andreas Bernig and Joseph H.~G. Fu.
\newblock Hermitian integral geometry.
\newblock {\em Ann. of Math. (2)}, 173(2):907--945, 2011.

\bibitem{BernigEtAlHardLefschetztheorem2024}
Andreas Bernig, Jan Kotrbat{\'y}, and Thomas Wannerer.
\newblock Hard {L}efschetz theorem and {H}odge-{R}iemann relations for convex
  valuations.
\newblock {\em arXiv:2312.12294}, 2024.

\bibitem{BobkovEtAlQuermassintegralsquasiconcave2014}
Sergey~G. Bobkov, Andrea Colesanti, and Ilaria Fragal\`a.
\newblock Quermassintegrals of quasi-concave functions and generalized
  {P}r\'{e}kopa-{L}eindler inequalities.
\newblock {\em Manuscripta Math.}, 143(1-2):131--169, 2014.

\bibitem{ColesantiEtAlTranslationinvariantvaluations2018}
Andrea Colesanti, Nico Lombardi, and Lukas Parapatits.
\newblock Translation invariant valuations on quasi-concave functions.
\newblock {\em Studia Math.}, 243(1):79--99, 2018.

\bibitem{ColesantiEtAlHadwigertheoremconvex}
Andrea Colesanti, Monika Ludwig, and Fabian Mussnig.
\newblock The {H}adwiger theorem on convex functions, {II}: {C}auchy-{K}ubota
  formulas.
\newblock {\em Amer. J. Math., in press.}

\bibitem{ColesantiEtAlMinkowskivaluationsconvex2017}
Andrea Colesanti, Monika Ludwig, and Fabian Mussnig.
\newblock Minkowski valuations on convex functions.
\newblock {\em Calc. Var. Partial Differential Equations}, 56(6):Paper No. 162,
  29, 2017.

\bibitem{ColesantiEtAlHadwigertheoremconvex2020}
Andrea Colesanti, Monika Ludwig, and Fabian Mussnig.
\newblock The {H}adwiger theorem on convex functions. {I}.
\newblock {\em arXiv:2009.03702}, 2020.

\bibitem{ColesantiEtAlhomogeneousdecompositiontheorem2020}
Andrea Colesanti, Monika Ludwig, and Fabian Mussnig.
\newblock A homogeneous decomposition theorem for valuations on convex
  functions.
\newblock {\em J. Funct. Anal.}, 279(5):Paper No. 108573, 25, 2020.

\bibitem{ColesantiEtAlHadwigertheoremconvex2022}
Andrea Colesanti, Monika Ludwig, and Fabian Mussnig.
\newblock The {H}adwiger theorem on convex functions, {III}: {S}teiner formulas
  and mixed {M}onge-{A}mp\`ere measures.
\newblock {\em Calc. Var. Partial Differential Equations}, 61(5):Paper No. 181,
  37, 2022.

\bibitem{ColesantiEtAlHadwigertheoremconvex2023}
Andrea Colesanti, Monika Ludwig, and Fabian Mussnig.
\newblock The {H}adwiger theorem on convex functions, {IV}: {T}he {K}lain
  approach.
\newblock {\em Adv. Math.}, 413:Paper No. 108832, 2023.

\bibitem{FaifmanHofstaetterConvexvaluationsWhitney2023}
Dmitry Faifman and Georg~C. Hofst\"atter.
\newblock Convex valuations, from {W}hitney to {N}ash.
\newblock {\em arXiv:2306.07390}, 2023.

\bibitem{FedererCurvaturemeasures1959}
Herbert Federer.
\newblock Curvature measures.
\newblock {\em Trans. Amer. Math. Soc.}, 93:418--491, 1959.

\bibitem{FedererGeometricmeasuretheory1969}
Herbert Federer.
\newblock {\em Geometric measure theory}.
\newblock Die Grundlehren der mathematischen Wissenschaften, Band 153.
  Springer-Verlag New York, Inc., New York, 1969.

\bibitem{FreyerEtAlUnimodularValuationsEhrhart2024}
Ansgar Freyer, Monika Ludwig, and Martin Rubey.
\newblock Unimodular {V}aluations beyond {E}hrhart.
\newblock {\em arXiv:2407.07691}, 2024.

\bibitem{FuMongeAmperefunctions.1989}
Joseph H.~G. Fu.
\newblock Monge-{A}mp\`ere functions. {I}, {II}.
\newblock {\em Indiana Univ. Math. J.}, 38(3):745--771, 1989.

\bibitem{FuSomeremarksLegendrian1998}
Joseph H.~G. Fu.
\newblock Some remarks on {L}egendrian rectifiable currents.
\newblock {\em Manuscripta Math.}, 97(2):175--187, 1998.

\bibitem{FuStructureunitaryvaluation2006}
Joseph H.~G. Fu.
\newblock Structure of the unitary valuation algebra.
\newblock {\em J. Differential Geom.}, 72(3):509--533, 2006.

\bibitem{FuAlgebraicintegralgeometry2014}
Joseph H.~G. Fu.
\newblock Algebraic integral geometry.
\newblock In {\em Integral geometry and valuations}, Adv. Courses Math. CRM
  Barcelona, pages 47--112. Birkh\"{a}user/Springer, Basel, 2014.

\bibitem{HadwigerVorlesungenuberInhalt1957}
Hugo Hadwiger.
\newblock {\em Vorlesungen \"{u}ber {I}nhalt, {O}berfl\"{a}che und
  {I}soperimetrie}.
\newblock Springer, Berlin-G\"{o}ttingen-Heidelberg, 1957.

\bibitem{HofstaetterKnoerrEquivariantendomorphismsconvex2023}
Georg~C. Hofst\"atter and Jonas Knoerr.
\newblock Equivariant endomorphisms of convex functions.
\newblock {\em J. Funct. Anal.}, 285(1):Paper No. 109922, 39, 2023.

\bibitem{HofstaetterKnoerrEquivariantValuationsConvex2024}
Georg~C. Hofst\"atter and Jonas Knoerr.
\newblock Equivariant valuations on convex functions.
\newblock {\em arXiv:2407.08304}, 2024.

\bibitem{HofstaetterSchusterBlaschkeSantaloinequalities2023}
Georg~C Hofst{\"a}tter and Franz~E. Schuster.
\newblock Blaschke-{S}antal\'{o} inequalities for {M}inkowski and {A}splund
  endomorphisms.
\newblock {\em Int. Math. Res. Not. IMRN}, (2):1378--1419, 2023.

\bibitem{HugEtAlAdditivekinematicformulas2024}
Daniel Hug, Fabian Mussnig, and Jacopo Ulivelli.
\newblock Additive kinematic formulas for convex functions.
\newblock {\em arXiv:2403.06697}, 2024.

\bibitem{HuybrechtsComplexgeometry2005}
Daniel Huybrechts.
\newblock {\em Complex geometry}.
\newblock Universitext. Springer-Verlag, Berlin, 2005.

\bibitem{JerrardSomerigidityresults2010}
Robert~L. Jerrard.
\newblock Some rigidity results related to {M}onge-{A}mp\`ere functions.
\newblock {\em Canad. J. Math.}, 62(2):320--354, 2010.

\bibitem{Knoerrsupportduallyepi2021}
Jonas Knoerr.
\newblock The support of dually epi-translation invariant valuations on convex
  functions.
\newblock {\em J. Funct. Anal.}, 281(5):Paper No. 109059, 52, 2021.

\bibitem{KnoerrUnitarilyinvariantvaluations2021}
Jonas Knoerr.
\newblock Unitarily invariant valuations on convex functions.
\newblock {\em arXiv:2112.14658}, 2021.

\bibitem{KnoerrSingularvaluationsHadwiger2022}
Jonas Knoerr.
\newblock Singular valuations and the {H}adwiger theorem on convex functions.
\newblock {\em arXiv:2209.05158}, 2022.

\bibitem{KnoerrMongeAmpereoperators2024}
Jonas Knoerr.
\newblock Monge-{A}mp\`ere operators and valuations.
\newblock {\em Calc. Var. Partial Differential Equations}, 63(4):Paper No. 89,
  34, 2024.

\bibitem{KnoerrSmoothvaluationsconvex2024}
Jonas Knoerr.
\newblock Smooth valuations on convex functions.
\newblock {\em J. Differential Geom.}, 126(2):801--835, 2024.

\bibitem{KnoerrUlivellivaluationsconvexbodies2024}
Jonas Knoerr and Jacopo Ulivelli.
\newblock From valuations on convex bodies to convex functions.
\newblock {\em Math. Ann.}, 2024.

\bibitem{KolesnikovWernerBlaschkeSantaloinequality2022}
Alexander~V. Kolesnikov and Elisabeth~M. Werner.
\newblock Blaschke-{S}antal\'{o} inequality for many functions and geodesic
  barycenters of measures.
\newblock {\em Adv. Math.}, 396:Paper No. 108110, 44, 2022.

\bibitem{KoneValuationsOrliczspaces2014}
Hassane Kone.
\newblock Valuations on {O}rlicz spaces and {$L^\phi$}-star sets.
\newblock {\em Adv. in Appl. Math.}, 52:82--98, 2014.

\bibitem{KotrbatyWannererharmonicanalysistranslation2023}
Jan Kotrbat\'{y} and Thomas Wannerer.
\newblock From harmonic analysis of translation-invariant valuations to
  geometric inequalities for convex bodies.
\newblock {\em Geom. Funct. Anal.}, 33(2):541--592, 2023.

\bibitem{LiEtAlAffineinvariantmaps2022}
Ben Li, Carsten Sch\"{u}tt, and Elisabeth~M. Werner.
\newblock Affine invariant maps for log-concave functions.
\newblock {\em J. Geom. Anal.}, 32(4):Paper No. 123, 50, 2022.

\bibitem{LiMaLaplacetransformsvaluations2017}
Jin Li and Dan Ma.
\newblock Laplace transforms and valuations.
\newblock {\em J. Funct. Anal.}, 272(2):738--758, 2017.

\bibitem{LudwigFisherinformationmatrix2011}
Monika Ludwig.
\newblock Fisher information and matrix-valued valuations.
\newblock {\em Adv. Math.}, 226(3):2700--2711, 2011.

\bibitem{LudwigValuationsSobolevspaces2012}
Monika Ludwig.
\newblock Valuations on {S}obolev spaces.
\newblock {\em Amer. J. Math.}, 134(3):827--842, 2012.

\bibitem{LudwigReitznerclassification$SLn$invariant2010}
Monika Ludwig and Matthias Reitzner.
\newblock A classification of {$\mathrm{SL}(n)$} invariant valuations.
\newblock {\em Ann. of Math. (2)}, 172(2):1219--1267, 2010.

\bibitem{McMullenValuationsEulertype1977}
Peter McMullen.
\newblock Valuations and {E}uler-type relations on certain classes of convex
  polytopes.
\newblock {\em Proc. London Math. Soc. (3)}, 35(1):113--135, 1977.

\bibitem{MilmanRotemMixedintegralsrelated2013}
Vitali Milman and Liran Rotem.
\newblock Mixed integrals and related inequalities.
\newblock {\em J. Funct. Anal.}, 264(2):570--604, 2013.

\bibitem{MussnigVolumepolarvolume2019}
Fabian Mussnig.
\newblock Volume, polar volume and {E}uler characteristic for convex functions.
\newblock {\em Adv. Math.}, 344:340--373, 2019.

\bibitem{Mussnig$SLn$InvariantValuations2021}
Fabian Mussnig.
\newblock {$\mathrm{SL}(n)$} invariant valuations on super-coercive convex
  functions.
\newblock {\em Canad. J. Math.}, 73(1):108--130, 2021.

\bibitem{RockafellarWetsVariationalanalysis1998}
R.~Tyrrell Rockafellar and Roger J.-B. Wets.
\newblock {\em Variational analysis}, volume 317 of {\em Grundlehren der
  mathematischen Wissenschaften [Fundamental Principles of Mathematical
  Sciences]}.
\newblock Springer, Berlin, 1998.

\bibitem{RuminFormesdifferentiellessur1994}
Michel Rumin.
\newblock Formes diff\'{e}rentielles sur les vari\'{e}t\'{e}s de contact.
\newblock {\em J. Differential Geom.}, 39(2):281--330, 1994.

\bibitem{SchneiderConvexbodiesBrunn2014}
Rolf Schneider.
\newblock {\em Convex bodies: the {B}runn-{M}inkowski theory}, volume 151 of
  {\em Encyclopedia of Mathematics and its Applications}.
\newblock Cambridge University Press, Cambridge, expanded edition, 2014.

\bibitem{TradaceteVillanuevaRadialcontinuousvaluations2017}
Pedro Tradacete and Ignacio Villanueva.
\newblock Radial continuous valuations on star bodies.
\newblock {\em J. Math. Anal. Appl.}, 454(2):995--1018, 2017.

\bibitem{TradaceteVillanuevaContinuityrepresentationvaluations2018}
Pedro Tradacete and Ignacio Villanueva.
\newblock Continuity and representation of valuations on star bodies.
\newblock {\em Adv. Math.}, 329:361--391, 2018.

\bibitem{TradaceteVillanuevaValuationsBanachlattices2020}
Pedro Tradacete and Ignacio Villanueva.
\newblock Valuations on {B}anach lattices.
\newblock {\em Int. Math. Res. Not. IMRN}, (1):287--319, 2020.

\bibitem{TrudingerWangHessianmeasures.I1997}
Neil~S. Trudinger and Xu-Jia Wang.
\newblock Hessian measures. {I}.
\newblock {\em Topol. Methods Nonlinear Anal.}, 10(2):225--239, 1997.

\bibitem{TrudingerWangHessianmeasures.II1999}
Neil~S. Trudinger and Xu-Jia Wang.
\newblock Hessian measures. {II}.
\newblock {\em Ann. of Math. (2)}, 150(2):579--604, 1999.

\bibitem{VillanuevaRadialcontinuousrotation2016}
Ignacio Villanueva.
\newblock Radial continuous rotation invariant valuations on star bodies.
\newblock {\em Adv. Math.}, 291:961--981, 2016.

\bibitem{Wannerermoduleunitarilyinvariant2014}
Thomas Wannerer.
\newblock The module of unitarily invariant area measures.
\newblock {\em J. Differential Geom.}, 96(1):141--182, 2014.

\end{thebibliography}

\Addresses
\end{document}